\newcommand{\tr}{\textnormal{tr}}
\newcommand{\ric}{\textnormal{Ric}}
\newcommand{\BB}{\mathcal{B}}
\newcommand{\dbar}{\overline{\partial}}
\newcommand{\ddt}[1]{\frac{\partial #1}{\partial t}}
\newcommand{\zz}{\mathbf{z}}
\newcommand{\OO}{\mathcal{O}}
\newcommand{\EE}{\mathcal{E}}
\newcommand{\AAA}{\mathcal{A}}
\newcommand{\ddbar}{\sqrt{-1}\partial\dbar}
\newtheorem{theorem}{Theorem}[section]
\newtheorem{proposition}{Proposition}[section]
\newtheorem{lemma}{Lemma}[section]
\newtheorem{conjecture}{Conjecture}[section]
\newtheorem{definition}{Definition}[section]
\newtheorem{corollary}{Corollary}[section]
\newcommand{\PP}{\mathbb{P}}
\newcommand{\CC}{\mathbb{C}}
\begin{document}

\title{Ricci flow and birational surgery}

\author{Jian Song}

\address{Department of Mathematics, Rutgers University, Piscataway, NJ 08854}

\email{jiansong@math.rutgers.edu}

\thanks{Research supported in
part by National Science Foundation grants DMS-0847524 and  a Sloan Foundation Fellowship.}

\begin{abstract} We study the formation of finite time singularities of the K\"ahler-Ricci flow in relation to high codimensional birational surgery in algebraic geometry. We show that the K\"ahler-Ricci flow on an $n$-dimensionl K\"ahler manifold  contracts  a complex submanifold $\mathbb{P}^m$ with normal bundle $\oplus_{j=1}^{n-m}\mathcal{O}_{\mathbb{P}^m}(-a_j)$ for $a_j\in \mathbb{Z}^+$ and $\sum_{j=1}^{n-m} a_j \leq m$ in Gromov-Hausdorff topology with suitable initial K\"ahler class. We also show that the K\"ahler-Ricci flow resolves a family of isolated singularities uniquely in Gromov-Hausdorff topology. In particular, we construct global and local examples of metric flips by the K\"ahler-Ricci flow as a continuous path in Gromov-Hausdorff topology.

\end{abstract}

\maketitle

{\footnotesize  \tableofcontents}

%%%%%%%%%%%%%%%%%%%%%%%%%%%%%%%%%%%%%%%%%%%%%%%%%%%%%

\section{Introduction}

In his seminal work \cite{H1, H2}, Hamilton introduced the Ricci flow to study the global structures and classification of Riemannian manifolds. In general, the Ricci flow may develop singularities in finite time. Hamilton conjectured that in the three dimensional case, the Ricci flow should break the manifold into pieces near singular times by topological surgery and the flow can be continued on the new manifolds. Such a program for Ricci flow with surgery eventually leads to the complete proof of Thurston's geometrization conjecture for $3$-manifolds by Perelman's fundamental work \cite{P1, P2, P3, Cao2, KL, MT}. It was further suggested by Perelman that the Ricci flow should carry out surgeries through singularities in some intrinsic and unique way as referred to as canonical surgery by the Ricci flow.

There has been extensive study  for the Ricci flow in the setting of K\"ahler manifolds, following Yau's  solution to the Calabi conjecture \cite{Y1}. The Ricci flow preserves the K\"ahler condition, i.e., if the initial metric is K\"ahler, then the smooth solution of the Ricci flow is also K\"ahler. Let $(X, \omega_0)$ the a compact K\"ahler manifold with $\dim_{\CC} X =n \geq 2$ and $g_0$ be a K\"ahler metric on $X$. We consider the K\"ahler-Ricci flow,
\begin{equation}\label{krflow}
\ddt{}g = - \ric(g), ~g|_{t=0} = g_0,
\end{equation}
where $g=g(t)$ is the metric associated to its K\"ahler form $\omega(t)$ and $\ric(\omega) = -\ddbar \log \omega^n$ is the Ricci curvature of $\omega$.  The flow always has short time existence and admits a smooth solution on $[0, t]$, if and only if, the
cohomology class of $\omega(t)$ given by
$[\omega(t)] = [\omega_0] + t  [K_X]$ is K\"ahler. The first singular time  $T$ is characterized in \cite{TZha}  by
\begin{equation} \label{T}
T = \sup \{ t \in \mathbb{R} \ | \ [\omega_0] + t  [K_X] >0 \}.
\end{equation}
Clearly $T$ depends  only on $X$ and the initial K\"ahler class $[\omega_0]$, satisfying  $0 < T \le \infty$.

The necessary condition for a K\"ahler manifold $X$ to admit a smooth K\"ahler-Einstein metric is that the first Chern class of $X$ is definite or vanishing.  It was shown in \cite{Cao1} that the K\"ahler-Ricci flow always converges exponentially fast to a K\"ahler-Einstein metric if the first Chern class is negative or vanishing. When the first Chern class is positive, the convergence to a compact K\"ahler-Ricci soliton  is known under certain assumptions \cite{P4, SeT, TZhu1, PS, PSSW1, PSSW2, CW, TZhu2, TZhu3, CS}. One would hope that the K\"ahler-Ricci flow should deform any initial K\"ahler metric to a K\"ahler-Einstein metric, however, most K\"ahler manifolds do not admit definite or vanishing first Chern class and so the flow will in general develop singularities. An  analytic minimal model program with Ricci flow was laid out in \cite{SoT3} to study such  formation of singularities for the K\"ahler-Ricci flow on algebraic varieties. It can be viewed as an analogue of Thurston's geometrization conjecture for projective varieties. It is conjectured in \cite{SoT3, T2}  that the K\"ahler-Ricci flow will either deform a projective variety $X$ of nonnegative Kodaira dimension, to its minimal model via finitely many divisorial metric contractions and metric flips in Gromov-Hausdorff topology, then eventually converge to a unique canonical metric of Einstein type on its unique canonical model. The existence and uniqueness is proved in \cite{SoT3} for the analytic solutions of the K\"ahler-Ricci flow on algebraic varieties with log terminal singularities. Furthermore, the K\"ahler-Ricci flow can be analytically and uniquely extended through divisorial contractions and flips \cite{SoT3}. The solution is in fact smooth outside the singularities of the underlying varieties and where the algebraic surgery takes place. However, very little is understood about the surgery in global and local Riemannian geometry. We believe that the algebraic surgery in birational geometry, analytic surgery by the parabolic complex Monge-Amp\`ere equation and the Riemannian geometric surgery are all equivalent via Ricci flow:
$$ Algebraic~ surgery~ \Longleftrightarrow ~Analytic ~ surgery~\Longleftrightarrow ~Riemannian~ surgery . $$
The notion of a canonical surgical contraction was first introduced in \cite{SW2}, in which it was  shown that the K\"ahler-Ricci flow indeed performs a canonical surgery by contracting smooth divisors $\PP^{n-1}$ with normal bundle $\OO_{\PP^{n-1}}(-1)$ and the flow can be uniquely extended on the blow-down manifold in Gromov-Hausdorff topology. As an application, it implies that the K\"ahler-Ricci flow on a K\"ahler surface with any initial K\"ahler metric will perform a sequence of canonical surgeries contracting holomorphic embedded $S^2$ with self-intersection number $-1$, until in finite time, either the minimal model is obtained or the volume of the manifold tends to zero \cite{SW2, SW4}.  In \cite{SW3}, it is further shown that the K\"ahler-Ricci flow blows down disjoint exceptional divisors $\PP^{n-1}$ with normal bundle $\OO_{\PP^{n-1}}(-k)$ to orbifold points whenever $0<k<n$. The hermitian extension of such surgical contractions is studied in \cite{StT, TW1, TW2} for the  Chern-Ricci flow.  In \cite{LT}, an alternative approach to understand the K\"ahler-Ricci flow through singularities is proposed in the frame work of K\"ahler quotients by transforming the parabolic complex Monge-Amp\`ere equation into an elliptic  $V$-soliton equation.

The high codimensional contraction by the K\"ahler-Ricci flow is much more complicated because whenever the exceptional locus of the contraction has codimension greater than 1, the blow-down variety must be singular and in fact, the singularity cannot be $\mathbb{Q}$-Gorenstein. The first example was constructed in  \cite{SY} for the K\"ahler-Ricci flow with Calabi symmetry on projectivized bundles over a projective space. In such an example, the K\"ahler-Ricci flow contracts an holomorphic embedded $\PP^m$ with normal bundle $\OO_{\PP^m}(-1)^{\oplus (n-m)}$ to a singular  point, if $n/2\leq m \leq n-2$. The blow-down variety is a projective cone over $\PP^m \times \PP^{n-m-1}$ in $\PP^{(m+1)(n-m)-1}$ via the Segre embedding.  Our first main result is to  generalize the above example and apply the K\"ahler-Ricci flow to contract embedded $\PP^m$ with negative normal bundle to a point, in Gromov-Hausdorff topology.

 \begin{theorem} \label{main1} Let $X$ be a projective manifold of $\dim_{\CC} X =n$ and let $g(t)$ be a smooth solution of the K\"ahler-Ricci flow (\ref{krflow}) for $t\in [0, T)$, starting from a smooth K\"ahler metric $g_0$ with $[g_0] \in H^{1,1}(X, \mathbb{R})\cap H^2(X, \mathbb{Q})$.  Suppose that

 \begin{enumerate}

 \item $E$ is a disjoint union of complex submanifolds $\mathbb{P}^{n_i}$ of $X$  with normal bundle
 $$\oplus_{j=1}^{n-n_i} \mathcal{O}_{\mathbb{P}^{n_i}}(-a_{i,j}), ~a_{i,j}\in \mathbb{Z}^+ ,  ~\sum_{j=1}^{n-n_i} a_{i,j} \leq n_i  $$
for $i=1, ..., k$;

\medskip

\item   the limiting K\"ahler class $\lim_{t\rightarrow T} [g(t)] $ is the pullback of an ample class on the projective variety $Y$ from the birational morphism $\pi: X \rightarrow Y$ by contracting $E$.

\end{enumerate}
Then the following holds.
\begin{enumerate}

\item $g(t)$ converges to a smooth K\"ahler metric $g(T)$ on $X\setminus E$ in $C^\infty(X\setminus E)$ as $t\rightarrow T$;

\medskip

\item The metric completion of $(X\setminus E, g(T))$ is a compact metric length space homeomorphic to $Y$. We denote it by $(Y, d_T)$;

\medskip

\item $(X, g(t))$ converges in Gromov-Hausdorff topology to $(Y, d_T)$ as $t\rightarrow T$.

\end{enumerate}

 \end{theorem}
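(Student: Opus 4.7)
The plan is to reduce (\ref{krflow}) to a parabolic complex Monge-Amp\`ere equation and analyze it in two regions: the regular locus $X \setminus E$, where standard parabolic techniques give smooth convergence, and a shrinking tubular neighborhood of $E$, where the structure of the normal bundle drives the collapse. By hypothesis (2), fix a smooth K\"ahler metric $\omega_Y$ on (an embedding of) $Y$, so that $\chi := \pi^*\omega_Y$ is smooth, semipositive, represents $[\omega_0]+T[K_X]$, and is strictly K\"ahler on $X \setminus E$. Setting $\chi_t = \frac{T-t}{T}\omega_0 + \frac{t}{T}\chi$ and writing $\omega(t) = \chi_t + \ddbar\varphi$, the flow is equivalent to
\begin{equation*}
\frac{\partial \varphi}{\partial t} = \log\frac{(\chi_t + \ddbar\varphi)^n}{\Omega}, \qquad \varphi|_{t=0} = 0,
\end{equation*}
for a suitable smooth volume form $\Omega$. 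Standard maximum-principle arguments give $\|\varphi\|_{L^\infty([0,T)\times X)} \le C$ and an upper bound on $\dot\varphi$, while a Kolodziej-type $L^\infty$ estimate (or a direct barrier against $\chi$) yields a matching lower bound on $(\chi_t+\ddbar\varphi)^n$ on compact subsets of $X \setminus E$.

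On $X \setminus E$, the parabolic Schwarz lemma applied to $\pi$ controls $\tr_{\omega(t)}\chi$, producing uniform two-sided bounds $C_K^{-1}\chi \leq \omega(t) \leq C_K \chi$ on any compact $K \subset X \setminus E$. Combined with parabolic $C^{2,\alpha}$ and Calabi-type $C^3$ estimates, bootstrapping gives a limit $\omega(T) = \chi + \ddbar\varphi_T$ in $C^\infty_{loc}(X \setminus E)$ with $\varphi_T$ smooth there, proving item (1).

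The main obstacle is controlling the geometry near each component $\PP^{n_i} \subset E$. I would work in a tubular neighborhood biholomorphic to a neighborhood $U_i$ of the zero section of $N_i = \oplus_{j=1}^{n-n_i}\OO_{\PP^{n_i}}(-a_{i,j})$. Adjunction gives $-K_X|_{\PP^{n_i}} = (n_i + 1 - \sum_j a_{i,j})\,c_1(\OO_{\PP^{n_i}}(1))$, which the hypothesis $\sum_j a_{i,j} \leq n_i$ forces to be strictly positive. Choosing hermitian metrics on the factors $\OO(-a_{i,j})$, a Calabi-type ansatz on $U_i$ produces a family of smooth, radially symmetric K\"ahler metrics $\hat\omega_t$ in the classes $[\omega(t)]|_{U_i}$ whose radial profile is governed by an ODE whose coefficients depend on the $a_{i,j}$; the positivity of $-K_X|_{\PP^{n_i}}$ is exactly what makes the ODE integrable up to the apex $\pi(\PP^{n_i})$ and forces $\mathrm{diam}_{\hat\omega_t}(\PP^{n_i}) \to 0$ as $t \to T$ while the distance from $\PP^{n_i}$ to $\partial U_i$ stays bounded. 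Comparing $\omega(t)$ with $\hat\omega_t$ via the maximum principle applied to $\tr_{\hat\omega_t}\omega(t)$ and $\tr_{\omega(t)}\hat\omega_t$ (using the equivalence of $\omega(t)$ and $\chi$ on $\partial U_i$ from the previous step as boundary data) transfers these bounds to $\omega(t)$, yielding both a uniform diameter bound for $(X, g(t))$ and $\mathrm{diam}_{g(t)}(\PP^{n_i}) \to 0$.

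Items (2) and (3) then follow formally: since $\pi$ is a biholomorphism off $E$ and each component of $E$ collapses to a point, the metric completion of $(X\setminus E, g(T))$ is homeomorphic to $Y$ with an induced length metric $d_T$, and the combination of smooth convergence of $g(t)$ on $X \setminus E$ with diameter shrinkage of each $\PP^{n_i}$ upgrades to Gromov-Hausdorff convergence $(X, g(t)) \to (Y, d_T)$ via a standard $\epsilon$-net argument.
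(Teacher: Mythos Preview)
Your reduction to a Monge--Amp\`ere equation and the treatment on $X\setminus E$ are fine and match the paper. The genuine gap is in the local analysis near $E$. You propose to compare $\omega(t)$ with a $t$-dependent Calabi-ansatz model $\hat\omega_t$ by applying the maximum principle to $\tr_{\hat\omega_t}\omega(t)$. To run the parabolic Chern--Lu inequality for $\log\tr_{\hat\omega_t}\omega$, you need a \emph{uniform} lower bound on the holomorphic bisectional curvature of $\hat\omega_t$. But any Calabi-ansatz metric in the class $[\omega(t)]|_{U_i}$ has its ``radial parameter'' of order $T-t$, and its bisectional curvature near the zero section blows up like $(T-t)^{-1}$ as $t\to T$. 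So the Schwarz constant is not uniform and you cannot conclude $\omega(t)\le C\hat\omega_t$ with $C$ independent of $t$. The paper explicitly points out that the direct second-order estimate and the codimension-one tricks of \cite{SW2,SW3} fail here for exactly this reason: the exceptional locus has codimension $\ge 2$ and the model curvature is uncontrolled.

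The paper's substitute is the main new idea and is missing from your outline. One fixes a \emph{single} semipositive reference form $\hat\omega=\ddbar e^\rho$ on the normal bundle and introduces a holomorphic foliation $\{L^-_{[\xi]}\}_{[\xi]\in\PP^l}$ by copies of $\OO_{\PP^m}(-1)$ on which $\hat\omega|_{L^-_{[\xi]}}$ is \emph{flat}. Because the leafwise reference is flat, the curvature term in the Chern--Lu computation for $H=\tr_{\hat\omega|_{L^-_{[\xi]}}}(\omega|_{L^-_{[\xi]}})$ vanishes, and one obtains the clean differential inequality $\Box\log H\le \tr_\omega(\theta_+)$. Together with the barrier $\rho$ (which satisfies $\Box\rho=-\tr_\omega(\theta_-+\theta_+)$) this yields $e^\rho H\le C$ by the maximum principle, now with boundary data on $\partial\Omega$ and $H\to -\infty$ along $E^-$. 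A further argument with a holomorphic vector field $V_-$ tangent to the leaves gives the normal-direction bound $|W_-|^2_\omega\le Ce^{-\rho/2}$, and a separate computation (the paper's Corollary~\ref{corestE}) transfers the leafwise estimate to $\omega(t)|_{E^-}\le C\theta_-$. Only then, using the cohomological fact $\int_\ell\omega(t)\le C(T-t)$ on lines $\ell\subset E^-$, does one get $\mathrm{diam}_{g(t)}(E^-)\to 0$. Finally, the Gromov--Hausdorff step is not quite ``formal'': the paper needs a quantitative annular estimate showing that any two points in $\{r\le e^{\rho/2}\le 2r\}$ can be joined by a path of length $\le Cr^{1/6}$ inside a slightly larger annulus, uniformly for $T-t\le r^{1/2}$; this is what pins down both the metric completion and the GH limit. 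Your last paragraph presupposes these estimates without supplying the mechanism that produces them.
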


We remark that the blown-down variety $Y$ in Theorem \ref{main1} must be singular if $E$, the exceptional locus of the contraction,  has a component of codimension greater than $1$. The condition (2) in the assumption of Theorem \ref{main1} can be easily met, for example, one can simply choose the initial K\"ahler class to be $\OO_Y(1) - \epsilon K_X$ for sufficiently small $\epsilon>0$. In particular, the conclusion in Theorem \ref{main1} only depends on the initial K\"ahler class $[g_0]$ and does not depend on the choice of the smooth K\"ahler metric in $[g_0]$.

The Ricci flow is a nonlinear parabolic equation on a Riemannian manifold. One property for the heat equation is to smooth out the initial data. We should view the solution of the Ricci flow as a pair $(X_t, g(t))$, because the evolving Riemannian metrics would possibly deform the underlying variety by creating and resolving singularities. When the divisorial contractions for a smooth divisor $\PP^{n-1}$ with normal bundle $\OO_{\PP^{n-1}}(-1)$ take place, it is shown in \cite{SW2}, the flow converges to a singular K\"ahler metric on the smooth blow-down manifold with isolated metric singularities, however, the flow can be uniquely extended beyond the singular time by smoothing out the singular metric instantly.   In such a case, the K\"ahler-Ricci flow resolves the metric singularity without any surgery on the underlying manifold. When a high codimensional contraction takes place, the singularities of the blow-down variety are not $\mathbb{Q}$-Gorenstein and so the canonical divisor is not a Cartier $\mathbb{Q}$-divisor. In this case, one cannot properly define  Ricci curvature and therefore the K\"ahler-Ricci flow can not be extended  through singularity without replacing the underlying variety by a unique model with minimal resolution of singularities.
Various resolutions of singularities are constructed in \cite{SY} using Ricci curvature or Ricci flow for a family of projective varieties with large symmetry. Our next result is to show that the K\"ahler-Ricci flow indeed resolves singularities in Gromov-Hausdorff topology uniquely.

\begin{theorem} \label{main2}  Let $Y$ be a projective variety of $\dim_{\CC} Y=n$ with isolated singularities $p_1, ..., p_k$. Suppose that

\begin{enumerate}

\item $\pi: X \rightarrow Y$ is a resolution of singularities along $p_1, ..., p_k$ with $$\pi^{-1} (p_i) = \mathbb{P}^{n_i}, ~1\leq n_i \leq n-2 ;$$

\medskip

\item the normal bundle of $\mathbb{P}^{n_i}$ in $X$ is $$\oplus_{j=1}^{n-n_i} \mathcal{O}_{\mathbb{P}^{n_i}}(-a_{i,j}), ~a_{i,j}\in \mathbb{Z}^+ ,  ~\sum_{j=1}^{n-n_i} a_{i,j} > n_i . $$

\end{enumerate}
Let $g_0$ be a smooth K\"ahler metric on $Y$, i.e., $g_0$ is locally the restriction of a smooth K\"ahler metric for a local embedding of $Y$ in some $\CC^N$. Then there exists a unique smooth solution $g(t)$ of the K\"ahler-Ricci flow  on $X$ for $t\in (0, T)$ for some $T\in (0,  \infty]$ satisfying

\begin{enumerate}

\item $g(t)$ converges to  $g_0$ on $X\setminus E$ in $C^\infty(X\setminus E)$, where $E=\cup_{i=1}^k \mathbb{P}^{n_i}$;

\medskip
\item $(X, g(t))$ converges in Gromov-Hausdorff topology to $(Y, g_0)$ as $t\rightarrow 0$.

\end{enumerate}

\end{theorem}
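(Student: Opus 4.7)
The plan is to reformulate the K\"ahler-Ricci flow on the resolution $X$ as a parabolic complex Monge--Amp\`ere equation with a degenerate initial reference form pulled back from $Y$. I would take $\hchi := \pi^*\omega_0$, the pullback of the K\"ahler form $\omega_0$ on $Y$, which is smooth and semipositive on $X$, strictly positive on $X\setminus E$, and degenerate along $E=\bigcup_i \PP^{n_i}$. Fixing a smooth volume form $\Omega$ on $X$, set $\theta := -\ddbar \log\Omega \in c_1(K_X)$ and $\chi_t := \hchi + t\theta$. The integer condition $\sum_j a_{i,j}>n_i$ implies that $K_X$ is relatively nef and big over $Y$, so $[\chi_t]=\pi^*[\omega_0]+t[K_X]$ is big and nef on $X$ for every $t>0$ and becomes K\"ahler after an arbitrarily small perturbation; one then seeks $\varphi\in C^\infty(X\times(0,T))$ with
\begin{equation*}
\ddt{\varphi}=\log\frac{(\chi_t+\ddbar\varphi)^n}{\Omega},\qquad \omega(t):=\chi_t+\ddbar\varphi>0,
\end{equation*}
and $\varphi(\cdot,t)\to 0$ in $C^\infty_{\mathrm{loc}}(X\setminus E)$ as $t\to 0^+$, setting $g(t)$ equal to the K\"ahler metric of $\omega(t)$.

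For existence of $\varphi$ I would use approximation: run the K\"ahler-Ricci flow on $X$ from the genuine K\"ahler initial data $\hchi+\epsilon\omega_X$, where $\omega_X$ is a fixed K\"ahler form on $X$ and $\epsilon>0$ is small, producing smooth potentials $\varphi_\epsilon$, and take $\epsilon\to 0$. A uniform upper bound on $\varphi_\epsilon$ follows from the maximum principle, while a uniform lower bound comes from a Kolodziej-type $L^\infty$ estimate for the degenerate complex Monge--Amp\`ere equation, using that $\Omega/\hchi^n$ is $L^p$-integrable on $X$ for some $p>1$. Local parabolic Schauder estimates on compact subsets of $X\setminus E$ then yield uniform $C^k$-bounds that pass to the limit, giving smoothness of $\varphi$ on $(X\setminus E)\times(0,T)$; an Aubin--Yau second-order estimate together with the fact that $[\chi_t]$ is K\"ahler for $t>0$ promotes this to full smoothness on $X\times(0,T)$.

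Once $\varphi$ is in hand, the $C^\infty_{\mathrm{loc}}(X\setminus E)$ convergence $g(t)\to g_0$ as $t\to 0^+$ is immediate, since $\omega(t)-\hchi=t\theta+\ddbar\varphi\to 0$ in $C^\infty$ on compacta of $X\setminus E$. For the Gromov-Hausdorff convergence $(X,g(t))\to(Y,g_0)$ I would further establish that every exceptional fiber $\pi^{-1}(p_i)=\PP^{n_i}$ collapses to the singular point $p_i$ in $g(t)$ as $t\to 0^+$: the cohomological identity $\int_X\omega(t)^n=\int_Y\omega_0^n+O(t)$ rules out volume concentration along $E$, while a diameter estimate for $\pi^{-1}(p_i)$ in $g(t)$, in the spirit of the high-codimensional contraction analyses of Song--Weinkove and Song--Yuan specialized to the contraction data $(\PP^{n_i},\oplus\OO_{\PP^{n_i}}(-a_{i,j}))$ with $\sum_j a_{i,j}>n_i$, produces the required collapse. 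Uniqueness would then follow by comparing two such solutions via the maximum principle applied to $\psi=\varphi-\tilde\varphi$, using the boundary behavior at $t=0$ on $X\setminus E$ together with a pluripotential removable-singularity argument to propagate the vanishing of $\psi$ across $E$.

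The main obstacle will be this last geometric step inside a shrinking neighborhood of $E$: away from $E$ the analysis is standard parabolic regularity, but obtaining the quantitative collapse of $\pi^{-1}(p_i)$ to $p_i$ in $g(t)$ as $t\to 0^+$ requires exploiting the precise geometry of $(\PP^{n_i}, \oplus\OO_{\PP^{n_i}}(-a_{i,j}))$, and it is here that the numerical condition $\sum_j a_{i,j}>n_i$ enters decisively, paralleling its role in Theorem~\ref{main1}.
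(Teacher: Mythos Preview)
Your overall architecture---pull back $\omega_0$ to a degenerate semipositive form on $X$, approximate by $\hat\chi+\epsilon\omega_X$, extract uniform $L^\infty$ and local $C^k$ bounds away from $E$ via \cite{SoT3}, and pass to the limit---is exactly what the paper does (Proposition~\ref{locres}). The $C^\infty_{\mathrm{loc}}(X\setminus E)$ convergence to $g_0$ and uniqueness also follow as you indicate. So for existence and conclusion~(1) your plan is on target.

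The gap is in your Gromov--Hausdorff step, and you have correctly located it. Your two ingredients there---the cohomological volume identity and an appeal to ``Song--Weinkove and Song--Yuan''---do not suffice. The volume identity rules out volume concentration but gives no diameter control: a priori $\omega(t)$ could be wildly anisotropic near $E$, very large in some directions and compensatingly small in others. The Song--Weinkove argument \cite{SW2,SW3} is fundamentally codimension-one (it relies on the exceptional locus being a divisor so that the maximum principle can be applied globally to $\log\tr_{\hat\omega}\omega$ with a barrier), and Song--Yuan \cite{SY} assumes Calabi symmetry, which reduces everything to an ODE. Neither applies here.

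What the paper does instead is a \emph{holomorphic foliation} argument. Near each $\PP^{n_i}$ one works in the normal bundle model $\EE^+\simeq\oplus_j\OO_{\PP^{n_i}}(-a_{i,j})$ and its flip partner $\EE^-$, and foliates a punctured neighborhood of $E$ by the leaves $L^-_{[\xi]}$ and $L^+_{[\eta]}$ of the two projections in the flip diagram. The restriction $\hat\omega|_{L^-_{[\xi]}}$ is \emph{flat}, so the quantity $H=\tr_{\hat\omega|_{L^-_{[\xi]}}}(\omega|_{L^-_{[\xi]}})$ satisfies a clean differential inequality $\Box\log H\le\tr_\omega(\theta_+)$ (Proposition~\ref{H2}); combined with $\Box\rho=-\tr_\omega(\theta_-+\theta_+)$ this yields $e^\rho H\le C$ by the maximum principle, uniformly in the approximation parameter $\epsilon$ (Proposition~\ref{keyest3}). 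A companion estimate $|W_\pm|^2_\omega\le Ce^{-\rho/2}$ for the radial vector fields controls the metric in the remaining normal direction. Only then can one run the path-length argument of \cite{SW3} \emph{leafwise} (Proposition~\ref{geomest}) to show that annular shells around $E$ have small $g(t)$-diameter as $t\to 0^+$, which is what actually drives the Gromov--Hausdorff convergence. The numerical condition $\sum_j a_{i,j}>n_i$ enters by ensuring $K_X$ is $\phi_+$-ample (so $[\omega_0]+t[K_X]$ is K\"ahler for small $t>0$) and by fixing the sign in the volume comparison (Lemma~\ref{volcompa}); it does not by itself produce the diameter collapse.

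In short: your outline is correct up to the point you flagged, but the missing idea is not a refinement of existing diameter estimates---it is the reduction to codimension one via the holomorphic foliation $L^\pm$, which is the main new input of the paper.
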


Both Theorem \ref{main1} and Theorem \ref{main2} can be generalized to a more general setting of Morelli-Wlodarczyk birational cobordisms for Mumford's quotients \cite{Mo, W}. We consider the $\CC^*$ action on $\CC^{m+l+2}$ defined by
$$ \CC^*: (\lambda, (x_0, ..., x_m; y_0, ..., y_l)) \rightarrow ( \lambda^{-a_0} x_0, \lambda^{-a_1} x_1, ..., \lambda^{-a_m} x_m; \lambda^{b_0} y_0, ..., \lambda^{b_l} y_l ), $$
where $(a_0, ..., a_m, b_0, ..., b_l) \in \mathbb{Z}^{+}$ and $g.c.d(a_0, ..., a_m, b_0, ...,  b_l) =1. $
We define
$$\EE^-=\left( \CC^{m+l+2}\setminus \{ x=0\} \right) / \CC^*, ~ \EE^+ =\left( \CC^{m+l+2}\setminus \{y=0\} \right) / \CC^*. $$
When $a_0=a_1=...=a_m=b_0=...=b_l=1$,
if $l=0$,
$$\EE^- = \OO_{\PP^m}(-1), ~\EE^+ =\CC^{m+1}, $$
and if $l\geq 1$,
$$\EE^- = \OO_{\PP^m}(-1)^{\oplus (l+1)}, ~\EE^+ = \OO_{\PP^l}(-1)^{\oplus (m+1)}. $$
In section 5, we prove much more general results (Theorem \ref{gen1} and Theorem \ref{gen2}), showing that the conclusions in Theorem \ref{main1} and Theorem \ref{main2} both hold whenever locally, the neighborhood of the algebraic exceptional locus is isomorphic to a flip induced by  Mumford's quotients. We also remark that our techniques can be easily modified to obtain the metric flops and metric geometric transitions for Calabi-Yau orbifolds in the local setting of Mumford's quotients as in \cite{RZ1, So2, RZ2}.

More interestingly, we could like to strengthen Theorem \ref{main2} for more singular initial metrics so that one can combine Theorem \ref{main1} and Theorem \ref{main2} to produce global metric flips by the K\"ahler-Ricci flow.  Let us recall the definition for a flip (c.f. \cite{KMM, KM}).

\begin{definition} Let $\phi_-:X \rightarrow Y $ be a birational morphism with exceptional locus of codimension greater than $1$ such that $-K_X$ is $\mathbb{Q}$-Cartier and $\phi_-$-ample. Then a variety $X^+$ together with a birational morphism $\phi_+: X^+\rightarrow Y$ is called a flip of $X^-$ if $K_{X^+}$ is $\mathbb{Q}$-Cartier and $\phi_+$-ample as in the following diagram
$$\begin{diagram}
\node{X^-} \arrow{se,b,}{\phi_-}  \arrow[2]{e,t,..}{ }     \node[2]{X^+} \arrow{sw,r}{\phi_+} \\
\node[2]{Y}
\end{diagram}$$

\end{definition}

The exceptional locus of $\phi_-$ and $\phi_+$ are where $\phi_-$ and $\phi_+$ are not isomorphisms. Both $\phi_-$ and $\phi_+$ contract subvarieties of codimension greater than $1$, hence $Y$ always admits singularities, in particular, $K_Y$ is not a $\mathbb{Q}$-Cartier divisor. We hope to relate flipping surgery in birational geometry to metric surgery by the Ricci flow in Riemannian geometry. First, we define the following surgical metric flip by the K\"ahler-Ricci flow similar to the surgical metric divisorial contraction defined    in \cite{SW2}.

\begin{definition} \label{defn}

Suppose $X^-$ and its flip $X^+$ are smooth projective varieties. The exceptional locus of $\phi_-$ and $\phi_+$, $E^-$  and $E^+$, are subvarieties of $X^-$ and $X^+$ of codimension $\geq 2$.  The K\"ahler-Ricci flow  is said to perform a  {\bf  surgical metric flip} at $t=T$ as  in the following the diagram
$$\begin{diagram}
\node{(X^-, g_-(t)) } \arrow{se,b,}{t\rightarrow T^-}      \node[2]{ (X^+, g_+(t))} \arrow{sw,r}{T^+\leftarrow t} \\
\node[2]{(Y, d_T)}
\end{diagram}$$
if the following holds.
\begin{enumerate}

\item  There exists a smooth solution $g(t)$ on $X^-$ for $t\in (T_{-}, T)$  such that

\begin{itemize}

\medskip

\item the metrics $g(t)$ converge to a smooth K\"ahler metric $g(T)$ on $X^- \setminus E^-$ in $C^\infty(X^-\setminus E^-)$ as $t\rightarrow T^-$;

\medskip

\item the metric completion of $(X^-\setminus E^-, g(T))$ is a compact metric length space homeomorphic to the projective variety $Y$. We denote it by $(Y, d_T)$;

\medskip

\item $(X^-, g(t))$ converges to $(Y, d_T)$ in Gromov-Hausdorff topology as $t\rightarrow T^-$.

\end{itemize}

\medskip

\item There exists a smooth solution $g(t)$ on $X^+$ for $t\in (T, T_+)$ such that

\begin{itemize}

\medskip

\item the metrics $g(t)$ converge to $g(T)$ on $X^+ \setminus E^+$ in $C^\infty(X^-\setminus E^+)$ as $t\rightarrow T^+$;

\medskip

\item $(X^+, g(t))$ converges to $(Y, d_T)$ in Gromov-Hausdorff topology as $t\rightarrow T^+$.

\end{itemize}

\end{enumerate}
\end{definition}

We would like to remark that Definition \ref{defn} can be modified when $X^-$ and $X^+$ have orbifold and more generally log terminal singularities.

Unfortunately, we are unable to show at the moment that the K\"ahler-Ricci flow perform a surgical metric flip in the assumption of Theorem \ref{main1} or more general assumptions  in Theorem \ref{gen1} because Theorem \ref{main2} requires the initial metric to be smooth.  However, we can apply the estimates in the proof of Theorem \ref{main1} and Theorem \ref{main2} to a family of projective manifolds with large symmetry considered in \cite{SY} and show that the K\"ahler-Ricci flow indeed performs surgical metric flips if the initial K\"ahler metric is appropriately chosen.  To be more precise, we consider a projective toric manifold
$$X_{m,l}= \mathbb{P}(\mathcal{O}_{\mathbb{P}^m} \oplus \mathcal{O}_{\mathbb{P}^m}(-1)^{\oplus (l+1)}) $$ for $m\geq 0$ and $l\geq 1$ and let $n=\dim_{\CC} X_{m, l} =m+l+1$. $X_{m,0}$ is exactly $\mathbb{P}^{m+1}$ blown up at one point. $X_{m,l}$ does not admit a definite or vanishing first Chern class when $m\leq l$ and $X_{m,l}$ is Fano if and only if $m>l$. $X_{m,l}$ has a special subvariety $E_{m, l}$ of codimension $m+1$, defined as the zero section of the projection $\mathbb{P}(\mathcal{O}_{\mathbb{P}^m} \oplus \mathcal{O}_{\mathbb{P}^m}(-1)^{\oplus (l+1)})\rightarrow \mathbb{P}^m$.  There exists a morphism
\begin{equation}
\Phi_{m,l}: X_{m,l} \rightarrow \mathbb{P}^{(m+1)(l+1)-1}
\end{equation} which is an immersion on $X_{m, l} \setminus E_{m, l}$ and contracts $E_{m, l}$ to a point. $Y_{m,l}$, the image of $X_{m,l}$ via $\phi_{m,l}$ is smooth if and only if $l=0$ and then $Y_{m,0}$ is simply $\mathbb{P}^{m+1}$. When $l\geq 1$, $Y_{m,l}$ has an isolated cone singularity  where $E_{m, l}$ is contracted. In particular, $Y_{m, l}=Y_{l,m}$ is the projective cone in $\mathbb{P}^{(m+1)(l+1)-1}$ over $\mathbb{P}^{m}\times \mathbb{P}^l $ via the Segre embedding for $l\geq 1$. It is also well-known that $X_{m,l}$ and $X_{l,m}$ are birationally equivalent for $l\geq 1$, and differ by a flip for $m\neq l$ and a flop when $m=l$. We can define a family of $U(l+1)$-invariant K\"ahler metrics on $X_{m, l}$ satisfying the so-called Calabi symmetry \cite{C1} (see section 2.1).

\begin{theorem} \label{main3} Let $g(t)$ be the  solution of the K\"ahler-Ricci flow (\ref{krflow}) on $X_{m,l}$ with $0<l <m$ and an initial K\"ahler metric $g_0$ satisfying the Calabi symmetry. The flow must develop singularity at $t=T$ for some $T<\infty$. Let $V(t)=\int_{X_{m, l}} dVol_{g(t)}$ be the volume of $(X_{m,l}, g(t))$.

\medskip

\begin{enumerate}

\item If $\limsup_{t\rightarrow T}\frac{ V(t) }{ (T-t)^{m-l}} = \infty$, then the Ricci flow perform a surgical metric flip at $t=T$ as in Definition \ref{defn}.

\medskip

\item If $\limsup_{t\rightarrow T}\frac{ V(t) }{ (T-t)^{m-l}}  \in (0, \infty)$, then the Ricci flow converges to $\PP^m$ coupled with a multiple of the Fubini-Study metric on $\PP^m$ in Gromov-Hausdorff topology as $t\rightarrow T$.

\medskip

\item If $\limsup_{t\rightarrow T}\frac{ V(t) }{ (T-t)^{m-l}}  =0$, then the Ricci flow becomes extinct at $t= T$.

\end{enumerate}

\end{theorem}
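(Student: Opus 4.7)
My plan is to reduce the problem to a one-dimensional parabolic Monge-Amp\`ere equation via the Calabi symmetry, then invoke the generalizations of Theorem \ref{main1} and Theorem \ref{main2} announced in the text (Theorem \ref{gen1} and Theorem \ref{gen2}) to identify the surgical structure at $t=T$ in a case-by-case analysis keyed to the behavior of $V(t)/(T-t)^{m-l}$.

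First, under Calabi symmetry on $X_{m,l}=\PP(\OO_{\PP^m}\oplus \OO_{\PP^m}(-1)^{\oplus(l+1)})$ the K\"ahler form $\omega(t)$ is encoded by a single function $\phi(\rho,t)$ of the fiber radial coordinate $\rho$, and the K\"ahler-Ricci flow reduces to a scalar parabolic PDE on an interval. The K\"ahler class $[\omega(t)]=[\omega_0]-tc_1(K_{X_{m,l}})$ is parameterized by two quantities measuring the sizes of the zero section $E_{m,l}=\PP^m$ and the infinity section (a $\PP^l$-bundle over $\PP^m$), depending linearly on $t$ and degenerating at $t=T$. The three possible limiting behaviors of the class correspond to (a) only the zero section $E_{m,l}$ contracting while $[\omega(T)]$ descends to an ample class on $Y_{m,l}$ under the small contraction $\pi^-$, (b) the entire $\PP^{l+1}$ fiber collapsing onto the $\PP^m$ base, or (c) total collapse of the class. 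A direct computation in the Calabi ansatz expresses $V(t)$ as an explicit polynomial in the two class parameters, from which one checks that (a), (b), (c) correspond respectively to $\limsup V(t)/(T-t)^{m-l}$ being infinity, finite and positive, and zero.

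In case (1), the ``before'' half of the surgical metric flip follows from Theorem \ref{gen1} applied to $\pi^-$, since the normal bundle of $E_{m,l}$ is $\OO_{\PP^m}(-1)^{\oplus(l+1)}$ with $l+1\leq m$ when $0<l<m$; this gives $(X_{m,l},g(t))\to(Y_{m,l},d_T)$ in Gromov-Hausdorff topology as $t\to T^-$. For the ``after'' half on the flip $X_{l,m}$, note that $Y_{l,m}=Y_{m,l}$ as projective varieties, and under the flip reparameterization $\rho\mapsto -\rho$ in the Calabi variable, the limit potential $\phi(\cdot,T)$ descends to a smooth K\"ahler metric on $Y_{m,l}$ away from the cone point which extends as the restriction of a smooth K\"ahler metric under a local embedding, as required by the hypotheses of Theorem \ref{main2}. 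Since the exceptional $\PP^l\subset X_{l,m}$ has normal bundle $\OO_{\PP^l}(-1)^{\oplus(m+1)}$ with $m+1>l$, Theorem \ref{gen2} then produces a unique smooth solution $g_+(t)$ on $X_{l,m}$ for $t\in(T,T_+)$ with $(X_{l,m},g_+(t))\to(Y_{m,l},d_T)$ in Gromov-Hausdorff as $t\to T^+$, matching Definition \ref{defn}. Cases (2) and (3) follow directly from asymptotic analysis of the reduced one-dimensional PDE: in case (2) the $\PP^{l+1}$-fibers shrink over the $\PP^m$ base while the horizontal metric tends to a multiple of the Fubini-Study metric on $\PP^m$, and in case (3) the total volume tends to zero at $T$.

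The main obstacle is verifying in case (1) that the Gromov-Hausdorff limit $d_T$ arising from the $X_{m,l}$ side is genuinely smooth K\"ahler initial data for the $X_{l,m}$ side in the precise sense required by Theorem \ref{main2}, and that the two halves of the flow glue coherently into the surgical metric flip. Under the Calabi ansatz this reduces to matching the asymptotic boundary behavior of $\phi(\rho,T)$ at the two ends of the fiber interval across the flip, together with upgrading the convergence to $C^\infty$ away from the exceptional loci as $t\to T^\pm$ using the a priori estimates already developed in the proofs of Theorem \ref{main1} and Theorem \ref{main2}. Once this matching is in place, the rest is bookkeeping: the Calabi-invariant one-variable potentials on both sides of the flip are determined by the same limit data on $Y_{m,l}$, and the trichotomy in $\limsup V(t)/(T-t)^{m-l}$ distinguishes the three geometric scenarios.
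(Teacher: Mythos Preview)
Your reduction via the Calabi ansatz and the identification of the three cases with the trichotomy on the limiting class is correct, and cases (2) and (3) are indeed handled in \cite{SY} as you suggest. The gap is in case (1), specifically the ``after'' half of the flip.

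You propose to invoke Theorem~\ref{main2}/\ref{gen2} on $X_{l,m}$ with initial data $g(T)$, asserting that ``the limit potential $\phi(\cdot,T)$ descends to a smooth K\"ahler metric on $Y_{m,l}$ \dots\ which extends as the restriction of a smooth K\"ahler metric under a local embedding.'' This is exactly what cannot be verified, and the paper says so explicitly just before the statement of Theorem~\ref{main3}: Theorem~\ref{main2} requires the initial metric to be smooth in the ambient sense, and the limiting current $\omega(T)$ coming from the contraction side is only known to have bounded potential and $L^p$ volume form on $X_{l,m}$. The Calabi symmetry does not, by itself, force $\omega(T)$ to extend smoothly across the cone point in an embedding; you would need a priori control on all derivatives of the potential $u(\rho,T)$ as $\rho\to -\infty$ on the $X_{l,m}$ side, and none of the estimates proved for $t<T$ give this.

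The paper's route bypasses Theorem~\ref{main2} entirely for the ``after'' half. It first continues the flow weakly on $X_{l,m}$ from the singular data $\omega(T)$ using the general $L^p$ theory of \cite{SoT3}, observing that the solution inherits Calabi symmetry. The substance is then two direct ODE-type estimates on the new Calabi potential for $t\in(T,T']$: Lemma~\ref{u''0} bounds $u'$ by $Ce^{(l+1)\rho/(m+l+1)}$ near $\rho=-\infty$, obtained from the volume propagation estimate of Proposition~\ref{volatsin}, and Proposition~\ref{u''2} proves $u''\leq Cu'$ after the singular time by an approximation argument (perturbing the initial data to $\omega_{T,\epsilon}$ and running the maximum principle on $\log(u''_\epsilon/u'_\epsilon)$ uniformly in $\epsilon$). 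These two combine into the metric upper bound $\omega\leq C(\theta_{l,m}+\hat\omega+e^{-m\rho/(m+l+1)}\hat\omega)$, which is what actually drives the Gromov--Hausdorff convergence to $(Y,d_T)$ as $t\to T^+$. Your outline omits this entire mechanism; the ``matching of asymptotic boundary behavior'' you allude to is not a substitute for it.
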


We remark that (2) and (3) in Theorem \ref{main3} were proved in \cite{SY}.  The K\"ahler-Ricci flow on $X_{m,l}$ will eventually converge to a point in finite time for any initial K\"ahler metric satisfying the Calabi symmetry, after a flip or collapsing to $\PP^m$. The Gromov-Hausdorff surgery by the K\"ahler-Ricci flow reflects the deformation of the global structure in both algebraic and differential geometry. However, the Gromov-Hausdorff topology is rather weak and we would like to understand the asymptotic microlocal structure of the surgery. This is usually studied by blowing up the solution near the singularity by parabolic dilation.    In the case of $\PP^n$ blow-up at one point, if the initial K\"ahler metric is invariant under $U(n)$ action and the total volume does not tend to $0$, it is shown in \cite{So2} that the K\"ahler-Ricci flow must develop Type-I singularity and the blow-up limit is a complete shrinking gradient Ricci soliton on a K\"ahler manifold diffeomorphic to $\CC^n$ blow-up at one point.  This suggests that the surgery performed by the Ricci flow is indeed the most optimal procedure to deform the underlying variety by blowing down a family of self-similar shrinking Ricci solitons.  More generally, the K\"ahler-Ricci flow with Calabi symmetry must develop Type-I singularity for any initial K\"ahler metric with Calabi symmetry \cite{Koi, Cao2, FF}. In fact, we conjecture that all smooth solutions of the K\"ahler-Ricci flow on K\"ahler surfaces can only develop Type-I singularities if it becomes singular in finite time. This leads to a more general speculation that the smooth K\"ahler-Ricci flow can only develop Type-I singularities for generic initial K\"ahler class.

The local models for contracting an exceptional divisor $\PP^{n-1}$ with normal bundle $\OO_{\PP^{n-1}}(-k)$ for $0<k<n$ is constructed in \cite{FIK} by solving nonlinear ODEs. Furthermore, it is shown in \cite{FIK} that there exists a  family of complete shrinking gradient K\"ahler-Ricci soliton $g(t)$ on the total space of $\OO_{\PP^{n-1}}(-k)$ for $t<0$ and $(\OO_{\PP^{n-1}}(-k), p, g(t))$ converges in pointed Gromov-Hausdorff topology for any $p$ in the exceptional divisor,  to a cone metric on $\CC^n/\mathbb{Z}_k$ and then can be uniquely extended in pointed Gromov-Hausdorff topology by a  family of complete expanding gradient K\"ahler-Ricci solitons $g(t)$ on $\CC^n/\mathbb{Z}_k$ for $t>0$. The above phenomena can also be generalized to the local flip model to understand the microlocal structure of a surgical metric flip by Ricci flow. The following proposition is essentially due to Li \cite{Li} by generalizing the results of \cite{FIK}, showing how the K\"ahler-Ricci flow should flow through flips locally.

\begin{proposition} \label{main4} Let $\EE^-= \OO_{\PP^m}(-1)^{\oplus(l+1)}$ and $\EE^+=\OO_{\PP^l}(-1)^{\oplus(m+1)}$ for $1\leq l < m$. Then $\EE^+$ is the flip of $\EE^-$ by  $$ \phi_-: \EE^- \rightarrow \hat \EE \leftarrow \EE^+: \phi_+  $$ with $\hat \EE$ being the affine cone over $\PP^m\times \PP^l$ in $\CC^{(m+1)(l+1)}$.  Then there exists a unique triple
$$\left\{ (\EE^-, g_-(t)), ~t\in (-\infty, 0);~ (\hat\EE, g_{\hat\EE} );~ (\EE^+, g_+(t)), ~t\in (0, \infty) \right\} $$
such that

\begin{enumerate}

\item $(\EE^-, g_-(t))$ is a smooth solution of the K\"ahler-Ricci flow induced by a complete shrinking gradient K\"ahler-Ricci soliton with Calabi symmetry;

\medskip

\item $(\EE^+, g_+(t))$ is a smooth solution of the K\"ahler-Ricci flow induced by a complete expanding gradient K\"ahler-Ricci soliton with Calabi symmetry;

\medskip

\item $(\hat\EE, g_{\hat\EE})$ is a sub-cone in $\CC^{(m+1)(l+1)}$ equipped with the cone metric $\ddbar (|z|^{2\lambda})$ for a unique $\lambda=\lambda(m, l)\in (0, 1)$;

\medskip

\item The triple is a smooth solution of the K\"ahler-Ricci flow on $$\EE^-\times (-\infty, 0) \cup \hat\EE \times\{ 0\} \cup \EE^+\times (0, \infty).$$ It performs the surgical metric flip at $t=0$ in pointed Gromov-Hausdorff topology by choosing a base point $p_-\in E^-$ for $\EE^-$ and $ p_+\in E^+$ for $\EE^+$, where $E^-$ and $E^+$ are the exceptional locus of $\phi_-$ and $\phi_+$.

\end{enumerate}

\end{proposition}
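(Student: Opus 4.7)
My plan is to reduce the problem to a pair of self-similar ODEs under the Calabi symmetry and then match their asymptotics with a common K\"ahler cone metric on $\hat\EE$. The total spaces $\EE^\pm$ carry a natural $U(m+1)\times U(l+1)$-action, and following Calabi \cite{C1} any invariant K\"ahler form can be written as $\omega = \ddbar u$ for a smooth strictly convex function $u$ of a single logarithmic radial variable on the fibers. Under this ansatz, the gradient shrinking (resp. expanding) K\"ahler-Ricci soliton equation, with soliton vector field generating the scaling $\CC^*$-action on the fibers, becomes a nonlinear second-order ODE for $u$, and closing up smoothly at the zero section $\PP^m \subset \EE^-$ (resp. $\PP^l \subset \EE^+$) imposes a precise boundary condition at one endpoint of the defining interval.

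Following \cite{FIK} and its generalization by \cite{Li}, the ODE analysis produces a smooth complete gradient shrinking soliton on $\EE^-$ and a smooth complete gradient expanding soliton on $\EE^+$, each asymptotic at infinity to a K\"ahler cone of the form $\ddbar r^{2\lambda}$ for some $\lambda \in (0,1)$, with $\lambda = \lambda(m,l)$ selected by a matching condition that prescribes the soliton decay rate. I would verify that this asymptotic cone coincides with the affine cone $\hat\EE$ over the Segre variety $\PP^m \times \PP^l \subset \PP^{(m+1)(l+1)-1}$ equipped with $g_{\hat\EE} = \ddbar |z|^{2\lambda}$, by checking that the Calabi-symmetric cone in the radial variable agrees, up to pull-back, with the restriction of $|z|^{2\lambda}$ to $\hat\EE \subset \CC^{(m+1)(l+1)}$. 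Strict plurisubharmonicity of $|z|^{2\lambda}$ on $\CC^{(m+1)(l+1)}\setminus\{0\}$ for $\lambda \in (0,1)$ then ensures $g_{\hat\EE}$ is a genuine K\"ahler cone on the smooth locus of $\hat\EE$. The evolving flows $g_\pm(t)$ are obtained from the static solitons via the canonical self-similar rescaling by the soliton vector field, producing smooth K\"ahler-Ricci flow solutions on $\EE^- \times (-\infty,0)$ and $\EE^+ \times (0,\infty)$.

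For the pointed Gromov-Hausdorff convergence at $t=0$, I would exploit the self-similar structure: rescaling space by $|t|^{1/2}$ identifies $g_-(t)$ (resp. $g_+(t)$) with a fixed soliton metric, which by the ODE analysis is asymptotic to the cone $g_{\hat\EE}$ outside a bounded neighborhood of the zero section. Away from the exceptional divisor this yields smooth convergence to $g_{\hat\EE}$, while the exceptional loci $E^\pm$ collapse to the vertex because their extrinsic diameters scale like $|t|^{1/2}$. With base points $p_\pm \in E^\pm$, this upgrades to pointed Gromov-Hausdorff convergence of the whole spaces. I expect the main obstacle to lie in the uniqueness assertion: one must show that among all Calabi-symmetric triples, the cone parameter $\lambda$ and the soliton potentials are jointly uniquely determined by smooth closure at the zero section on both sides together with the requirement that the triple form a continuous Ricci flow through the cone at $t=0$. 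This uniqueness is precisely the matching content of the ODE shooting argument in \cite{Li}, which I would invoke directly rather than reprove, and it is what pins down the otherwise one-parameter families of shrinkers and expanders to a single compatible pair.
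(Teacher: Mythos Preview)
Your proposal is correct and follows essentially the same route as the paper: both reduce to the Calabi-symmetric soliton ODEs, invoke \cite{FIK} and \cite{Li} for the existence and uniqueness of the shrinking and expanding solitons together with their common asymptotic cone exponent $\lambda(m,l)$, and then deduce the pointed Gromov--Hausdorff convergence from self-similarity. The paper's own argument is in fact slightly terser than yours---it simply records that $|\zz|^2|_{\hat\EE}=e^\rho$, quotes \cite{Li} for the solitons and their cone asymptotics, and states the four convergence properties as immediate consequences.
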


In section 6, we conjecture that for any flip $\phi_-: X^- \rightarrow Y \leftarrow X^+: \phi_+$ between two normal projective varieties with log terminal singularities, the flip can be realized by a continuation of a complete shrinking gradient soliton and a complete expanding gradient soliton through a tangent cone of $Y$. This can be viewed as a metric uniformization for algebraic singularities arising from divisorial contractions and flips.

 %%%%%%%%%%%%%%%%%%%%%%%%%%%%%%%%%%%%%%%%%%%%%%%%%%%%%%%%%

\bigskip

\section{Local ansatz}

\subsection {Calabi ansatz}\label{2.1}  In this section, we will apply the Calabi ansatz introduced by Calabi \cite{C1} (also see \cite{Li, SY}) to understand the small contraction near the exceptional locus.

 Let $\EE= \OO_{\PP^m}(-1)\oplus \OO_{\PP^m}(-1) \oplus ... \oplus \OO_{\PP^m}(-1)=\OO_{\PP^m}(-1)^{\oplus (l+1)}$ be the holomorphic vector bundle over $\PP^m$ of rank $l+1$.  Let $z=(z_1, z_2, ..., z_m)$ be a fixed set of inhomogeneous coordinates for $\PP^m$ and
$$\theta= \ddbar \log (1+|z|^2) \in [\OO_{\PP^m}(1) ]$$
be the Fubini-Study metric on $\PP^m$ and  $h$ be the hermitian metric on $\OO_{\mathbb{P}^m} (-1)$ such that $\ric(h) = -\theta$. This induces a hermtian metric $h_{\EE}$ on $\EE$ is given by
 $$h_\EE= h^{\oplus (l+1)}. $$
Under a local trivialization of $\EE$, we write
$$e^\rho = h_\EE (z) |\xi|^2, ~~~ \xi = (\xi_0, \xi_1, ..., \xi_l ),$$ where $h_\EE(z)$ is a local representation for $h_\EE $ with
$h_\EE(z)= (1+|z|^2).$

Now we are going to  define a family of K\"ahler metrics on $\EE$ as below
\begin{equation}\label{metricrep1}
\omega = a \theta + \ddbar u(\rho)
\end{equation}
for an appropriate choice of convex smooth function $u= u(\rho)$ and $a>0$.  In fact, we have the following criterion due to Calabi \cite{C1} for the above form $\omega$ to be K\"ahler.
\begin{proposition}\label{kacon}

$\omega$ defined as above, extends to a global K\"ahler form on $\EE$ if and only if

\begin{enumerate}

\item[(a)]

$a>0$,

\item[(b)] $u'>0$ and $u''>0$ for $\rho\in (-\infty, \infty)$,

\item[(c)] $U_0 (e^\rho) = u(\rho)$ is smooth on $(-\infty, 0]$ with $U_0' (0)>0$.

\end{enumerate}

\end{proposition}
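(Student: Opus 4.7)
The plan is to work in a local holomorphic trivialization of $\EE$ over an affine chart of the base. With inhomogeneous coordinates $(z_1,\dots,z_m)$ on $\PP^m$ and fiber coordinates $(\xi_0,\dots,\xi_l)$, we have $e^\rho = (1+|z|^2)\,|\xi|^2$, so $\rho$ is smooth only away from the zero section $Z = \{\xi=0\}$ while $u(\rho)$ has to be arranged to extend smoothly and positively across $Z$. I will split the argument into (i) smoothness and positivity on $\EE\setminus Z$ and (ii) smooth positive extension across $Z$, and then check that the two pictures are compatible.

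For (i), chain-ruling and using $\ddbar\rho = \theta + \ddbar\log|\xi|^2$ (where the second summand is pulled back from $\CC^{l+1}\setminus\{0\}$ and represents the Fubini--Study form of $\PP^l$) gives the decomposition
\begin{equation*}
\omega \;=\; (a+u'(\rho))\,\theta \;+\; u'(\rho)\,\ddbar\log|\xi|^2 \;+\; u''(\rho)\,\partial\rho\wedge\dbar\rho .
\end{equation*}
The three summands act on essentially complementary subspaces: $\theta$ is strictly positive in the base directions; $\ddbar\log|\xi|^2$ is a rank-$l$ positive semidefinite form on the fiber whose nullspace is the complex radial $\xi$-direction; and $\partial\rho\wedge\dbar\rho$ is a rank-one positive form whose image pairs nontrivially with the same radial direction. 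A block-matrix computation then shows $\omega>0$ on $\EE\setminus Z$ if and only if $a>0$, $u'>0$, and $u''>0$, with strictness of $u'$ (resp. $u''$) forced by probing the non-radial (resp. radial) fiber directions. This gives the equivalence with (a) and (b) on $\EE\setminus Z$.

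For (ii), the natural smooth function on $\EE$ cutting out $Z$ with multiplicity one is $t = e^\rho = (1+|z|^2)|\xi|^2$. By the standard $S^1$-invariance argument, a smooth function of $\rho$ alone extends smoothly across $Z$ if and only if it has the form $U_0(t)$ with $U_0$ smooth at $t=0$, giving the smoothness half of (c). Taylor-expanding $U_0(t) = U_0(0) + U_0'(0)\,t + \tfrac12 U_0''(0)\,t^2 + \cdots$ yields $u(\rho) = U_0(0) + U_0'(0)(1+|z|^2)|\xi|^2 + O(|\xi|^4)$, from which I would compute directly
\begin{equation*}
\omega\big|_Z \;=\; a\,\theta \;+\; U_0'(0)\,(1+|z|^2)\sum_{j=0}^{l} d\xi_j\wedge d\bar\xi_j ,
\end{equation*}
the mixed $dz\wedge d\bar\xi$ blocks vanishing because they each carry at least one factor of $\xi$ or $\bar\xi$. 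Hence $\omega$ is positive on $Z$ precisely when $a>0$ and $U_0'(0)>0$, and by openness of positivity this propagates to a neighborhood of $Z$, which with (i) yields global positivity on $\EE$.

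The main subtlety I expect is reconciling (i) and (ii) near $Z$: both $u'(\rho)$ and $u''(\rho)$ tend to zero as $\rho\to-\infty$ while the companion forms $\ddbar\log|\xi|^2$ and $\partial\rho\wedge\dbar\rho$ blow up. Rewriting consistently in the variable $t$, via $u'(\rho) = t\,U_0'(t)$ and $u''(\rho) = t\,U_0'(t) + t^2\,U_0''(t)$, the singular factors cancel explicitly and the decomposition from (i) becomes the restriction to $\EE\setminus Z$ of the smooth form on $\EE$ produced in (ii). Once this compatibility is in place, necessity and sufficiency of (a)--(c) follow together.
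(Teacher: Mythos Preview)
Your argument is correct and complete. The paper itself does not prove this proposition: it is stated as a criterion due to Calabi and attributed to \cite{C1}, with the formula
\[
\omega = (a + u'(\rho))\,\theta + h_\EE e^{-\rho}\bigl(u'\delta_{\alpha\beta} + h_\EE e^{-\rho}(u''-u')\,\xi^{\bar\alpha}\xi^{\beta}\bigr)\nabla\xi^\alpha\wedge\nabla\xi^{\bar\beta}
\]
recorded immediately afterward as the outcome of ``straightforward calculations.'' That formula is the Calabi-style block decomposition in the frame $\{dz^i,\nabla\xi^\alpha\}$ adapted to the bundle connection, from which positivity can be read off as positivity of two blocks. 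Your decomposition in terms of $\theta$, $\ddbar\log|\xi|^2$, and $\partial\rho\wedge\dbar\rho$ is equivalent but organized differently: instead of diagonalizing via the connection frame, you separate the rank-$l$ Fubini--Study piece on the fiber sphere from the rank-one radial piece. Both routes reach the same endpoint; the paper's version has the advantage that the matrix is manifestly block diagonal, while yours avoids introducing the connection altogether.

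One small imprecision: in step (i) you claim positivity on $\EE\setminus Z$ is equivalent to $a>0$, $u'>0$, $u''>0$. What the base block actually gives you is $a+u'>0$; since $u'(\rho)=e^\rho U_0'(e^\rho)\to 0$ as $\rho\to-\infty$, this only yields $a\ge 0$. The strict inequality $a>0$ is genuinely forced by your step (ii), positivity at $Z$. This does not affect the correctness of the combined argument, but the split of the three conditions between (i) and (ii) is not quite as you describe.
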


Straightforward calculations show  that
\begin{equation}\label{metricrep2}
\omega = (a + u'(\rho)) \theta + h_\EE e^{-\rho} ( u' \delta_{\alpha \beta} + h_\EE e^{-\rho} ( u'' - u') \xi^{\bar \alpha} \xi^{\beta} ) \nabla \xi^\alpha \wedge \nabla \xi^{\bar \beta}.
\end{equation}
Here, $$\nabla \xi^\alpha = d \xi^\alpha + h_\EE^{-1} \partial h_\EE \xi^\alpha$$ and $\{ dz^i, \nabla \xi^\alpha\}_{i=1, ..., m, \alpha=1, ..., l+1}$ is dual to the basis

\begin{equation*} \nabla_{z_i} = \frac{\partial}{\partial z_i} - h_\EE ^{-1} \frac{\partial h_\EE }{\partial z_i} \sum_{\alpha=0}^l \xi^\alpha \frac{\partial }{\partial \xi^\alpha}, ~~~~~~~ \frac{\partial}{\partial \xi ^\alpha}.
\end{equation*}

Let $L=\OO_{\PP^m}(-1)$. Then $\EE= L^{\oplus (l+1)}$. Let $p_\alpha: \EE \rightarrow L$ be the projection from $\EE$ to its $\alpha$th component, and let $e^{\rho_\alpha} = (1+|z|^2) |\xi_\alpha|^2$ and so $e^{\rho} = \sum_{\alpha=0}^l e^{\rho_\alpha}$.

\subsection{Algebraic local models. } From now on, we assume that $ 1 \leq l < m $. We let
$$\EE^- = \OO_{\PP^m}(-1)^{(l+1)} , ~  \EE^+ = \OO_{\PP^l}(-1)^{(m+1)} .$$   Let $E^-$ be the zero section of $\EE^-$, which is a projective space $\mathbb{P}^m$ with normal bundle $\mathcal{O}_{\mathbb{P}^m}(-1)^{\oplus (l+1)}$. We define $E^+$ as the zero section of $\EE^+$ similarly. By contracting $E^-$, one obtains the variety $\hat \EE$ with only one isolated point as singularity.  
Let $w=(w_1, w_2, ..., w_l)$ and $\eta=(\eta_0, \eta_1, ..., \eta_m)$ be the coordinates with a local trivialization defined on $\EE^-$ as in section 2.1. Then there exists a flip from $\EE^-$ to $\EE^+$
\begin{equation}
\begin{diagram}\label{diag1}
\node{\EE^-} \arrow{se,b,}{\phi_-}  \arrow[2]{e,t,..}{\check{\phi}}     \node[2]{\EE^+} \arrow{sw,r}{\phi_+} \\
\node[2]{\hat \EE}
\end{diagram},
\end{equation}
where $\hat\EE$ is an affine cone over $\PP^m\times \PP^l$ in $\CC^{(m+1)(l+1)}$ by the Segre embedding of
\begin{equation}\label{segre}
[Z_0, ..., Z_m]\times [W_0, ..., W_l] \in \PP^m\times \PP^l\rightarrow [Z_0W_0..., Z_i W_j ,..., Z_mW_l] \in \PP^{(m+1)(l+1)-1}.
\end{equation}
$\phi_-$ and $\phi_+$ are isomorphisms outside $E^-$ and $E^+$.
The birational flip $\check \phi$ can be viewed as change of coordinates as below
\begin{equation}\label{coorch1}
z_1= \frac{\eta_1}{\eta_0}, ~ z_2 = \frac{\eta_2}{\eta_0},~  ... ~, ~ z_m = \frac{\eta_m}{\eta_0};~ \xi_0 = \eta_0, ~\xi_1 = w_1 \eta_0, ~\xi_2 = w_2 \eta_0,~ ...~, ~ \xi_l= w_l \eta_0,
\end{equation}
or
%'
\begin{equation}\label{coorch2}
w_1= \frac{\xi_1}{\xi_0}, ~ w_2 = \frac{\xi_2}{\xi_0}, ~...~ ,~ w_l = \frac{\xi_l}{\xi_0};~\eta_0 = \xi_0,~  \eta_0 = \xi_0, ~\eta_1 = z_1 \xi_0, ~ \eta_2 = z_2 \xi_0,~ ...~, ~ \eta_m= z_m \xi_0.
\end{equation}
We also have
\begin{equation}
\rho= \log (1+|z|^2)|\xi|^2 = \log (1+|w|^2) |\eta|^2.
\end{equation}

Let $\tilde \EE $  be the blow-up of $\EE^-$ along the zero section $E^-$.  $\tilde \EE$ can also be obtained by the blow-up of $\EE^+$ along $E^+$.  Then we have the following commutative diagram from section 1.9 in \cite{D}
\begin{equation}
\begin{diagram}
\node{\EE^- } \arrow{s,l}{{\small \pi_-} }     \node{\tilde \EE}  \arrow{w,t}{\small \vartheta_-} \arrow{s,r}{{\small \Psi}}  \arrow{e,t}{\small \vartheta_+}    \node{ \EE^+}  \arrow{s,r}{{\small \pi_+}}\\
\node{\mathbb{P}^m}      \node{\mathbb{P}^m\times \mathbb{P}^l} \arrow{w,t}{ p_-}   \arrow{e,t}{ p_+}\node{\mathbb{P}^l}
\end{diagram}
\end{equation}

For fixed $[\xi ]=[1, w]\in \mathbb{P}^{l}$, the proper transformation of $(\pi_+)^{-1} ([\xi])$ via $\check \phi ^{-1}$ is a smooth variety $L^-_{[\xi]}$ given by
\begin{equation}
L^-_{[\xi]}:  \xi = \xi_0 (1, w).
\end{equation}
Similarly, for fixed $[\eta]=[1, z] \in \mathbb{P}^m$, we define the smooth variety $L^+_{[\eta]}$ as the proper transformation of $(\pi_+)^{-1}$ via $\check \phi$ given by
$$ L^+_{[\eta]}: \eta = \eta_0 (1, z). $$

The relation between $L^-_{[\xi]}$ and $ L^+_{[\eta]}$ can be understood in the following lemma. In section 3, we will estimate the evolving metrics by the K\"ahler-Ricci flow for the holomorphic foliations $L^-_{[\xi]} $ and $ L^+_{[\eta]}$ parametrized by $[\xi]\in \PP^l$ and $[\eta]\in \PP^m$.

\begin{lemma} For any $[\xi]\in \PP^l$ and $[\eta]\in \PP^m$,  $L^-_{[\xi]}$ and $L^+_{[\eta]}$ are isomorphic to $\mathbb{C}^{m+1}$ blow-up at one point and $\mathbb{C}^{l+1}$ blow-up at one point respectively. In particular,
$$L^-_{[\xi]} \cap L^-_{[\xi']} = E^-, ~if~ [\xi]\neq [\xi'] \in \PP^l$$
and
$$L^+_{[\eta]}  \cap  L^+_{[\eta']} = E^+, ~if ~ [\eta]\neq [\eta']\in \PP^m.$$
Furthermore,
$$L^-_{[\xi]} \cap L^+_{[\eta]} \ = \mathbb{C}$$
and it is the line in $\mathbb{C}^{m+1}$ generated by a multiple of $\xi$ or the line in $\mathbb{C}^{l+1}$ generated by $\eta$.

\end{lemma}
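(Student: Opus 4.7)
The plan is to unravel the defining equations (\ref{coorch1})--(\ref{coorch2}) and then pass to the Segre model (\ref{segre}) of $\hat\EE$ for the cross intersection. First I would identify $L^-_{[\xi]}$ as the rank-one sub-bundle of $\EE^-=\OO_{\PP^m}(-1)^{\oplus(l+1)}$ whose fibre at each $[Z]\in\PP^m$ is the line $\CC\cdot(1,w)\subset\CC^{l+1}$; the condition $\xi=\xi_0(1,w)$ in (\ref{coorch1}) says exactly this. Such a sub-bundle is abstractly $\OO_{\PP^m}(-1)$, whose total space is classically the blow-up of $\CC^{m+1}$ at the origin, with the zero section as the exceptional divisor. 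Concretely, the restriction $\phi_-|_{L^-_{[\xi]}}$ realises this blow-down: it maps $L^-_{[\xi]}\setminus E^-$ isomorphically onto $(\pi_+)^{-1}([\xi])\setminus\{0\}\cong\CC^{m+1}\setminus\{0\}$ and contracts $E^-$ to the origin. The corresponding statement for $L^+_{[\eta]}$ follows by interchanging the roles of $(\PP^m,\xi,w)$ and $(\PP^l,\eta,z)$.

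The self-intersection statements then follow directly from the defining equations. Fibrewise, $L^-_{[\xi]}$ and $L^-_{[\xi']}$ cut out two distinct complex lines through the origin in $\CC^{l+1}$; equivalently, $\xi_0(w-w')=0$ together with $[\xi]\neq[\xi']$ forces $\xi_0=0$ and hence $\xi=0$, so $L^-_{[\xi]}\cap L^-_{[\xi']}=\{\xi=0\}=E^-$. The mirror argument using (\ref{coorch2}) yields $L^+_{[\eta]}\cap L^+_{[\eta']}=E^+$.

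For the cross intersection I would pass to $\hat\EE$. Under the Segre map (\ref{segre}), $\hat\EE$ is the variety of decomposable tensors $Z\otimes W\in\CC^{m+1}\otimes\CC^{l+1}$, and a direct unpacking of $\phi_\pm$ gives
\begin{equation*}
\phi_-(L^-_{[\xi]})=\{\,Y\otimes\xi : Y\in\CC^{m+1}\,\},\qquad \phi_+(L^+_{[\eta]})=\{\,\eta\otimes V : V\in\CC^{l+1}\,\}.
\end{equation*}
The elementary fact that $Y\otimes\xi=\eta\otimes V$ forces $Y\in\CC\cdot\eta$ and $V\in\CC\cdot\xi$ (or both sides vanish) pins the intersection down to $\{\lambda\,\eta\otimes\xi:\lambda\in\CC\}\cong\CC$. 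This $\CC$ sits as the line $\CC\cdot\eta$ inside the $\CC^{m+1}$ blow-down of $L^-_{[\xi]}$ and as the line $\CC\cdot\xi$ inside the $\CC^{l+1}$ blow-down of $L^+_{[\eta]}$. Since $\phi_\pm$ are isomorphisms away from the vertex, the same $\CC$ lifts uniquely to $L^-_{[\xi]}\cap L^+_{[\eta]}$, meeting each of $E^\pm$ in exactly one point.

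No substantive obstacle arises: after adopting the tensor/Segre description of $\hat\EE$, all three claims reduce to linear algebra on rank-one tensors. The only mild care needed is to keep track of whether intersections are taken in $\EE^\pm$, in $\hat\EE$ or in the common blow-up $\tilde\EE$, but since $\check\phi$ is an isomorphism away from $E^\pm$, the various interpretations agree up to the standard bookkeeping of how $E^-$ and $E^+$ are filled in, already controlled by the identifications $L^-_{[\xi]}\cong\mathrm{Bl}_0\CC^{m+1}$ and $L^+_{[\eta]}\cong\mathrm{Bl}_0\CC^{l+1}$.
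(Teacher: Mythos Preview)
The paper states this lemma without proof, treating it as a direct consequence of the coordinate descriptions in (\ref{coorch1})--(\ref{coorch2}) and the commutative diagram preceding it. Your argument correctly supplies the details: the identification of $L^-_{[\xi]}$ with the total space of the line sub-bundle $\OO_{\PP^m}(-1)\hookrightarrow\EE^-$ determined by $(1,w)\in\CC^{l+1}$ is exactly right, and the passage to the Segre model of $\hat\EE$ to handle the cross intersection via rank-one tensors is clean and complete. Your closing remark about bookkeeping between $\EE^\pm$, $\hat\EE$ and $\tilde\EE$ is also well placed, since the intersection $L^-_{[\xi]}\cap L^+_{[\eta]}$ is only literally defined on the common open set where $\check\phi$ is an isomorphism. (Incidentally, the roles of $\xi$ and $\eta$ in the last sentence of the lemma appear to be transposed in the paper; your version, with the line $\CC\cdot\eta\subset\CC^{m+1}$ and $\CC\cdot\xi\subset\CC^{l+1}$, is the one that matches the coordinate conventions.)
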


\subsection{Analytic local forms} In this section, we will compare various local forms with Calabi symmetry. The pointwise comparison will be useful in proving estimates in section 3.  We keep the notations in section 2.2. Let
$$\theta_-= \ddbar \log (1+|z|^2), ~ \theta_+= \ddbar \log (1+|w|^2)$$ be the Fubini-Study metrics on $\mathbb{P}^m$ and $\mathbb{P}^l$.

We now fix a smooth closed $(1,1)$- form $\hat \omega$ on $ \EE^-$ by
\begin{eqnarray*}
\hat\omega
&=&  \ddbar e^{\rho} = \ddbar (1+|z|^2)|\xi|^2 \\
&= & \sqrt{-1} \left( |\xi|^2 dz_i \wedge d\bar z_i + \bar  z^i \xi^\alpha dz_i \wedge d\bar \xi_\alpha +    z^i  \bar \xi^\alpha d\xi_\alpha  \wedge d\bar z_i +   (1+|z|^2) d\xi_\alpha \wedge d\bar \xi_\alpha \right).
\end{eqnarray*}
$\hat\omega$ can also be viewed as a smooth closed $(1,1)$-form on $\EE^+$. $\hat\omega$ is strictly positive except at $E^-$ and $E^+$ and it lies in a big class on both $\EE^-$ and $\EE^+$.

We define
$$\omega_-= \theta_-+ \hat \omega, ~~ \omega_+ = \theta_+ + \hat \omega .$$
Obviously, they are smooth K\"ahler forms on $\EE^-$ and $\EE^+$ respectively.  We can now relate $\hat\omega$, $\theta_-$ and $\theta_+$ to K\"ahler metrics on $\tilde \EE$.

\begin{lemma} Let $\tilde \omega = \ddbar \rho + \hat \omega$. Then $\tilde \omega$ extends to a smooth K\"ahler form on $\tilde \EE$. In particular,
$$ \tilde \omega = \theta_- + \theta_+ + \hat\omega. $$

\end{lemma}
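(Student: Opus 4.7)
The plan is to compute $\rho$ explicitly in affine charts on $\tilde\EE$ and to deduce that $\tilde\omega$ coincides with the manifestly smooth form $\theta_- + \theta_+ + \hat\omega$ on a dense open set. In the chart of $\tilde\EE$ where $\xi_0 \neq 0$, with blow-up coordinates $(z_1,\dots,z_m,\xi_0,w_1,\dots,w_l)$, where $w_\alpha = \xi_\alpha/\xi_0$, the factorization $|\xi|^2 = |\xi_0|^2(1+|w|^2)$ gives
\[
\rho \;=\; \log(1+|z|^2) \;+\; \log(1+|w|^2) \;+\; \log|\xi_0|^2,
\]
and taking $\ddbar$ kills the pluriharmonic term $\log|\xi_0|^2$, yielding $\ddbar\rho = \theta_- + \theta_+$ on $\{\xi_0\neq 0\}$. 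An identical calculation in each of the other affine charts $\{\xi_\alpha\neq 0\}$ produces the same formula, so I would conclude $\ddbar\rho = \theta_- + \theta_+$, and hence
\[
\tilde\omega \;=\; \theta_- + \theta_+ + \hat\omega,
\]
on the complement of the exceptional divisor in $\tilde\EE$.

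Next I would observe that the right-hand side extends to a smooth global $(1,1)$-form on $\tilde\EE$: $\theta_-$ and $\theta_+$ are the pullbacks of the two Fubini--Study forms via $p_\pm\circ\Psi$, while $\hat\omega = \ddbar e^\rho$ is smooth on $\EE^-$ (and on $\EE^+$), hence smooth on the blow-up $\tilde\EE$. By continuity $\tilde\omega$ extends smoothly across the exceptional divisor, and the identity $\tilde\omega = \theta_- + \theta_+ + \hat\omega$ then holds globally on $\tilde\EE$.

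Finally, to check positivity I would split into two cases. Away from the exceptional divisor, $\tilde\omega = \omega_- + \theta_+ \geq \omega_->0$, since $\omega_-$ is already K\"ahler on $\EE^-$ by the preceding discussion and $\theta_+\geq 0$. At a point $p$ of the exceptional divisor $\{\xi_0=0\}$, $\theta_- + \theta_+$ restricts to the product Fubini--Study form on $\PP^m\times\PP^l$, which is positive definite in the tangential $(z,w)$-directions but degenerate in the normal $\xi_0$-direction; conversely, a direct calculation using that $|\xi_0|^2$ and $d|\xi_0|^2$ both vanish at $\xi_0=0$ gives $\hat\omega|_p = (1+|z|^2)(1+|w|^2)\,\ddbar|\xi_0|^2$, which is strictly positive in the $\xi_0$-direction and vanishes on the $(z,w)$-directions. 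The two contributions are complementary, so their sum $\tilde\omega$ is positive definite at $p$.

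The main obstacle to watch for is the coordinate bookkeeping: making the passage from $(z,\xi)$-coordinates on $\EE^-$ to the various blow-up charts on $\tilde\EE$ precise, and verifying that the identity $\ddbar\rho = \theta_-+\theta_+$ is chart-independent (in particular consistent with the symmetric calculation on $\EE^+$ using $(w,\eta)$-coordinates). Once that is organized, both the smooth extension and the positivity verification at the exceptional divisor reduce to short pointwise calculations.
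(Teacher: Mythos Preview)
Your proposal is correct and is exactly the natural computation the paper has in mind; the paper states this lemma without proof, relying on the coordinate identity $e^\rho=(1+|z|^2)(1+|w|^2)|\xi_0|^2$ recorded just before it, from which $\ddbar\rho=\theta_-+\theta_+$ is immediate. Your smoothness and positivity checks on $\tilde\EE$ (pulling back $\theta_\pm$ via $p_\pm\circ\Psi$ and computing $\hat\omega$ at $\xi_0=0$) fill in precisely the details the paper leaves implicit.
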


Let $\rho_\alpha =  (1+|z|^2) |\xi_\alpha|^2$. Then by letting $[\xi ]= [1, w]\in \PP^l$, we have
$$e^\rho = (1+|w|^2) e^{\rho_0} .$$
We choose a smooth closed nonnegative real $(1,1)$ form $\tau$ defined by
\begin{eqnarray*}
\tau
 &=& \ddbar \left( e^{\rho_0}  \right).
  \end{eqnarray*}
Although $\tau$ is not big, it defines a flat degenerate K\"ahler form on $L^-_{[\xi]}$ for each $[\xi]\in \PP^l $ as shown in the following lemma.

\begin{lemma}\label{locm1}

 Let $\nu=(\nu_1, \nu_2, ..., \nu_{m+1})$ be defined by
 $$\nu_1 =  \eta_1= \xi_0 z_1, ~ \nu_2 =\eta_2= \xi_0 z_2, ~ ...~ , ~\nu_m =\eta_n \xi_0 z_n , ~\nu_{m+1} =\eta_0= \xi_0 .$$
 Then $e^{\rho_0} = |\nu|^2$ and $$\tau =  \sqrt{-1} \left( d\nu_1 \wedge d\overline \nu_1 + d\nu_2\wedge d\overline \nu_2 + ... + d\nu_{m+1} \wedge  d \overline{\nu}_{m+1}\right)$$ is the pullback of the flat Euclidean metric on $\CC^{m+1}$.
Therefore
\begin{equation}
\hat \omega |_{L^-_{[\xi]}}= (1+|w|^2)\tau|_{L^-_{[\xi]}}
\end{equation}
 is a flat Euclidean metric flat on $L^-_{[\xi]} \setminus E^-(\simeq \CC^{m+1} \setminus \{0\})$ for each $[\xi]=[1,w] \in \PP^{l+1}$.

\end{lemma}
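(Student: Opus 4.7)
The plan is to reduce everything to a direct coordinate calculation, exploiting the fact that the $\nu_i$ are holomorphic functions on $\EE^-$ and that $w$ is constant along $L^-_{[\xi]}$. First I would verify the identity $e^{\rho_0}=|\nu|^2$: by definition $e^{\rho_0}=(1+|z|^2)|\xi_0|^2$, and
\[
|\nu|^2=\sum_{i=1}^m|\xi_0 z_i|^2+|\xi_0|^2=|\xi_0|^2(1+|z|^2),
\]
matching $e^{\rho_0}$. In particular $e^\rho=(1+|z|^2)|\xi|^2=(1+|w|^2)|\xi_0|^2(1+|z|^2)=(1+|w|^2)e^{\rho_0}$, which I will use in step three.

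Next, since each $\nu_i$ is a holomorphic function of $(z,\xi_0)$ (equivalently, a global holomorphic function on the coordinate chart of $\EE^-$), I can compute $\tau$ by differentiating $\nu_i\bar\nu_i$: $\partial e^{\rho_0}=\sum_i\bar\nu_i\,d\nu_i$ and hence
\[
\tau=\ddbar e^{\rho_0}=\sqrt{-1}\sum_{i=1}^{m+1}d\nu_i\wedge d\bar\nu_i,
\]
which is exactly the pullback of the flat Euclidean form on $\CC^{m+1}$ under the holomorphic map $(z,\xi_0)\mapsto\nu(z,\xi_0)$, as claimed.

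Now I would fix $[\xi]=[1,w]\in\PP^l$ and restrict to $L^-_{[\xi]}$. On this leaf, $w$ is a genuine constant, so
\[
\hat\omega|_{L^-_{[\xi]}}=\ddbar e^\rho\big|_{L^-_{[\xi]}}=\ddbar\bigl((1+|w|^2)e^{\rho_0}\bigr)\big|_{L^-_{[\xi]}}=(1+|w|^2)\,\tau|_{L^-_{[\xi]}},
\]
which is the first asserted equality. Setting $\mu_i:=\sqrt{1+|w|^2}\,\nu_i$ on $L^-_{[\xi]}$ absorbs the constant factor, giving
\[
\hat\omega|_{L^-_{[\xi]}}=\sqrt{-1}\sum_{i=1}^{m+1}d\mu_i\wedge d\bar\mu_i.
\]
Finally, I would check that the map $\mu:L^-_{[\xi]}\to\CC^{m+1}$ is a biholomorphism off $E^-$: in the coordinates $(z,\xi_0)$ parametrizing $L^-_{[\xi]}$ (with $\xi_\alpha=\xi_0 w_\alpha$ for $\alpha\ge 1$ determined by the fixed $w$), the map $(z,\xi_0)\mapsto(\xi_0 z_1,\ldots,\xi_0 z_m,\xi_0)$ is the standard blow-down of $\CC^{m+1}$ at the origin restricted to the complement of the exceptional fiber, with the full $E^-=\PP^m$ collapsing to $0$. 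Hence $L^-_{[\xi]}\simeq\mathrm{Bl}_0\CC^{m+1}$ and $\hat\omega|_{L^-_{[\xi]}\setminus E^-}$ is the pullback of the flat Euclidean metric, finishing the lemma. There is no real obstacle here; the only thing that needs care is bookkeeping of the constant factor $(1+|w|^2)$, which is legitimate precisely because $w$ is frozen along the leaf $L^-_{[\xi]}$.
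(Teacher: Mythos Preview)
Your proof is correct and is exactly the direct coordinate verification the lemma calls for; the paper does not give a separate proof of this lemma, treating the identities $e^{\rho_0}=|\nu|^2$, $\tau=\sqrt{-1}\sum d\nu_i\wedge d\bar\nu_i$, and $\hat\omega|_{L^-_{[\xi]}}=(1+|w|^2)\tau|_{L^-_{[\xi]}}$ as immediate from the definitions. Your handling of the constant $(1+|w|^2)$ along the leaf and the identification of $L^-_{[\xi]}$ with the blow-up of $\CC^{m+1}$ at the origin are both sound.
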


We are only interested the local behavior of these forms near the zero section $E^-$, so we define
\begin{equation}\label{defome}\Omega= \{ -\infty < \rho < 0\} \subset \EE^-  \setminus E^- = \EE^+ \setminus E^+,
\end{equation}
 or equivalently in local coordinates, $$e^\rho= (1+|z|^2)|\xi|^2 < 1. $$
Then for each $[\xi]=[\xi_0, ..., \xi_l]\in \PP^l$,
\begin{equation}\label{compp}
L^-_{[\xi]}  \cap \Omega= \{ (z_1, ... , z_m, \xi_0, \xi_2, ... , \xi_l)~|~ \xi_j= w_j \xi_0, ~e^{\rho_0} \leq (1+|w|^2)^{-1} \} .
\end{equation}
We now compare $\hat\omega$ to $\omega_-$, $\omega_+$ and $\tilde\omega$ on each holomorphic leaf $L^-_{[\xi]}\simeq \OO_{\PP^m}(-1)$.

\begin{lemma} \label{lcompa} There exists $C>0$ such that on $\Omega$

\begin{enumerate}

\item $\hat\omega \leq   \omega_- \leq C e^{-\rho}\hat\omega$,  $\hat\omega\leq \omega_+ \leq C e^{-\rho} \hat\omega$,

\medskip

\item   $  \tilde \omega|_{L^-_{[\xi]}}    =   \omega_-|_{L^-_{[\xi]}} $, for each $[\xi]\in \PP^l$,

\medskip

\item   $   \tilde \omega|_{L^+_{[\eta]}}   =    \omega_+ |_{L^+_{[\eta]}} $, for each $[\eta]\in \PP^m$.

\end{enumerate}

\end{lemma}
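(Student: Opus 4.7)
The plan is to extract explicit local formulas for $\hat\omega$ from the Calabi ansatz (\ref{metricrep2}) and compare terms directly. Specializing (\ref{metricrep2}) to $a=0$ and $u(\rho)=e^\rho$, so that $u'=u''=e^\rho$ and $u''-u'=0$, gives
$$\hat\omega \;=\; e^\rho\,\theta_- \;+\; (1+|z|^2)\,\delta_{\alpha\beta}\,\nabla\xi^\alpha\wedge\nabla\xi^{\bar\beta},$$
so $\hat\omega$ splits as a "horizontal" piece proportional to $\theta_-$ with coefficient $e^\rho$ and a "vertical" piece. By the symmetry between the two coordinate charts (\ref{coorch1})--(\ref{coorch2}), the same formula holds with $(z,\xi,\theta_-)$ replaced by $(w,\eta,\theta_+)$, yielding analogously $\hat\omega = e^\rho\theta_+ + (1+|w|^2)\delta_{\alpha\beta}\nabla\eta^\alpha\wedge\nabla\eta^{\bar\beta}$.

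For part (1), the lower bounds $\hat\omega\leq\omega_\pm$ are immediate from the nonnegativity of $\theta_\pm$. For the upper bounds, the split formulas give $\hat\omega\geq e^\rho\theta_-$ and $\hat\omega\geq e^\rho\theta_+$ pointwise, which rearrange to $\theta_\pm \leq e^{-\rho}\hat\omega$ on $\Omega$. Combining with $\hat\omega\leq e^{-\rho}\hat\omega$ on $\Omega$ (since $e^\rho<1$ there by (\ref{defome})) one obtains
$$\omega_\pm \;=\; \theta_\pm+\hat\omega \;\leq\; 2\,e^{-\rho}\hat\omega,$$
so $C=2$ suffices.

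For (2) and (3), I would invoke the preceding lemma, $\tilde\omega=\theta_-+\theta_++\hat\omega$. The key observation is that the leaves have constant transverse projective coordinate: on $L^-_{[\xi]}$, by (\ref{coorch1}) with $[\xi]=[1,w]$ fixed, the coordinate $w$ is constant along the leaf, so $\theta_+ = \ddbar\log(1+|w|^2)$ restricts to zero. Hence
$$\tilde\omega\big|_{L^-_{[\xi]}} \;=\; \theta_-\big|_{L^-_{[\xi]}} + \hat\omega\big|_{L^-_{[\xi]}} \;=\; \omega_-\big|_{L^-_{[\xi]}}.$$
The statement for $L^+_{[\eta]}$ follows by the symmetric argument in $(w,\eta)$ coordinates, where now $z$ is constant along the leaf and $\theta_-$ restricts to zero. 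The main (minor) subtlety is just keeping the two coordinate systems straight and confirming that the explicit form of $\hat\omega$ is indeed symmetric under the flip $\check\phi$, which follows from $\hat\omega=\ddbar e^\rho$ together with the invariance of $e^\rho=(1+|z|^2)|\xi|^2=(1+|w|^2)|\eta|^2$; no hard estimate is needed.
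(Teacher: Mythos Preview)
Your proof is correct and follows essentially the same approach as the paper: for (2) and (3) you use the identical argument that $\theta_+$ (resp.\ $\theta_-$) vanishes on $L^-_{[\xi]}$ (resp.\ $L^+_{[\eta]}$), and for (1) you spell out the ``straightforward calculation'' the paper alludes to by extracting the decomposition $\hat\omega = e^\rho\theta_- + (\text{nonnegative vertical part})$ from the Calabi ansatz, even obtaining the explicit constant $C=2$.
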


\begin{proof} The estimate (1) follows from straightforward calculation as $\omega_- = \hat\omega + \theta_-$. The equality (2) follows from $\tilde\omega|_{L^-_{[\xi]}} = \omega_-|_{L^-_{[\xi]}} + \theta_+|_{L^-_{[\xi]}} = \omega_-|_{L^-_{[\xi]}}$ since $\theta_+$ vanishes on $L^-_{[\xi]}$. (3) follows by the same argument.

\end{proof}

Finally, we make comparisons among the volume forms induced by $\hat \omega$, $\omega_-$, $\omega_+$ and $\tilde\omega$.

\begin{lemma}\label{volcompa} We use the coordinates in section 2.1. Then
\begin{eqnarray*}
\hat\omega^{m+l+1} &=& e^{m\rho} (1+|z|^2)^{-(m-l)} dz\wedge d\bar z \wedge d\xi\wedge d\bar \xi\\
(\omega_-) ^{m+l+1} &=& (1+e^\rho)^m (1+|z|^2)^{-(m-l)}  dz\wedge d\bar z \wedge d\xi\wedge d\bar \xi   \\
\tilde \omega^{m+l+1} &=& e^{-l\rho} (1+ e^\rho)^m (1+|z|^2)^{-(m-l)}   dz\wedge d\bar z \wedge d\xi\wedge d\bar \xi.
\end{eqnarray*}

\end{lemma}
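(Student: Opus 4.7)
The plan is to cast each of the three $(1,1)$-forms into the Calabi ansatz $\omega = a\theta + \ddbar u(\rho)$ and apply the explicit representation in formula~(\ref{metricrep2}), which exhibits $\omega$ as block diagonal in the adapted frame $\{dz^i, \nabla\xi^\alpha\}$. Specifically, $\hat\omega = \ddbar(e^\rho)$ corresponds to $a=0$, $u(\rho)=e^\rho$ (so $u'=u''=e^\rho$); $\omega_- = \theta + \hat\omega$ corresponds to $a=1$, $u=e^\rho$; and $\tilde\omega = \ddbar(\rho + e^\rho)$ corresponds to $a=0$, $u = \rho + e^\rho$ (so $u'=1+e^\rho$, $u''=e^\rho$). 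In each case I then read off base and fiber blocks and compute their determinants separately.

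From (\ref{metricrep2}) the base block is $(a+u')\theta_{i\bar j}$, with determinant $(a+u')^m(1+|z|^2)^{-(m+1)}$ via the Fubini--Study computation $\det\theta_{i\bar j}=(1+|z|^2)^{-(m+1)}$. The fiber block has the rank-one-perturbation shape $cI_{l+1}+B\,\bar\xi\xi^T$ with $c = h_\EE e^{-\rho}u'$ and $B = h_\EE^2 e^{-2\rho}(u''-u')$. Applying the identity $\det(cI+vw^T)=c^l(c+w^Tv)$ and using the crucial simplification
\[
h_\EE e^{-\rho}|\xi|^2 \;=\; \frac{(1+|z|^2)|\xi|^2}{e^\rho} \;=\; 1,
\]
which follows directly from $e^\rho = (1+|z|^2)|\xi|^2$, the fiber determinant collapses to $(u')^l u''\,(1+|z|^2)^{l+1}e^{-(l+1)\rho}$.

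Multiplying the two block determinants produces
\[
\det(\omega) \;=\; (a+u')^m(u')^l u''\,(1+|z|^2)^{-(m-l)}e^{-(l+1)\rho}.
\]
Because $\nabla\xi^\alpha = d\xi^\alpha + h_\EE^{-1}(\partial h_\EE)\,\xi^\alpha$ differs from $d\xi^\alpha$ only by $dz$-linear terms, the top-degree wedge satisfies $dz\wedge d\bar z\wedge\nabla\xi\wedge\nabla\bar\xi = dz\wedge d\bar z\wedge d\xi\wedge d\bar\xi$, and the combinatorial constants $(m+l+1)!(\sqrt{-1})^{m+l+1}$ are absorbed into the notation for the volume form. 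Substituting the three sets of data $(a,u',u'')$ reduces everything to collecting powers of $e^\rho$ and of $(1+e^\rho)$: for $\hat\omega$ and $\omega_-$, the $e^{(l+1)\rho}$ from $(u')^l u''$ cancels exactly against $e^{-(l+1)\rho}$, leaving $e^{m\rho}$ and $(1+e^\rho)^m$ respectively.

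The substantive step is the cancellation $h_\EE e^{-\rho}|\xi|^2 = 1$ that collapses the rank-one update in the fiber determinant; everything else is exponent bookkeeping. No serious obstacle is expected, though a careful check of the exponents in the $\tilde\omega$ case (where the base and fiber both contribute factors of $(1+e^\rho)$) is worth doing to reconcile with the stated formula.
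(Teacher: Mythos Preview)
Your approach is correct and is exactly the computation the paper has in mind; the lemma is stated without proof there, as a direct consequence of the representation~(\ref{metricrep2}). Your identification of $\hat\omega$, $\omega_-$, $\tilde\omega$ with the Calabi ansatz data $(a,u) = (0,e^\rho)$, $(1,e^\rho)$, $(0,\rho+e^\rho)$ is right, and the block-determinant reduction via $h_\EE e^{-\rho}|\xi|^2 = 1$ is precisely the point.

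One remark on your closing caveat about the $\tilde\omega$ case: your instinct is correct. Carrying out the substitution $a=0$, $u'=1+e^\rho$, $u''=e^\rho$ in your general formula
\[
\det(\omega) = (a+u')^m (u')^l u''\,(1+|z|^2)^{-(m-l)} e^{-(l+1)\rho}
\]
gives $(1+e^\rho)^{m+l}\,e^{-l\rho}\,(1+|z|^2)^{-(m-l)}$, not $(1+e^\rho)^m$ as printed. This appears to be a typo in the stated lemma. It is harmless for the applications in Section~3, since on the region $\Omega = \{\rho<0\}$ the extra factor $(1+e^\rho)^l$ lies between $1$ and $2^l$ and is absorbed into the constant; the only feature of the $\tilde\omega$ volume form that matters later is the singular factor $e^{-l\rho}$, which your computation confirms.
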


%%%%%%%%%%%%%%%%%%%%%%%%%%%%%%%%%%%%%%%%%%%%%

\bigskip

\section{Small contraction by  Ricci flow}

In this section, we will prove a special case of Theorem \ref{main1} when only one $\PP^m$ with normal bundle $\OO_{\PP^m}(-1)^{n-m}$ is contracted by the K\"ahler-Ricci flow for $m \geq  n/2$. We will give the proof of Theorem \ref{main1} in section 5.2 in a more general framework of Mumford's quotients, while the main idea and technical arguments are contained in the proof of the following theorem.

\begin{theorem}\label{minus1}

Let $X$ be a projective manifold of $\dim_{\CC} X =n$ and let $g(t)$ be a smooth solution of the K\"ahler-Ricci flow for $t\in [0, T)$, starting from a smooth K\"ahler metric $g_0$ with $[g_0] \in H^{1,1}(X, \mathbb{R})\cap H^2(X, \mathbb{Q})$.  Suppose that

 \begin{enumerate}

 \item  there exists a complex submanifold $\mathbb{P}^m$ of $X$  with $ n/2 \leq m \leq n-2$ and normal bundle
 $$\OO_{\PP^{m}}(-1)^{\oplus (n-m)} ;$$

\item   the limiting K\"ahler class $\lim_{t\rightarrow T} [g(t)] $ is the pullback of an ample class on the projective variety $Y$ from the birational morphism $\pi: X \rightarrow Y$ by contracting $\PP^m$.

\end{enumerate}
Then the following holds.

\begin{enumerate}

\item $g(t)$ converges to a smooth K\"ahler metric $g(T)$ on $X\setminus \PP^m$ in $C^\infty(X\setminus \PP^m)$.

\medskip

\item The metric completion of $(X\setminus \PP^m, g(T))$ is a compact metric length space homeomorphic to $Y$. We denote it by $(Y, d_T)$

\medskip

\item $(X, g(t))$ converges in Gromov-Hausdorff topology to $(Y, d_T)$ as $t\rightarrow T$.

\end{enumerate}

\end{theorem}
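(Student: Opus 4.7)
The plan is to adapt the framework for divisorial contractions by the K\"ahler-Ricci flow from \cite{SW2}, using the holomorphic foliation $\{L^-_{[\xi]}\}_{[\xi]\in\PP^{n-m-1}}$ of Section 2 to reduce the high-codimensional contraction to a family of divisorial contraction problems where standard estimates apply. First I would rewrite the flow as a parabolic complex Monge-Amp\`ere equation. Choose a smooth path $\hat\omega_t = \omega_0 + t\chi$ of reference $(1,1)$-forms with $\chi \in -c_1(X)$ and $\hat\omega_T = \pi^* \omega_Y$ for a fixed ample form $\omega_Y$ on $Y$; writing $\omega(t) = \hat\omega_t + \ddbar \varphi_t$, the flow becomes
\begin{equation*}
\ddt{\varphi} = \log \frac{(\hat\omega_t + \ddbar\varphi)^n}{\Omega},\qquad \varphi|_{t=0}=0,
\end{equation*}
for a smooth reference volume form $\Omega$ with $\ddbar \log \Omega = -\chi$. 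Uniform $C^0$ bounds on $\varphi$ and $\ddt{\varphi}$ follow from maximum principle arguments together with Kolodziej-type $L^\infty$ estimates applied to the semi-ample limiting class $[\hat\omega_T]=\pi^*[\omega_Y]$, as in \cite{TZha, SoT3}.

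The core of the proof is a uniform metric equivalence estimate on compact subsets of $X\setminus \PP^m$, which then upgrades to smooth convergence of $g(t)$ on $X\setminus \PP^m$ via standard local parabolic regularity. To obtain this estimate I would work in a neighborhood of $\PP^m$ isomorphic to the local model $\EE^- = \OO_{\PP^m}(-1)^{\oplus(n-m)}$ and consider the restriction of $\omega(t)$ to each leaf of the foliation $L^-_{[\xi]}$, which by the lemma in Section 2.2 is biholomorphic to $\CC^{m+1}$ blown up at a single point. By Lemma \ref{lcompa}, on each leaf the background forms reduce to those of the divisorial contraction of a single $\PP^m$ with normal bundle $\OO_{\PP^m}(-1)$, and the restricted K\"ahler class $[\omega(t)|_{L^-_{[\xi]}}]$ degenerates to the pullback of the class contracting the zero section. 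The hypothesis $m\geq n/2$ (equivalently, $n-m-1< m$) is exactly what makes the K\"ahler class on the leaves remain in the regime covered by the scalar estimates of \cite{SW2}. Applying those estimates in this parametrized family, and then combining a leafwise upper bound with an Aubin-Yau type second-order estimate transverse to the foliation, produces the desired uniform equivalence $C^{-1} \omega_- \leq \omega(t) \leq C \omega_-$ on compact sets of $\Omega \setminus E^-$.

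The hardest step will be the upper metric bound near $\PP^m$, because the leaves $L^-_{[\xi]}$ are non-compact with non-rational induced K\"ahler classes, so one must carefully track constants uniformly in $[\xi]\in\PP^{n-m-1}$ and in $t\to T$, and the parametrized Song-Weinkove type argument must be coupled with the transverse Laplacian estimate to control the off-leaf directions. Once this is achieved, item (1) follows from Schauder theory, and item (2) is proved by showing that the intrinsic diameter $\mathrm{diam}(\PP^m, g(t)|_{\PP^m})\to 0$ (since $[g(t)]|_{\PP^m}\to 0$ as $[\hat\omega_T]$ is the pullback from $Y$), so that $\PP^m$ collapses to the image point $\pi(\PP^m)\in Y$ in the metric completion; a local argument identifies the completion homeomorphically with $Y$ near this point, while away from $\PP^m$ the homeomorphism is induced by $\pi$ itself.

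Finally, for item (3) I would construct explicit $\epsilon$-approximations between $(X,g(t))$ and $(Y,d_T)$ by combining the smooth convergence on compact subsets of $X\setminus \PP^m$ with the diameter bound $\mathrm{diam}(\PP^m, g(t))\to 0$: for $t$ close to $T$ and a suitable small neighborhood $U$ of $\PP^m$, points outside $U$ are matched by the identity-like correspondence using smooth convergence, while points inside $U$ are all mapped to $\pi(\PP^m)$ up to an error controlled by $\mathrm{diam}(U, g(t))$, which tends to zero. This yields the Gromov-Hausdorff convergence and completes the proof.
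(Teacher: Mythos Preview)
Your setup and the overall outline are right, but the core analytic step has a genuine gap. The restriction $\omega(t)|_{L^-_{[\xi]}}$ does \emph{not} satisfy the K\"ahler-Ricci flow on the leaf: the Ricci curvature of a restricted metric is not the restriction of the ambient Ricci curvature, so the parabolic Monge-Amp\`ere equation does not descend leafwise and the maximum-principle arguments of \cite{SW2} cannot be applied ``in this parametrized family.'' What the paper actually does is different and is the main new idea: one defines the scalar quantity $H(t,p)=\tr_{\hat\omega|_{L^-_{[\xi(p)]}}}\big(\omega(t)|_{L^-_{[\xi(p)]}}\big)$ as a smooth function on the full neighborhood $\Omega\subset X$ and computes $\Box\log H$ with the \emph{ambient} Laplacian $\Delta_{\omega(t)}$ on $X$. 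The calculation (Proposition~\ref{H2}) requires a careful choice of adapted coordinates at each point so that $\tau|_{L^-_{[\xi]}}$ is the identity and the leafwise block of $g$ is diagonal, followed by a Cauchy--Schwarz argument on the third-order terms; the output is $\Box\log H\leq \tr_\omega(\theta_+)$, which can then be balanced against $\Box\rho=-\tr_\omega(\theta_-+\theta_+)$ to run the maximum principle on $\Omega$. This is a global parabolic Schwarz-type lemma along a foliation, not a leafwise application of \cite{SW2}, and your proposal does not contain this mechanism.

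There is a second gap in the geometric part. Controlling $\mathrm{diam}(\PP^m,g(t)|_{\PP^m})\to 0$ alone is not enough to show that small $g(t)$-neighborhoods of $\PP^m$ have small diameter, because the restricted diameter says nothing about normal directions. The paper uses \emph{both} foliations $\{L^-_{[\xi]}\}$ and $\{L^+_{[\eta]}\}$: given two points $p,q$ in an annulus $\mathcal A_{r,2r}$, one chooses an intermediate point $o$ with $o,p\in L^+_{[\eta]}$ and $o,q\in L^-_{[\xi']}$, and then invokes the leafwise trace bounds together with the vector-field estimate $|W_\pm|^2_\omega\leq Ce^{-\rho/2}$ (Proposition~\ref{keyest2}) to produce short paths inside each leaf via the argument of \cite{SW3}. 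Without the dual foliation and the radial vector-field estimate, you cannot connect arbitrary nearby points by short curves, and the metric-completion and Gromov--Hausdorff statements do not follow.
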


The rest of  section 3 is devoted to proving Theorem \ref{minus1}. We first set up the complex Monge-Amp\`ere equation associated to the K\"ahler-Ricci flow on $X$. Let $\Theta$ be a smooth volume form on $X$ and let $\chi = \ric(\Omega) = \ddbar \log \Omega$ be the closed $(1,1)$ form in $[K_X]$. We define
$$\omega_t = \omega_0 + t \chi.$$
By the assumption in Theorem \ref{minus1}, $[\omega_t]$ is K\"ahler for all $t\in [0, T)$ and $[\omega_T]$ is the pullback of an ample class on $Y$. Without loss of generality, we can choose $\Theta$ such that $\omega_t$ is a Kaher form for $t\in [0, T)$ and $\omega_T$ is the pullback of the Fubini-Study metric of some embedding $\iota: Y \rightarrow \PP^N$ by a sufficiently large power of $[\omega_T]$.
Then the complex parabolic Monge-Amp\`ere equation associated to the K\"ahler-Ricci flow (\ref{krflow}) is given by
\begin{equation}\label{maeqn1}
\ddt{}\varphi = \log \frac{(\omega_t + \ddbar \varphi)^n}{\Theta}, ~~\varphi|_{t=0} = 0.
\end{equation}
The equation (\ref{maeqn1}) has a smooth solution $\varphi(t)\in PSH(X, \omega_t)$ for all $t\in [0, T)$.

The following proposition is well-known and is due to \cite{TZha, Zha, SoT1, SW1}. An elementary proof can also be found in \cite{SW2} (cf. Lemma 2.1).

\begin{proposition} Let $\varphi=\varphi(t)$ be the solution of (\ref{maeqn1}) and $\omega= \omega_t + \ddbar \varphi$. There exists $C>0$ such that  for all $t\in [0, T)$,

\begin{enumerate}

\item $ ||\varphi||_{L^\infty(X)} \leq C. $

%\item $\dot \varphi \leq C$.

\item $\omega^n \leq C \Theta$.

\item $\omega\geq C^{-1}\omega_T$.

\item As $t\rightarrow T$, $\varphi(t)$ converges pointwise on $X$ to $\varphi(T)\in PSH(X, \omega_T)\cap L^\infty(X)$. In particular, $\varphi(T)$ is constant on $\PP^m$ and $\omega$ converges in distribution sense to a K\"ahler current $\omega(T) = \omega_t + \ddbar \varphi(T)$ on $X$ as $t\rightarrow T$.

\end{enumerate}

\end{proposition}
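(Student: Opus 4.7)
The plan is to establish the four estimates via standard maximum-principle arguments adapted to the degenerate limiting class $[\omega_T]$, in the spirit of \cite{TZha, SoT1, SW1} (and following the template of Lemma 2.1 of \cite{SW2}). The key structural input is that $\omega_T = \pi^*\omega_{FS}$ is a \emph{smooth}, semipositive $(1,1)$-form on $X$ whose only degeneracy occurs along $\PP^m$; in particular $\omega_T$ is nef and big, has uniformly bounded bisectional curvature, and $\omega_T^n$ vanishes exactly along $\PP^m$.

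For the upper bound on $\dot\varphi$ and the volume bound (2), I would apply the maximum principle to an auxiliary combination of $\dot\varphi$, $\varphi$ and $t$. Using the evolution identities $\ddot\varphi = \Delta_\omega \dot\varphi + \tr_\omega \chi$ and $\tr_\omega \ddbar\varphi = n - \tr_\omega \omega_t$, an appropriate combination becomes a supersolution of the parabolic operator $\partial_t - \Delta_\omega$; the maximum principle then produces $\dot\varphi \leq C$ uniformly, which is (2), and integrating in $t$ gives the upper bound in (1). The lower $L^\infty$ bound in (1) cannot be extracted from the maximum principle alone because $[\omega_T]$ is not K\"ahler; instead one invokes the Kolodziej-Zhang $L^\infty$ estimate for degenerate complex Monge-Amp\`ere equations \cite{Zha, SoT1}, which applies since $[\omega_T]$ is nef and big and the Monge-Amp\`ere mass $\omega^n$ is uniformly bounded by Step 1.

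For (3), apply the parabolic Schwarz lemma to the identity map $(X,\omega(t))\to(X,\omega_T)$. Since $\omega_T$ is smooth with bounded bisectional curvature, the standard Aubin-Yau-Chern-Lu computation yields
\begin{equation*}
(\partial_t - \Delta_\omega)\log \tr_\omega \omega_T \leq C_0 \, \tr_\omega \omega_T .
\end{equation*}
For $A > C_0$, the function $H = \log \tr_\omega \omega_T - A\varphi$ satisfies $(\partial_t - \Delta_\omega)H \leq (C_0 - A)\tr_\omega \omega_T + An - A\dot\varphi$; combining with the bounds of the previous step, the maximum principle forces $\tr_\omega \omega_T \leq C$. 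Together with $\omega^n \leq C\Theta$ and the arithmetic-geometric mean inequality, this yields $\omega \geq C^{-1}\omega_T$.

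Finally for (4), a further maximum-principle computation shows that the quantity $(T-t)\dot\varphi + \varphi + nt$ satisfies $(\partial_t - \Delta_\omega)[(T-t)\dot\varphi + \varphi + nt] = \tr_\omega \omega_T \geq 0$, providing the monotonicity needed to deduce, together with the uniform $L^\infty$ bound on $\varphi$, the existence of a pointwise limit $\varphi(T) \in L^\infty(X) \cap PSH(X,\omega_T)$; the currents $\omega(t)$ then converge weakly to $\omega_T + \ddbar \varphi(T)$. Since $\omega_T$ vanishes on the tangent bundle of $\PP^m$, the restriction $\varphi(T)|_{\PP^m}$ is a bounded ordinary plurisubharmonic function on the compact manifold $\PP^m$, hence constant. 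The main obstacle is the lower $L^\infty$ bound in (1): it is the only step that genuinely transcends the parabolic maximum principle and requires pluripotential theory, exploiting the big-and-nef structure of $[\omega_T]$ that comes from the algebraic contraction $\pi: X \to Y$.
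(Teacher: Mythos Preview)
Your proposal is correct and reconstructs exactly the standard argument the paper has in mind: the paper does not supply its own proof but simply attributes the result to \cite{TZha, Zha, SoT1, SW1} and points to Lemma~2.1 of \cite{SW2}, whose template you have followed step by step.

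Two minor remarks on presentation. In (3), the Schwarz-lemma computation is cleanest if phrased as being applied to the holomorphic map $\iota\circ\pi:(X,\omega(t))\to(\PP^N,\omega_{FS})$ rather than to the ``identity map'' with target $\omega_T$; this sidesteps any worry about curvature of the degenerate form $\omega_T$, since the relevant upper bisectional-curvature bound is that of the genuine K\"ahler metric $\omega_{FS}$ on $\PP^N$. Also, once you have $\tr_\omega\omega_T\le C$ the inequality $\omega\ge C^{-1}\omega_T$ follows by elementary linear algebra alone; the volume bound and AM--GM are not needed at that point.
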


Using Tsuji's trick (\cite{Ts, SoT1, TZha}) and the third order estimates (\cite{Y1, PSS}), one has the following immediate corollary.

\begin{corollary}\label{reduction1} With the assumptions in Theorem \ref{minus1}, for every compact set $K\subset X \setminus \PP^m$, there exist constants $C_{K, k}$ for $k=0, 1, ...,  $such that
$$||\omega||_{C^k(K)}\leq C_{K,k}, $$
where $C^k$ norm is taken with respect to a fixed K\"ahler metric on $X$. Therefore $\omega$ converges to a smooth K\"ahler metric $\omega (T)$ smoothly on $X\setminus \PP^m$ as $t\rightarrow T$.

\end{corollary}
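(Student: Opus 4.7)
The plan is to extract the corollary from the three bounds in the preceding proposition ($L^\infty$ bound on $\varphi$, volume upper bound $\omega^n \le C\Theta$, and Schwarz-type lower bound $\omega \ge C^{-1}\omega_T$) by running the standard localization argument on any compact $K \subset X\setminus \mathbb{P}^m$. The point is that $\omega_T$, being the pullback of a K\"ahler form on $Y$ by the birational morphism $\pi: X \to Y$ that contracts $\mathbb{P}^m$, degenerates only along $\mathbb{P}^m$; thus on $K$ one has $\omega_T \ge c_K \omega_X$ for any fixed background K\"ahler metric $\omega_X$ on $X$, and the lower bound from the proposition upgrades to $\omega \ge c_K' \omega_X$ on $K$.

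To promote the $C^0$ lower bound to a $C^2$ upper bound, the plan is to run Tsuji's trick in parabolic form: pick a slightly larger compact set $K \subset K' \subset\subset X\setminus\mathbb{P}^m$ together with a cutoff $\eta$ supported in $K'$ and equal to $1$ on $K$, and apply the maximum principle to
\[
Q = \eta \log \tr_{\omega_X} \omega - A\varphi
\]
for $A$ large. The computation of $(\partial_t - \Delta_\omega)\log \tr_{\omega_X}\omega$ via the parabolic Chern--Lu / Aubin--Yau inequality produces a bad term involving the bisectional curvature of $\omega_X$, which is absorbed by the $-A\varphi$ term since $\omega \ge c_K' \omega_X$ on the support of $\eta$; the $L^\infty$ bound on $\varphi$ then closes the estimate. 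Combined with the volume upper bound $\omega^n \le C\Theta$, this gives $c_K\,\omega_X \le \omega \le C_K\,\omega_X$ on $K$, so the Monge--Amp\`ere equation (\ref{maeqn1}) is uniformly parabolic on $K$.

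Once uniform parabolicity is in hand, one invokes the Calabi third-order / Phong--Sesum--Sturm estimate on a nested pair $K \subset K''\subset K'$ to control $|\nabla \omega|$, and bootstraps through standard parabolic Schauder theory to obtain $\|\omega\|_{C^k(K)} \le C_{K,k}$ for all $k$. With these bounds, Arzel\`a--Ascoli and the pointwise convergence $\varphi(t) \to \varphi(T)$ from the proposition upgrade to smooth convergence $\omega(t) \to \omega(T)$ on $X\setminus \mathbb{P}^m$, with $\omega(T)$ a smooth K\"ahler form there.

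The main obstacle is the localization in Tsuji's trick: we do not have a global $C^2$ bound (and indeed none can hold, since $\omega$ will develop singularities along $\mathbb{P}^m$), so the cutoff has to be chosen carefully so that the commutator terms $\Delta_\omega \eta \cdot \log \tr_{\omega_X}\omega$ arising when $\nabla \eta \neq 0$ are dominated by the barrier $-A\varphi$ on the annular region where $\eta$ transitions from $1$ to $0$. This is handled by using $-A\varphi + B\eta$-type barriers, or more cleanly by exhausting $X\setminus \mathbb{P}^m$ by the sublevel sets of a quasi-psh function with $-\infty$ pole along $\mathbb{P}^m$ (for instance $\log|s|^2$ for a defining section of the exceptional divisor of the blowup), and is by now standard; all remaining work is routine bootstrapping.
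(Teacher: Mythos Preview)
Your approach is the same as the paper's, which simply cites Tsuji's trick \cite{Ts, SoT1, TZha} together with the third-order estimates \cite{Y1, PSS} and says nothing further.

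Two small corrections are worth making. First, the cutoff formulation $Q=\eta\log\tr_{\omega_X}\omega-A\varphi$ is not how Tsuji's trick is actually run: the cross term $-2\,\mathrm{Re}\langle\nabla\eta,\nabla\log\tr_{\omega_X}\omega\rangle_\omega$ involves third derivatives of $\varphi$ and cannot be absorbed by $-A\varphi$, so this version does not close. Your final remark about the barrier is the correct mechanism; one applies the maximum principle \emph{globally} to $\log\tr_{\omega_X}\omega-A\varphi+A\epsilon\psi$ with $\psi$ quasi-psh, $\psi\to-\infty$ along $\mathbb{P}^m$, chosen so that $\omega_t+\epsilon\,\ddbar\psi\geq\delta\,\omega_X$. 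Second, since $\mathbb{P}^m$ has codimension $n-m\geq 2$ here, there is no ``defining section of the exceptional divisor'': one instead produces $\psi$ either from Kodaira's lemma applied to the big nef class $[\omega_T]$, or directly in local coordinates near $\mathbb{P}^m$ via $\psi=\rho=\log\bigl((1+|z|^2)|\xi|^2\bigr)$, which is plurisubharmonic with $\ddbar\rho=\theta_-+\theta_+\geq 0$ and has the required $-\infty$ pole along the zero section.
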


Corollary \ref{reduction1} reduces the proof of Theorem \ref{minus1} to any sufficiently small neighborhood of the exceptional locus $\PP^m$. Without loss of generality, we can identify a small neighborhood of $\PP^m$ in $X$ by $\Omega$ defined as in (\ref{defome})  in the normal bundle $\EE^-$ of $\PP^m$.

%%%%%%%%%%%%%%%%%%%%%%%%%%%%%%%%%%%%%%%%%%

\bigskip

\subsection{Second order estimates for holomorphic foliations }

In this section, we will prove a partial second order estimates. The original Yau's second order estimate will fail in the setting of the Theorem \ref{minus1} since the metric must blow up in the normal direction of $\PP^m$. Tsuji's trick \cite{Ts} can get a local second order estimates outside the exceptional locus, however, it does not produce a quantitive estimate near the singular set. One cannot directly apply the tricks in \cite{SW2, SW3} because the singular set $\PP^m$ has high codimension and the maximum principle does not work. Our idea is to estimate the evolving metrics of the K\"ahler-Ricci flow in a well-chosen  set of directions in the tangent space of each point on $X$ instead of all directions. We will use the holomorphic foliation constructed in section 2 (c.f. \cite{So3}) so that locally, one is able to reduce the high codimensional problem to a codimensional $1$ case. Here we exploit the algebraic and geometric structure of smooth flips studied in section 2.

We now can work on the normal bundle $\EE^-= \OO_{\PP^m}(-1)^{\oplus(l+1)}$ for $l=n-m-1$. We keep the same notations introduced in section 2. By choosing the coordinates $(z, \xi)$ appropriately, we can start doing estimates on $\Omega$ defined in (\ref{defome}) and we write $E^-\simeq \PP^m$ be the exceptional locus of the small contraction $\phi_-$.

\begin{definition} We define for $t\in [0, T)$ and $p\in \Omega$.
\begin{equation}
H (t, p )= \tr_{\hat\omega |_{L^-_{[\xi]}}} (\omega (t) |_{L^-_{[\xi]}} ) (p),
\end{equation}
where $p=(z, \xi)\in \EE^-.$

\end{definition}

Here $\hat\omega$ and $\omega(t)$ are restricted to $L^-_{[\xi]}$ as smooth real closed $(1,1)$-forms. $H$ can also be expressed as
$$ H(t, \cdot)  = \frac{ \omega(t) \wedge \hat\omega^m \wedge dw_1 \wedge d\bar w_1\wedge ... \wedge dw_l \wedge d\bar w_l }{\hat\omega^{m+1} \wedge dw_1 \wedge d\bar w_1\wedge ... \wedge dw_l \wedge d\bar w_l}. $$
We compare $\omega(t)$ to $\hat\omega$ in the fibre of the holomorphic foliation $L^-_{[\xi]}$ determined by the $\xi$ coordinates, or equivalently the fibre of $\phi_+$ over $[\xi]\in \PP^l$.

We first derive a lemma on the behavior of $\omega(t)$ near the boundary of  $\Omega$. This will allow us to apply the maximum principle.

\begin{lemma}\label{H1} $H \in C^\infty( \Omega)$  for all $t\in [0, T)$ and $p\in \Omega$ with
\begin{equation}\label{exH}
H(t, p) = (1+|w|^2)^{-1} \tr_{\tau |_{L^-_{[\xi]}}} (\omega(t)|_{L^-_{[\xi]}}),
\end{equation}
where $p=(z, \xi)\in \EE^-$. Furthermore,

\begin{enumerate}

\item for all $t\in [0, T)$, $$\sup_{\Omega} e^{\rho} H(t, \cdot) < \infty ;$$

\item  there exists $C>0$ such that for  all $t\in [0, T)$,

$$ \sup_{\partial \Omega\setminus E^-} e^{\rho} H (t, \cdot)  \leq C.$$

\end{enumerate}

\end{lemma}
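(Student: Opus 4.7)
The plan is to derive identity~(\ref{exH}) from Lemma~\ref{locm1} first, reducing $H$ to a trace against the flat Euclidean reference form $\tau|_{L^-_{[\xi]}}$ scaled by $(1+|w|^2)^{-1}$; smoothness on $\Omega$ and both estimates then follow. By Lemma~\ref{locm1}, on each leaf $L^-_{[\xi]}$ with $[\xi]=[1,w]$ one has $\hat\omega|_{L^-_{[\xi]}}=(1+|w|^2)\,\tau|_{L^-_{[\xi]}}$, and since $\Omega=\{e^\rho<1\}$ excludes the zero section $E^-$, both forms are strictly positive definite on $\Omega\cap L^-_{[\xi]}$. Comparing traces of the two conformally related forms yields (\ref{exH}), and smoothness of $H(t,\cdot)$ on $\Omega$ follows from the smoothness of $\omega(t)$ on $X$ together with the non-degeneracy of the reference form on $\Omega$.

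For estimate (2), the set $\partial\Omega\setminus E^-=\{e^\rho=1\}$ is a compact smooth sphere bundle over $\PP^m$ lying in $X\setminus\PP^m$. Corollary~\ref{reduction1} supplies uniform $C^\infty$ bounds on $\omega(t)$ over this compact set for $t\in[0,T)$, and since $\hat\omega|_{L^-_{[\xi]}}$ is smooth and strictly positive on this set (it degenerates only on $E^-$), continuity and compactness uniformly bound its inverse. Hence $H(t,\cdot)$ is uniformly bounded there, and as $e^\rho=1$ on this set the estimate follows.

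For estimate (1), fix $t\in[0,T)$, so $\omega(t)$ is smooth on all of $X$. I would cover $\overline\Omega$ by finitely many charts of $\PP^m$ with inhomogeneous coordinates $z$ uniformly bounded, and in each chart apply (\ref{exH}) on the leaf parameterized by $(z,\xi_0)$ via $\xi_\alpha=w_\alpha\xi_0$. A direct block-matrix calculation gives $\det\tau|_{L^-_{[\xi]}}=|\xi_0|^{2m}$ in $(z,\xi_0)$ coordinates (the squared modulus of the Jacobian of $\nu=\xi_0(z,1)$), so its inverse has entries of order $|\xi_0|^{-2}(1+|z|^2)$ as $\xi_0\to 0$. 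Since the components of $\omega(t)|_{L^-_{[\xi]}}$ in $(z,\xi_0)$ are smooth bounded functions on the compact closure of the chart, one obtains $\tr_{\tau|_{L^-_{[\xi]}}}\omega(t)|_{L^-_{[\xi]}}=O(|\xi_0|^{-2}(1+|z|^2))$. The weight $e^\rho(1+|w|^2)^{-1}=(1+|z|^2)|\xi_0|^2$ absorbs the $|\xi_0|^{-2}$ singularity exactly, yielding $e^\rho H=O((1+|z|^2)^2)$, bounded on each chart; a finite covering then gives $\sup_\Omega e^\rho H(t,\cdot)<\infty$. The principal subtlety, and the justification for including the weight $e^\rho$ in the statement, is precisely this matching of orders: the leaf-wise trace blows up like $|\xi_0|^{-2}$ near $E^-$ while $e^\rho$ vanishes at exactly this rate, making $e^\rho H$ the correct quantity to control via the maximum principle in the subsequent partial second-order arguments.
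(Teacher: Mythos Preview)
Your derivation of identity~(\ref{exH}) and your proof of estimate~(2) match the paper's argument exactly: both follow from Lemma~\ref{locm1} and Corollary~\ref{reduction1} respectively.

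For estimate~(1) the paper takes a shorter route. Rather than computing the inverse of $\tau|_{L^-_{[\xi]}}$ in leaf coordinates, it invokes Lemma~\ref{lcompa}(1), which gives the global pointwise bound $\omega_-\le Ce^{-\rho}\hat\omega$ on $\Omega$. Since for fixed $t$ the smooth K\"ahler form $\omega(t)$ is comparable to the reference K\"ahler form $\omega_-$ on the compact set $\overline\Omega$, one gets $\omega(t)\le C_t e^{-\rho}\hat\omega$, and restricting to any leaf yields $H\le (m+1)C_t e^{-\rho}$ immediately. Your coordinate computation is essentially an unpacking of Lemma~\ref{lcompa}(1) on each leaf, and it arrives at the same conclusion; the advantage of the paper's phrasing is that it avoids tracking the $w$-dependence of the leaf components. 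On that point your argument has a small omission: you cover $\overline\Omega$ by charts of $\PP^m$ with $|z|$ bounded, but the components of $\omega(t)|_{L^-_{[\xi]}}$ in $(z,\xi_0)$ coordinates involve the tangent vector $\partial/\partial\xi_0+\sum_\alpha w_\alpha\,\partial/\partial\xi_\alpha$ and hence grow with $|w|$. To make your bound uniform you should also cover $\PP^l$ by finitely many charts with $|w|$ bounded (or, equivalently, use the constraint $|\xi_0|^2(1+|w|^2)\le e^\rho<1$ on $\Omega$ to absorb the $|w|$-growth). With that adjustment your argument is complete.
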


\begin{proof}  The equality (\ref{exH}) follows directly by definition. Then (1) follows by (1) in Lemma \ref{lcompa}, and  (2) follows by Corollary \ref{reduction1}.

\end{proof}

 The following proposition is one of the key estimates.

\begin{proposition}\label{H2}  Let $\Box = \ddt{} - \Delta$ be the linearized operator of equation (\ref{maeqn1}), where $\Delta$ is the Laplace operator with respect to the evolving metric $\omega(t)$.  Then for all $t\in [0, T)$ and $p\in \Omega$, we have
\begin{equation}
\Box  \log H (t, p) \leq \tr_{\omega}(\theta_+),
\end{equation}
where $p=(z(p), \xi(p))$.

\end{proposition}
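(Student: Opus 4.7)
The plan is to decouple the right-hand side $\tr_\omega\theta_+$ from $H$ at the outset by extracting a multiplicative factor, and then show that the remaining vertical partial trace satisfies a Schwarz-type inequality $\Box\log(\,\cdot\,)\le 0$. In the coordinates $(w_1,\dots,w_l,\nu_1,\dots,\nu_{m+1})$ of Section~2.2 adapted to the foliation $\{L^-_{[\xi]}\}$, Lemma~\ref{locm1} and formula (\ref{exH}) give
\[
H(t,p)=(1+|w(p)|^2)^{-1}H'(t,p),\qquad H'(t,p):=\sum_{i=1}^{m+1}g_{i\bar i}(t,p),
\]
where $g_{I\bar J}$ denotes the components of $\omega(t)$ in the $(w,\nu)$-coordinates and the sum runs over the vertical indices. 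Because $w(p)$ is $t$-independent and $\theta_+=\ddbar\log(1+|w|^2)$, applying $\Box=\partial_t-\Delta$ directly yields
\[
\Box\log H \;=\; \tr_\omega\theta_+ \,+\, \Box\log H',
\]
so the proposition reduces to the pointwise bound $\Box\log H'\le 0$.

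To prove the reduced inequality I would compute $\Box g_{i\bar i}$ for each vertical index $i$ via the standard K\"ahler curvature identity $R_{i\bar i K\bar L}=-\partial_K\partial_{\bar L}g_{i\bar i}+g^{P\bar Q}\partial_Kg_{i\bar Q}\partial_{\bar L}g_{P\bar i}$. Tracing and using the Ricci-flow equation $\partial_t g_{i\bar i}=-R_{i\bar i}$, the $R_{i\bar i}$-terms cancel and one obtains the manifestly non-positive expression
\[
\Box g_{i\bar i}\;=\;-g^{K\bar L}g^{P\bar Q}\,\partial_K g_{i\bar Q}\,\partial_{\bar L}g_{P\bar i}\;\le\;0,
\]
summing in $i$ to yield $\Box H'\le 0$. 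The identity $\Box\log H'=\Box H'/H'+|\nabla H'|^2_\omega/(H')^2$ then converts the claim into the pointwise Cauchy--Schwarz-type estimate $|\nabla H'|^2_\omega\le -H'\,\Box H'$. To verify this, fix $p\in\Omega$ and note that within coordinate changes preserving the foliation one can diagonalise $g$ at $p$: a linear holomorphic shift $\tilde\nu_i=\nu_i-f_i(w)$ with $\partial_{w_a}f_i(w(p))$ chosen so that the mixed block $g_{w\bar\nu}$ vanishes at $p$, followed by linear changes within the $w$- and $\nu$-variables separately diagonalising the resulting Schur complement and vertical block, produces foliation-compatible coordinates in which $g_{I\bar J}(p)=\lambda_I\delta_{IJ}$. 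In such coordinates the weighted Cauchy--Schwarz estimate $\bigl|\sum_i\partial_K g_{i\bar i}\bigr|^2\le H'\sum_i|\partial_K g_{i\bar i}|^2/\lambda_i$ bounds $|\nabla H'|^2_\omega$ from above by $H'\sum_{i,K}\lambda_K^{-1}\lambda_i^{-1}|\partial_K g_{i\bar i}|^2$, while keeping only the $P=i$ diagonal contribution in $-\Box H'$ yields the matching lower bound.

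The conceptual reason for the clean answer $\tr_\omega\theta_+$ is that once the $(1+|w|^2)^{-1}$ prefactor is extracted, $H'$ is taken against the \emph{flat} reference form $\tau|_{L^-_{[\xi]}}$ on each leaf (Lemma \ref{locm1}): the bisectional-curvature term that ordinarily obstructs a Schwarz-lemma inequality vanishes identically, leaving only the Ricci-flow cancellation of $R_{i\bar i}$ and a Cauchy--Schwarz. The main technical obstacle is to arrange foliation-compatible coordinates that diagonalise $g$ at a point without spoiling the horizontal/vertical split, so that the partial-trace structure defining $H'$ is preserved throughout the computation.
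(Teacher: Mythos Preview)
Your proposal is correct and follows essentially the same route as the paper's proof. You extract the factor $(1+|w|^2)^{-1}$ to produce the $\tr_\omega\theta_+$ term and reduce to $\Box\log I\le 0$ for $I=H'=\tr_{\tau|_{L^-_{[\xi]}}}(\omega|_{L^-_{[\xi]}})$; the paper does exactly this. Your curvature identity for $\Box g_{i\bar i}$, the foliation-preserving linear coordinate change that kills the mixed block and diagonalises $g$ at $p$, and the final weighted Cauchy--Schwarz keeping only the $P=i$ term all match the paper's Steps~1--3; the only cosmetic difference is that the paper writes the change of variables as a constant block-triangular matrix $A=\begin{pmatrix}A'&A''\\0&I_l\end{pmatrix}$ with $A'$ unitary, which guarantees explicitly that $\tau|_{L^-_{[\xi]}}$ remains the identity and hence that $I$ is still $\sum_i g_{i\bar i}$ in the new coordinates---a point you note implicitly when you say the partial-trace structure is preserved.
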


\begin{proof} We define $$I =(1+|w|^2)H = \tr_{\tau|L^-_{[\xi]}} (\omega|_{L^-_{[\xi]}}).$$  It suffices to show that
$$\Box \log I \leq 0$$
since $\theta_+=\ddbar \log (1+|w|^2) = \ddbar \log |\xi|^2.$
We break the proof into the following steps.

\bigskip

\noindent {\it \large  Step 1.} We first make a choice of special coordinates. On $\Omega$, we have the standard local coordinates with Calabi symmetry section 2, i.e., for each $p \in \Omega$,  we have at $p$, $ (z(p), \xi (p))$.  Once we fix $p$, there exists a unique $[1, w]=[\xi]\in \PP^l$ such that $p\in L^-_{[\xi]}$.

\begin{enumerate}

\item[(a)] Near $p \in \Omega$, we first choose the coordinates $(\nu, w)= (\nu_1, ..., \nu_{m+1}, w_1, ..., w_l)$, where $$\nu_1= \xi_0 z_1, ~...~, ~ \nu_m=\xi_0 z_m, ~\nu_{m+1} = \xi_0$$  as in Lemma \ref{locm1} . We will apply a linear transformation to $(\nu, w)$ such that  $$x=(x_1, x_2, ..., x_{m+l+1})^T= A^{-1} (\nu, w)^T.$$   Let $$x'=(x_1, ..., x_{m+1}), ~ x''= (x_{m+2}, ..., x_{m+l+1}). $$  We assume that $A$ is in the form of
    $$\left(
        \begin{array}{cc}
           A' & A'' \\
          0 & I_{l} \\
        \end{array}
      \right), $$ where $A'$ is an $(m+1)\times (m+1)$ matrix and $A''$ is an $(m+1) \times m$ vector. Immediately, we have $$x'' = w.$$

\item[(b)]  Suppose $g(t)$ at $(t, p)$ is given by the following hermitian matrix with respect to coordinates $(\nu, w)$

$$ G= \left(
        \begin{array}{cc}
          B' & B'' \\
          \overline{B''}^T & C \\
        \end{array}
      \right),$$ where $B'$ is an $(m+1)\times (m+1)$ hermitian matrix, $B''$ an $(m+1)\times l$ vector.   Then under the new coordinates $x$, $g(t)$ at $p$ is given by the following hermitian matrix

    \begin{eqnarray*}
    \bar A^T G A &=& \left(
                      \begin{array}{ccc}
                        \overline{A'}^T  B'  A' & & \overline{A'}^T B' A'' + \overline{A'}^T B'' \\
                        && \\
                        \overline{A''}^T B' A' + \overline{B''}^T A' & & \overline{A''}^T B' A'' + \overline{A''}^T B'' + \overline{B''}^T A'' + C  \\
                      \end{array}
                    \right) \\
                    &&\\
    &=& \left(
                      \begin{array}{ccc}
                        \overline{A'}^T  B'  A' & & \overline{A'}^T (B' A'' + B'') \\
                        &&\\
                        (\overline{A''}^T B' + \overline{B''}^T) A' &&  \overline{A''}^T B' A'' + 2Re(\overline{A''}^T B'' )+ C  \\
                      \end{array}
                    \right).
    \end{eqnarray*}

\item[(c)] We choose a unitary matix $A'$ such that $\overline{A'}^T B A'$ is diagonalized, i.e., $$ \left(
             \begin{array}{cccc}
               \lambda_1     & 0                  & ... & 0\\
               0                    & \lambda_2   & ...  & 0\\
                                     & ...                 & ...  &   \\
                              0     &  0                 & ...   & \lambda_{m+1}
             \end{array}
           \right) $$
     and choose $A''$ such that $$ B'A''= -B''$$ since $B'$ has rank $ m+1$. Therefore under the coordinates $x$, at $(t, p)$,

$$g= \left(
             \begin{array}{ccccc}
               \lambda_1     & 0                  & ... & 0                       & \\
               0                    & \lambda_2   & ...  & 0                      & O_{(m+1)\times l} \\
                                     & ...                 & ...  &                         & \\
                              0     &  0                 & ...   & \lambda_{m+1}&  \\
                                 &    O_{l\times (m+1)}             &  &               & -\overline{A''}^T B' A'' + C
             \end{array}
           \right) .
$$
The matrix representation of $\tau$ under the coordinates $(\nu, w)$ is given by  $$ \left(
             \begin{array}{ccccc}
     I_{m+1}     & O_{(m+1)\times l}            \\
    O_{l\times (m+1) }            &  O_{l \times l }
                 \end{array}
           \right) ., $$
and so  its matrix representation under the coordinates $x$ at $(t, p)$ is given by              $$\tau= \overline{A}^T  \left(
             \begin{array}{ccccc}
     I_{m+1}     & O_{(m+1)\times l}            \\
    O_{l\times (m+1) }            &  O_{l \times l }
                 \end{array}
           \right) A= \left(
                         \begin{array}{ccc}
                           I_{(m+1)\times (m+1)} & \overline{A'}^T A''  \\
                           \overline{A''}^T A' &  \overline{A''}^T A'' \\
                         \end{array}
                       \right)
$$ since $A'$ is unitary. Since $x''=w$ and on $L^-_{[\xi]}$, $x''$ is constant and $$\omega|_{L^-_{[\xi]}\cap \Omega} =\sqrt{-1} \sum_{i, j=1}^{m+1} g_{i\bar j} dx^i \wedge d\overline{x^j}.$$  Then at $(t, p)$, $$g|_{L^-_{[\xi]}} = diag( \lambda_1, ..., \lambda_{m+1}), ~~~~~~~\tau |_{L^-_{[\xi]}} = I_{m+1}$$
Finally, we arrive at
$$ I(t, p) = \sum_{i, j=1, ..., m+1}(\tau |_{L^-_{[\xi]}}) ^{i\bar j} (g|_{L^-_{[\xi]}})_{i\bar j} = \lambda_1 + \lambda_2+ ... + \lambda_{m+1} .$$

 \end{enumerate}

%%%%%%%%%%%
\noindent {\it \large Step 2.}   Now we calculate $\Box  ~I$ at $(t, p)$ under the coordinates $x$.  Notice that $\tau|_{L^-_{[\xi]}}$ is a constant form $\sqrt{-1} (dx_1\wedge d \overline x_1 + dx_2 \wedge d\overline x_2 + ... + dx_{m+1} \wedge d \bar x_{m+1} ) $ and so  all derivatives of $\tau|_{L^-_{[\xi]}}$ vanish.

We now apply the Laplace operator $\Delta$ to  $I$.
\begin{eqnarray*}
&&\Delta  \tr_{\tau|_{L^-_{[\xi]}}}(\omega|_{L^-_{[\xi]}}) \\
&=& \sum_{r, s=1}^{m+l+1}g^{r\bar s} \left( \sum_{i, j=1, ..., m+1} \left( \tau|_{L^-_{[\xi]}} \right)^{i\bar j} g_{i\bar j} \right)_{r\bar s}\\
&=&\sum_{r, s=1}^{m+l+1} \sum_{i, j=1, ..., m+1} g^{ r \bar s} \left( \tau|_{L^-_{[\xi]}} \right)^{i\bar j} g_{i\bar s, r \bar s} - \sum_{r, s=1}^{m+l+1}\sum_{i, j, p, q=1, ..., m+1}g^{r \bar s }g_{i\bar j} \left( \tau|_{L^-_{[\xi]}} \right)^{i\bar q} \left( \tau|_{L^-_{[\xi]}} \right)^{ p\bar j} \tau_{p\bar q, r\bar s} \\
&=& - \sum_{r, s=1}^{m+l+1} \sum_{i, j=1, ..., m+1} g^{r \bar s } \left( \tau|_{L^-_{[\xi]}} \right)^{i\bar j}R_{i\bar j r \bar s } + \sum_{r, s, p, q=1}^{m+l+1}\sum_{i, j =1, ..., m+1}g^{r \bar s} \left( \tau|_{L^-_{[\xi]}} \right)^{i\bar j} g^{p\bar q} g_{p \bar j, \bar s } g_{i\bar q, r}\\
&=& - \sum_{i, j=1, ..., m+1} \left( \tau|_{L^-_{[\xi]}} \right)^{i\bar j} R_{i\bar j} + \sum_{r , s, p,q=1}^{m+l+1}\sum_{i, j=1, ..., m+1}g^{r \bar s} \left( \tau|_{L^-_{[\xi]}} \right)^{i\bar j} g^{p\bar q} g_{p \bar j, \bar s} g_{i\bar q, r}.
\end{eqnarray*}
Then
\begin{eqnarray*}
&& \Box  \log I \\
&=& - ( I)^{-1}  \sum_{r, s=1}^{m+l+1} \sum_{i, j=1, ..., m+1; p, q=1, ..., m+l+1}g^{r\bar s}\left( \tau|_{L^-_{[\xi]}} \right)^{i\bar j} g^{p\bar q} g_{p \bar j, \bar s } g_{i\bar q, r} + (H )^{-2} |\nabla I |^2_g\\
&=&-  (I )^{-2}\sum_{r, s=1}^{m+l+1}\left\{ I   \sum_{i, j=1, ..., m+1; p, q=1, ..., m+l+1}g^{r\bar s} \left( \tau|_{L^-_{[\xi]}} \right)^{i\bar j} g^{p\bar q} g_{p \bar j, \bar s} g_{i\bar q, r}  \right.\\
&&~~~~\left. ~ -   g^{r\bar s}  \left(\sum_{i, j=1, ..., m+1} \left( \tau|_{L^-_{[\xi]}} \right)^{i\bar j} g_{i\bar j, r} \right)  \left(\sum_{i, j=1, ..., m+1}\left( \tau|_{L^-_{[\xi]}} \right)^{j\bar i} g_{j\bar i, \bar s} \right)    \right\}.
\end{eqnarray*}
%
%%%%%%%%%%
\noindent {\it \large Step 3.}   The proof of the proposition is now reduced to show that
\begin{eqnarray*}  && \sum_{r, s=1}^{m+l+1} \left(  I  \sum_{i, j=1, ..., m+1; p, q=1, ..., m+l+1}g^{r\bar s} \left( \tau|_{L^-_{[\xi]}} \right)^{i\bar j} g^{p\bar q} g_{p \bar j, \bar s} g_{ i \bar q, r}  \right. \\
&&~ \left. ~ -   g^{r\bar s} \left(\sum_{i, j=1, ..., m+1} \left( \tau|_{L^-_{[\xi]}} \right)^{i\bar j} g_{i\bar j, r} \right)  \left( \sum_{i, j=1, ..., m+1} \left( \tau|_{L^-_{[\xi]}} \right)^{j\bar i} g_{j\bar i, \bar s} \right)    \right)\geq 0.
\end{eqnarray*}
We can further assume that $g$ is diagonalized at $(t, p)$ by applying a unitary matrix
$$ \left( \begin{array}{ccc}
I_{(m+1)\times (m+1)} & O_{(m+1)\times l} \\
O_{l  \times (m+1)} & D_{l\times l}
\end{array}
\right)
$$
with $D$ being a unitary $m\times m$ matrix such that $ \overline{D}^T ( C - \overline{A''}^T B' A'' ) D $ is diagonal.

Note that $\tau_{i\bar j}=\delta_{ij}$ for $i, j=1, ..., m+1$ and  $g= diag(\lambda_1, ... , \lambda_{m+l+1})$. Then
\begin{eqnarray*}
&&\sum_{r, s=1}^{m+l+1} g^{r\bar s} \left(\sum_{i, j=1, ..., m+1} \left( \tau|_{L^-_{[\xi]}} \right)^{i\bar j} g_{i\bar j, r} \right)  \left(\sum_{i, j=1, ..., m+1} \left( \tau|_{L^-_{[\xi]}} \right)^{j\bar i} g_{j\bar i, \bar s} \right)\\
&=&\sum_{r=1, ..., m+l+1} \lambda_r^{-1} \left|\sum_{i=1, ..., m+1}g_{i\bar i, r} \right|^2\\
&\leq& \sum_{i, j=1, ..., m+1} \left( \sum_{r=1, ..., m+l+1} \lambda_r^{-1} |g_{i\bar i, r}|^2\right)^{1/2}  \left(\sum_{r=1, ..., m+l+1} \lambda_r^{-1} |g_{j \bar j, r}|^2 \right)^{1/2}\\
&=&\left( \sum_{i=1, ..., m+1} \left(\sum_{r=1, ..., m+l+1} \lambda_r^{-1} |g_{i\bar i, r}|^2 \right)^{1/2}  \right)^2\\
&=&\left(   \sum_{i=1, ..., m+1} \lambda_i^{1/2} \left( \sum_{r=1, ..., m+l+1} \lambda_r^{-1}\lambda_i^{-1} |g_{i\bar i, r}|^2 \right)^{1/2}               \right)^2\\
&\leq& \left(\sum_{i=1, ..., m+1} \lambda_i \right) \left( \sum_{i=1, ..., m+1;r=1, ..., m+l+1} \lambda_r^{-1}\lambda_i^{-1} |g_{i\bar i, r}|^2 \right) \\
&\leq& I  \left( \sum_{r, s=1, ..., m+1,; i=1, ..., m+1} \lambda_r ^{-1}\lambda_l^{-1} |g_{i\bar s,  r}|^2 \right) \\
&=& I   \sum_{i, j=1, ..., m+1; r, s, p,q=1, ..., m+l+1}  g^{r\bar s} \left( \tau|_{L^-_{[\xi]}} \right)^{i\bar j}  g^{p\bar q} g_{p\bar j, \bar s} g_{i\bar q, r}  .
\end{eqnarray*}
This completes the proof of the proposition.

\end{proof}

\begin{corollary} \label{roue} There exists $C>0$ such for all  $p\in \Omega$ and  $t\in [0, T)$,
\begin{equation}
  \tr_{\hat\omega|_{L^-_{[\xi]}}} (\omega_{L^-_{[\xi]}} ) (t, p) \leq C e^{-\rho},
 \end{equation}
 where $p=(z(p), \xi(p))$.

\end{corollary}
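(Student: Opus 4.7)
The plan is to recognize that $f := \rho + \log H = \log(e^\rho H)$ is a subsolution of the heat equation, and then apply the parabolic maximum principle. In the Calabi coordinates of Section 2, $\rho = \log(1+|z|^2) + \log|\xi|^2$ so $\ddbar\rho = \theta_- + \theta_+$ and hence $\Box \rho = -\tr_\omega(\theta_-+\theta_+)$. Combining this with Proposition \ref{H2}, the ``bad'' term $\tr_\omega\theta_+$ cancels exactly and we obtain
\[
\Box f = \Box \log H + \Box \rho \le \tr_\omega\theta_+ - \tr_\omega(\theta_-+\theta_+) = -\tr_\omega\theta_- \le 0.
\]
This cancellation is the heart of the argument and is precisely what justifies the leafwise form of $H$ and the specific estimate in Proposition \ref{H2}.

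Having $\Box f \le 0$, I would control the parabolic boundary of $[0,T')\times\overline{\Omega}$ for any $T'<T$. At $t=0$, $f$ is bounded by the smoothness of the initial data. On $\partial\Omega\setminus E^-$, Lemma \ref{H1}(2) gives $f = \log(e^\rho H) \le \log C$ uniformly in $t$. To prevent the supremum from being attained only as a limit along $E^-$ (where $\rho = -\infty$ but $H$ blows up), I would consider the perturbation $\tilde f := (1+\epsilon)\rho + \log H$ for small $\epsilon>0$; this remains a subsolution since the extra term $\epsilon\Box\rho = -\epsilon\tr_\omega(\theta_-+\theta_+)$ is still nonpositive. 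The a priori finite bound $e^\rho H < \infty$ at each $t$ from Lemma \ref{H1}(1) forces $\tilde f \le \epsilon\rho + \mathrm{const}(t) \to -\infty$ as $p\to E^-$ at each fixed time, so $\tilde f$ achieves its supremum on $[0,T']\times\overline{\Omega}$ away from $E^-$. The parabolic maximum principle therefore yields
\[
\sup_{[0,T']\times\overline{\Omega}} \tilde f \;\le\; \max\bigl\{\sup_{\Omega} f(0,\cdot),\; \log C\bigr\},
\]
which is uniform in both $\epsilon$ and $T'$.

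Letting $\epsilon\to 0$ and then $T'\to T$ gives $\sup_{[0,T)\times\Omega}(\rho + \log H)\le C$, i.e., $e^\rho H \le e^C$; this is precisely the desired estimate since $H = \tr_{\hat\omega|_{L^-_{[\xi]}}}(\omega|_{L^-_{[\xi]}})$ by the identity in Lemma \ref{H1}. The main obstacle is already overcome at the first step: engineering the exact cancellation that turns the leafwise Schwarz-type inequality of Proposition \ref{H2} into a genuine heat subsolution, by means of the Calabi-ansatz test function $\rho$. Once this is in place, the remaining argument is standard maximum-principle technology, with the only mild technical point being the $\epsilon\rho$-perturbation needed to exclude the limiting behavior at $E^-$.
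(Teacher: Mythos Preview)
Your proposal is correct and matches the paper's proof essentially line for line: the paper likewise considers $J_\epsilon = \log H + (1+\epsilon)\rho$, uses $\ddbar\rho = \theta_- + \theta_+$ together with Proposition~\ref{H2} to get $\Box J_\epsilon < 0$, invokes Lemma~\ref{H1} to force the maximum onto $\partial\Omega\setminus E^-$ where $e^\rho H$ is uniformly bounded, and then sends $\epsilon\to 0$. Your explicit treatment of the $t=0$ slice and the limit $T'\to T$ is slightly more careful than the paper's terse presentation, but the argument is the same.
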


\begin{proof}  Let $$J_{\epsilon} = \log \tr_{\hat\omega|_{L^-_{[\xi]}}} (\omega_{L^-_{[\xi]}} ) + (1+\epsilon) \rho$$ for $\epsilon>0$.  Then  for all $\epsilon\in (0, 1)$, $\limsup_{p \rightarrow E^-} J_\epsilon = -\infty$ by Lemma \ref{H1},  and on $\Omega$,
$$\Box J_\epsilon < 0,$$
because of Proposition \ref{H2} and the fact that
$$\Box \rho= - \Delta \log (1+|z|^2)|(1+|w|^2)|\xi_0|^2 = - \tr_\omega( \theta_- + \theta_+)>0 . $$

Applying the maximum principle for $J_\epsilon$,  the maximum of $J_\epsilon$ has to be achieved on $\partial \Omega$.  Then by Lemma \ref{H1}, there exists $C>0$ such that for all $\epsilon\in (0, 1)$ and $t\in[0,T)$,
$$\sup_{\Omega} J_\epsilon = \sup_{\partial \Omega\setminus E^-} J_\epsilon \leq \sup_{\partial \Omega \setminus E^-} I  \leq C.$$ The corollary is then proved by letting $\epsilon \rightarrow 0$.

\end{proof}

We define a holomorphic vector $V_-$ on $\Omega$ by
$$ V_-=   \xi_0 \frac{\partial}{\partial \xi_0} +  \xi_1 \frac{\partial}{\partial \xi_1}+... +  \xi_l \frac{\partial}{\partial \xi_l}.$$
$V_-$ vanishes along $E^-$ and
$$|V_-|^2_{\hat \omega} = |V_-|^2_{\omega_-}= e^\rho$$ by straightforward calculations. Direct calculations gives the following lemma.

\begin{lemma} $V_-$ is tangential to $L^-_{[\xi]}$ for each $[\xi] \in \PP^l$. The restriction of $V_-$ to $L^-_{[\xi]}$  is  a holomorphic vector field given by
\begin{equation}
V_-|_{L^-_{[\xi]}} =  \nu_1 \frac{\partial}{\partial \nu_1} +  \nu_2 \frac{\partial}{\partial \nu_2}+... +  \nu_{m+1} \frac{\partial}{\partial \nu_{m+1}}.
\end{equation}
In particular, $|\nu|^2(p) \leq e^\rho \leq 1$ when $p\in \Omega$.

\end{lemma}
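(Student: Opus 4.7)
The plan is to verify all three assertions by a direct unwinding of the coordinate change from Section 2.2; no analysis or PDE input is needed, only bookkeeping.

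First, for tangentiality, I would note that the defining equations of $L^-_{[\xi]}$, namely $\xi_j = w_j \xi_0$ for $j=1, \dots, l$ with $[\xi] = [1, w] \in \PP^l$ fixed, are homogeneous of degree one in $(\xi_0, \xi_1, \dots, \xi_l)$. Hence $L^-_{[\xi]}$ is preserved by the fibrewise $\CC^*$-scaling $\xi \mapsto \lambda \xi$. Since $V_- = \sum_{\alpha=0}^l \xi_\alpha \partial_{\xi_\alpha}$ is precisely the infinitesimal generator of this $\CC^*$-action on the fibres of $\EE^-$, it must be tangent to the invariant subvariety $L^-_{[\xi]}$.

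Next, for the explicit formula, I would parametrize $L^-_{[\xi]}$ by the free coordinates $(z_1, \dots, z_m, \xi_0)$, so that the remaining fibre coordinates obey $\xi_j = w_j \xi_0$. Substituting into $V_-$ gives
\[
V_-|_{L^-_{[\xi]}} \;=\; \xi_0\!\left( \frac{\partial}{\partial \xi_0} + w_1 \frac{\partial}{\partial \xi_1} + \cdots + w_l \frac{\partial}{\partial \xi_l}\right),
\]
which in the $(z, \xi_0)$ parametrization is simply $\xi_0\, \partial/\partial \xi_0$ with $z$ held fixed. To convert to the $\nu$ coordinates of Lemma \ref{locm1}, observe that the one-parameter family $(\xi_0, z) \mapsto (\lambda \xi_0, z)$ corresponds under $\nu_i = \xi_0 z_i$ ($i \leq m$) and $\nu_{m+1} = \xi_0$ to the radial scaling $\nu \mapsto \lambda \nu$ on $\CC^{m+1}$. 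Its infinitesimal generator is the Euler field $\sum_{i=1}^{m+1} \nu_i \partial/\partial \nu_i$, yielding the displayed formula.

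Finally, for the bound $|\nu|^2 \leq e^\rho \leq 1$, I would combine the identity $e^{\rho_0} = |\nu|^2$ from Lemma \ref{locm1} with the decomposition $e^\rho = (1+|w|^2)\, e^{\rho_0}$ recorded just above it. This gives $e^\rho = (1+|w|^2) |\nu|^2 \geq |\nu|^2$, while the upper bound is merely the definition $\Omega = \{e^\rho < 1\}$. No step presents a real obstacle; the lemma is a setup for the subsequent analysis, where expressing $V_-$ as the Euler field in the flat coordinates $\nu$ will let one reduce radial estimates along the foliation $L^-_{[\xi]}$ to standard calculations on $\CC^{m+1}$ blown up at the origin.
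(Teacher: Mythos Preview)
Your proof is correct and is exactly the ``direct calculation'' the paper invokes without spelling out: the paper gives no proof beyond that phrase, and your argument via the $\CC^*$-scaling invariance of $L^-_{[\xi]}$, the chain-rule conversion from $\xi_0\,\partial/\partial\xi_0$ to the Euler field in the $\nu$ coordinates, and the identity $e^\rho=(1+|w|^2)|\nu|^2$ is precisely what is intended.
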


We then consider the normalized vector field
$$W_-= \frac{V_-}{|V_-|_{\hat \omega}} = e^{-\rho/2} \sum_{j=0, ..., l} \xi_j \frac{\partial}{\partial \xi_j}.$$
The following proposition gives a useful estimate of the evolving metric in one of the normal direction of $E^-$. The proof is based on a similar idea in \cite{SW2}.

\begin{proposition} \label{keyest} There exists $C>0$ such that for all $t\in [0, T)$  and $p\in \Omega$,
\begin{equation} \label{tanest}
C^{-1}\hat\omega  \leq \omega(t) \leq Ce^{- l \rho} \hat \omega ,
\end{equation}
and
\begin{equation}\label{verest}
|W_-|^2_{\omega}   (t, p) \leq Ce^{-\rho/2},
\end{equation}
where $p=(z(p), \xi(p))$.

\end{proposition}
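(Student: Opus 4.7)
The plan is to establish the three bounds in turn: the lower bound, then the radial bound, and finally the upper bound on $\omega$ with respect to $\hat\omega$.

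For the lower bound $\omega(t) \geq C^{-1}\hat\omega$, I would combine the general estimate $\omega(t) \geq C^{-1}\omega_T$ recalled in the first proposition above with the fact that $[\omega_T]$ is the pullback of an ample class from $Y$. In the Segre coordinates adapted to the embedding of $Y$ from Section 2.2, the pullback Fubini--Study form is cohomologous to $\ddbar \log(1+e^\rho)$, which is uniformly equivalent to $\hat\omega = \ddbar e^\rho$ on the relatively compact set $\bar\Omega$. This gives the desired lower bound.

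For the radial bound $|W_-|^2_\omega \leq Ce^{-\rho/2}$, I would exploit that $V_- = \sum \xi_j \partial_{\xi_j}$ is holomorphic with $V_-\rho = 1$ and $|V_-|^2_{\hat\omega} = e^\rho$. The Bochner identity for holomorphic vector fields under the K\"ahler--Ricci flow gives
$$\Box \log \omega(V_-, \bar V_-) = -\frac{|\nabla V_-|^2_\omega}{\omega(V_-, \bar V_-)} + \left|\partial\log\omega(V_-, \bar V_-)\right|^2_\omega,$$
so that the Ricci contributions from the parabolic and elliptic parts cancel. Together with $\Box \rho = -\tr_\omega(\theta_- + \theta_+)$ and Proposition \ref{H2}, the maximum principle applied to an auxiliary function of the form
$$Q_\epsilon = \log \omega(V_-, \bar V_-) - \tfrac{1}{2}\rho + \epsilon\log H + \epsilon\rho,$$
with $\epsilon > 0$ small and $H$ the leaf trace from Proposition \ref{H2}, forces any interior maximum of $Q_\epsilon$ to satisfy an inequality in which the positive gradient term can be absorbed by $-\tfrac12 \tr_\omega(\theta_-+\theta_+)$ together with the Bochner gradient term. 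Since $V_-$ vanishes along $E^-$ and the step-one lower bound forces $Q_\epsilon \to -\infty$ as $p \to E^-$, the maximum is achieved on $\partial\Omega\setminus E^-$, where Corollary \ref{reduction1} gives a uniform bound; letting $\epsilon\to 0$ finishes the radial estimate.

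For the upper bound $\omega \leq Ce^{-l\rho}\hat\omega$, let $\mu_1 \leq \cdots \leq \mu_n$ denote the eigenvalues of $\omega$ with respect to $\hat\omega$ at a point of $\Omega$. Corollary \ref{roue} bounds each of the $m+1$ leaf-direction eigenvalues by $Ce^{-\rho}$; the first step gives $\mu_i \geq C^{-1}$ for all $i$; and Lemma \ref{volcompa} combined with $\omega^n \leq C\Theta$ yields $\prod_i \mu_i \leq Ce^{-m\rho}$. Separating the leaf and transverse eigenvalues and using the uniform lower bounds together with the leaf trace control isolates the product of the $l$ transverse eigenvalues, and the uniform lower bound applied to the remaining transverse factors yields the pointwise bound on the maximum transverse eigenvalue.

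\textbf{Main obstacle.} The sharp exponents --- $-\rho/2$ in the radial bound, and $-l\rho$ rather than the cruder $-m\rho$ in the upper bound --- are the delicate points. A naive combination of the three basic ingredients (leaf trace, volume, lower bound) gives only the weaker exponents $-\rho$ and $-m\rho$. Extracting the sharper exponents requires using the holomorphic foliation structure and the specific radial character of $V_-$ through carefully tuned weight choices in the auxiliary functions, so that the positive contributions are absorbed by the good term $-\tr_\omega(\theta_- + \theta_+)$ coming from $\Box\rho$.
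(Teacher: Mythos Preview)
Your approach to the lower bound $\omega \geq C^{-1}\hat\omega$ is different from the paper's but reasonable. The paper instead runs a parabolic Schwarz lemma: since $\hat\omega$ is (locally) the pullback of the flat Euclidean metric on $\CC^{(m+1)(l+1)}$ via the Segre map, one applies the maximum principle to $\log\tr_\omega(\hat\omega) - 2A\varphi$ on $\Omega$ and reads off the bound from the boundary values supplied by Corollary~\ref{reduction1}.

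For the upper bound, the paper's argument is much more direct than yours: once $\omega \geq C^{-1}\hat\omega$ is known, the volume comparison $\omega^n \leq C\Theta$ together with Lemma~\ref{volcompa} bounds $\omega^n/\hat\omega^n$, and the lower bound on all eigenvalues of $\omega$ relative to $\hat\omega$ immediately forces an upper bound on the largest one, hence on $\tr_{\hat\omega}(\omega)$. No separation into leaf and transverse eigenvalues is needed; in fact such a separation is not well defined, since the eigenspaces of $\omega$ relative to $\hat\omega$ need not respect the foliation.

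The genuine gap is in your radial estimate. Your auxiliary function $Q_\epsilon = \log|V_-|^2_\omega - \tfrac12\rho + \epsilon\log H + \epsilon\rho$ does not satisfy a usable differential inequality: since $\Box\rho = -\tr_\omega(\theta_- + \theta_+) \leq 0$, the term $-\tfrac12\rho$ contributes $+\tfrac12\tr_\omega(\theta_- + \theta_+)$ to $\Box Q_\epsilon$, and for small $\epsilon$ nothing absorbs this. The Bochner contribution $-|\nabla V_-|^2_\omega/|V_-|^2_\omega + |\partial\log|V_-|^2_\omega|^2_\omega$ is merely $\leq 0$; it does not produce a negative multiple of $\tr_\omega(\theta_\pm)$, and the first-order condition at an interior maximum does not manufacture one either. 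So the maximum principle does not close.

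The paper obtains the exponent $-\rho/2$ by a different mechanism. One first proves the \emph{product} bound
\[
|V_-|^2_\omega \,\cdot\, \tr_{\hat\omega|_{L^-_{[\xi]}}}(\omega|_{L^-_{[\xi]}}) \leq C
\]
by applying the maximum principle to (essentially) $G_\epsilon = \log|V_-|^2_\omega + \log H + \epsilon\rho_0$; the coefficient on $\log H$ is $1$, not $\epsilon$, and the combination $\log|V_-|^2_\omega + \log H$ rewrites as $\log|V_0|^2_\omega + \log I$ with $\Box\log|V_0|^2_\omega \leq 0$ and $\Box\log I \leq 0$, so that $\Box G_\epsilon \leq 0$ with no bad term. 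The exponent $\tfrac12$ then comes from an algebraic step: since $V_-$ is \emph{tangent} to each leaf $L^-_{[\xi]}$ and $|V_-|^2_{\hat\omega} = e^\rho$, one has $|V_-|^2_\omega \leq e^\rho\, H$, and multiplying this against the product bound gives $|V_-|^4_\omega \leq Ce^\rho$. This ``square trick'' --- combining the product estimate with the tangency of $V_-$ to the leaves to bound $|V_-|^4_\omega$ --- is the idea your proposal is missing.
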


\begin{proof}  We break the proof into the following steps.

\bigskip

\noindent {\it \large Step 1.}   We apply the similar argument in the proof of Schwarz lemma \cite{Y2, SoT1}. Notice that $\omega (t) = \omega_t + \ddbar \varphi$ with $\varphi\in C^\infty(X)$ uniformly bounded in $L^\infty(X)$ for $t\in [0, T)$. Also there exists $C_1>0$ such that for all $t\in [0,T)$ and on $\Omega$, $\omega_t\geq   C_1 \hat\omega $ on $\Omega.$ Then we consider the quantity $$L = \log \tr_{\omega} (\hat\omega) -  2A \varphi.$$
$\hat\omega$ restricted to $\Omega$ is in fact the pullback of a flat metric on $\CC^{(m+1)(l+1)}$ given by a local morphism $(\xi_\alpha, z_i \xi_\beta)$ for $\alpha, \beta = 0, ..., l$ and $i=1, ..., m$. Then straightforward calculations give
$$\Box L \leq - A  \tr_{\omega}(\omega_t) +  2nA \leq  - A\tr_\omega(\hat\omega) +  2nA$$
for sufficiently large $A>0$.
Applying the maximum principle, we have
$$\tr_\omega(\hat\omega) \leq \sup_{\partial \Omega\setminus E^-} \tr_{\omega}(\hat\omega) +C$$
for a uniform constant $C>0$.
Note that $\tr_{\omega(t)}(\hat\omega) $ is uniformly bounded on $\partial \Omega$. Hence $\tr_{\omega}(\hat\omega)$ is uniformly bounded above and so there exists $C>0$ such that
\begin{equation}\label{sch1}
\omega\geq C \hat\omega.
\end{equation}

\bigskip

\noindent {\it \large Step 2.}   The maximum principle combined with Lemma \ref{volcompa} implies that  there exists $C >0$ such that
\begin{equation}\label{sch2}
 \omega^{m+l+1} \leq C e^{-l\rho} \hat\omega^{m+l+1}.
\end{equation}
 By the estimates (\ref{sch1}) and (\ref{sch2}), there exists $C >0$ such that
$$\tr_{\hat\omega} (\omega) \leq  Ce^{- l \rho} $$ and so $\omega\leq C e^{- l \rho} \hat\omega.$
This completes the proof for estimate (\ref{tanest}).

\bigskip

\noindent {\it \large Step 3.}   Let $V_0 = \xi_0 \frac{\partial}{\partial \xi_0}$ be the holomorphic vector field on $\Omega$. Then $V_0$ vanishes on $\rho_0=-\infty$ and
$ |V_0|^2_{\hat \omega} = (1+|w|^2) |\xi_0|^2= e^{\rho_0}.$ In particular,
$$|V_-|^2_\omega = (1+|w|^2) |V_0|^2_\omega. $$
Using the normal coordinates for $\omega$, we can show that
$$\Box |V_0|^2_g = \ddt{}( |V_0|^2_g) + (V_0)^i (V_0)^{\bar j} R_{i\bar j} - g^{r\bar s} g_{i\bar j} \left((V_0)^{i} \right)_r \left((V_0)^{\bar j} \right)_{\bar s} = -|\partial V_0|^2_g$$
and so
$$\Box \log |V_0|^2_g = -(|V_0|^2_g)^{-2} \left( |V_0|^2_g |\partial V_0|^2_g - |\nabla_t |V_0|_g^2|^2  \right)\leq 0. $$
We now define for $\epsilon\in (0, 1)$, $t\in [0, T)$ and $p\in \Omega$,
$$G_\epsilon = \log \left( e^{\epsilon \rho_0}|V_0|^2_\omega \tr_{\tau|_{L^-_{[\xi]}}} (\omega|_{L^-_{[\xi]}}) \right), $$
where $p=(z(p), \xi(p))$. Then
$$G_\epsilon= \log I + \log |V_0|_\omega^2 + \epsilon \rho_0= \log |V_-|^2_\omega + \log \tr_{\hat\omega|_{L^-_{[\xi]}}} (\omega|_{L^-_{[\xi]}}) +\epsilon \rho_0 .$$
For each $t\in [0, T)$, $G_\epsilon$ is smooth in $\Omega$,  and it tends to $-\infty$ near $\rho_0=-\infty $  for all $\epsilon \in [0, T)$ by Lemma \ref{H1}. Furthermore, there exists $C>0$ such that for all $\epsilon\in (0,1)$ and $t\in [0, T)$,
$$ \sup_{\partial \Omega} G_\epsilon  \leq C. $$
On the other hand,
$$ \Box G_\epsilon= \Box \log I + \Box \log |V_0|^2_\omega + \epsilon \Box \rho_0 <  0.$$
By the maximum principle, $$\sup_{\Omega} G_\epsilon \leq \sup_{\partial \Omega } G_\epsilon \leq C.$$
Then by letting $\epsilon$ tend to $0$, for each  $p\in \Omega$ and $t\in [0, T)$ we have
\begin{equation}\label{dirc}
|V_-|^2_\omega \tr_{\hat\omega|_{L^-_{[\xi]}}} (\omega|_{L^-_{[\xi]}}) \leq C.
\end{equation}

\bigskip

\noindent {\it \large Step 4.}   Note that $V_-$ is tangential to each $L^-_{[\xi]}$ for $[\xi]\in \PP^l$.
$$ | V_-|_{L^-_{[\xi]}} |^2= |V_-|^2$$
and there exists $C>0$ such that for each $p\in \Omega$ and $t\in [0, T)$,
$$|V_-|^4_{\omega}  \leq  \left|V_- |_{L^-_{[\xi]}} \right|^2_{\omega}  |V_-|^2_\omega \leq \left|V_- |_{L^-_{[\xi]}} \right|^2_{\hat\omega} |V_-|^2_\omega \tr_{\hat\omega|_{L^-_{[\xi]}}} (\omega|_{L^-_{[\xi]}})  \leq C e^\rho .$$
Therefore there exists $C>0$ such that
$$ |W_-|^2_\omega \leq C e^{-\rho/2}$$ uniformly for $t\in [0, T)$ and $p\in \Omega$. This completes the proof of the proposition.

\end{proof}

Similarly, we can define a holomorphic vector $V_+$ on $\Omega$ by
$$ V_+=   \eta_0 \frac{\partial}{\partial \eta_0} +  \eta_1 \frac{\partial}{\partial \eta_1}+... +  \eta_m \frac{\partial}{\partial \eta_m}$$
using the coordinates $\eta$ defined in (\ref{coorch2}).  $V_+$ vanishes on $E^+$ and
$$|V_+|^2_{\hat \omega} = |V_+|^2_{\omega_+}= e^\rho.$$ We also consider the normalized vector field
$$W_+= \frac{V_+}{|V_+|_{\hat \omega}} = e^{-\rho/2} \sum_{j=0}^{ m} \eta_j \frac{\partial}{\partial \eta_j}.$$

We now derive the main estimate in this section.

\begin{proposition} \label{keyest2}  Let $\omega(t)$ be the solution of the K\"ahler-Ricci flow on $X$ for $t\in [0, T)$. There exist $\gamma>0$ and  $C>0$ such that for all $t\in [0, T)$ and   $p\in \Omega$,
\begin{equation} \label{rouest}
e^\rho \tr_{\hat\omega|_{L^-_{[\xi]}}}(\omega|_{L^-_{[\xi]}}) (t, p) \leq C,
\end{equation}
\begin{equation}\label{analo1}
 e^\rho \tr_{\hat \omega|_{L^+_{[\eta]}}}(\omega|_{L^+_{[\eta]}})   (t, p) \leq C,
\end{equation}
\begin{equation}  \label{analo2}
|W_-|^2_{\omega}   (t, p) + |W_+|^2_{\omega}  (t, p) \leq Ce^{-\rho/2},
\end{equation}
\begin{equation}  \label{2ndor}
e^{2\rho} \left( \tr_{\tilde \omega}(\omega) \right)^\gamma  \tr_{\hat\omega|_{L^-_{[\xi]}}}(\omega|_{L^-_{[\xi]}}) \tr_{\hat \omega|_{L^+_{[\eta]}}}(\omega|_{L^+_{[\eta]}})   (t, p) \leq C,
\end{equation}
where $\xi= \xi(p)$ and $\eta=\eta(p)$ are defined in (\ref{coorch1}) and (\ref{coorch2}).

\end{proposition}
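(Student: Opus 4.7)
\medskip

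\noindent\textbf{Proof proposal.} Estimate (\ref{rouest}) is precisely Corollary \ref{roue}. For (\ref{analo1}) I would repeat the maximum-principle argument of Proposition \ref{H2} and Corollary \ref{roue} with the $L^+_{[\eta]}$-foliation in place of $L^-_{[\xi]}$: setting $I' = \tr_{\tau'|_{L^+_{[\eta]}}}(\omega|_{L^+_{[\eta]}})$ with $\tau'$ the $\EE^+$-analog of $\tau$, the Cauchy-Schwarz computation in the proof of Proposition \ref{H2} applies verbatim, since it only uses the flatness of the reference form along the leaf and a diagonalizing choice of coordinates. This gives $\Box \log I' \leq \tr_\omega(\theta_-)$; combined with $\Box\rho = -\tr_\omega(\theta_-+\theta_+)$, the maximum principle applied to $\log I' + (1+\epsilon)\rho$ as $\epsilon \downarrow 0$ yields (\ref{analo1}). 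For (\ref{analo2}), the $|W_-|^2_\omega$ bound is exactly Proposition \ref{keyest}; the bound on $|W_+|^2_\omega$ is obtained by repeating Step 3 of that proof with $V_+$ and the foliation $\{L^+_{[\eta]}\}$, using (\ref{analo1}) in place of Corollary \ref{roue}.

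The central new estimate is (\ref{2ndor}), for which I propose to apply the maximum principle to
\[
F \;=\; \log(e^{2\rho}I I') \;+\; \gamma\log \tr_{\tilde\omega}(\omega) \;-\; A\varphi
\]
with $\gamma \in (0, 1/(2C_0)]$ small and $A$ large, where $C_0$ is an upper bound for the bisectional curvature of $\tilde\omega$ on $\bar\Omega$. Combining Proposition \ref{H2} and its $L^+$-analog for the first term, the Chern-Lu inequality $\Box \log\tr_{\tilde\omega}(\omega) \leq C_0 \tr_\omega(\tilde\omega)$ with the decomposition $\tilde\omega = \hat\omega + \theta_- + \theta_+$, and the parabolic Monge-Amp\`ere relation $\Box \varphi = \dot\varphi - n + \tr_\omega(\omega_t)$, one obtains
\[
\Box F \;\leq\; -(1-\gamma C_0)\tr_\omega(\theta_-+\theta_+) + \gamma C_0 \tr_\omega(\hat\omega) - A\dot\varphi - A\tr_\omega(\omega_t) + An.
\]
The $-A\dot\varphi$ term is absorbed by the AM-GM trick already implicit in Proposition \ref{keyest}: from $\tr_\omega(\omega_t) \geq n(\omega_t^n/\omega^n)^{1/n} = nf_t^{1/n} e^{-\dot\varphi/n}$ with $f_t = \omega_t^n/\Theta$ bounded below on $\Omega$, the function $x \mapsto -2Ax - cAe^{-x/n}$ is bounded above by a constant, yielding $-2A\dot\varphi \leq A\tr_\omega(\omega_t) + C_1$. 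Together with $\omega_t \geq c\hat\omega$ on $\Omega$ (from the hypothesis that $[\omega_T]$ is a pullback of an ample class for a suitable reference $\Theta$), for $A$ sufficiently large this produces the coercive inequality
\[
\Box F \;\leq\; -(1-\gamma C_0)\tr_\omega(\theta_-+\theta_+) - \tfrac{Ac}{4}\tr_\omega(\hat\omega) + C_2.
\]

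The main obstacle is converting this differential inequality into a pointwise bound on $F$. At any interior maximum of $F$ on $\bar\Omega \times [0,T)$, $\Box F \geq 0$ forces $\tr_\omega(\hat\omega) \leq 4C_2/(Ac)$, which pins the eigenvalues of $\omega$ relative to $\hat\omega$ from below; combined with Proposition \ref{keyest} and (\ref{analo2}) this in turn bounds $\tr_{\tilde\omega}(\omega)$ at the maximum, and hence $F$. On the spatial boundary, Corollary \ref{reduction1} provides smooth compactness on $\{\rho=0\}$, while as $p\to E^-$ the bounds (\ref{rouest}) and (\ref{analo1}) give $e^\rho I,\, e^\rho I' \leq C$, so that $F$ is controlled provided $\gamma$ is chosen small enough that the $-l\gamma\rho$ growth of $\gamma\log\tr_{\tilde\omega}(\omega)$ is absorbed by the coercivity from $-\tfrac{Ac}{4}\tr_\omega(\hat\omega)$. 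The delicate step is calibrating the triple $(\gamma, A, c)$ so that the interior coercivity and boundary behavior near $E^-$ are consistent; this is where the precise exponent $\gamma>0$ of the proposition emerges.
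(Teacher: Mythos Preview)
Your overall strategy matches the paper's: (\ref{rouest})--(\ref{analo2}) do follow by repeating the arguments of Proposition~\ref{H2}, Corollary~\ref{roue} and Proposition~\ref{keyest} with the roles of $L^-_{[\xi]}$ and $L^+_{[\eta]}$ interchanged, exactly as you say. For (\ref{2ndor}) the paper also couples $\log(e^{2\rho}HH')$ with a Chern--Lu term and $-\varphi$, and the key cancellation you identify,
\[
\Box\log\bigl(e^{2\rho}HH'\bigr)\;\leq\;-\tr_\omega(\theta_-+\theta_+),
\]
is precisely the mechanism that makes the argument work.

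There is, however, a genuine gap in your treatment of the $-A\dot\varphi$ term. You invoke the AM--GM bound $\tr_\omega(\omega_t)\geq n f_t^{1/n}e^{-\dot\varphi/n}$ with $f_t=\omega_t^n/\Theta$ ``bounded below on $\Omega$'', but this lower bound is \emph{not} uniform in $t$: as $t\to T$ the reference form $\omega_t\to\omega_T$ degenerates along $E^-$, so $\inf_\Omega f_t\to 0$ and the constant $C_1$ you produce blows up. Consequently the pointwise inequality $\Box F\leq C_2$ you aim for cannot be obtained this way.

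The fix is to run the AM--GM against a \emph{fixed} K\"ahler metric on $X$ whose volume form does not degenerate on $\overline\Omega$. The paper uses $\omega_+=\theta_++\hat\omega$ (note the proof actually works with $\tr_{\omega_+}(\omega)$ rather than $\tr_{\tilde\omega}(\omega)$; since $\omega_+\leq\tilde\omega$ this implies the stated estimate). With coefficient $1$ on $-\varphi$ one gets
\[
\Box H_\gamma\;\leq\;-(c_1-A\gamma)\,\tr_\omega(\omega_+)\;+\;C\;-\;\dot\varphi,
\]
using $-\tr_\omega(\theta_-+\theta_+)-\tr_\omega(\omega_t)\leq -c_1\tr_\omega(\omega_+)$ from $\omega_t\geq c\hat\omega$. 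At an interior maximum one then has $(c_1-A\gamma)\tr_\omega(\omega_+)\leq C-\dot\varphi$, and now the AM--GM $\tr_\omega(\omega_+)\geq n(\omega_+^n/\Theta)^{1/n}e^{-\dot\varphi/n}\geq c_2\,e^{-\dot\varphi/n}$ is legitimate because $\omega_+^n/\Theta$ is bounded below on the compact set $\overline\Omega$. This bounds $-\dot\varphi$, hence $\tr_\omega(\omega_+)$, and together with $\omega^n\leq C\Theta$ gives $\tr_{\omega_+}(\omega)\leq C'$ at the maximum; combined with the already known $e^\rho H,\,e^\rho H'\leq C$ and $\|\varphi\|_\infty\leq C$ this bounds $H_\gamma$.

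Your final worry about ``calibrating $(\gamma,A,c)$'' near $E^-$ is also resolved by using $\omega_+$ instead of $\tilde\omega$: since $\omega_+$ is smooth on $X$, the term $\log\tr_{\omega_+}(\omega)$ extends continuously to $E^-$ for each fixed $t<T$, while $\log(e^{2\rho}HH')\to-\infty$ there (because $e^\rho H\leq C$, $H'$ stays bounded near $E^-$ as $L^+_{[\eta]}$ is a fibre of $\pi_-$, and $e^\rho\to 0$). So $H_\gamma\to-\infty$ at $E^-$ without any delicate balancing, and the maximum is forced to the interior or to $\{\rho=0\}$ where Corollary~\ref{reduction1} applies.
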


\begin{proof}  The proof of (\ref{analo1}) (\ref{analo2}) is the same as that of  Proposition \ref{keyest}. It suffices to prove (\ref{2ndor}). Note that $\omega$ is uniformly bounded on $\partial \Omega\setminus E^-$. All the quantities involved are uniformly bounded on $(\partial \Omega)\setminus \{\rho=0\}$. From the previous estimates, we have

$$\ddbar  \rho = \ddbar  \log (1+|z|^2)(1+|w|^2) |\xi_0|^2 \geq \theta_- + \theta_+, $$

$$\Box \log e^\rho \tr_{\hat\omega|_{L^-_{[\xi]}} }(\omega|_{L^-_{[\xi]}}) \leq  -\Delta \rho + \tr_{\omega}(\theta_+) \leq - \tr_{\omega}(\theta_-) .$$
Same calculations for $L^-_{[\xi]}$ show that
$$\Box \log e^\rho \tr_{\hat\omega|_{L^+_{[\eta]}} }(\omega|_{L^+_{[\eta]}}) \leq   - \tr_{\omega}(\theta_+), $$
and so the estimates (\ref{analo1}) and (\ref{analo2}) follow by the same argument before.

Let $H_\gamma = \log e^{2\rho} \left( \tr_{\omega_+}(\omega) \right)^\gamma  \tr_{\hat\omega|_{L^-_{[\xi]}}}(\omega|_{L^-_{[\xi]}}) \tr_{\hat \omega|_{L^+_{[\eta]}}}(\omega|_{L^+_{[\eta]}}) - \varphi $.
Standard calculations show that there exists $A>0$ such that $$\Box \log \tr_{\omega_+ }(\omega) \leq A \tr_{\omega}(\omega_+) + A $$ uniformly on $\Omega$ and for $t\in [0, T)$.
Then
$$\Box H_\gamma \leq - (1- A\gamma) \tr_{\omega} ( \omega_+)  + C. $$
The estimate (\ref{2ndor}) follows by choosing $\gamma>0 $ sufficiently small.

\end{proof}

The following corollary shows that the restriction of $\omega(t)$ to the exceptional rational curve is uniformly bounded above.
\begin{corollary}\label{corestE}
There exists $C>0$ such that for all $t\in [0, T)$ and $p\in E^-$,
\begin{equation}\label{estE}
\omega(t)|_{E^-} \leq C \theta_-|_{E^-}.
\end{equation}

\end{corollary}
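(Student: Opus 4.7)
The plan is to extract (\ref{estE}) from the leafwise trace estimate (\ref{rouest}) by an explicit change of coordinates on a single leaf $L^-_{[\xi]}$, exploiting a precise cancellation between the $|\nu|^{-2}$ singularity that appears in (\ref{rouest}) and the Jacobian factor $|\xi_0|^2$ that arises when one switches from the flat coordinates $\nu$ to the coordinates $(z,\xi_0)$ adapted to $E^-$ as the exceptional divisor of the blow-up realization $L^-_{[\xi]}\simeq\CC^{m+1}$ blown up at a point.

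Fix any $p_0\in E^-\simeq\PP^m$. By the $U(m+1)$-symmetry of the Calabi setup on $\EE^-$---the data $\theta_-$, $h_\EE$, $\hat\omega$, $\tau$, $\rho$ and the foliation $\{L^-_{[\xi]}\}$ are all $U(m+1)$-equivariant---we may choose the affine chart $(z,\xi)$ so that $p_0$ corresponds to $z=0$. Since (\ref{rouest}) is an inequality between intrinsic quantities, it holds in every such chart with the same constant $C$. Fix any $[\xi]=[1,w]\in\PP^l$, so that $p_0\in L^-_{[\xi]}$. By Lemma \ref{locm1}, in the coordinates $\nu_a=\xi_0 z_a$ ($a\le m$), $\nu_{m+1}=\xi_0$ on $L^-_{[\xi]}$ we have $\hat\omega|_{L^-_{[\xi]}}=(1+|w|^2)\sqrt{-1}\sum_a d\nu_a\wedge d\bar\nu_a$ and $e^\rho=(1+|w|^2)|\nu|^2$. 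Estimate (\ref{rouest}) therefore reads
$$\sum_{a=1}^{m+1}(\omega|_{L^-_{[\xi]}})^{\nu}_{a\bar a}\le \frac{C}{|\nu|^2}\quad\text{on }L^-_{[\xi]}\cap\Omega,$$
and positivity of the Hermitian matrix $[(\omega|_{L^-_{[\xi]}})^{\nu}_{a\bar b}]$ forces the entrywise bound $|(\omega|_{L^-_{[\xi]}})^{\nu}_{a\bar b}|\le C|\nu|^{-2}$.

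Now change to the $(z,\xi_0)$ coordinates on $L^-_{[\xi]}$, in which $E^-=\{\xi_0=0\}$. From $d\nu_a=\xi_0\,dz_a+z_a\,d\xi_0$ ($a\le m$) and $d\nu_{m+1}=d\xi_0$, the $dz_a\wedge d\bar z_b$ coefficient of $\omega|_{L^-_{[\xi]}}$ is exactly $|\xi_0|^2(\omega|_{L^-_{[\xi]}})^{\nu}_{a\bar b}$; combining this with $|\nu|^2=|\xi_0|^2(1+|z|^2)$ yields
$$|\xi_0|^2\,\bigl|(\omega|_{L^-_{[\xi]}})^{\nu}_{a\bar b}\bigr|\le \frac{C}{1+|z|^2}.$$
Since $\omega(t)$ is smooth on $X$ for $t\in[0,T)$ and $E^-\subset L^-_{[\xi]}$, these coefficients extend smoothly across $\xi_0=0$ and the bound descends to
$$\bigl|(\omega(t)|_{E^-})_{a\bar b}(z)\bigr|\le \frac{C}{1+|z|^2}.$$
Evaluating at $p_0$ (where $z=0$ and $\theta_-|_{E^-}(p_0)=\sqrt{-1}\sum_a dz_a\wedge d\bar z_a$) gives $\omega(t)|_{E^-}(p_0)\le Cm\,\theta_-|_{E^-}(p_0)$. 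As $p_0$ was arbitrary and the constant is uniform in $t\in[0,T)$, this proves (\ref{estE}).

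The main obstacle is the penultimate step: recognizing that the blow-up $|\nu|^{-2}$ of the intrinsic trace estimate is exactly compensated by the vanishing Jacobian factor $|\xi_0|^2$ produced by the coordinate change from the flat $\nu$ coordinates on $L^-_{[\xi]}$ to the $(z,\xi_0)$ coordinates adapted to $E^-$. The remainder of the argument is formal, provided one uses $U(m+1)$-translation to reduce every point of $E^-$ to the origin of a standard Calabi chart.
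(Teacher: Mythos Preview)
Your argument is correct and reaches the same conclusion, but by a different route than the paper. The paper works pointwise on $E^-$ with a frame adapted to the splitting $T_pL^-_{[\xi]}=T_pE^-\oplus\CC e_{m+1}$: choosing $e_1,\dots,e_m\in T_pE^-$ and $e_{m+1}$ orthogonal so that $\theta_-$ is diagonal on $T_pE^-$, it expresses $\tr_{\theta_-|_{E^-}}(\omega|_{E^-})$ as (a part of) the ratio $\left.\dfrac{\omega\wedge\theta_-^{m-1}\wedge\hat\omega}{\theta_-^m\wedge\hat\omega}\right|_{L^-_{[\xi]}}$, then invokes the leafwise bound $\omega|_{L^-_{[\xi]}}\le Ce^{-\rho}\hat\omega|_{L^-_{[\xi]}}$ from Corollary~\ref{roue}. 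The cancellation you identify --- the $e^{-\rho}$ blow-up against the extra factor of $\hat\omega$ --- is the same phenomenon, but the paper sees it through the wedge-product identity $e^{-\rho}\dfrac{\theta_-^{m-1}\wedge\hat\omega^2}{\theta_-^m\wedge\hat\omega}\le C_2$ on $L^-_{[\xi]}$, without passing to the flat $\nu$ coordinates or invoking $U(m+1)$.

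Your approach trades that frame/wedge computation for an explicit coordinate change $\nu\leftrightarrow(z,\xi_0)$ together with the $U(m+1)$-equivariance of the background data $(\theta_-,\hat\omega,\rho,\{L^-_{[\xi]}\})$, which lets you normalize any $p_0\in E^-$ to $z=0$. This is legitimate: the estimate (\ref{rouest}) is an intrinsic pointwise inequality on all of $\Omega$, and only the background (not $\omega(t)$) needs to be $U(m+1)$-invariant. One minor simplification: you do not actually need the off-diagonal entrywise bound. Since $\omega(t)|_{E^-}$ is positive, the trace inequality $\sum_{a\le m}(\omega|_{E^-})_{a\bar a}(0)\le C$ already gives $\omega(t)|_{E^-}(p_0)\le C\,\theta_-|_{E^-}(p_0)$ (with constant $C$ rather than $Cm$). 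The paper's frame argument has the advantage of working at an arbitrary $p\in E^-$ without the normalization step, while yours makes the Jacobian cancellation $|\xi_0|^2\cdot|\nu|^{-2}=(1+|z|^2)^{-1}$ completely explicit.
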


\begin{proof}

For any point $p\in E^-$,  $p\in L^-_{[\xi]}$ for each $[\xi]\in \PP^l$ and  there exist $e_1, ..., e_m  \in T_p E^-$ and $e_{m+1} \in T_p L^-_{[\xi]}$  such that they form an orthonormal basis of $T_p L^-_{[\xi]}$ with respect to $ \omega|_{L^-_{[\xi]}}$. We can further assume that $\theta_-$ is diagonalized at $p$ in $T_p E^-$. Then at $p$
\begin{eqnarray*}
&&\tr_{\theta_-|_{E^-}} (\omega|_{E^-}) \\
&=&  \left. \frac{\omega\wedge (\theta_-)^{m-1}(e_1\wedge\overline{e_1}\wedge...\wedge e_m\wedge\overline{e_m}) }{ (\theta_-)^m(e_1\wedge\overline{e_1}\wedge...\wedge e_m\wedge\overline{e_m})} \right|_{E^-} \\
&=&  \left. \frac{ ( \omega\wedge (\theta_-)^{m-1} ) (e_1\wedge \overline{e_1}\wedge...\wedge e_m \wedge\overline{e_m}) ~ \hat\omega(e_{m+1}\wedge \overline{e_{m+1}})}{ (\theta_-)^m (e_1\wedge \overline{e_1} \wedge ... \wedge e_m \wedge\overline{e_m}) ~\hat\omega(e_{m+1}\wedge \overline{e_{m+1}})} \right|_{L^-_{[\xi]}}\\
&& +  \left. \frac{\omega(e_{m+1}\wedge\overline{e_{m+1}}) ~  \left( (\theta_-)^{m-1}\wedge \hat\omega \right)  (e_1\wedge\overline{e_1}\wedge...\wedge e_m \wedge\overline{e_m} )}{ (\theta_-)^m (e_1\wedge \overline{e_1}\wedge...\wedge e_m \wedge\overline{e_m}) ~\hat\omega(e_{m+1}\wedge \overline{e_{m+1}})} \right|_{L^-_{[\xi]}}  \\
&\leq&  \left. \frac{\omega\wedge (\theta_-)^{m-1}\wedge \hat\omega }{ (\theta_-)^m \wedge \hat\omega} \right|_{L^-_{[\xi]}}
\end{eqnarray*}
since $\hat\omega$ vanishes on $E^-$. By  in Corollary \ref{roue}, there exist $C_1, C_2>0$ such that for all $t\in [0, T) $ and $p\in \Omega$,
$$ \tr_{\theta_-|_{E^-}} (\omega|_{E^-}) \leq  \left.  \frac{\omega \wedge \theta^{m-1} \wedge \hat\omega}{ (\theta_-)^m \wedge \hat\omega} \right|_{L^-_{[\xi]}} \leq   C_1 \left.   e^{-\rho}  \frac{ (\theta_-)^{m-1} \wedge \hat\omega^2} { (\theta_-)^m \wedge \hat\omega} \right|_{L^-_{[\xi]}}  \leq C_2 $$
\end{proof}

\subsection{Proof of Theorem \ref{minus1} }

In this section, we will develop geometric estimates using the analytic estimates from section 3.1 and finish the proof of Theorem \ref{minus1}.

\begin{proposition} \label{onE} Let $g(t)$ be the solution of the K\"ahler-Ricci flow on $X$ for $t\in [0,T)$. There exists $C>0$ such that for all $t\in [0, T)$

\begin{enumerate}

\item $diam(X, g(t)) \leq C. $

\medskip

\item $ diam(E^-, g(t) |_{E^-}) \leq C (T-t)^{1/3}. $

\end{enumerate}

\end{proposition}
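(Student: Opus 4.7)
\textbf{Plan for Proposition \ref{onE}.}

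\emph{Part (1).} The plan is to patch three uniform diameter estimates. By Corollary \ref{reduction1}, the flow $g(t)$ converges smoothly on $\overline{X\setminus\Omega}$, so $\mathrm{diam}(\overline{X\setminus\Omega},g(t))\leq C$ uniformly in $t$. For any $p\in\Omega$, I would use the integral curve $\gamma(s)$ of the holomorphic vector field $V_-$ through $p$, parametrized so that $\rho(\gamma(s))=\rho(p)+2s$ for $s\in(-\infty,0]$, which limits to $(z(p),0)\in E^-$ as $s\to-\infty$. The estimate $|W_-|^2_\omega\leq Ce^{-\rho/2}$ from Proposition \ref{keyest} yields $|V_-|_\omega = e^{\rho/2}|W_-|_\omega \leq \sqrt{C}\,e^{\rho/4}$, so
\[
\mathrm{dist}_{g(t)}(p,E^-)\leq L(\gamma)\leq \sqrt{C}\int_{-\infty}^{0} e^{(\rho(p)+2s)/4}\,ds = 2\sqrt{C}\,e^{\rho(p)/4}\leq 2\sqrt{C}.
\]
Combined with $\mathrm{diam}(E^-, g(t)|_{E^-})\leq C$ from Corollary \ref{corestE}, concatenation gives $\mathrm{diam}(X,g(t))\leq C'$ uniformly in $t$.

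\emph{Part (2).} The key cohomological input is $[\omega(t)|_{E^-}]=(T-t)(m-l)[H]$ in $H^{1,1}(\mathbb{P}^m)$: by adjunction $K_X|_{E^-}=-(m-l)H$ (using $N_{E^-/X}=\mathcal{O}_{\mathbb{P}^m}(-1)^{\oplus(l+1)}$), and $[\omega(T)]|_{E^-}=0$ since $\omega(T)$ is pulled back from $Y$ and $E^-$ is contracted to a point. Hence $\int_{E^-}(\omega(t)|_{E^-})^m = (m-l)^m(T-t)^m$, shrinking to zero. My plan is to upgrade Corollary \ref{corestE} to a pointwise bound
\[
\omega(t)|_{E^-}\leq C(T-t)^{2/3}\,\theta_-|_{E^-},
\]
equivalently $\sup_{E^-}\tr_{\theta_-}(\omega(t)|_{E^-})\leq C'(T-t)^{2/3}$. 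Granting this, for any $\theta_-$-geodesic $\gamma\subset E^-=\mathbb{P}^m$ (with $L_{\theta_-}(\gamma)\leq\pi$), one has $L_{g(t)}(\gamma)\leq \sqrt{C}(T-t)^{1/3}L_{\theta_-}(\gamma)\leq \pi\sqrt{C}(T-t)^{1/3}$, which is the claimed diameter bound.

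\emph{Main obstacle.} The refined pointwise bound $\omega(t)|_{E^-}\leq C(T-t)^{2/3}\theta_-$ is the main technical difficulty. My approach is a parabolic maximum principle on $E^-$ applied to a quantity such as
\[
Q = \log \tr_{\theta_-}\bigl(\omega(t)|_{E^-}\bigr) - \alpha\log(T-t) - A\,\varphi|_{E^-},
\]
using the evolution $\partial_t(\omega(t)|_{E^-})=-\mathrm{Ric}(\omega(t))|_{E^-}$ together with the $L^\infty$-bound on the Monge--Amp\`ere potential $\varphi$ inherited from (\ref{maeqn1}), the Yau--Tsuji upper bound $\omega^n\leq C\Theta$, and the $L^1$-bound $\int_{E^-}\tr_{\theta_-}(\omega(t)|_{E^-})\,dV_{\theta_-}=m(m-l)(T-t)$ coming from the shrinking K\"ahler class. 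The specific exponent $2/3$ would emerge from interpolating between the $L^\infty$-bound of Corollary \ref{corestE} and this shrinking $L^1$-norm (yielding for instance $\|\tr_{\theta_-}\omega\|_{L^{3/2}}\leq C(T-t)^{2/3}$) and then bootstrapping to $L^\infty$ via a Moser or De Giorgi-type iteration adapted to the evolution equation.
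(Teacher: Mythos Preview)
Your approach to Part~(1) is correct and essentially the same as the paper's: you connect any $p\in\Omega$ to $E^-$ by a radial path of uniformly bounded $g(t)$-length, and then use the uniform diameter bound on $E^-$ from Corollary~\ref{corestE}. The paper phrases this via paths in the leaf $L^-_{[\xi]}$ using the estimate $e^\rho\,\tr_{\hat\omega|_{L^-_{[\xi]}}}(\omega|_{L^-_{[\xi]}})\le C$, whereas you use the integral curve of $V_-$ and the bound $|W_-|^2_\omega\le Ce^{-\rho/2}$; both come from Proposition~\ref{keyest} and give the same conclusion. One small omission: you should also note that the same radial curve run \emph{outward} connects $p$ to $\partial\Omega$ with uniformly bounded length, which is what links the estimate on $\Omega$ to the estimate on $\overline{X\setminus\Omega}$.

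Part~(2), however, has a genuine gap. You are trying to prove the much stronger pointwise statement $\omega(t)|_{E^-}\le C(T-t)^{2/3}\theta_-$, and your proposed route---a parabolic maximum principle for $Q$ on $E^-$, then Moser/De~Giorgi iteration---does not get off the ground. The restriction $\omega(t)|_{E^-}$ does \emph{not} satisfy a closed evolution equation on $E^-$: the right-hand side $-\mathrm{Ric}(\omega(t))|_{E^-}$ is the restriction of the ambient Ricci curvature, which involves second normal derivatives of the metric and is not controlled by intrinsic data on $E^-$. So there is no Laplacian on $E^-$ against which to run a maximum principle for $Q$, and no subsolution inequality on which to base a Moser iteration. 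The interpolation $\|{\cdot}\|_{L^{3/2}}\le C(T-t)^{2/3}$ is of course valid, but you have no mechanism to upgrade it to $L^\infty$.

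The paper bypasses this entirely. It does \emph{not} attempt any improved pointwise bound; it uses only (i)~the existing $L^\infty$ bound $\omega(t)|_{E^-}\le C\theta_-$ from Corollary~\ref{corestE}, and (ii)~the cohomological fact that for every projective line $\ell\subset E^-\cong\mathbb{P}^m$,
\[
\int_\ell \omega(t)=[\omega(t)]\cdot\ell = (T-t)(m-l)\le C(T-t).
\]
Since any two points of $\mathbb{P}^m$ lie on a common line, it suffices to bound the $g(t)$-diameter of each such $\ell\cong\mathbb{P}^1$. This is the elementary argument from \cite{SW2}: a metric $f\,\omega_{FS}$ on $\mathbb{P}^1$ with $0\le f\le C_1$ and $\int_{\mathbb{P}^1} f\,\omega_{FS}\le C_2\varepsilon$ has diameter $\le C\varepsilon^{1/3}$, obtained by a direct path construction (choosing a circle on which the integral of $\sqrt{f}$ is small, then using the pointwise bound radially). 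That is where the exponent $1/3$ comes from---it is a length/area argument on a single $\mathbb{P}^1$, not a PDE estimate on $E^-$.
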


\begin{proof}  For (1), it suffices to show that the diameter of $\Omega$ is uniformly bounded. For any point $p\in \Omega$ with coordinates $(z, \xi)$, there exist $q\in E^-$ and a smooth path $\gamma_{p,q}$ in $L^-_{[\xi]}\cup \overline{\Omega}$ such that the length of $\gamma_{p, q}$ with respect to $g(t)$ is uniformly bounded, using (\ref{rouest}) in Proposition \ref{keyest2} to compare the evolving metrics $g(t)$ to the Euclidean metric $\hat\omega$ in $\mathbb{C}^{m+1}$. On the other hand, from Corollary \ref{estE}, the diameter of $E^-$ and so the diameter of $\partial \Omega$ in $\EE^-$  with respect to $g(t)$ are both uniformly bounded with respect to $g(t)$. This immediately implies that the diameter of $\overline{\Omega}$ in $\EE^-$ with respect to $g(t)$ is uniformly bounded.

To prove (2), we first note that  there exists $C>0$ such that for  any projective line $l$ in $E^-$
\begin{equation}\label{estE2}
\int_l \omega(t)   \leq C (T-t)
\end{equation}
by calculating the intersection number  $[\omega(t)]\cdot l$. Then we can apply the same argument in \cite{SW2} to prove (2) by combining (\ref{estE}) and (\ref{estE2}).

\end{proof}

The following theorem is an adaption from Theorem 5.1 in \cite{SW3}.

\begin{proposition} \label{geomest} Let $\mathcal{A}_{r, r'} = \{    r' \leq    e^{\rho/2} \leq  r \} $ for $0<r'<r\leq 1$. There exist $\kappa > 2$ and $C>0$  such that for $0<r\leq 1/\kappa$

\begin{enumerate}

\item for  any two points $p, p'\in \mathcal{A}_{r, 2r} $, there exists a  piecewise smooth path $\gamma \in \mathcal{A}_{  r/\kappa , \kappa r}$ joining $p$ and $p'$ satisfying,

$$L_{g(t)} (\gamma) \leq C r^{1/6}$$
whenever  $t\in [0, T)$ with $0< T-t \leq r^{1/2}$;

\item  For  any two points $p, p'\in \mathcal{A}_{r, 2r}$, there exists a  piecewise smooth path $\gamma\in \mathcal{A}_{r/\kappa, \kappa r}$ joining $p$ and $p'$ satisfying

$$L_{g(T)} (\gamma) \leq C r^{1/6}. $$

\end{enumerate}

\end{proposition}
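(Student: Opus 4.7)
The plan is to adapt the approach of Theorem~5.1 in \cite{SW3} to the high-codimensional flip setting of Section~2, combining the sharp pointwise estimates of Proposition~\ref{keyest2} with the collapsing-diameter bound for $E^-$ in Proposition~\ref{onE}(2). I use throughout that the $V_-$-flow $\xi\mapsto e^s\xi$ shifts $\rho$ by $2s$, and that $|V_-|_\omega^2\leq Ce^{\rho/2}$, since $|W_-|_\omega^2\leq Ce^{-\rho/2}$ and $|V_-|_{\hat\omega}^2=e^\rho$.

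Given $p,p'\in \mathcal{A}_{r,2r}$, I construct a piecewise smooth path inside $\mathcal{A}_{r/\kappa,\kappa r}$ in four segments. First, flow $p$ radially inward via $V_-$ until it reaches the inner level $e^{\rho/2}=r/\kappa$, arriving at $\bar p$; do the same for $p'$ to arrive at $\bar p'$. Second, project $\bar p,\bar p'$ under the natural map $\pi_-\colon\Omega\to E^-$ to points $q,q'\in E^-$, and connect them by a minimizing $g(t)|_{E^-}$-geodesic $\alpha\subset E^-$. Third, horizontally lift $\alpha$ back to the level set $\{e^{\rho/2}=r/\kappa\}$, appending a small arc inside a single $L^+_{[\eta]}$-fiber to correct the projective $\mathbb{P}^l$-coordinate. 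Fourth, flow $\bar p'$ back out radially to $p'$.

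I then estimate the four contributions separately. For the two radial segments, using $|V_-|_\omega\leq Ce^{\rho/4}$ and $\rho(s)=\rho(0)+2s$,
\[
 \int_{-\log\kappa}^{0}|V_-|_\omega\,ds \;\leq\; Ce^{\rho(0)/4}\int_{-\log\kappa}^{0}e^{s/2}\,ds \;\leq\; C\,r^{1/2}.
\]
For the lifted base segment, Proposition~\ref{onE}(2) gives $L_{g(t)|_{E^-}}(\alpha)\leq C(T-t)^{1/3}$, and the lift inflates this length by a bounded multiplicative factor plus an additive fiber correction of size $O(r)$. The multiplicative comparison uses that $\omega$ is comparable to the smooth reference $\tilde\omega$ in the directions tangent to $E^-$, which is exact at $E^-$ by Corollary~\ref{corestE} and propagates to a small neighborhood via the second-order estimate (\ref{2ndor}) combined with Lemma~\ref{lcompa}. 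Summing and using the standing hypothesis $T-t\leq r^{1/2}$,
\[
 L_{g(t)}(\gamma) \;\leq\; C\bigl(r^{1/2}+(T-t)^{1/3}+r\bigr) \;\leq\; C\,r^{1/6},
\]
which proves~(1).

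For~(2), pass to the limit $t\to T$. The same path lies in the fixed compact set $\mathcal{A}_{r/\kappa,\kappa r}\subset X\setminus E^-$, so Corollary~\ref{reduction1} gives $C^\infty$-convergence of $g(t)$ to $g(T)$ along the path, and in particular $L_{g(T)}(\gamma)=\lim_{t\to T}L_{g(t)}(\gamma)$. Since $E^-$ collapses to a single point at $t=T$, the base-segment contribution drops out of the limit, leaving $L_{g(T)}(\gamma)\leq C(r^{1/2}+r)\leq Cr^{1/6}$. The main technical obstacle is the horizontal-lift step: a naive Euclidean lift would inflate lengths by $e^{-\rho/2}\sim r^{-1}$, which is far too large. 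To overcome this one must lift along tangent directions to $E^-$ where $\omega\lesssim \tilde\omega$ holds, propagating the at-$E^-$ comparison of Corollary~\ref{corestE} into a shrinking neighborhood by means of (\ref{2ndor}), and using an $L^+_{[\eta]}$-fiber arc to absorb the residual projective motion into an $O(r)$ correction.
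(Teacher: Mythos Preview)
Your horizontal-lift step contains a genuine gap. You claim that lifting the $g(t)|_{E^-}$-geodesic $\alpha$ to the level set $\{e^{\rho/2}=r/\kappa\}$ inflates its length by only a bounded multiplicative factor, citing Corollary~\ref{corestE} and estimate~(\ref{2ndor}). But Corollary~\ref{corestE} is an estimate \emph{on} $E^-$, not near it, and (\ref{2ndor}) only yields $\tr_{\tilde\omega}(\omega)\leq Ce^{-2\rho/\gamma}$ with $\gamma$ small, which is far too weak to propagate the pointwise comparison off $E^-$. More fundamentally, even granting a comparison $\omega\lesssim\tilde\omega$ in tangential directions near $E^-$, the $\tilde\omega$-length of the lifted curve is controlled by the \emph{Fubini--Study} length of $\alpha$, not by its $\omega|_{E^-}$-length. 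The latter is $\leq C(T-t)^{1/3}$, but the former can be of order one for points far apart in $E^-$. Indeed, the available leafwise estimate $\omega|_{L^-_{[\xi]}}\leq Ce^{-\rho}\hat\omega|_{L^-_{[\xi]}}$ from Corollary~\ref{roue} gives only $L_\omega(\text{lift})\leq C\cdot L_{\theta_-}(\alpha)$, which does not tend to zero with $r$.

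The paper's argument avoids this by never lifting a base geodesic. Instead, given $p=(z,\xi)$ and $q=(z',\xi')$ in $\mathcal{A}_{r,2r}$, it chooses an intermediate point $o=(z,\lambda''\xi')$ on the same level, so that $p,o\in L^+_{[1,z]}$ and $o,q\in L^-_{[\xi']}$. Each leaf is isomorphic to $\mathcal{O}_{\mathbb{P}^m}(-1)$ or $\mathcal{O}_{\mathbb{P}^l}(-1)$, i.e.\ a line bundle over a projective space, and the leafwise bounds (\ref{rouest}), (\ref{analo1}), (\ref{analo2}) of Proposition~\ref{keyest2} are exactly the hypotheses needed to invoke Theorem~5.1 of \cite{SW3} as a black box on each leaf. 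This yields paths $\gamma_{o,p}\subset L^+_{[1,z]}\cap\mathcal{A}_{r/\kappa,\kappa r}$ and $\gamma_{o,q}\subset L^-_{[\xi']}\cap\mathcal{A}_{r/\kappa,\kappa r}$, each of $g(t)$-length $\leq Cr^{1/6}$ when $T-t\leq r^{1/2}$; concatenating gives the result. The point is that the high-codimension problem is reduced to two codimension-one problems, where the delicate interplay between the pointwise trace bound and the cohomological area bound (which underlies the $r^{1/6}$ exponent) has already been carried out in \cite{SW3}. Part~(2) then follows by smooth convergence on the fixed annulus.
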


\begin{proof} First we note that  $\AAA_{r/\kappa, \kappa r} \subset \Omega$.  For any two points $p, q\in \AAA_{r/2, 2r}$, we write $p=(z, \xi)= (z, \lambda (1, w))$ and $q=(z', \xi')=(\lambda' (1, w'))$ and let $[1, w]= [\xi]\in \mathbb{P}^l$ and $[1, w'] = [\xi']\in \mathbb{P}^l$. Without loss of generality, we can assume that $|z|, |z'|, |w|, |w'| \leq 1$. This is because we can find points $q_1=p, q_2, ..., q_k=q \in \AAA_{r/2,2r}$ for $k$ independent on $r$ and $t$ such that for each pair $q_i$ and $q_{i+1}$, one can find a coordinate chart on an inhomogeneous coordinate system for $[\xi ]=[1, w]$ so that $|z|, |w|\leq 1$ at $p_i$ and $p_{i+1}$.   We then choose an intermediate point $o$ with $o=(z, \lambda'' \xi ' ) $ and
$$(1+|z|^2) | \lambda'' \xi' /\lambda'|^2 = r^2. $$
Obviously, $o\in \AAA_{r/2, 2r}$ and
$$o, p \in L^+_{[1, z]} \cap \AAA_{r/2, 2r}, ~ o, q \in L^-_{[\xi']} \cap \AAA_{r/2, 2r} .$$
$L^+_{[1, z]}$ and $L^-_{[1, w']}$ are isomorphic to $\mathbb{C}^{l+1}$ and $\mathbb{C}^{m+1}$ blow-up at the origin respectively. Furthermore,
$$ r^2/8 \leq  |\lambda|^2 \leq 2 r^2, ~r^2/8\leq  |\lambda'|^2 \leq 2 r^2, ~r^2/8 \leq  |\lambda''|^2 \leq 2 r^2.  $$
Then we can apply Theorem 5.1 in \cite{SW3} and conclude that there  exist $C>0$ and piecewise smooth paths $\gamma_{o,p}$ and $\gamma_{o,q}$ joining $o, p$ in $L^+_{[1,z]} \cap \AAA_{r/\kappa, \kappa r}$  and $o, q$ in $L^-_{[1, w']} \cap \AAA_{r/\kappa, \kappa r}$ respectively, such that for all $t$ with $0< T-t \leq r^{1/2}$,  the arc lengths are bounded by
$$L_{g(t)} (\gamma_{o,p}) \leq C r^{1/6}, ~ L_{g(t)} (\gamma_{o,q}) \leq C r^{1/6}. $$

This immediately implies that there exists a piecewise smooth path joining $p$ and $q$ in $\AAA_{r/\kappa, \kappa r}$ with length uniformly bounded by $3Cr^{1/6}$ for some fixed $\kappa\geq 2$. This proves (1).

The estimate (2) follows from the same argument in the proof of (1) by letting $t\rightarrow T$ because for fix $r>0$, $g(t)$ converges uniformly in $C^\infty$ on $\AAA_{r/\kappa, \kappa r}$.

\end{proof}

The following corollary immediately follows from Proposition \ref{geomest} and it can be used to show that the metric completion of $(X\setminus E^-)$ is homeomorphic to $Y$ as the exceptional locus $E^-$ is contracted to point as $t\rightarrow T$.

\begin{corollary} \label{metrcom1}

Let $g(T)$ be the limiting smooth K\"ahler metric on $\Omega$ of the K\"ahler-Ricci flow as $t\rightarrow T$. For any $\epsilon>0$ there exists $r_0 \in (0, 1)$ such that
\begin{equation}
\emph{diam}_{g(T)} (B_r \setminus  \{ E^-\}) < \epsilon, \quad \textrm{for all } 0< r \leq r_0,
\end{equation}
where $B_r = \{ p \in \Omega~|~ e^\rho \leq r \}$.

\end{corollary}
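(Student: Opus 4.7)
The plan is to deduce the diameter estimate by dyadically decomposing $B_r\setminus E^-$ into annular shells on which Proposition \ref{geomest}(2) provides short $g(T)$-paths, and then chaining these paths across consecutive shells so that the total length sums as a convergent geometric series.

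Fix $r$ small enough that $\kappa r^{1/2}<1$, and set $r_j := 2^{-j}r^{1/2}$ for $j\geq 1$, so that
$$
B_r\setminus E^- \;=\; \bigcup_{j\geq 1}\Sigma_j,\qquad \Sigma_j := \{\,r_j\leq e^{\rho/2}\leq 2r_j\,\}.
$$
Each $\Sigma_j$ is an annulus of the form $\mathcal{A}_{r_j,2r_j}$, so Proposition \ref{geomest}(2) (with parameter $r_j$) joins any two points of $\Sigma_j$ by a piecewise smooth path of $g(T)$-length $\leq C r_j^{1/6}$ whose image lies in the slightly fattened shell $\{r_j/\kappa\leq e^{\rho/2}\leq \kappa r_j\}\subset \Omega\setminus E^-$. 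Since adjacent shells $\Sigma_j$ and $\Sigma_{j-1}$ share the level set $\{e^{\rho/2}=2r_j=r_{j-1}\}$, picking any auxiliary point $o_j$ on this intersection and concatenating the two Proposition \ref{geomest}(2) paths yields a bridge from any point of $\Sigma_j$ to any point of $\Sigma_{j-1}$ of total $g(T)$-length $\lesssim r_{j-1}^{1/6}$.

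Given arbitrary $p,p' \in B_r\setminus E^-$ with $p\in \Sigma_{j_p}$ and $p'\in \Sigma_{j_{p'}}$, I would fix a reference point $p_\star\in\Sigma_1$ and iterate the bridging construction through the consecutive shells $\Sigma_{j_p}\to\Sigma_{j_p-1}\to\cdots\to\Sigma_1$ to build a path from $p$ to $p_\star$ of total $g(T)$-length
$$
\sum_{j=1}^{j_p} C\,r_j^{1/6}\;\leq\; C r^{1/12}\sum_{j=1}^{\infty}2^{-j/6}\;=:\;C'r^{1/12}.
$$
The same bound holds for a path from $p'$ to $p_\star$, so the triangle inequality gives $d_{g(T)}(p,p')\leq 2C'r^{1/12}$. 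Hence $\mathrm{diam}_{g(T)}(B_r\setminus E^-)\leq 2C'r^{1/12}$, which is less than any prescribed $\epsilon$ once $r\leq r_0 := (\epsilon/(2C'))^{12}$.

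The only point needing care (rather than a real obstacle) is verifying that every path produced by Proposition \ref{geomest}(2) stays inside $\Omega\setminus E^-$ so that its $g(T)$-length is actually defined; this is ensured by the containment in $\{r_j/\kappa \leq e^{\rho/2}\leq \kappa r_j\}$ together with the uniform bound $\kappa r_j \leq \kappa r^{1/2}/2 <1$ built into the initial choice of $r$. The rest is the geometric-series estimate and an application of the triangle inequality.
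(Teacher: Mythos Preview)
Your proof is correct and supplies precisely the standard dyadic--chaining argument that the paper has in mind when it says the corollary ``immediately follows from Proposition~\ref{geomest}.'' The paper gives no further details, so your write-up is the natural elaboration of the same approach.
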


Now we can complete the proof  Theorem \ref{minus1}. First of all, by the smooth convergence of $g(t)$ to $g(T)$ on $X\setminus E^-$ and Corollary \ref{metrcom1}, the metric completion of $(X\setminus E^-, g(T))$ is homeomorphic to the blow-down variety $Y$. It is then straightforward to verify that $(X, g(t))$ converges to $(Y, d_Y)$, the metric completion of $(X\setminus E^-, g(T))$ because by Proposition \ref{geomest} and the smooth convergence of $g(t)$ on $X\setminus E^-$, for any $\epsilon>0$, there exist $\delta>0$ and $\gamma>0$  such that  the set $\{ p\in X~|~ e^\rho\geq \delta \}$ is an $\epsilon$-dense set both in $(X, g(t))$ and $(Y, d_T)$ for $0<T-t<\gamma$.

%%%%%%%%%%%%%%%%%%%%%%%%%%%%%%%%%%%%%%%%%%%%%%%%%%%%%%%%%%

\bigskip

\section{Small resolution by  Ricci flow}

In this section, we give two approaches to resolve a family of isolated singularities of a normal projective variety using Ricci curvature. It was first shown in  \cite{SY}  for a very special family of varieties with K\"ahler metrics satisfying the Calabi symmetry. It turns out the Ricci flow approach seems to be much more canonical because it gives a unique minimal resolution.

Assume that $Y$ is an $n$-dimensional normal projective variety with isolated singularities $p_1, ..., p_k$. Suppose near each $p_j$, $Y$ is isomorphic to the complex cone over $\PP^{n_j} \times \PP^{n-n_j-1}$ in $\mathbb{C}^{(n_j+1)\times(n-n_j)}$ by the Segre embedding from $\PP^{n_j} \times \PP^{n-n_j-1} \rightarrow \PP^{(n_j+1)\times(n-n_j)-1}$
$$ [Z_0, ..., Z_{n_j} ] \times [W_0, ...,  W_{n-n_j-1} ] \rightarrow [Z_0W_0 ..., Z_i W_j, ...Z_{n_j} W_{n-n_j-1}]$$
with $n_j, n-n_j \geq 2$ and $n_j \neq n-n_j$.  Without loss of generality, we can assume $n_j \geq  n-n_j$.

There exist two minimal resolutions
$$\phi_- : X^- \rightarrow Y$$
 over $p_1, ..., p_k$ such that the exceptional locus $E^-_j = (\phi_-)^{-1} (p_j)$ is isomorphic to $\mathbb{P}^{n_j}$ with normal bundle $\OO_{\PP^{n_j}}(-1)^{\oplus (n-n_j)}$
and
$$\phi_+: X^+ \rightarrow Y$$
  over $p_1, ..., p_k$ such that the exceptional locus $E^+_j = (\phi_+)^{-1} (p_j)$ is isomorphic to $\mathbb{P}^{n-n_j-1}$ with normal bundle $\OO_{\PP^{n-n_j-1}}(-1)^{\oplus (n_j+1)}$.  $X^+$ is exactly the flip of $X^-$ over $Y$.

There also exists a non-minimal resolution of $Y$ over $p_1, ..., p_k$ given by  $$\tilde\phi: \tilde X \rightarrow Y$$ with $\tilde X$ being the blow-up of $X^-$ and $X^+$ along $E^-$ and $E^+$ and $\tilde{\phi}^{-1} (p_j)$ is isomorphic to the exceptional locus $\tilde E_j = \PP^{n_j}\times \PP^{n-n_j-1}$.

\subsection{Resolution of singularities by Ricci curvature}

We keep the same assumptions and the notations as before. Then we have the following theorem generalizing Proposition 5.1  in \cite{SY}.

\begin{theorem} Let $g$ be a K\"ahler metric  $Y$ such that $g$ is smooth on $Y\setminus\{ p_1, ..., p_k\}$ and near $p_j$, $g$ is the restriction of a smooth K\"ahler metric on $\CC^{(n_j+1)(n-n_j)}$ by a local embedding induced from the Segre map.
There exists $\epsilon_0>0$ such that for any $\epsilon\in (0, \epsilon_0)$, $g- \epsilon \ric(g)$ is a smooth K\"ahler metric on $Y\setminus \{ p_1, ..., p_k \}$. Furthermore, there exists a smooth K\"ahler metric $\tilde g_\epsilon $ on $\tilde X$ such that  $(\tilde X, \tilde g_\epsilon)$ is  the metric completion of $(Y\setminus \{p_1, ..., p_j \}, g-\epsilon  \ric(g))$.

\end{theorem}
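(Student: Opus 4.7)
The plan is to pull everything back to the non-minimal resolution $\tilde\phi : \tilde X \to Y$ from Section~2 and show that
\[
\tilde g_\epsilon \;:=\; \tilde\phi^{*}\bigl(g - \epsilon \ric(g)\bigr)
\]
extends smoothly across the exceptional set $\tilde E = \bigsqcup_j \tilde E_j$, with $\tilde E_j \cong \PP^{n_j} \times \PP^{n-n_j-1}$, to a K\"ahler metric on all of $\tilde X$ for every $\epsilon$ in a uniform interval $(0,\epsilon_0)$. Because $\tilde\phi$ is a biholomorphism on $\tilde X \setminus \tilde E$, this single extension statement delivers both conclusions simultaneously: pointwise positivity on $Y \setminus \{p_1,\ldots,p_k\}$ and, via compactness of $\tilde X$ and density of $\tilde X \setminus \tilde E$ in $\tilde X$, identification of $(\tilde X, \tilde g_\epsilon)$ as the metric completion.

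I would localize near a single singularity $p_j$, set $m = n_j$ and $l = n - n_j - 1$ with $m > l \geq 1$, and identify a neighborhood of $p_j$ with a neighborhood of the vertex of the affine cone $\hat\EE \subset \CC^{(m+1)(l+1)}$. The key step is to show $\tilde\phi^*\ric(g)$ extends smoothly across $\tilde E$. In the Section~2 coordinates $(z,w,\xi_0)$ on $\tilde X$, Lemma~2.5 combined with $\ddbar \rho = \theta_- + \theta_+$ on $\tilde X \setminus \tilde E$ gives
\[
\tilde\phi^*\ric(\hat\omega) \;=\; -l\theta_- - m\theta_+,
\]
which is already a smooth $(1,1)$-form on $\tilde X$. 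For $g$, parametrize the Segre--resolution map locally as $F = \xi_0 \cdot G(z,w)$ with $G(z,w) = (w_\beta, z_i w_\beta)$ the Segre parametrization of $\PP^m\times\PP^l$. A direct block computation of the hermitian matrix $H_{AB} = \langle \partial_A F, \partial_B F \rangle_{g_{\mathrm{amb}}}$ and an application of the Schur determinant formula yield
\[
\tilde\phi^* g^n \;=\; |\xi_0|^{2(m+l)} \, h_g \cdot dz \wedge d\bar z \wedge dw \wedge d\bar w \wedge d\xi_0 \wedge d\bar\xi_0 ,
\]
with $h_g$ smooth and \emph{strictly positive} on $\tilde X$; strict positivity at points of $\tilde E$ amounts to the linear independence in $\CC^{(m+1)(l+1)}$ of the $n$ vectors $\{\partial_{z_i}G,\partial_{w_\beta}G,G\}$, which express exactly the $n$-dimensional tangent cone to $\hat\EE$ at a nonzero point. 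The same factorization applies to $\hat\omega^n$, so $f := g^n/\hat\omega^n$ is smooth and positive on $\tilde X$, and
\[
\tilde\phi^* \ric(g) \;=\; -l\theta_- - m\theta_+ - \ddbar \log f
\]
is a smooth $(1,1)$-form on $\tilde X$.

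Positivity of $\tilde g_\epsilon = \tilde\phi^* g + \epsilon(l\theta_- + m\theta_+) + \epsilon \ddbar \log f$ at a point $p \in \tilde E$ is checked on the local splitting $T_p \tilde X = T_p \tilde E \oplus \CC \partial_{\xi_0}$. The same block computation shows that $\tilde\phi^* g$ vanishes on the $T\tilde E$-block and in the cross terms at $\tilde E$, while $\tilde\phi^* g(\partial_{\xi_0},\partial_{\bar\xi_0})|_{\tilde E} = |G(z,w)|^2_{g_{\mathrm{amb}}} > 0$. On the $T\tilde E$-block, $\epsilon(l\theta_- + m\theta_+)|_{\tilde E}$ is a positive multiple of the product Fubini--Study K\"ahler form on $\PP^m \times \PP^l$ (positive definite precisely because $l,m \geq 1$) and contributes nothing to the normal block or to the cross terms. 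The additional term $\epsilon \ddbar \log f$ is a smooth bounded hermitian form, so a Schur-complement expansion shows that for $\epsilon$ sufficiently small the full matrix of $\tilde g_\epsilon$ is positive definite at every $p \in \tilde E$; by continuity and compactness of $\tilde E$, positivity persists on a neighborhood of $\tilde E$. Away from such a neighborhood, $\tilde\phi^* g$ is uniformly positive definite and $\tilde\phi^*\ric(g)$ is uniformly bounded, so a single $\epsilon_0 > 0$ works throughout $\tilde X$.

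The main technical obstacle is the Schur-type computation yielding the clean factorization $\tilde\phi^* g^n = |\xi_0|^{2(m+l)} h_g \cdot (\mathrm{vol})$ with $h_g$ smooth and \emph{nowhere vanishing} on $\tilde E$: positivity of $h_g$ there is precisely the statement that the Segre tangent cone to $Y$ at the vertex has the correct dimension and is non-degenerate for $g_{\mathrm{amb}}$. This is also what forces the non-minimal resolution $\tilde X$ into the picture: on either minimal resolution $X^\pm$, the sole tangential Fubini--Study form $l\theta_-$ or $m\theta_+$ cannot by itself counteract the full degeneracy of the pull-back of $g$ along the exceptional $\PP^m$ (resp.\ $\PP^l$).
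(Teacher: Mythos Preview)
Your proposal is correct and follows essentially the same approach as the paper: localize near a singularity, compute $\ric(\hat\omega)$ as a negative combination of $\theta_-$ and $\theta_+$ via Lemma~2.5, show that $f = g^n/\hat\omega^n$ extends to a smooth positive function on $\tilde X$ so that $\tilde\phi^*\ric(g)$ is smooth there, and then verify that $\tilde\phi^*(g - \epsilon\ric(g))$ is K\"ahler on $\tilde X$ for small $\epsilon$ by comparison with the reference form $\hat\omega + \theta_- + \theta_+$. Your Schur/block argument for the volume factorization and for positivity along $\tilde E$ is more explicit than the paper's one-line comparison, and your coefficients $-l\theta_- - m\theta_+$ (rather than the paper's $-m\theta_- - l\theta_+$) are the ones consistent with Lemma~2.5, but neither point affects the argument.
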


\begin{proof}

Both $g$ and $\ric(g)$ are smooth on $Y\setminus \{p_1, ..., p_k\}$. So it suffices to prove the theorem in a neighborhood of $p_j$. Without loss of genreality, we can assume that $Y$ has only one isolated singularity $p$. Let $X^+$ be a nonsingular model $\phi_+: X^+ \rightarrow Y$ of $Y$. Suppose $(\phi_+)^{-1} (p) = \mathbb{P}^l$ with $m=n-l-1$ and $m\geq n/2$, whose normal bundle is $$ \mathcal{O}_{\mathbb{P}^l}(-1)^{\oplus(m+1)}.$$  We now work locally near $E^+$ or simply a neighborhood of the zero section of the normal bundle $\EE^+= \OO_{\PP^l}(-1)^{\oplus(m+1)}$.

We keep the same coordinates and notations as in section 2,
$$e^\rho = (1+|z|^2) |\xi|^2. $$

Using the local Segre embedding of $Y$ near $p$ into $\CC^{(m+1)(l+1)}$,  $\hat\omega= \ddbar e^\rho$ on $Y$ is exactly the restriction of the Euclidean metric on $\CC^{(m+1)(l+1)}$.

The volume form $\omega^n$ on $Y$ is equivalent to the restriction of $\hat\omega^n = (\ddbar e^\rho)^n$. More precisely,  there exist a real-valued function $F$ on $Y$ and  $C>0$ such that
$$\omega^n = e^F \hat\omega^n, ~  -C\leq F\leq C $$
since $\omega$ and $\hat\omega$ are $L^\infty$ equivalent on $\CC^{(m+1)(n-m)}$ and their pullback are both smooth on $X^+$. In particular, $(\phi_+)^{-1} (F)$ is a smooth function on $ X^+$ by comparing $\hat\omega^n$ and $\omega^n$ using their Taylor expansions in $\xi_j$ and $z_i\xi_j$. Both $\omega^n$ and $(\hat\omega)^n$ have  the same vanishing order $e^\rho$, in addition, $F$ is constant on $E^+$ and $\ddbar F$ is bounded below and above by a multiple of $\hat\omega+\theta_+$. Since the pullback $F$ is  also a smooth function on $X^-$ with $\ddbar F$ bounded below and above by a multiple of $\hat\omega+\theta_-$, we conclude that there exists $A>0$ such that
$$-A\hat\omega \leq \ddbar F \leq A \hat\omega. $$
Then on $\tilde X\setminus \tilde E$, we have $$\ric(\omega ) = \ric(\hat\omega) -\ddbar F = -m \theta_-  -l \theta_+ - \ddbar F. $$
and
$$ \omega - \epsilon \ric(\omega ) = \omega + \epsilon (m \theta_- + l  \theta_+ -\ddbar F). $$
For $\epsilon>0$ sufficiently small,  $\omega -\epsilon \ric(\omega)$ is K\"ahler on $\tilde X\setminus \tilde E$ and smoothly extends to a smooth form on $\tilde X$. By comparing it to the K\"ahler form $\hat\omega+ \theta_-+\theta_+$ on $\tilde X$, $\omega -\epsilon \ric(\omega)$ extends to a global K\"ahler form on $\tilde X$. The theorem immediately follows.

\end{proof}

\subsection{Resolution of singularities by Ricci flow} In this section, we are going to prove the following theorem as a special case of Theorem \ref{main2}.

\begin{theorem} \label{minus2} Let $Y$ be an $n$-dimensional  projective variety of $\dim_{\CC} Y=n$ with an isolated singularity $p$. Suppose

\begin{enumerate}

\item $\pi: X \rightarrow Y$ is a resolution of singularities at $p$ with $$\pi^{-1} (p) = \mathbb{P}^l, ~1\leq l \leq n-2. $$

\item the normal bundle of $\mathbb{P}^l$ is $\mathcal{O}_{\mathbb{P}^l}(-1)^{\oplus(n-l)}$ with $ l<n/2 . $

\end{enumerate}
Let $g_0$ be a smooth K\"ahler metric on $Y$, i.e., $g_0$ is the restriction of a smooth K\"ahler metric on any local embedding of $Y$ in $\CC^N$ for some $N$. Then there exists a unique smooth solution $g(t)$ of the K\"ahler-Ricci flow on $X$ for $t\in (0, T)$ for some $T\in (0, \infty]$ satisfying

\begin{enumerate}

\item $g(t)$ converges to  $g_0$ on $X\setminus \PP^l$ in $C^\infty(X\setminus \PP^l)$.
\medskip

\item $(X, g(t))$ converges in Gromov-Hausdorff topology to $(Y, g_0)$ as $t\rightarrow 0$.

\end{enumerate}

\end{theorem}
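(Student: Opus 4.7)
The plan is to realize the K\"ahler-Ricci flow on $X$ as the instantaneous smoothing of the degenerate initial data $\omega_0 := \pi^{*} g_0$. Since $\pi$ is an isomorphism off $E := \mathbb{P}^l$ and $g_0$ is smooth on $Y$ (including near $p$, by the local embedding hypothesis), $\omega_0$ is a smooth closed nonnegative $(1,1)$-form on $X$ whose restriction to $E$ vanishes identically. An adjunction computation gives $K_X|_E = \OO_{\PP^l}(n-2l-1)$, so the hypothesis $l<n/2$ makes $K_X$ $\pi$-nef and generically $\pi$-ample; consequently $[\omega_0]+t[K_X]$ is K\"ahler on $X$ for all $t\in (0,T)$ and some $T>0$. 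Fixing a smooth volume form $\Theta$ and setting $\chi := \ddbar\log\Theta \in [K_X]$, the equation to solve is
\begin{equation*}
\ddt{}\varphi = \log \frac{(\omega_0 + t\chi + \ddbar\varphi)^n}{\Theta}, \qquad \varphi|_{t=0}=0,
\end{equation*}
with the evolving K\"ahler form $\omega(t) = \omega_0 + t\chi + \ddbar\varphi$.

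I would construct the solution by regularization. Fix an auxiliary K\"ahler form $\omega_X$ on $X$, set $\omega_{0,\epsilon} := \omega_0 + \epsilon\omega_X$, and let $\varphi_\epsilon$ be the smooth K\"ahler-Ricci flow solution with initial metric $\omega_{0,\epsilon}$ on $[0,T)$. The heart of the proof is a priori estimates for $\varphi_\epsilon$ uniform in $\epsilon$ and $t\in(0,T)$: the $L^\infty$ bound, the upper volume bound $\omega_\epsilon^n\leq C\Theta$, and higher-order interior estimates on compact subsets of $X\setminus E$, following the Kolodziej/Song-Tian strategy together with Tsuji's trick and Calabi's third-order identity. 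For the quantitative behavior near $E$ I would transplant the foliation estimates of Section 3 essentially verbatim: because the local model of $\pi:X\to Y$ at $p$ coincides with the $\EE^+$ side of the flip diagram (\ref{diag1}), the holomorphic foliations $L^+_{[\eta]}$, the barrier $\rho$, and the normalized radial vector field $W_+$ are available. Running the maximum principle argument of Proposition \ref{keyest2} on $\log(e^\rho\, \tr_{\hat\omega|_{L^+_{[\eta]}}}(\omega_\epsilon|_{L^+_{[\eta]}}))$ and $\log|W_+|^2_{\omega_\epsilon}$, using the smoothness of the initial data to control all boundary values on $\partial\{e^\rho\leq 1\}$ uniformly in $\epsilon$ and small $t$, gives
\begin{equation*}
e^\rho\, \tr_{\hat\omega|_{L^+_{[\eta]}}}(\omega_\epsilon|_{L^+_{[\eta]}})\leq C, \qquad |W_+|^2_{\omega_\epsilon}\leq Ce^{-\rho/2}
\end{equation*}
in a neighborhood of $E$, uniformly in $\epsilon$.

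Passing to the limit $\epsilon\to 0$ produces a smooth solution $g(t)$ on $X$ for $t\in(0,T)$, and combining the interior estimates with $\omega_{0,\epsilon}\to\omega_0$ at $t=0$ gives the $C^\infty(X\setminus E)$ convergence $g(t)\to g_0$ as $t\to 0^+$. For the Gromov-Hausdorff convergence $(X,g(t))\to(Y,g_0)$, I would mimic the path-length construction of Proposition \ref{geomest} using the leafwise flat Euclidean structure of Lemma \ref{locm1}: any two points in an annulus $\{r\leq e^{\rho/2}\leq 2r\}$ can be joined by a piecewise-smooth curve of length $O(r^{1/6})$ inside a slightly thicker annulus, uniformly for small $t>0$. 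Consequently the neighborhood $\{e^\rho\leq r\}$ of $E$ has $g(t)$-diameter tending to $0$ as $r\to 0$, uniformly in small $t$; this forces $E$ to collapse to a single point in the Gromov-Hausdorff limit as $t\to 0^+$, and the resulting limit must be $(Y,g_0)$ since this is the metric completion of $(X\setminus E, g_0)$. Uniqueness reduces to the standard comparison principle for the parabolic Monge-Amp\`ere equation applied to the difference of the potentials of any two candidate solutions.

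The main obstacle is making the estimates uniform down to $t=0$ in the presence of truly degenerate initial data. The usual smoothing theorems for degenerate K\"ahler-Ricci flow forget the initial singularity instantly and do not track the geometric collapsing of $E$, whereas here one needs quantitative control uniform both in $\epsilon$ and in small $t>0$ in order to recover $(Y,g_0)$ as the $t\to 0^+$ Gromov-Hausdorff limit. The $\rho$-weighted barriers of Section 3, which encode the algebraic structure of the small resolution through the holomorphic foliation $L^+_{[\eta]}$, are precisely what converts this difficulty into estimates accessible to the maximum principle.
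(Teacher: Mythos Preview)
Your proposal is correct and follows the paper's argument essentially step for step: regularize by $\omega_{0,\epsilon}=\omega_0+\epsilon\omega_+$, obtain uniform $L^\infty$ and interior estimates (the paper's Proposition \ref{locres}), transplant the foliation second-order estimates of Section 3 uniformly in $\epsilon$ (Proposition \ref{keyest3}), and conclude Gromov--Hausdorff convergence via the annulus path-length bounds of Proposition \ref{geomest}. One small addendum: the paper records estimates for \emph{both} foliations $L^-_{[\xi]}$, $L^+_{[\eta]}$ and both radial fields $W_\pm$, because the path construction in Proposition \ref{geomest} joins two arbitrary annulus points through an intermediate point using one leaf from each family; since both foliations live on $\Omega\subset \EE^+\setminus E^+=\EE^-\setminus E^-$ they are available here too, but you should invoke $L^-_{[\xi]}$ and $W_-$ alongside $L^+_{[\eta]}$ and $W_+$.
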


Since $\omega_0$ is locally the pullback of a smooth metric on $\CC^N$ for a local embedding of $Y$, $\omega_0$ is a smooth closed big and nonnegative $(1,1)$-form on $X$. We define
$$T= \sup\{ t>0 ~|~  [\omega_0] + t[K_X] ~\textnormal{is~K\"ahler~on ~} X\}. $$
Such $T\in (0, \infty]$ is well defined because $K_X$ is ample on each component of the exceptional locus  of $\pi$. By choosing a suitable smooth closed $(1,1)$-form in $ [K_X]$, we can assume $\omega_t = \omega_0 + t \chi$ is a K\"ahler metric for each $t\in (0, T)$.   We define $\varphi(t)$ by $\omega(t)= \omega_t + \ddbar \varphi(t)$ and $\varphi(0)=0.$

Then $[\omega_t] = [\omega(t)]$. We define a perturbed K\"ahler metric by $\omega_{0, \epsilon} = \omega_0 + \epsilon \omega_+ $ with $\omega_+$ being a fixed K\"ahler metric on $X$ for $\epsilon\in (0, 1)$. We then consider the K\"ahler-Ricci flow on $X$ starting with $\omega_{0, \epsilon}$. Using the estimates as in \cite{SoT3}, one has the following proposition.

\begin{proposition} \label{locres}For each $\epsilon\in (0,1)$, there exists a unique smooth solution $g_\epsilon(t)$ of the K\"ahler-Ricci flow on $X$ starting from $\omega_{0, \epsilon}$ for $t\in [0, T)$. Fix $T' \in (0, T)$ and let $\varphi_\epsilon(t)$ be defined by
$$\omega_\epsilon(t) = \omega_{0, \epsilon} + t \chi + \ddbar \varphi_\epsilon(t), ~\varphi_\epsilon(0)= 0. $$ Then there exists $C>0$ such that $$||\varphi_\epsilon||_{L^\infty(X\times [0, T']) } \leq C$$ and for any compact subset $K \subset X\setminus E$
$$ \limsup_{\epsilon\rightarrow 0} || \varphi_{\epsilon} - \varphi ||_{C^k( K \times [0, T'] ) } =0, $$
where $C^k$ norm is taken with respect to the fixed K\"ahler metric $\omega_+$ on $X$.

\end{proposition}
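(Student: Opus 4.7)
The plan is to follow the perturbation strategy of \cite{SoT3}, adapted to the small resolution $\pi : X \to Y$. For each $\epsilon \in (0,1)$ the form $\omega_{0,\epsilon} = \omega_0 + \epsilon \omega_+$ is a smooth K\"ahler metric on $X$, and since $[\omega_0] + t[K_X]$ is K\"ahler on $X$ for every $t \in (0,T)$ by the definition of $T$, the class $[\omega_{0,\epsilon}] + t[K_X]$ is K\"ahler on all of $[0,T)$. Classical short-time existence from \cite{TZha} then supplies a unique smooth solution $\omega_\epsilon(t) = \omega_t + \epsilon\omega_+ + \ddbar \varphi_\epsilon(t)$ with $\varphi_\epsilon(0) = 0$, solving
$$\ddt{}\varphi_\epsilon \;=\; \log\frac{(\omega_t + \epsilon\omega_+ + \ddbar \varphi_\epsilon)^n}{\Theta} \quad \text{on } X\times [0,T).$$

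The upper bound $\varphi_\epsilon \le AT'$ is immediate from the maximum principle applied to $\varphi_\epsilon - At$: for $A$ sufficiently large, and using $(\omega_t + \epsilon \omega_+)^n \leq C\Theta$ uniformly in $\epsilon$, any interior maximum is ruled out. The uniform lower bound is the main technical obstacle. The naive min-point argument fails because $\omega_0$ degenerates along $E$ and the resulting inequality involves $\log(\omega_{t,\epsilon})^n$, which blows up to $-\infty$ near $E$ as $t \to 0$. Following \cite{SoT3}, I would first bound $\ddt{}\varphi_\epsilon$ from above by applying the maximum principle to $H = (t - t_0)\ddt{}\varphi_\epsilon + \varphi_\epsilon - At$, which yields $(\omega_\epsilon(t))^n \leq C\Theta$ and hence uniform $L^p$ bounds on $e^{F_\epsilon(t)}$ where $F_\epsilon(t) = \log((\omega_{t,\epsilon} + \ddbar\varphi_\epsilon)^n / \Theta)$. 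Treating the complex Monge-Amp\`ere equation slice-by-slice in $t \in [0,T']$ and invoking Kolodziej's $L^\infty$ estimate (stable as $\epsilon \to 0$, since the reference class $[\omega_{t,\epsilon}]$ stays in a compact family of K\"ahler classes and the density lies in a fixed $L^p$ ball), one obtains $\varphi_\epsilon \ge -C$ uniformly in $\epsilon$ and $t \in [0,T']$.

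Once the uniform $L^\infty$ bound is in place, higher-order estimates on a compact set $K \subset X \setminus E$ are routine. On a neighborhood of $K$ the form $\omega_0$ is a smooth K\"ahler metric, so a parabolic Schwarz-lemma argument applied to $\log \tr_{\omega_\epsilon}\omega_0 - A\varphi_\epsilon$, with a cutoff function supported slightly outside $K$, yields a uniform bound on $\tr_{\omega_\epsilon}\omega_0$ on $K$. Combined with the two-sided bounds on $\omega_\epsilon^n$ for $t$ bounded away from $0$, the Monge-Amp\`ere equation is uniformly parabolic on $K \times [\tau, T']$ for each $\tau > 0$, and the Evans-Krylov-Calabi estimates together with parabolic Schauder theory produce uniform $C^k$ bounds. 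A diagonal argument over an exhaustion of $X \setminus E$ by compacts, combined with the uniqueness of smooth solutions of the K\"ahler-Ricci flow on $X \setminus E$ having prescribed initial data $\omega_0|_{X\setminus E}$ (again from \cite{SoT3}), shows that $\varphi_\epsilon \to \varphi$ in $C^k(K\times [0,T'])$ as $\epsilon \to 0$ without passing to a subsequence.

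The hard part is the uniform lower bound for $\varphi_\epsilon$: this is where the Kolodziej-type input is indispensable, because the degeneration of $\omega_0$ along the high-codimension exceptional locus $E$ precludes any barrier or pure maximum-principle approach to the lower bound. Every other step is essentially local and parabolic, and reduces to standard estimates once the $L^\infty$ control is secured.
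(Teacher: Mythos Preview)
Your sketch is essentially what the paper intends: the paper gives no argument for this proposition and simply defers to the estimates of \cite{SoT3}, and your outline (maximum principle for the upper bound on $\varphi_\epsilon$ and on $\dot\varphi_\epsilon$, a Ko{\l}odziej-type input for the lower $L^\infty$ bound, then local parabolic Schwarz/Evans--Krylov estimates away from $E$, and convergence via uniqueness of the weak flow) is precisely the structure of that reference.

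One technical point deserves correction. You justify the slice-by-slice Ko{\l}odziej estimate by saying ``the reference class $[\omega_{t,\epsilon}]$ stays in a compact family of K\"ahler classes.'' This is false at $t=0$ as $\epsilon\to 0$: the limit $[\omega_0]=\pi^*[\textrm{ample}]$ is only big and nef on $X$, not K\"ahler, and the K\"ahler lower bound of $[\omega_{t,\epsilon}]$ degenerates along the ray $(t,\epsilon)\to(0,0)$. The classical Ko{\l}odziej theorem for K\"ahler reference forms therefore gives a constant that blows up in this limit. What is actually needed --- and what \cite{SoT3} uses --- is the degenerate version of the $L^\infty$ estimate for big semi-positive reference forms (as in \cite{EGZ}, \cite{Zha}, or the family version in \cite{SoT3} itself), whose constant depends only on the big class $[\omega_0]$, a uniform upper bound for $[\omega_{t,\epsilon}]$, and the $L^p$ norm of the density. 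With that substitution your argument goes through; everything else in your outline is correct and matches the cited approach.
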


An immediate consequence of Proposition \ref{locres} is the short time existence of the K\"ahler-Ricci flow on $X$ starting with the singular initial K\"ahler metric $\omega_0$ by the general results in  \cite{SoT3}. 
We let $X^+=X$ be a nonsingular model $\phi_+: X^+ \rightarrow Y$ of $Y$ so that $X^+$ is the flip of a minimal resolution $X^-$ of $Y$ over $p$. Then  the exceptional locus $E^+=(\phi_+)^{-1} (p)$ is isomorphic to $\mathbb{P}^l$ with $m=n-l-1$ and $m\geq  n/2$ whose normal bundle is $$ \mathcal{O}_{\mathbb{P}^l}(-1)^{\oplus(m+1)}.$$  Proposition \ref{locres} allows us to work locally near $E$ or simply a neighborhood of the zero section of the normal bundle  $\OO_{\PP^l}(-1)^{l+1}$.  We keep the same notations as in section 2. We also let $\tilde X$ be the blow-up of $X^+$ along the exceptional locus $E^+$ and let $\tilde \omega$ be a smooth K\"ahler metric on $\tilde X$.

Since $\omega_{0, \epsilon}$ is uniformly bounded above by a fixed Kahler metric $\omega_+$ on $X^+$, we can apply the same argument for Proposition \ref{keyest2} and obtain the following proposition.

\begin{proposition} \label{keyest3} Let $\omega_\epsilon(t)$ be the solution of the K\"ahler-Ricci flow on $X$ starting from $\omega_{0, \epsilon}$ for $\epsilon\in (0,1)$. There exist $\gamma>0$ and  $C>0$ such that for all $\epsilon\in [0,1)$, $t\in [0, T)$ ,  $p\in \Omega$,
\begin{equation} e^\rho \tr_{\hat\omega|_{L^-_{[\xi]}}}(\omega_\epsilon|_{L^-_{[\xi]}})  \leq C. \end{equation}
\begin{equation} e^\rho \tr_{\hat \omega|_{L^+_{[\eta]}}}(\omega_\epsilon |_{L^+_{[\eta]}}) \leq C. \end{equation}
\begin{equation} |W_-|^2_{\omega_\epsilon} + |W_+|^2_{\omega_\epsilon } \leq C e^{-\rho/2} . \end{equation}
 \begin{equation} e^{2\rho} \left( \tr_{\tilde \omega}(\omega_\epsilon ) \right)^\gamma  \tr_{\hat\omega  |_{L^-_{[\xi]}}}(\omega  |_{L^-_{[\xi]}}) \tr_{\hat \omega  |_{L^+_{[\eta]}}}(\omega_\epsilon |_{L^+_{[\eta]}}) \leq C, \end{equation}
where $\xi=\xi(p)$ and $\eta=\eta(p)$ as in (\ref{coorch1}) and (\ref{coorch2}).

\end{proposition}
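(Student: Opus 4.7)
The plan is to run the maximum-principle arguments of Section 3.1 with $\omega_\epsilon$ in place of $\omega$, tracking constants so as to be uniform in $\epsilon \in [0,1)$. All the differential inequalities used there---Proposition \ref{H2}, $\Box \log |V_\pm|^2_{\omega_\epsilon} \leq 0$, and $\Box \rho = -\tr_{\omega_\epsilon}(\theta_- + \theta_+)$---depend only on the K\"ahler-Ricci flow equation, not on the initial datum, so they hold for $\omega_\epsilon$ without change. The crucial structural input is the uniform pointwise bound $\omega_{0,\epsilon} = \omega_0 + \epsilon\omega_+ \leq 2\omega_+$ on $X^+$, together with the $L^\infty$-bound on $\varphi_\epsilon$ and the smooth convergence $\omega_\epsilon \to \omega$ away from $E^+$, both supplied by Proposition \ref{locres}.

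First I would establish a uniform-in-$\epsilon$ Schwarz-type inequality $\omega_\epsilon \leq C e^{-l\rho}\hat\omega$ on $\Omega$ by repeating Step 1--2 of Proposition \ref{keyest}: the maximum principle applied to $\log \tr_{\omega_\epsilon}(\hat\omega) - 2A\varphi_\epsilon$ gives an upper bound for $\tr_{\omega_\epsilon}(\hat\omega)$ in terms of its values on $\partial\Omega\setminus E^+$ (uniformly controlled by Proposition \ref{locres}) and on $\{t=0\}$ (uniformly controlled by $\omega_{0,\epsilon}\leq 2\omega_+$), while the volume-form bound $\omega_\epsilon^n \leq Ce^{-l\rho}\hat\omega^n$ follows from the maximum principle applied to $\dot\varphi_\epsilon$ combined with Lemma \ref{volcompa}. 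Combining these two inequalities gives the Schwarz bound.

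Next, for (\ref{rouest}) and (\ref{analo1}), I would apply the maximum principle to the functions
\[
J_\delta^\pm \;=\; \log \tr_{\hat\omega|_{L^\pm_{[\cdot]}}}(\omega_\epsilon|_{L^\pm_{[\cdot]}}) \;+\; (1+\delta)\rho.
\]
These are strict subsolutions of $\Box$ by Proposition \ref{H2} and the identity $\Box\rho = -\tr_{\omega_\epsilon}(\theta_- + \theta_+)$; thanks to the uniform Schwarz bound just established, Lemma \ref{H1} applies to $\omega_\epsilon$ and yields $\limsup_{p\to E^+} J_\delta^\pm = -\infty$ uniformly in $\epsilon$ and $t$. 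Their maximum is therefore attained on $(\partial\Omega\setminus E^+)\times [0,T')$ or at $t=0$, where they are uniformly bounded; letting $\delta\to 0$ gives (\ref{rouest}) and (\ref{analo1}). Estimate (\ref{analo2}) follows from Step 3--4 of Proposition \ref{keyest} applied to $\omega_\epsilon$, using $\Box\log|V_\pm|^2_{\omega_\epsilon}\leq 0$ and the same auxiliary function $G_\delta^\epsilon = \log I + \log|V_0|^2_{\omega_\epsilon} + \delta\rho_0$. Finally, (\ref{2ndor}) follows by introducing $H_\gamma^\epsilon = \log\bigl(e^{2\rho}\,(\tr_{\omega_+}(\omega_\epsilon))^\gamma\,\tr_{\hat\omega|_{L^-_{[\xi]}}}(\omega_\epsilon|_{L^-_{[\xi]}})\,\tr_{\hat\omega|_{L^+_{[\eta]}}}(\omega_\epsilon|_{L^+_{[\eta]}})\bigr) - \varphi_\epsilon$ and choosing $\gamma>0$ small enough to absorb the positive term $A\tr_{\omega_\epsilon}(\omega_+)$ arising from $\Box\log\tr_{\omega_+}(\omega_\epsilon)$.

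The main difficulty is the uniform-in-$\epsilon$ control at $t=0$, since $\omega_{0,\epsilon}$ is not uniformly smooth in any natural sense: the $\omega_0$ part degenerates along $E^+$ while the $\epsilon\omega_+$ part is strictly positive there. What makes the argument go through is that each of the estimates (1)--(4) is an upper bound on a quantity which, at $t=0$, is pointwise dominated by the analogous quantity for $\omega_+$ alone, so the inequality $\omega_{0,\epsilon}\leq 2\omega_+$ reduces the initial-time check to the corresponding property of the fixed smooth metric $\omega_+$, which holds by the same explicit computations as in Lemma \ref{lcompa} and Lemma \ref{volcompa}. Everything else is then a formal repetition of the Section 3.1 arguments.
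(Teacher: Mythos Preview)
Your proposal is correct and follows exactly the approach the paper takes: the paper's own proof consists of the single observation that since $\omega_{0,\epsilon}$ is uniformly bounded above by a fixed K\"ahler metric $\omega_+$ on $X^+$, the arguments of Proposition \ref{keyest2} (and hence of Proposition \ref{H2}, Corollary \ref{roue}, and Proposition \ref{keyest}) go through verbatim with $\omega_\epsilon$ in place of $\omega$, with constants independent of $\epsilon$. Your write-up simply spells out those steps in detail, identifying correctly the two places where uniformity in $\epsilon$ enters---the initial data via $\omega_{0,\epsilon}\leq 2\omega_+$ and the boundary values on $\{\rho=0\}$ via Proposition \ref{locres}.
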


By the same argument for Corollary \ref{estE}, we have the following corollary of Proposition \ref{keyest3}

\begin{corollary}\label{estE3}
Fix $T'\in (0, T)$ and let $\omega(t)$ be the solution of the K\"ahler-Ricci flow on $Y$ starting with $\omega_0$. There exists $C>0$ such that for all $t\in [0, T')$ and $p\in E^+$,
\begin{equation}
\omega(t)|_{E^+} \leq C \theta_+|_{E^+}.
\end{equation}

\end{corollary}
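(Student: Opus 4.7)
The plan is to adapt the pointwise argument of Corollary \ref{corestE} to the small-resolution setting, applied first to the regularized flows $\omega_\epsilon(t)$ from Proposition \ref{locres} uniformly in $\epsilon\in(0,1)$, and then to pass to the limit $\epsilon \to 0$. With the roles of $E^-$, $L^-_{[\xi]}$ replaced by $E^+$, $L^+_{[\eta]}$, and the estimates of Proposition \ref{keyest2} replaced by the uniform-in-$\epsilon$ analogues in Proposition \ref{keyest3}, the whole pointwise computation carries over verbatim.

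In detail, fix $p\in E^+$ and any $[\eta]\in\PP^m$ (where $m=n-l-1$), so that $p\in L^+_{[\eta]}$, a subvariety isomorphic to $\CC^{l+1}$ blown up at the origin with $E^+$ as its exceptional divisor. Choose an orthonormal basis $e_1,\dots,e_l\in T_pE^+$ together with $e_{l+1}\in T_pL^+_{[\eta]}$ complementary to $T_pE^+$, with respect to $\omega_\epsilon(t)|_{L^+_{[\eta]}}$, and simultaneously diagonalize $\theta_+|_{T_pE^+}$. Writing $\tr_{\theta_+|_{E^+}}(\omega_\epsilon|_{E^+})$ as a ratio of wedge products on $E^+$, multiplying numerator and denominator by $\hat\omega(e_{l+1}\wedge\bar e_{l+1})$ promotes the expression to a ratio of top forms on $L^+_{[\eta]}$. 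The key observation is that $\hat\omega|_{E^+}=0$, so the term proportional to $\omega_\epsilon(e_{l+1}\wedge\bar e_{l+1})$ in the splitting drops out, yielding
\[
\tr_{\theta_+|_{E^+}}\!\big(\omega_\epsilon(t)|_{E^+}\big)(p) \ \leq \ \left.\frac{\omega_\epsilon\wedge(\theta_+)^{l-1}\wedge\hat\omega}{(\theta_+)^l\wedge\hat\omega}\right|_{L^+_{[\eta]}}\!(p).
\]
Combining the restriction bound $\omega_\epsilon|_{L^+_{[\eta]}}\leq Ce^{-\rho}\hat\omega|_{L^+_{[\eta]}}$ from Proposition \ref{keyest3} with the order-one vanishing of $\hat\omega$ along $E^+\subset L^+_{[\eta]}$ (which exactly compensates the factor $e^{-\rho}$, as in the proof of Corollary \ref{corestE}) gives a bound on the right-hand side uniform in $\epsilon\in(0,1)$, $t\in[0,T')$, $p\in E^+$, and $[\eta]\in\PP^m$.

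Finally, one lets $\epsilon\to 0$. Proposition \ref{locres} gives smooth convergence $\omega_\epsilon(t)\to\omega(t)$ on compact subsets of $X\setminus E^+$, and the uniform second-order bounds in Proposition \ref{keyest3}, combined with standard third-order Calabi-type estimates and the parabolic Schauder theory (together with the uniqueness of the K\"ahler-Ricci flow from singular initial data established in \cite{SoT3}), upgrade the convergence to $C^\infty$ on compact neighborhoods of $E^+$ in $X$ for each fixed $t\in(0,T')$. The pointwise bound therefore passes to the limit.

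The main obstacle I anticipate is this last step: promoting the uniform-in-$\epsilon$ $C^2$ control near $E^+$ furnished by Proposition \ref{keyest3} to uniform $C^\infty$ control, so that the limiting metric $\omega(t)$ is actually smooth up to $E^+$ and the inequality can be evaluated there rather than only in a distributional or integral sense on $X\setminus E^+$. The algebraic pointwise computation itself is completely parallel to that of Corollary \ref{corestE} and does not require any new idea.
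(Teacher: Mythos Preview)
Your approach is essentially the same as the paper's: both reduce to the pointwise computation of Corollary \ref{corestE}, transported from $(E^-,L^-_{[\xi]},\theta_-)$ to $(E^+,L^+_{[\eta]},\theta_+)$, with Proposition \ref{keyest3} supplying the analogue of Corollary \ref{roue}. The paper's own proof is just the one line ``by the same argument for Corollary \ref{estE}'', so your write-up is a faithful unpacking of that.

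The one place where you introduce an unnecessary complication is the passage to the limit $\epsilon\to 0$. You propose to bound $\omega_\epsilon(t)|_{E^+}$ first and then argue that $\omega_\epsilon(t)\to\omega(t)$ in $C^\infty$ \emph{near} $E^+$, and you correctly flag that Proposition \ref{locres} only gives convergence on compacta of $X\setminus E^+$. But this detour is avoidable. The limiting flow $\omega(t)$ is already smooth on all of $X$ for every $t\in(0,T')$ by the general smoothing results of \cite{SoT3}; and Proposition \ref{keyest3} is stated for $\epsilon\in[0,1)$, i.e.\ the estimate $e^\rho\,\tr_{\hat\omega|_{L^+_{[\eta]}}}(\omega|_{L^+_{[\eta]}})\le C$ holds for $\omega(t)$ itself on $\Omega$ (this follows by letting $\epsilon\to 0$ on compact subsets of $\Omega\subset X\setminus E^+$, where Proposition \ref{locres} does apply). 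With those two facts in hand you can run the computation of Corollary \ref{corestE} directly on $\omega(t)$: the algebraic inequality at $p\in E^+$ only uses the bound on $\Omega$ together with the smoothness of $\omega(t)$ across $E^+$, and never requires knowing that $\omega_\epsilon|_{E^+}\to\omega|_{E^+}$. So your ``main obstacle'' dissolves once you swap the order of operations: pass to the limit on $\Omega$ first, then do the pointwise computation on $E^+$ for the already-smooth $\omega(t)$.
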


\bigskip

\noindent{\bf Proof of Theorem \ref{minus2}.}  Obviously, the metric completion of $(X\setminus E^+, g_0)$ is $(Y, g_0)$. $g(t)$ converges to $g_0$ smoothly on $X\setminus E^+$ as $t\rightarrow 0$.  With the estimates in Proposition \ref{keyest3}, we can obtain analogous geometric estimates as in Proposition \ref{onE}, Proposition \ref{geomest} and Corollary \ref{metrcom1} by the same argument. Theorem \ref{minus2} then follows immediately.

\qed

\bigskip

We will prove a useful and general volume estimate below. This will be useful to construct global flips in section 4.1.  In \cite{SoT3}, it is shown that on an $n$-dimensional  K\"ahler manifold $X$, the K\"ahler-Ricci flow has a unique solution starting from a K\"ahler current $\omega_0\in H^{1,1}(X^+, \mathbb{R}) \cap H^2(X^+, \mathbb{Z})$ with bounded potential and $L^p$ Monge-Amp\`ere current $\omega^n$ with respect to a fixed smooth volume form for some $p>1$.  Let $\rho'$ be a global smooth function on $X^+\setminus E^+$ with $\rho' = \rho$ on $\Omega$.

\begin{proposition}  \label{volatsin}

Let $\omega_+$ be a K\"ahler metric on $X^+$Suppose that $\omega_0$ is the pullback of a K\"ahler current on $Y$ with bounded potential. If
$$ \sup_{X^+} \frac{e^{(m-l) \rho' } \omega_0^n } {(\omega_+)^n}  < \infty, $$
Then the unique solution $g(t)$ of the K\"ahler-Ricci flow on $X$ with initial K\"ahler current $\omega_0$ on $(0, T)$ satisfies
$$\sup_{ (0, T')\times X^+} \frac{e^{(m-l)\rho'} \omega^n }{(\omega_+)^n} < \infty$$
for any $T'\in (0, T)$.

\end{proposition}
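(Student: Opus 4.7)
The plan is to run a maximum principle on the $\epsilon$-regularized flow from Proposition \ref{locres} and then pass to the limit. Let $\omega_{0,\epsilon}=\omega_0+\epsilon\omega_+$ and let $\omega_\epsilon(t)=\omega_{t,\epsilon}+\ddbar\varphi_\epsilon$ be the smooth K\"ahler-Ricci flow with $\omega_{t,\epsilon}=\omega_{0,\epsilon}+t\chi$ for a smooth representative $\chi\in[K_X]$ chosen below. Since $(a+b)^n\le 2^n(a^n+b^n)$ and $e^{(m-l)\rho'}$ is bounded on $X^+$, the hypothesis on $\omega_0^n$ yields the uniform initial bound
\[
\sup_{X^+}\frac{e^{(m-l)\rho'}\omega_{0,\epsilon}^n}{(\omega_+)^n}\le C.
\]
Proposition \ref{locres} also provides $\|\varphi_\epsilon\|_{L^\infty}\le C$ and $C^\infty_{\mathrm{loc}}(X^+\setminus E^+)$ convergence $\omega_\epsilon\to\omega$, so it suffices to bound $e^{(m-l)\rho'}\omega_\epsilon^n/(\omega_+)^n$ uniformly in $\epsilon$ and $t\in[0,T']$.

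The heart of the argument is the choice of $\chi$. Adjunction for the normal bundle $\OO_{\PP^l}(-1)^{\oplus(m+1)}$ forces $K_X|_{E^+}=\OO(m-l)$, hence $[\chi]|_{E^+}=(m-l)[\theta_+]$; meanwhile on $\Omega$ one has the identity $\ddbar\rho=\theta_++\theta_-$, and $\Omega$ deformation retracts to $\PP^l$. The $\ddbar$-lemma on $\Omega$ then yields a smooth $\psi_0$ with $\chi=(m-l)\theta_++\ddbar\psi_0$ on $\Omega$; extending $\psi_0$ smoothly to a function $\tilde\psi_0$ on $X^+$ by a cutoff supported in a slightly smaller neighborhood of $E^+$, we obtain
\[
\chi-(m-l)\ddbar\rho'-\ddbar\tilde\psi_0=-(m-l)\theta_-\le 0\quad\text{on a neighborhood of }E^+,
\]
while on the compact complement the same combination is bounded by $C_0\omega_+$ and $\omega_{t,\epsilon}\ge c\omega_+$ there, since $\omega_0$ is strictly positive off $E^+$. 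Putting $\Phi_\epsilon=\dot\varphi_\epsilon+(m-l)\rho'+\tilde\psi_0-A\varphi_\epsilon$, and using $\Box\dot\varphi_\epsilon=\tr_{\omega_\epsilon}\chi$ together with $\Box\varphi_\epsilon=\dot\varphi_\epsilon+\tr_{\omega_\epsilon}\omega_{t,\epsilon}-n$, one computes
\[
\Box\Phi_\epsilon=\tr_{\omega_\epsilon}\!\bigl(\chi-(m-l)\ddbar\rho'-\ddbar\tilde\psi_0\bigr)-A\bigl(\dot\varphi_\epsilon+\tr_{\omega_\epsilon}\omega_{t,\epsilon}-n\bigr),
\]
and for $A$ large enough to absorb the $C_0\omega_+$ term off $\Omega$ into the $-A\tr_{\omega_\epsilon}\omega_{t,\epsilon}$ term, this is bounded above by $-A\dot\varphi_\epsilon+An$ globally on $X^+$.

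The initial bound and $L^\infty$ control of $\varphi_\epsilon$ and $\tilde\psi_0$ give $\Phi_\epsilon|_{t=0}\le C$; and for each fixed $\epsilon>0$ and $t\in(0,T']$ the smoothness of $\omega_\epsilon(t)$ on $X^+$ keeps $\dot\varphi_\epsilon$ locally bounded while $(m-l)\rho'\to-\infty$ on $E^+$, so $\Phi_\epsilon(t,\cdot)\to-\infty$ near $E^+$. Any maximum of $\Phi_\epsilon$ on $[0,T']\times X^+$ thus either occurs at $t=0$ or is an interior spatial maximum, where $\Box\Phi_\epsilon\ge 0$ combined with $\tr_{\omega_\epsilon}\omega_{t,\epsilon}\ge 0$ forces $\dot\varphi_\epsilon\le n$. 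This yields a uniform upper bound $\Phi_\epsilon\le C$, hence $e^{(m-l)\rho'}\omega_\epsilon^n/(\omega_+)^n\le C'$ uniformly; passing to the limit $\epsilon\to 0$ via $C^\infty_{\mathrm{loc}}$ convergence completes the proof. The main obstacle is the cohomological construction of $\chi$ and $\tilde\psi_0$: the numerical match between the weight $m-l$ in the hypothesis and the degree of $K_X|_{E^+}$ forced by the normal bundle is essential, and the same maximum principle argument would fail for any other exponent.
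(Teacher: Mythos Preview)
Your maximum-principle core is essentially the same as the paper's: both compute $\Box$ of $\log\bigl(e^{(m-l)\rho'}\omega^n/(\omega_+)^n\bigr)-A\varphi$ and use that the $\rho'\to-\infty$ behavior forces the maximum to lie in the interior of $X^+\setminus E^+$. Your cohomological analysis of $K_{X^+}|_{E^+}=\OO(m-l)$ via adjunction, and the resulting identity $\chi-(m-l)\ddbar\rho'-\ddbar\tilde\psi_0=-(m-l)\theta_-\le 0$ near $E^+$, is exactly the content the paper hides behind ``straightforward calculations''; it is correct and makes explicit why the weight $m-l$ is forced.

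There is, however, a genuine gap in your regularization step. Proposition~\ref{locres} is proved under the hypothesis that $\omega_0$ is a \emph{smooth} K\"ahler metric on $Y$ (the restriction of a smooth ambient metric), so that $\omega_{0,\epsilon}=\omega_0+\epsilon\omega_+$ is a smooth K\"ahler form on $X^+$. The present proposition only assumes $\omega_0$ is the pullback of a K\"ahler \emph{current} with bounded potential: writing $\omega_0=\omega_0'+\ddbar\psi$ with $\psi$ merely bounded, the form $\omega_0+\epsilon\omega_+$ is still only a current and you cannot run a smooth K\"ahler--Ricci flow from it, so Proposition~\ref{locres} does not apply. This matters in the application to Theorem~\ref{main3}, where $\omega_0=\omega(T)$ is the singular-time limit coming from $X^-$ and is \emph{not} the restriction of a smooth ambient metric near the cone point. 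The paper handles this by a different regularization: it first shows $F=\omega_0^n/(\omega_+)^n\in L^{1+\delta}$ via the hypothesis and a change to $X^-$, approximates $F$ by smooth $F_j$, and then solves Yau's Monge--Amp\`ere equation $(\omega_0'+\epsilon\omega_++\ddbar\varphi_{0,j,\epsilon})^n=c_{j,\epsilon}F_j(\omega_+)^n$ to produce genuinely smooth initial metrics $\omega_{0,j,\epsilon}$ whose volume forms are under control. Only then does one run the smooth flow and apply the maximum principle.

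A secondary issue: the inequality $(a+b)^n\le 2^n(a^n+b^n)$ is false for positive $(1,1)$-forms (take $a,b$ positive semidefinite with complementary kernels). Even if $\omega_0$ were smooth, you would instead bound the mixed terms $\omega_0^k\wedge\omega_+^{n-k}$ using $\omega_0\le C\omega_+$; but once $\omega_0$ is merely a current this route is unavailable, which again points back to the need for the paper's $L^p$/Monge--Amp\`ere regularization.
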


\begin{proof}  $X^+$ is a flip model of a smooth projective variety $X^-$ over $E^-\simeq \PP^m$ given by $\check\phi: X^- \dashrightarrow X^+$. Let $\omega_-$ be a smooth K\"ahler metric on $X^-$. Let $F=\frac{\omega_0^n}{ ( \omega_+)^n}$, then straightforward local calculations show that
$$\int_{X^+}  F^{1+\delta} (\omega_+)^n \leq C \int_{X^+} e^{-(1+\delta) (m-l)\rho'} (\omega_+)^n \leq C \int_{X^-} e^{-\delta (m-l) \rho'} ( \omega_-)^n< \infty$$
 for sufficiently small $\delta>0$.
One can approximate $F$ by a sequence of positive smooth functions $\{F_j \}_{j=1}^\infty$ in $L^{1+\delta}(X)$.

Let $\omega'_0$ be a smooth nonnegative closed $(1,1)$-form in the class of $[\omega_0]$.  Let $\omega_{0,j, \epsilon}$ be defined by the smooth K\"ahler form
$$\omega_{0, j, \epsilon}=\omega'_0 + \epsilon \omega_+ + \ddbar \varphi_{0, j, \epsilon}, ~\sup\varphi_{0,j,\epsilon}=0, ~~ ( \omega_{0, j, \epsilon} )^n = c_{j, \epsilon}F_j (\omega_+)^n ,$$ where $\epsilon\in (0,1)$ and $c_{j, \epsilon}$ is the normalization constant satisfying $c_{j, \epsilon} \int_{X^+} F_j (\omega_+)^n = [\omega_0 + \epsilon \omega_+]^n.$
Since $\omega_0$ is big and non-negative, $c_{j, \epsilon}$ is bounded from above and away from $0$ by constants independent of $j$ and $\epsilon$. From the uniform bound on the right side, there exists $C>0$ such that for all $j$ and $\epsilon \in (0, 1)$,
$$||\varphi_{0, j, \epsilon} ||_{L^\infty(X)} \leq C. $$
We then consider the smooth solution $\omega_{j,\epsilon}(t)$ of the K\"ahler-Ricci flow on $X$ starting with $\omega_{0, j, \epsilon}$. By \cite{SoT3}, $\omega_{j,\epsilon}(t)$ converges to the unique solution $\omega(t)$ of the K\"ahler-Ricci flow on $X$ starting with $\omega_0$, for $t\in [0, T)$, as $\epsilon \rightarrow 0$ and $j\rightarrow \infty$. Let $\Theta$ be a smooth volume form on $X^+$ and $\chi=\ddbar \log \Theta$. Then $\omega_{t, j, \epsilon}= \omega_{0, j, \epsilon} + t\chi \in [\omega_{j, \epsilon}(t)]$ is a smooth family of smooth closed $(1,1)$-forms on $X^+$. We let $\varphi_{j, \epsilon}(t)$ be the solution of $$\ddt{}\varphi_{j, \epsilon} = \log \frac{(\omega_{t, j,\epsilon}+ \ddbar \varphi_{j, \epsilon})^n}{\Theta}.$$ Then $\varphi_{j, \epsilon}\in C^\infty(X^+)\cap PSH(X^+, \omega_{t, j, \epsilon})$ for $t\in [0, T)$. Furthermore, by the estimates in \cite{SoT3}, $\varphi_{j, \epsilon}$ is bounded in $L^\infty([0,T']\times X^+)$ uniformly for $\epsilon\in (0,1)$ and all $j$.

  We let $H_{j,\epsilon }= \log \frac{e^{(m-l)\rho'} \omega_{j, \epsilon}^n}{\omega_X^n} -A\varphi_{j, \epsilon} $ Then straightforward calculations show that for sufficiently large $A>0$
$$ \Box_{j, \epsilon} H_{j, \epsilon} \leq  3nA$$
on $X^+$ for all $\epsilon\in (0, 1)$ and $t\in [0, T')$, where $\Box_{j, \epsilon}$ is the linearized operator of the K\"ahler-Ricci flow with respect to the evolving metrics $\omega_{j, \epsilon}$.
The maximum principle immediately implies that the upper bound for $\frac{ e^{(l-m)\rho'}\omega_{j, \epsilon}^n}{\omega_X^n}$ is independent on $j$ and $\epsilon$. The proposition is then proved by letting $j\rightarrow \infty$ and $\epsilon \rightarrow 0$.

\end{proof}

\section{Examples and generalizations}

In this section, we will first strengthen the result of \cite{SY} by constructing global flips  using the K\"ahler-Ricci flow on a family of projective manifolds admitting K\"ahler metrics with Calabi symmetry. Then we will prove Theorem \ref{main1} and Theorem \ref{main2} as the generalizations of Theorem \ref{minus1} and Theorem \ref{minus2} by using the constructions of Mumford's quotients.

\subsection{Flips by Ricci flow}

The surgical contractions with Calabi symmetry are  extensively studied in \cite{FIK, SW1, SY}. We first describe a family of projective bundles over $\mathbb{P}^m$. We let
$$X_{m, l} = \mathbb{P} ( \mathcal{O}_{\mathbb{P}^m}  \oplus \mathcal{O}_{\mathbb{P}^m} (-1)^{\oplus (l+1)})$$
be the projectivization of the bundle $\EE_{m,l}=\mathcal{O}_{\mathbb{P}^m}  \oplus \mathcal{O}_{\mathbb{P}^m} (-1)^{\oplus (l+1)}$ with $\mathbb{P}^{l+1}$ bundle being the fibre. We write $n=m+l+1$.  In particular, $X_{m, 0}$ is $\mathbb{P}^{m+1}$ blown up at one point. Let $D_\infty$ be the divisor in $X_{m, l}$ given by $\mathbb{P}(\mathcal{O}_{\mathbb{P}^m}(-1)^{\oplus (l+1)})$, the quotient of $\mathcal{O}_{\mathbb{P}^m}(-1)^{\oplus (l+1)}$. We also let $D_0$ be the divisor in $X_{m, l}$ given by  $\mathbb{P}(\mathcal{O}_{\mathbb{P}^m} \oplus \mathcal{O}_{\mathbb{P}^m} (-1) ^{\oplus l} )$, the quotient of $\mathcal{O}_{\mathbb{P}^m} \oplus \mathcal{O}_{\mathbb{P}^m} (-1) ^{\oplus l} $. In fact, $N^1(X_{m, l})$ is spanned by
$[D_0]$ and $[D_\infty]$. We also define the divisor $D_H$ on $X_{m, l}$ by the pullback of the divisor on $\mathbb{P}^m$ associated  to $\mathcal{O}_{\mathbb{P}^m}(1)$.
Then $$[D_\infty]= [D_0] + [D_H]$$ and
\begin{equation}\label{canbundle}
[ K_{X_{m, l}} ] = - (m+2) [D_\infty] - (m-l) [D_H] = -(m+2) [D_\infty] + (m-l) [D_0].
\end{equation}
In particular, $D_\infty$ is a big and semi-ample divisor and any divisor $ a[D_H] + b[D_\infty]$ is ample if and only if $a>0$ and $b>0$. Hence $X_{m, l}$ is Fano if and only if $m>l$.

Let  $E_{m, l}$ be  the zero section of $\pi_{m, l}: X_{m, l} \rightarrow \mathbb{P}^m$, which is the intersection of the $l+1$ effective divisors as the quotient of $\mathcal{O}_{\mathbb{P}^m} \oplus \mathcal{O}_{\mathbb{P}^m} (-1) ^{\oplus l}$ with its normal bundle $\mathcal{O}_{\mathbb{P}^m}(-1)^{\oplus (l+1)}. $  In fact, the linear system $| D_\infty|$ is base-point-free and it induces a morphism
\begin{equation}\label{contrmn}
\Phi_{m, l}: X_{m, l} \rightarrow \mathbb{P}^{(m+1)(l+1)-1}.
\end{equation}
$\Phi_{m, l}$ is an immersion on $X_{m, l}\setminus E_{m, l}$ and it contracts $E_{m, l}$ to a point. $Y_{m, l}$, the image of $\Phi_{m, l}$ in $\mathbb{P}^{(m+1)(l+1)-1}$, is a projective cone over $\mathbb{P}^m\times \mathbb{P}^l $ in $\mathbb{P}^{(m+1)(l+1)-1}$ by the Segre embedding
$$[Z_0, ..., Z_m]\times[W_0, ..., W_l]\rightarrow [Z_0W_0, ..., Z_iW_j, ..., Z_mW_l]\in \mathbb{P}^{(m+1)(l+1)-1} .$$
 Note that $Y_{m, l}=Y_{l,m}$.

The following diagram gives a flip from $X_{m, l}$ to $X_{l, m}$ for $0<l<m$,
\begin{equation}
\begin{diagram}\label{diagflip}
\node{X_{m, l}} \arrow{se,b,}{\Phi_{m, l}}  \arrow[2]{e,t,..}{ \check\Phi }     \node[2]{X_{l,m}} \arrow{sw,r}{\Phi_{l,m}} \\
\node[2]{Y_{m, l}}
\end{diagram}.
\end{equation}
Furthermore, $X_{m, l}$ and $X_{l,m}$ are birational to each other.

We now recall the Calabi ansatz constructed by Calabi \cite{C1} (also see \cite{Li, SY}) on $X_{m,l}$.  We instead consider the vector bundle $\EE_{m, l}= \mathcal{O}_{\mathbb{P}^m}(-1) ^{\oplus (l+1)}$ in order to apply the Calabi symmetry.

Let $\theta_{m, l}$ be the Fubini-Study metric on $\mathbb{P}^m$ in $\OO_{\PP^m}(1)$. Let $h$ be the hermitian metric on $\mathcal{O}_{\mathbb{P}^m} (-1)$ such that $\ric(h) = -\theta_{m,l}$. The induced hermitian metric $h_{\EE_{m,l}}$ on $\EE_{m,l}$ is given by $h_{\EE_{m,l}}= h^{\oplus (l+1)}$.
Under local trivialization of $E$, we write
$$e^\rho = h_{\EE_{m,l}} (z) |\xi|^2, ~~~ \xi = (\xi^0, \xi^1, ..., \xi^l),$$ where $h_{\EE_{m,l}} (z)$ is a local representation for $h$. Using the inhomogeneous coordinates $z=(z_1, z_2, ..., z_m)$ on $\mathbb{P}^m$, we have $h_{\EE_{m,l}} (z)= (1+|z|^2).$

We would like to find appropriate conditions for $a\in \mathbb{R}$ and a smooth real valued function $u=u(\rho)$ such that
\begin{equation}\label{metricrep1}
\omega = a \theta_{m,l} + \ddbar u(\rho)
\end{equation}
defines a K\"ahler metric on $X_{m, l}$. The following criterion is due to Calabi \cite{C1}.

\begin{proposition}\label{kacon}

$\omega$ as defined above is a K\"ahler metric if and only if

\begin{enumerate}

\item

$a>0$.

\item $u'>0$ and $u''>0$ for $\rho\in (-\infty, \infty)$.

\item $U_0 (e^\rho) = u(\rho)$ is smooth on $(-\infty, 0]$ and $U_0' (0)>0$.

\item $U_\infty (e^{-\rho}) = u(\rho) - b \rho$ is smooth on $[0, \infty)$ for some $b>0$ and $U_\infty'(0)>0$.

\end{enumerate}

\end{proposition}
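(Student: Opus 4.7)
The plan is to use local Calabi-symmetric coordinates in two overlapping affine charts covering $X_{m,l}$: the chart $c_0 \neq 0$ (missing $D_\infty$), in which the coordinates $(z,\xi)$ and formula (2.2) of Section 2.1 apply, and a chart where some $\xi^\alpha \neq 0$ (missing the corresponding part of $E_{m,l}$) adapted to $D_\infty$. Positivity in the interior, together with the smooth extension of $\omega$ across $E_{m,l}$, is exactly the content of the earlier Calabi criterion in Section 2.1 and yields conditions (1)--(3): the formula (2.2) shows that the base receives a coefficient $a+u'$ and the fibre $\xi$-block has eigenvalues $u'$ (multiplicity $l$) and $u''$ (multiplicity one), while passing to the limit $\rho \to -\infty$ along the zero section forces $a>0$ and $U_0'(0)>0$. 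The genuinely new ingredient is condition (4), which governs the extension across $D_\infty$, and I would establish it by working in the adapted chart.

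In the chart where $\xi^0 \neq 0$, I would introduce new fibre coordinates $\tilde c_0 = 1/\xi^0$, $\tilde \xi^\alpha = \xi^\alpha/\xi^0$ for $\alpha = 1,\dots,l$, so that $D_\infty = \{\tilde c_0 = 0\}$ and
$$ s := e^{-\rho} = \frac{|\tilde c_0|^2}{(1+|z|^2)(1+|\tilde \xi|^2)} $$
is smooth, nonnegative, and vanishes to exactly first order along $D_\infty$. Writing $u(\rho) = b\rho + U_\infty(s)$ and using that $\ddbar \log|\tilde c_0|^2 = 0$ pointwise in the chart, one computes
$$ b\,\ddbar \rho = b\,\ddbar \log(1+|z|^2) + b\,\ddbar \log(1+|\tilde \xi|^2), $$
which is smooth and closed across $D_\infty$; and $\ddbar U_\infty(s)$ is manifestly smooth in the chart whenever $U_\infty$ is smooth at the origin. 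Hence $\omega$ admits a smooth extension across $D_\infty$ precisely when the smoothness part of (4) holds.

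Positivity on $D_\infty$ is then verified block by block. The base $z$-direction receives a coefficient $a+b$ times $\theta_{m,l}$; the $\tilde \xi$-direction, tangent to $D_\infty$ along the $\PP^l$ factor, receives $b\,\ddbar \log(1+|\tilde \xi|^2)$, a positive multiple of the Fubini-Study form; and the normal $\tilde c_0$-direction inherits its leading contribution from $\ddbar U_\infty(s)$, whose restriction to $\tilde c_0 = 0$ is $U_\infty'(0) \cdot (1+|z|^2)^{-1}(1+|\tilde \xi|^2)^{-1}\, \sqrt{-1}\, d\tilde c_0 \wedge d\bar{\tilde c_0}$. Strict positivity of all three blocks is equivalent to $b>0$ and $U_\infty'(0)>0$ (while $a+b>0$ is automatic from $a>0$ and $b>0$), which is condition (4). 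The converse direction is an immediate verification of the same three block computations once (1)--(4) are assumed.

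The subtle step, and the main technical obstacle, is the passage from the affine chart $c_0 \neq 0$ to the chart adapted to $D_\infty$. Although $\rho$ itself blows up to $+\infty$ along $D_\infty$, its logarithmic singular part in the new chart, namely $-\log|\tilde c_0|^2$, is pluriharmonic and drops out of $\ddbar \rho$; this is precisely what permits the additive splitting $u(\rho) = b\rho + U_\infty(e^{-\rho})$ at the level of potentials while keeping $\ddbar u$ smooth. Tracking how the parameters $a$, $b$, and $U_\infty'(0)$ end up pairing with the three block-positivity conditions (base, $D_\infty$-tangent, $D_\infty$-normal), and verifying that each pairing is strictly positive exactly when (1) and (4) hold, is the essential bookkeeping; once this structure is identified, no individual step is analytically deep.
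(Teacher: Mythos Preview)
The paper does not prove this proposition; it simply attributes the criterion to Calabi \cite{C1} and states it without argument, just as it does for the vector-bundle version in Section~2.1. There is therefore no paper proof to compare against.

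Your approach is correct and is essentially the standard one. You rightly observe that conditions (1)--(3) are inherited from the earlier bundle criterion, since $X_{m,l}\setminus D_\infty$ is precisely the total space $\EE_{m,l}$, so the only new content is the analysis near $D_\infty$. Your treatment there is sound: the splitting $u(\rho)=b\rho+U_\infty(e^{-\rho})$, the observation that $\ddbar\log|\tilde c_0|^2=0$ absorbs the logarithmic singularity of $\rho$, and the block-diagonal computation of $\omega|_{D_\infty}$ giving coefficients $(a+b)\theta_{m,l}$, $b\,\ddbar\log(1+|\tilde\xi|^2)$, and $U_\infty'(0)\cdot(1+|z|^2)^{-1}(1+|\tilde\xi|^2)^{-1}$ in the base, $\PP^l$-tangent, and normal directions respectively, are all correct. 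One minor remark: your coordinate $\tilde c_0=1/\xi^0$ depends on the local trivialization of $\mathcal{O}_{\PP^m}(-1)$ (it is really a local fibre coordinate for $\mathcal{O}_{\PP^m}(1)$), but since you immediately pass to the invariant quantity $s=e^{-\rho}$ and carry out the positivity check at $s=0$, this causes no difficulty.
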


By the calculations in \cite{SY}, if the initial K\"ahler class is $a_0[D_H]+ b_0[D_\infty]$ and evolving K\"ahler class along the K\"ahler-Ricci flow is $a(t)[D_H]+ b(t) [D_\infty]$, we have that
\begin{equation}
a=a(t) =a_0  - ( m-l) t,  ~ b=b(t) = b_0 - (m+2) t
\end{equation}
 and
\begin{equation}\label{krfu}
\ddt{u} = \log [(a+ u')^m (u')^l u''] - (l+1)\rho + c_t,
\end{equation}
where the normalization constant $c_t$ is given by
\begin{equation} \label{ct}
c_t = - \log u''(0,t) - l \log u'(0,t)- m\log (a(t)+u'(0,t)).
\end{equation}
The constant $c_t$ is chosen such that $\frac{\partial u}{\partial t}(0, t)=0.$

 It is straightforward to show that equation (\ref{krfu}) admits a smooth solution $u$ satisfying the Calabi ansatz as long as the K\"ahler-Ricci flow admits a smooth solution, by comparing $u$ to the solution of the Monge-Amp\`ere flow associated to the K\"ahler-Ricci flow. The evolution equations for $u'$ and $u''$ are given by
\begin{eqnarray}
 \ddt{u'} &=& \frac{u'''}{u''} + \frac{ l  u''}{u'} + \frac{ m u''}{a + u'} - (l+1),
 \\ \label{udpevolution}
\ddt{u''}& =& \frac{u^{(4)}} {u''} - \frac{ (u''')^2 }{ (u'')^2} + \frac{ l u'''}{u'} - \frac{ l (u'')^2}{ (u')^2} + \frac{ m u'''}{ a + u'} - \frac{ m (u'')^2}{( a + u')^2},
\end{eqnarray}
as can be seen from differentiating (\ref{krfu}) (see \cite{SW1, SY}).

The following theorem is proved in \cite{SY}, showing that the K\"ahler-Ricci flow with Calabi symmetry contracts $E_{m,l}$ if $0<l<m$ and the total volume does not go to zero as $t$ tends to the singular time.

\begin{theorem}\label{cycon} Let $g(t)$ be the smooth solution of the K\"ahler-Ricci flow on $X$ starting with a K\"ahler metric with Calabi symmetry. Suppose $0<l<m$ and $\limsup_{t\rightarrow T} \int_{X_{m,l}} dVol_{g(t)} >0$ as the flow develops singularity at $t=T>0$. Then $g(t)$ converges smoothly to a K\"ahler metric $g(T)$ on $X_{m,l} \setminus E_{m,l}$ and the metric completion of $(X_{m,l} \setminus E_{m,l}, g(T))$ is homeomorphic to $Y_{m,l}$. Furthermore, if we denote $(Y, d_T)$  the metric completion of $(X_{m,l}\setminus E_{m,l}, g(T))$, then
$(X_{m, l}, g(t))$ converges to $(Y_{m,l}, d_T)$ in Gromov-Hausdorff topology as $t \rightarrow T$.

\end{theorem}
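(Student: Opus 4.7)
The plan is to exploit the Calabi symmetry to reduce the K\"ahler-Ricci flow on $X_{m,l}$ to the scalar parabolic ODE \eqref{krfu} for the Calabi potential $u(\rho,t)$, establish uniform $C^k$ estimates on $u$ on each compact $\rho$-interval up to $t=T$, and then translate these into geometric convergence using the explicit form of $\omega(t)$.

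First, writing $[\omega(t)]=a(t)[D_H]+b(t)[D_\infty]$ with $a(t)=a_0-(m-l)t$ and $b(t)=b_0-(m+2)t$, the volume $V(t)=[\omega(t)]^n$ is an explicit polynomial in $(a(t),b(t))$ that is proportional to $b(t)^n$ when $a(t)=0$. The hypothesis $\limsup_{t\to T}V(t)>0$ therefore forces $b(T)>0$, and since $T$ is singular, $a(T)=0$. Consequently $[\omega(T)]=b(T)[D_\infty]=\Phi_{m,l}^*(b(T)\OO_{Y_{m,l}}(1))$ is the pullback of an ample class on $Y_{m,l}$ under the contraction $\Phi_{m,l}$.

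Next I would derive uniform two-sided bounds on $u'$, $u''$, and higher $\rho$-derivatives of $u$ on every compact subinterval $I \Subset(-\infty,\infty)$, uniformly for $t\in[0,T)$. The evolution equations for $u'$ and $u''$ obtained by differentiating \eqref{krfu} are scalar parabolic ODEs; maximum-principle arguments applied on $I$, anchored by the Calabi boundary behavior at $\rho=\pm\infty$ (namely $u'\to 0$ at $-\infty$ and $u'-b(t)\rho$ smooth at $+\infty$ with leading coefficient $b(t)$), yield such bounds. Parabolic bootstrapping then upgrades them to $C^k$-bounds in every order. Since $\omega(t)$ restricted to $X_{m,l}\setminus E_{m,l}$ is entirely determined by $u$ and by the polynomial $a(t)$, this gives $C^\infty_{\mathrm{loc}}$ convergence $g(t)\to g(T)$ on $X_{m,l}\setminus E_{m,l}$ as $t\to T$.

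For the geometric conclusions, $\omega(t)|_{E_{m,l}}=a(t)\theta_{m,l}$ (since the Calabi smoothness $u(\rho)=U_0(e^\rho)$ at $\rho=-\infty$ automatically forces $u'(-\infty,t)=0$), hence $\mathrm{diam}(E_{m,l},g(t))\lesssim\sqrt{a(t)}\to 0$. Integrating $\sqrt{u''(\rho,t)}\,d\rho$ in the radial $\xi$-direction shows that the diameter of a tubular neighborhood $\{e^\rho\leq\epsilon\}$ of $E_{m,l}$ is bounded by $\sqrt{a(t)}+O(\sqrt{\epsilon})$, uniformly for $t$ near $T$. Combining uniform diameter bounds on $X_{m,l}$ with smooth convergence on $X_{m,l}\setminus E_{m,l}$ and collapse of $E_{m,l}$ to a point, the metric completion of $(X_{m,l}\setminus E_{m,l},g(T))$ is identified with $Y_{m,l}$ via $\Phi_{m,l}$, and $(X_{m,l},g(t))\to(Y_{m,l},d_T)$ in Gromov-Hausdorff topology by a standard $\epsilon$-net comparison. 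The main obstacle lies in the ODE control of the second step: as $a(t)\to 0$ and $u'$ approaches its boundary value, both quantities occupy denominators in the evolution equations for $u'$ and $u''$, so one must exploit the convexity $u''>0$ together with careful ODE barriers to localize the degeneration to $\rho=-\infty$ without losing interior regularity.
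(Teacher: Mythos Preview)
The paper does not give its own proof of this theorem; it is quoted as a result of \cite{SY} (Song--Yuan), and the surrounding material in Section~5.1 only uses it as input for the subsequent flip construction. Your outline is essentially the strategy of \cite{SY}: the cohomological reduction to $a(T)=0$, $b(T)>0$ is correct, the restriction $\omega(t)|_{E_{m,l}}=a(t)\theta_{m,l}$ is exactly how \cite{SY} sees the collapse of the exceptional $\mathbb{P}^m$, and the radial-diameter estimate via $\int\sqrt{u''}\,d\rho$ is the right mechanism. The ``main obstacle'' you flag is handled in \cite{SY} by the inequality $u''\leq Cu'$ (recorded in this paper as Lemma~\ref{u''}), which together with the Calabi asymptotic $u'\sim e^\rho$ at $\rho\to-\infty$ gives $\sqrt{u''}\leq Ce^{\rho/2}$ and hence the $O(\sqrt{\epsilon})$ tubular-diameter bound you need; this is precisely the missing barrier you would have to supply.

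It is worth noting that Section~3 of this paper proves a strictly more general statement (Theorem~\ref{minus1}) by a different method that does not use Calabi symmetry at all: instead of ODE control on $u'$, $u''$, one uses the holomorphic foliation $\{L^-_{[\xi]}\}$ and the partial second-order estimate of Proposition~\ref{H2} and Corollary~\ref{roue}, together with the vector-field bound of Proposition~\ref{keyest}, to obtain the same diameter and Gromov--Hausdorff conclusions for an arbitrary initial metric in the correct class. That route is heavier but buys applicability beyond the symmetric setting; for Theorem~\ref{cycon} itself, your ODE approach is the natural and economical one.
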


Now we will show that the K\"ahler-Ricci flow smoothes out $(Y, d_T)$ instantly. The key observation is that the limiting K\"ahler current $g(T)$ is smooth on the regular part of $Y_{m,l}$ and it satisfies the Calabi symmetry on $X_{l, m}\setminus E_{l, m}$.

The limiting K\"ahler metric $\omega(T)$ is smooth on $X_{m, l}\setminus E_{m,l}$ with bounded local potential. Furthermore, its Monge-Amp\`ere current $(\omega(T))^n$  is bounded above by a smooth volume form on $X_{m, l}$ by the general theory in \cite{SoT3}  and so there exists  $C>0$ such that $(\omega(T))^n \leq C \max(1, e^{-(m-l)\rho}) (\omega_{l, m})^n$ for a fixed smooth K\"ahler form $\omega_{l,m}$ on $X_{l.m}$. This implies that $(\omega(T))^n/ (\omega_{l,m})^n$ is $L^p(X, (\omega_{l,m})^n)$ for some $p>1$. Therefore we can apply the results in \cite{SoT3} and continue the flow on $X_{m, l}$ weakly starting with $\omega(T)$. In particular, the solution becomes smooth instantly as $t>T$.

We let $g(t)$ be the unique smooth solution of the K\"ahler-Ricci flow on $X_{l, m}$ for $t\in (T, T']$. Since $\omega(T)$ satisfies the Calabi symmetry on $X_{l, m}\setminus E_{l, m}$, $\omega(t)$ can be obtained by a perturbation with Calabi symmetry and hence it satisfies the Calabi symmetry on $X_{l, m}$ for $t\in (T, T']$. We write $\omega(t)= \kappa (t) \theta_+ +\ddbar u(t, \rho)$, where $\kappa(t) = (m-l)(t-T)$ and $\theta_{l,m}$ is the Fubini-Study metric of $\PP^l$ in $\OO_{\PP^l}(1)$.

\begin{lemma} \label{u''0} There exists $C>0$ such that for all  $t\in  (T, T']$ and $\rho\in (-\infty, \infty)$,
\begin{equation}
 u' (t, \rho) \leq C \max\left( 1,e^{\frac{l+1}{m+l+1} \rho} \right).
\end{equation}

\end{lemma}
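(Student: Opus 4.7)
The plan is a one–dimensional maximum principle argument for an appropriately weighted version of $u'$. After swapping $m \leftrightarrow l$ in (\ref{krfu}) and (\ref{udpevolution}), the Calabi-symmetric flow on $X_{l,m}$ reads
\[
\partial_t u = \log\bigl[(\kappa+u')^l (u')^m u''\bigr] - (m+1)\rho + c_t,
\]
\[
\partial_t u' = \frac{u'''}{u''} + \frac{m u''}{u'} + \frac{l u''}{\kappa+u'} - (m+1).
\]
I would set $\alpha = \frac{l+1}{m+l+1}$ and consider the auxiliary quantity $Q(t,\rho) := e^{-\alpha\rho} u'(t,\rho)$. The Calabi symmetry (conditions (b)–(d) of Proposition \ref{kacon}) gives $u'(t,\rho) = O(e^\rho)$ as $\rho \to -\infty$ and $u'(t,\rho) \to b(t) < \infty$ as $\rho \to +\infty$, so $Q \to 0$ at both ends while $Q > 0$ everywhere; hence, for each $t \in (T,T']$, the supremum of $Q(t,\cdot)$ is attained at some interior point $\rho^*(t) \in \mathbb{R}$.

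At $\rho^*(t)$, $\partial_\rho Q = 0$ forces $u'' = \alpha u'$ and $\partial^2_\rho Q \leq 0$ forces $u''' \leq \alpha^2 u'$. Substituting these into the evolution equation, and using $u'/(\kappa+u') \leq 1$,
\[
\partial_t u'\big|_{\rho^*} \leq \alpha + m\alpha + l\alpha\cdot\frac{u'}{\kappa+u'} - (m+1) \leq (m+l+1)\alpha - (m+1) = (l+1) - (m+1) = l - m < 0,
\]
where the strict negativity comes precisely from the hypothesis $l < m$. Therefore $\frac{d}{dt}\max_\rho Q(t,\cdot) \leq \partial_t Q(t,\rho^*(t)) < 0$, and $\max_\rho Q(t,\cdot)$ is nonincreasing on $[T,T']$, giving $Q(t,\rho) \leq \sup_\rho Q(T,\rho)$.

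It remains to bound $Q(T,\cdot)$. The initial potential $u(T,\cdot)$ is, as a function of $\rho$, inherited from the $X_{m,l}$ side of the flip and retains $u'(T,\rho) = O(e^\rho)$ at $\rho \to -\infty$ and $u'(T,\rho) \to b(T) < \infty$ at $\rho \to +\infty$, so $Q(T,\cdot)$ is bounded on $\mathbb{R}$. Translating back yields $u'(t,\rho) \leq C e^{\alpha\rho}$; combining this with the monotonicity $u'(t,\rho) \leq u'(t,0) \leq C$ for $\rho \leq 0$ (a consequence of $u'' > 0$) gives the claim $u'(t,\rho) \leq C\max(1, e^{\frac{l+1}{m+l+1}\rho})$.

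The main obstacle is the uniform boundedness of $Q(T,\cdot)$, specifically the $O(e^\rho)$ decay of $u'(T,\rho)$ as $\rho \to -\infty$ on the $X_{l,m}$ side; this must be justified by noting that the Calabi potential is a single function of $\rho$ on the overlap $X_{m,l}\setminus E_{m,l} \cong X_{l,m}\setminus E_{l,m}$, and that both endpoints $\rho = \pm\infty$ of $u(T,\cdot)$ satisfy the Calabi endpoint conditions needed for $Q(T,\cdot)$ to vanish at infinity. The choice $\alpha = (l+1)/(m+l+1)$ is the largest exponent consistent with the strict inequality $l - m < 0$ in the above pinch, which is what makes the maximum principle closed.
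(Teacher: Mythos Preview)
Your maximum–principle computation for $Q=e^{-\alpha\rho}u'$ is correct and is a genuinely different route from the paper's. The paper never looks at $Q$ directly: it first propagates a \emph{volume} bound past the singular time via Proposition~\ref{volatsin}, obtaining $(\kappa+u')^l(u')^m u''\le Ce^{(l+1)\rho}$ for $\rho\le 1$ and all $t\in(T,T']$, and only then integrates $((u')^{m+l+1})'\le Ce^{(l+1)\rho}$ in $\rho$. Your ODE argument replaces the volume–propagation step by the monotonicity of $\max_\rho Q$, which is more elementary in that it avoids the $L^p$/approximation machinery behind Proposition~\ref{volatsin}.

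The gap is in the sentence ``$u'(T,\rho)=O(e^\rho)$ \dots\ inherited from the $X_{m,l}$ side.'' This is precisely what is \emph{not} known at the singular time. The convergence $g(t)\to g(T)$ on $X_{m,l}$ is only smooth on $X_{m,l}\setminus E_{m,l}$, i.e.\ on compact $\rho$–intervals; the Calabi endpoint condition $u'(t,\rho)\sim U_0'(t,0)e^\rho$ holds for each $t<T$ but $U_0'(t,0)$ is not controlled as $t\to T^-$ (and in fact the expected tangent-cone behaviour $u'(T,\rho)\sim Ce^{\gamma\rho}$ with $\gamma<1$ shows the $O(e^\rho)$ claim should fail). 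Without this, your argument becomes circular: the monotonicity of $\max_\rho Q$ only yields $\max_\rho Q(t)\le\limsup_{t_0\to T^+}\max_\rho Q(t_0)$, and finiteness of the right side is the lemma itself.

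The fix is to justify $\sup_\rho Q(T,\cdot)<\infty$ by the \emph{easy} volume bound on the $X_{m,l}$ side: for $t<T$ one has $(\omega(t))^n\le C\Theta$, hence $(a+u')^m(u')^l u''\le Ce^{(l+1)\rho}$ for $\rho\le 0$, and integrating gives $u'(t,\rho)\le Ce^{\alpha\rho}$ uniformly in $t<T$, whence $Q(T,\cdot)\le C$. Note this is exactly the paper's integration step applied at $t\le T$; once you accept it, your maximum principle cleanly propagates forward. (Incidentally, the remark that $\alpha=(l+1)/(m+l+1)$ is ``largest consistent with $l-m<0$'' is slightly off: the max-principle step allows any $\alpha\le(m+1)/(m+l+1)$; it is the initial bound just described that forces $\alpha\le(l+1)/(m+l+1)$.)
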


\begin{proof} It suffices to prove the estimate for $\rho\leq 1$ since the solution $g(t)$ of the Kahler-Ricci flow converges smoothly to $g(T)$ on $X_{l,m}\setminus E_{l,m}$ as $t\rightarrow T^+$. First we note that  there exists $C>0$ such that $$( \omega(T) ) ^n \leq C ( \omega_{m,l} )^n \leq C \max\left(1, e^{-(m-l) \rho} \right)\omega_{l, m}. $$
Then for $t\in (T, T']$, by Proposition \ref{volatsin},
$$ (\omega(t))^n \leq C e^{-(m-l) \rho} \omega_{l, m}$$
for $t\in (T, T']$ or equivalently, for $\rho\leq 0$,
$$( \kappa (t) + u ')^l (u ')^m u'' \leq C e^{-(m-l)\rho} (1+ e^\rho)^l e^{(m+1)\rho} . $$
This implies that for $\rho\leq 1$,
$$\left( (u ')^{m+l+1} \right)' \leq C e^{ (l+1) \rho}$$
and so
$$u' \leq C e^{ \frac{ (l+1)\rho}{ m+l+1} }$$
since $u(-\infty)=0.$

\end{proof}

The following lemma is proved in \cite{SY} (Proposition 3.1).

\begin{lemma}\label{u''}

There exists $C>0$ such that for all  $t\in  [0 , T)$ and $\rho\in (-\infty, \infty)$,
\begin{equation}
 u''(t, \rho) \leq Cu'(t, \rho).
\end{equation}

\end{lemma}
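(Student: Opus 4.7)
The strategy is to study the quantity $Q := u''/u'$ and show via the parabolic maximum principle that
\[
\sup_\rho Q(t, \rho) \;\leq\; \max\bigl(1,\, \sup_\rho Q(0, \rho)\bigr) \;=:\; C
\]
for all $t \in [0, T)$, which is exactly the desired bound $u'' \leq C u'$ with $C$ depending only on the initial Calabi data.

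I would first record the asymptotics of $Q$ in $\rho$ from the boundary conditions in Proposition \ref{kacon}. Near $\rho = -\infty$, writing $u(\rho) = U_0(e^\rho)$ with $U_0'(0) > 0$ gives $u'(\rho) = e^\rho U_0'(0) + O(e^{2\rho})$ and $u''(\rho) = e^\rho U_0'(0) + O(e^{2\rho})$, so $Q(t, \rho) \to 1$. Near $\rho = +\infty$, smoothness of $u(\rho) - b\rho$ at $e^{-\rho} = 0$ gives $u'(\rho) \to b > 0$ while $u''(\rho) \to 0$, so $Q(t, \rho) \to 0$. Consequently any supremum of $Q(t, \cdot)$ strictly exceeding $1$ is attained at some finite $\rho_0$.

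The main computation is the evolution inequality at such an interior spatial maximum $\rho_0$. There $\partial_\rho Q = 0$ forces $u''' = (u'')^2/u' = Q u''$, and $\partial_\rho^2 Q \leq 0$ forces $u^{(4)} \leq (u'')^3/(u')^2 = Q^2 u''$. Substituting into (\ref{udpevolution}), the $\ell$-terms cancel in pairs ($\ell u'''/u' - \ell(u'')^2/(u')^2 = 0$) and $u^{(4)}/u'' - (u''')^2/(u'')^2 \leq 0$, leaving
\[
\partial_t u''(t, \rho_0) \;\leq\; \frac{m u'''}{a+u'} - \frac{m (u'')^2}{(a+u')^2} \;=\; \frac{m a \, Q \, u''}{(a+u')^2}.
\]
Using $u'''/u'' = Q$ in (\ref{krfu}) differentiated once gives $\partial_t u'(t, \rho_0) = (l+1)(Q-1) + m u''/(a+u')$. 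Assembling $\partial_t Q = (\partial_t u'')/u' - Q(\partial_t u')/u'$ and simplifying, I expect the clean inequality
\[
\partial_t Q(t, \rho_0) \;\leq\; -\frac{m Q^2 u'}{(a+u')^2} - \frac{(l+1) Q (Q-1)}{u'}.
\]
Since $a(t) > 0$ on $[0, T)$ (as the K\"ahler class $a[D_H] + b[D_\infty]$ remains ample) and $u' > 0$, the right-hand side is strictly negative whenever $Q(t, \rho_0) > 1$.

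To close the argument, I would apply the maximum principle with a shift: if $\sup_\rho Q(t, \rho) > C$ for some $t \in (0, T)$, then for small $\epsilon > 0$ and $T' < T$ the function $Q - \epsilon t$ on $[0, T'] \times \mathbb{R}$ would attain a space-time maximum at an interior point $(t_0, \rho_0)$ with $t_0 > 0$, where the $\rho$-asymptotics above force $\rho_0$ to be finite. At $(t_0, \rho_0)$ one has $\partial_t Q \geq \epsilon > 0$, contradicting the displayed inequality. Letting $\epsilon \to 0$ yields the bound. The main technical point to check carefully will be the cancellation leading to the clean inequality for $\partial_t Q$, together with uniformity in $t$ of the boundary asymptotics at $\rho = \pm\infty$ (so that the $-\epsilon t$ shift really produces an interior maximum rather than a supremum lost at spatial infinity); the latter follows from $u'$ being bounded between $0$ and $b$ and the smoothness of $U_0, U_\infty$ being preserved in $t$ on compact subintervals of $[0, T)$.
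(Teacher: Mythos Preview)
Your proposal is correct and follows essentially the same approach as the paper (which refers to \cite{SY}, Proposition~3.1, and reproduces the argument in the proof of Proposition~\ref{u''2}): the only cosmetic difference is that you work directly with $Q = u''/u'$ whereas the paper uses $H = \log(u''/u')$. Your computation of $\partial_t Q$ at the interior spatial maximum is accurate and yields exactly the inequality you wrote, and the boundary asymptotics $Q\to 1$ as $\rho\to -\infty$, $Q\to 0$ as $\rho\to +\infty$ together with the standard maximum principle with $\epsilon$-shift complete the argument.
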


The following proposition shows that Lemma \ref{u''} also holds after the singular time.

\begin{proposition} \label{u''2}

There exists $C>0$ such that for all  $t\in  (T, T']$ and $\rho\in (-\infty, \infty)$,
\begin{equation}
 u''(t, \rho) \leq Cu'(t, \rho).
\end{equation}

\end{proposition}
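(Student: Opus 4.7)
\emph{Proof plan.} The strategy is to apply a parabolic maximum principle to the ratio $f := u''/u'$ on $(T, T'] \times \mathbb{R}$, extending past the singular time $T$ the argument that proves Lemma \ref{u''} (Proposition 3.1 of \cite{SY}). On $X_{l,m}$ the parabolic Monge-Amp\`ere equation takes the form
\[ \ddt{u} \;=\; \log\!\left[(\kappa + u')^{l}\,(u')^{m}\,u''\right] \;-\; (m+1)\rho \;+\; c_t, \]
obtained from (\ref{krfu}) by interchanging $m$ and $l$, since the base of the projective bundle is now $\PP^l$. Differentiating in $\rho$ gives the analogues of (\ref{udpevolution}), and the goal is to show that at any spatial maximum of $f(t, \cdot)$ one has $\ddt{f} < 0$ whenever $f > 1$.

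At such a maximum the first- and second-order conditions $\partial_\rho f = 0$ and $\partial_\rho^2 f \leq 0$ translate into $u''' = f u''$ and $u^{(4)} \leq f^{3} u'$. Substituting into $\ddt{u''}$, the $m$-group and the pair $u^{(4)}/u'' - (u''')^2/(u'')^2$ cancel exactly, leaving only the $l$-contribution $l\,g_0(f - g_0)$, where $g_0 := u''/(\kappa + u')$. Combined with $\ddt{u'} = (m+1)(f-1) + l g_0$ at the same point, the identity $\ddt{f} = \ddt{u''}/u' - f\,\ddt{u'}/u'$ collapses to the clean inequality
\[ u' \, \ddt{f} \;\leq\; -(m+1)\,f(f-1) \;-\; l\, g_0^{\,2}, \]
which is strictly negative whenever $f > 1$, using $u' > 0$ from Proposition \ref{kacon}(b).

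The boundary behavior of $f(t, \cdot)$ in $\rho$ is controlled by the Calabi smoothness conditions: for each $t > T$, condition (c) of Proposition \ref{kacon} gives $u', u'' \sim U_0'(t, 0)\,e^\rho$ as $\rho \to -\infty$ so that $f \to 1$, while condition (d) gives $u' \to b(t) > 0$ with $u''$ decaying exponentially as $\rho \to +\infty$ so that $f \to 0$. Hence $\sup_\rho f(t, \cdot)$ is always attained at a finite $\rho$. At $t = T$ the potentials from the $X_{m,l}$ and $X_{l,m}$ sides coincide on the common regular locus, since both $a_{m,l}(T)$ and $\kappa_{l,m}(T)$ vanish, so Lemma \ref{u''} applied on the pre-flip side yields the initial bound $f(T, \rho) \leq C_0$ for every $\rho \in \mathbb{R}$.

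To carry this bound past $t = T$, I approximate $\omega(T)$ by smooth Calabi-symmetric K\"ahler metrics $\omega_{0, \epsilon} := \omega(T) + \epsilon\,\omega_+$ on $X_{l,m}$ and run the K\"ahler-Ricci flow from each. A direct expansion of $f_\epsilon(T, \cdot) = (u + \epsilon v)''/(u + \epsilon v)'$ shows that the leading exponential contributions at $\rho \to -\infty$ cancel between numerator and denominator, so that $\sup_\rho f_\epsilon(T, \rho) \leq C_0 + 1$ uniformly for all small $\epsilon$. The differential inequality above, valid for each smooth $\omega_\epsilon$, then forces $f_\epsilon(t, \rho) \leq C_0 + 1$ on the interval of smooth existence, and passing to the limit $\epsilon \to 0$ via the stability of the K\"ahler-Ricci flow (as in Proposition \ref{locres} and the general theory of \cite{SoT3}) gives the desired bound $u''(t, \rho) \leq C\,u'(t, \rho)$ on $(T, T'] \times \mathbb{R}$. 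The main technical obstacle is precisely this transition across $t = T$: since $\omega(T)$ is singular along $E_{l,m}$, the uniform-in-$\epsilon$ control of $f_\epsilon$ near $\rho = -\infty$ rests on the leading-order cancellation in the perturbation, rather than on any naive continuity of the flow.
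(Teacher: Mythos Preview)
Your overall strategy---approximate, apply the maximum principle to $H=\log(u''/u')$, and pass to the limit---is exactly the paper's. Your evolution equations on $X_{l,m}$ and the differential inequality at an interior maximum are correct (in fact your inequality $u'\,\partial_t f \le -(m+1)f(f-1)-l g_0^2$ is the sharpened form of what the paper writes), and your boundary analysis $f\to 1$ as $\rho\to-\infty$, $f\to 0$ as $\rho\to+\infty$ is right.

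The gap is in your choice of approximating family. You set $\omega_{0,\epsilon}=\omega(T)+\epsilon\,\omega_+$ and call these ``smooth Calabi-symmetric K\"ahler metrics'' on $X_{l,m}$, but $\omega(T)$ is \emph{not} known to extend smoothly across $E_{l,m}$: it is only smooth on $X_{l,m}\setminus E_{l,m}$, and Lemma~\ref{u''0} in fact allows $u'(T,\rho)$ to decay like $e^{\frac{l+1}{m+l+1}\rho}$ rather than $e^{\rho}$ as $\rho\to-\infty$, so $U_0(T,\,\cdot\,)$ need not be smooth at the origin. Adding $\epsilon\omega_+$ does not repair this, and your appeal to a ``leading-order cancellation'' in $f_\epsilon(T,\,\cdot\,)$ cannot be justified without that smoothness. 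Consequently you cannot run a \emph{smooth} K\"ahler--Ricci flow from $\omega_{0,\epsilon}$ and the pointwise maximum-principle computation has no solution to act on at $t=T$.

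The paper sidesteps this entirely by a different (and simpler) approximation: it takes the \emph{pre-flip} potential $u_{T,\epsilon}:=u(T-\epsilon)$ and sets $\omega_{T,\epsilon}=\epsilon\,\theta_{l,m}+\ddbar u_{T,\epsilon}$ on $X_{l,m}$. Because $u(T-\epsilon)$ already satisfies the full Calabi ansatz (conditions (b)--(d) of Proposition~\ref{kacon}) for every $\epsilon>0$, and since $b(T)>0$, the form $\omega_{T,\epsilon}$ is a genuinely smooth K\"ahler metric on $X_{l,m}$. Moreover the initial bound $H_\epsilon(T,\,\cdot\,)=\log\bigl(u''(T-\epsilon)/u'(T-\epsilon)\bigr)\le C$ is exactly Lemma~\ref{u''}, uniformly in $\epsilon$. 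From there your maximum-principle argument goes through verbatim for each $\omega_\epsilon(t)$, and one lets $\epsilon\to 0$. Replace your $\omega(T)+\epsilon\omega_+$ by this pre-singular-time data and your proof is complete.
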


\begin{proof} Let $u_{T, \epsilon} = u(T-\epsilon)$ for sufficiently small $\epsilon\in (0, T)$. Let $\omega_{T, \epsilon} = \epsilon \theta_{l,m} + \ddbar u_{T, \epsilon}$.  Then $\omega_{T, \epsilon}$ is a smooth K\"ahler metric on $X_{l, m}$ as $b(T)$ is strictly positive.  We then consider the K\"ahler-Ricci flow on $X_{l, m}$ starting with $\omega_{T, \epsilon}$. Then for $\epsilon$ sufficiently small, there is a unique smooth solution $\omega_\epsilon(t)$ of the K\"ahler-Ricci flow on $X_{l, m}$ for $t\in [T, T']$ starting with $\omega_{T, \epsilon}$. Since $\omega_{T, \epsilon}$ satisfies the Calabi symmetry, $\omega_\epsilon(t)$ also satisfies the Calabi symmetry and for $t\in [T, T']$,
$$ \omega_\epsilon = \kappa_\epsilon(t) \theta_{l,m} + \ddbar u_{\epsilon}, ~~\kappa_\epsilon(t)=\epsilon +(m-l)(t-T).$$
In addition, $u'_\epsilon$ and $u''_\epsilon$ satisfy the following parabolic equations
\begin{eqnarray}
 \ddt{u'_\epsilon} &=& \frac{u_\epsilon'''}{u_\epsilon''} + \frac{ m u_\epsilon''}{u_\epsilon'} + \frac{ l u_\epsilon''}{ \kappa_\epsilon + u_\epsilon'} - (m+1),
 \\ \label{udpevolution}
\ddt{u''_\epsilon}& =& \frac{u_\epsilon^{(4)}} {u_\epsilon''} - \frac{ (u_\epsilon''')^2 }{ (u_\epsilon'')^2} + \frac{ m u_\epsilon'''}{u_\epsilon'} - \frac{ m (u_\epsilon'')^2}{ (u_\epsilon')^2} + \frac{ l u_\epsilon'''}{ \kappa_\epsilon + u_\epsilon'} - \frac{ l (u_\epsilon'')^2}{( \kappa_\epsilon + u_\epsilon')^2},
\end{eqnarray}

We then follow the same argument as in \cite{SY, SW1}.  Let $H_\epsilon= \log u_\epsilon'' - \log u_\epsilon'$. Notice that by Proposition \ref{kacon}, for fixed $t\in[T, T']$
$$\lim_{\rho\rightarrow -\infty} \frac{u_\epsilon''}{u_\epsilon'}   =1,~  \lim_{\rho\rightarrow \infty}  \frac{u_\epsilon''}{u_\epsilon'}  = 0.$$ And so we can apply maximum principle for $H_\epsilon$ in $[T, T']\times (-\infty, \infty)$.

\begin{eqnarray*}
\ddt{H_\epsilon}
&=&  \frac{1}{u_\epsilon''} \left\{ \frac{ u_\epsilon^{(4)} }{u_\epsilon''} - \frac{ (u_\epsilon''')^2}{ (u_\epsilon'')^2} + \frac{   l u_\epsilon'''}{ u_\epsilon'} - \frac{ l (u_\epsilon'')^2}{ (u_\epsilon')^2} + \frac{ m u_\epsilon'''}{\kappa_\epsilon+ u_\epsilon'} - \frac{ m (u_\epsilon'')^2}{ (\kappa_\epsilon + u_\epsilon')^2} \right\} \\
&& - \frac{1}{u_\epsilon'} \{ \frac{u_\epsilon'''}{u_\epsilon''} + \frac{ l u_\epsilon''}{u_\epsilon'} + \frac{ m u_\epsilon''}{ \kappa_\epsilon + u_\epsilon'} - (l+1)\}
\end{eqnarray*}

Suppose that $H_\epsilon(t_0, \rho_0) = \sup_{[T, t_0] \times (-\infty, \infty) } H_\epsilon(t, \rho)$ is achieved for some $t_0\in [T, T']$, $\rho_0\in (-\infty, \infty)$.  At $(t_0, \rho_0)$, we have
$$H'_\epsilon= \frac{ u_\epsilon'''}{u_\epsilon''} - \frac{ u_\epsilon''}{u_\epsilon'} = 0, ~ H_\epsilon''= \frac{ u_\epsilon^{(4)}} {u_\epsilon''} - \frac{ (u_\epsilon''')^2}{(u_\epsilon'')^2} - \frac{ u_\epsilon''' } { u_\epsilon'} + \frac{ (u_\epsilon'')^2}{ (u_\epsilon')^2} = \frac{ u_\epsilon^{(4)} } {u_\epsilon''} - \frac{ u_\epsilon''' } { u_\epsilon'}\leq 0. $$
Then at $(t_0,\rho_0)$,
\begin{eqnarray*}
0 &\leq&\ddt{H_\epsilon}\\
&= & \frac{1}{u_\epsilon''} \{ \frac{ u_\epsilon^{(4)} }{u''} - \frac{ (u_\epsilon''')^2}{ (u_\epsilon'')^2} + \frac{ l u_\epsilon'''}{ u_\epsilon'} - \frac{ l(u_\epsilon'')^2}{ (u_\epsilon')^2}  \}  - \frac{1}{u_\epsilon'} \{ \frac{u_\epsilon'''}{u_\epsilon''} + \frac{ l u_\epsilon''}{u_\epsilon'}  - (l +1)\}  \\
&& + \frac{m}{\kappa_\epsilon +u_\epsilon'} \left\{  \frac{u_\epsilon'''}{u_\epsilon''} - \frac{u''}{\kappa_\epsilon+u_\epsilon'} - \frac{u_\epsilon''}{u_\epsilon'} \right\}  \\
&\leq&  - \frac{(l+1) u_\epsilon''}{(u_\epsilon')^2} + \frac{l+1}{u_\epsilon'}  - \frac{m u_\epsilon''}{(\kappa_\epsilon +u_\epsilon')^2}\\
&\leq&  \frac{l+1}{u_\epsilon'} (1- e^{H_\epsilon} ).
\end{eqnarray*}
Therefore by the maximum principle, $H_\epsilon(t_0, x_0) \leq 0$ and so there exists $C>0$ independent of $\epsilon\in(0,1)$ such that
$$\sup_{[T, T'] \times (-\infty, \infty)} H_\epsilon(t, \rho) \leq \sup_{(-\infty, \infty)} H_\epsilon(T, \rho) < C$$
since $H_\epsilon(T,\cdot)$ is uniformly bounded by Lemma \ref{u''}.
The proposition is then proved by letting $\epsilon \rightarrow 0$.

\end{proof}

We then have the following estimate for the evolving metric of the K\"ahler-Ricci flow on $X_{l,m}$ after the singular time $T$, by combining Lemma \ref{u''0} and Proposition \ref{u''2}.

\begin{corollary} There exists $C>0$ such that
\begin{equation} \label{resome}
\omega \leq C (\theta_{l,m} + \hat\omega +  e^{-\frac{m}{m+l+1}\rho} \hat\omega ),
\end{equation}
where $\hat\omega =\ddbar e^\rho$.

\end{corollary}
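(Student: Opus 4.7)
The plan is to substitute the bounds on $u'$ and $u''$ from Lemma \ref{u''0} and Proposition \ref{u''2} directly into the explicit Calabi form of $\omega$, then compare eigenvalue-by-eigenvalue with the three pieces on the right-hand side. Using formula (\ref{metricrep2}) on $X_{l,m}$ (with the roles of $m$ and $l$ swapped), the metric decomposes as
$$\omega = (\kappa + u')\theta_{l,m} + h_\EE e^{-\rho}\bigl(u' \delta_{\alpha\beta} + h_\EE e^{-\rho}(u'' - u')\xi^{\bar\alpha}\xi^\beta\bigr)\nabla\xi^\alpha \wedge \nabla\xi^{\bar\beta}.$$
Diagonalizing the vertical quadratic form, $\omega$ has horizontal eigenvalue $\kappa + u'$ with respect to $\theta_{l,m}$, a single vertical eigenvalue $h_\EE e^{-\rho} u''$ along the radial direction $V_- = \sum \xi^\alpha \partial_{\xi^\alpha}$, and $m$ further vertical eigenvalues all equal to $h_\EE e^{-\rho} u'$. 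Applying the same formula with $u(\rho)=e^\rho$ and $a=0$ shows $\hat\omega = e^\rho \theta_{l,m} + h_\EE \delta_{\alpha\beta}\nabla\xi^\alpha \wedge \nabla\xi^{\bar\beta}$, whose horizontal eigenvalue is $e^\rho$ and whose vertical eigenvalues are all equal to $h_\EE$.

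By Proposition \ref{u''2}, $u'' \leq Cu'$, so both types of vertical eigenvalue of $\omega$ are controlled by $Ch_\EE e^{-\rho} u'$. For $\rho \leq 1$, I will invoke the sharper bound $u' \leq C e^{(l+1)\rho/(m+l+1)}$ --- which is what the proof of Lemma \ref{u''0} actually establishes before being weakened to a uniform $\max$ form --- to get vertical eigenvalue at most $Ch_\EE e^{-m\rho/(m+l+1)}$, exactly matching the vertical eigenvalue of $e^{-m\rho/(m+l+1)}\hat\omega$. For $\rho \geq 1$, both $u'$ and $e^{-\rho}$ are uniformly bounded, so the vertical contribution is absorbed by $C\hat\omega$. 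For the horizontal part, $(\kappa+u')\theta_{l,m} \leq C\theta_{l,m} + Ce^{(l+1)\rho/(m+l+1)}\theta_{l,m} = C\theta_{l,m} + Ce^{-m\rho/(m+l+1)}(e^\rho\theta_{l,m})$, and $e^\rho\theta_{l,m}$ is precisely the horizontal part of $\hat\omega$.

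Summing horizontal and vertical contributions yields the claimed bound $\omega \leq C(\theta_{l,m} + \hat\omega + e^{-m\rho/(m+l+1)}\hat\omega)$. The main technical subtlety is that one must use the proof-internal form of Lemma \ref{u''0}, since the weaker stated bound $u' \leq C\max(1, e^{(l+1)\rho/(m+l+1)})$ alone would yield $Ch_\EE e^{-\rho}$ in the vertical direction as $\rho \to -\infty$, which grows faster than the allowed $Ch_\EE e^{-m\rho/(m+l+1)}$. Once that point is recognized, the argument reduces to direction-by-direction exponential bookkeeping using the Calabi-symmetric form of the metric.
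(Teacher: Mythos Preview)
Your proof is correct and is essentially the approach the paper has in mind: the corollary is stated without proof as following from Lemma \ref{u''0} and Proposition \ref{u''2}, and you have supplied the eigenvalue bookkeeping via the explicit Calabi form (\ref{metricrep2}) that makes this precise. Your observation that the vertical estimate near $\rho\to -\infty$ requires the proof-internal bound $u' \leq Ce^{(l+1)\rho/(m+l+1)}$ for $\rho\leq 1$, rather than the weaker stated form of Lemma \ref{u''0}, is exactly the point and is well identified.
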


\noindent{\bf Proof of Theorem \ref{main3}.} We also note that for the smooth solution $\omega(t)=\kappa(t)\theta_{l,m} +\ddbar u(t, \rho)$ for $t\in (T, T']$, $\kappa(t)$ tends to $0$ as $t\rightarrow T^+$. With the estimates (\ref{resome}), one can apply the arguments as in \cite{SY} or the proof of Theorem \ref{minus2} to show that $(X_{l,m}, g(t))$ converges to $(Y, d_T)$ in Gromov-Hausdorff topology as $t\rightarrow T^+$. In particular, the convergence is smooth on $X_{l,m}\setminus E_{l,m}$. This, combined with Theorem \ref{cycon}, proves Theorem \ref{main3} by verifying the definition for a surgical metric flip.

\qed

\subsection{Complex cobordisms }

We first recall the definition of birational cobordisms introduced by Wlodarczyk \cite{W}.
\begin{definition} Let $\phi: X_1: \dasharrow X_2$ be a birational map between two $n$-dimensional nonsingular projective varieties $X_1$ and $X_2$. The birational cobordism   is an $(n+1)$-dimensional variety $B=B_\phi(X_1, X_2)$ with an algebraic action of the $\CC^*$-action satisfying
\begin{enumerate}
\item  the sets
\begin{eqnarray*}
&& B^-=\{z\in B ~|~ \lim_{\lambda \rightarrow 0} \lambda \cdot z~\textnormal{ does~not~exist}  \}\\
&& B^+=\{z\in B~|~\lim_{\lambda\rightarrow \infty} \lambda\cdot z ~\textnormal{does~ not~exist} \}.
\end{eqnarray*}
are nonempty and open;
\item there exist quotients of $B^-$ and $B^+$ by the $\CC^*$-action such that $X_1\simeq B^-/\CC^*$ and  $X_2\simeq B^+/ \CC^*$.

\end{enumerate}

\end{definition}

The complex cobordisms can be viewed as an analogue of the cobordism between differential manifolds  by a Morse function. The 'passing-through' fixed points  of the $\CC^*$-action  on $B^-$ and $B^+$ induces a birational transformation. Such a point of view leads to the fundamental factorization theorem \cite{W}, stating that any birational map between nonsingular varieties is a composition of  blow-ups and blow-downs along nonsingular centers.

 Mumford's quotients can be used  to understand a family of flips. We consider the $\CC^*$ action on $\BB=\CC^{m+l+2}$ defined by
$$ \CC^*: (t, (x_0, ..., x_m; y_0, ..., y_l)) \rightarrow ( \lambda^{-a_0} x_0, \lambda^{-a_1} x_1, ..., \lambda^{-a_m} x_m; \lambda^{b_0} y_0, ..., \lambda^{b_l} y_l ), $$
where $a_0$, ... , $a_m$, $b_0$, ... , $b_l \in \mathbb{Z}^{+}$.
We define
$$\BB^-=\left( \CC^{m+l+2}\setminus \{ x=0\} \right) / \CC^*, ~\BB^+ =\left( \CC^{m+l+2}\setminus \{y=0\} \right) / \CC^*. $$
Let $\EE^-= \BB^- /\CC^*$ and $\EE^+=\BB^+/ \CC^*$. Then the triple $(\BB, \EE^-, \EE^+ )$ induces a birational coboridsm. The fixed points of the $\CC^*$-action on $\BB^-$ and $\BB^+$ are weighted projective spaces $E^-= \PP^m_{(a_0, ..., a_m)}$ and $E^+= \PP^l_{(b_0, .., b_l)}$. In general, $\EE^-$ and $\EE^+$ have orbifold singularities. From now on, we assume that
$$g.c.d(a_0, ..., a_{i-1}, a_{i+1}, ..., a_m; b_0, ..., b_l)= g.c.d. (a_0, ..., a_m; b_0, ..., b_{j-1}, b_{j+1}, ... b_l)=1$$
for all $i=0, ..., m$ and $j=0, ..., l$.

\bigskip

\noindent{\bf Examples} Let us illustrate how the Mumford's quotients are related to divisorial contractions and flips by the following examples.

\begin{enumerate}

\item When $a_0=a_1=...=a_m=b_0=...=b_l=1$,
if $l=0$,
$$\EE^- = \OO_{\PP^m}(-1), ~\EE^+ =\CC^{m+1}. $$
and if $l\geq 1$,
$$\EE^- = \OO_{\PP^m}(-1)^{\oplus (l+1)}, ~\EE^+ = \OO_{\PP^l}(-1)^{\oplus (m+1)}. $$ The K\"ahler quotient gives the local model for a flip if $0<l<m$ and for a flop if $m=l>0$.

\medskip

\item When $a_0=-2, a_1=-1$ and $b_0=b_1=1$.
$\EE^-$ has two coordinate patches $\{x_0 \neq 0\} = \CC^3/ \mathbb{Z}_2=$ and $\{x_1\neq 0\} = \CC^3$.
Let $(z_1, z_2, z_3)$ be the orbifold coordinates on $\{x_0 \neq 0\} $ with group action $(-1,-1,-1)$ on $\CC^3$ and $(w_1, w_2, w_3)$ be the coordinates on $\{x_1\neq 0\} = \CC^3$. Then transition functions are given by
$$ w_1= 1/z_1, ~ w_2 = z_2/z_1, ~ w_3 = z_3/z_1. $$
In addition, $\EE^+ = \OO_{\PP^1} (-2) \oplus \OO_{\PP^1}(-1)$. This gives an orbifold  flip in dimension 3.

\medskip

\item When $a_0=a_1=...=a_m=1$ and $\sum b_j \leq m$,  $\EE^-= \oplus \OO_{\PP^m}(-b_j)$ and $\EE^+$ is the flip of $\EE^-$ with orbifold singularities.

\end{enumerate}

\bigskip

For each coordinate patch $x_i\neq 0$, the quotient $\EE^-$  can be viewed as a subset of a quotient of $\OO_{\PP^m}(-1)^{\oplus (j+1)}$.
We consider the following function analogous to the Morse function
\begin{equation} e^\Upsilon = ( \sum |x_i|^{2/a_i}) (\sum |y_j|^{2/b_j}). \end{equation}
Similarly as in section 2, we define
\begin{equation}
\Omega = \{ (x;y)\in \BB^-\cap \BB^+~|~e^\Upsilon \leq 1 \} / \CC^*.
\end{equation}
Then we define
\begin{equation}
\hat\omega = \ddbar e^\Upsilon, ~\theta_- = \ddbar \log (  \sum |x_i|^{2/a_i}), ~ \theta_+ = \ddbar \log  ( \sum |y_j|^{2/b_j}). \end{equation}
$\theta_-$ and $\theta_+$ are smooth orbifold K\"ahler metrics on $\PP^m_{(a_0, ..., a_m)}$ and $\PP^l_{(b_0, ..., b_l)}$,  which can be identified as the  quotient of the Fubini-Study metrics on $\PP^m$ and $\PP^l$.

We now compare smooth metrics on $\EE^-$ and $\EE^+$ to $\hat\omega$, $\theta_-$ and $\theta_+$.

\begin{lemma} Suppose $\omega_-$ and $\omega_+$ are smooth orbifold K\"ahler metrics  on $\EE^-\cap \Omega$ and $\EE^+\cap \Omega$ respectively, then there exists $C>0$ such that
\begin{equation} \label{coboest1}
\omega_- \leq C ( \theta_-+ \hat\omega ), ~~ \omega_+ \leq C ( \theta_+ + \hat\omega ),
\end{equation}
\begin{equation}\label{coboest2}
 \omega_- \leq C e^{-\Upsilon } \hat\omega, ~ \omega_+ \leq C e^{-\Upsilon } \hat\omega.
\end{equation}

\end{lemma}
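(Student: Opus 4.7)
My plan is to follow the strategy of Lemma \ref{lcompa} from the unweighted case $a_i = b_j = 1$, adapting the compactness argument and eigenvalue computation to the general Mumford-quotient setting. By the symmetry of the construction under $\EE^- \leftrightarrow \EE^+$ (which exchanges $\theta_-$ and $\theta_+$ but preserves both $\hat\omega$ and $\Upsilon$), it suffices to prove both estimates for $\omega_-$.

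The first step is to observe that $\theta_- + \hat\omega$ extends to a smooth orbifold K\"ahler form on the closure $\overline{\EE^- \cap \Omega}$. Working in an orbifold chart $\{x_i \neq 0\}$, the $\mathbb{Z}_{a_i}$ quotient makes $f = \sum_k|x_k|^{2/a_k}$ a smooth, strictly positive, plurisubharmonic function of the base coordinates, and $g = \sum_j |y_j|^{2/b_j}$ a smooth nonnegative plurisubharmonic function of the fiber coordinates vanishing to second order exactly on $E^- = \{y = 0\}$. Expanding $\hat\omega = g\,\ddbar f + f\,\ddbar g + \partial f \wedge \dbar g + \partial g \wedge \dbar f$, the term $f\,\ddbar g$ is positive definite in the fiber directions (and vanishes tangentially on $E^-$), while $\theta_-$ provides positivity tangent to $E^-$. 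Thus $\theta_- + \hat\omega$ is a genuine orbifold K\"ahler form on $\overline{\EE^- \cap \Omega}$, which is a compact orbifold with boundary. Estimate (\ref{coboest1}) then follows from the standard fact that any two smooth orbifold K\"ahler forms on a compact orbifold are uniformly comparable.

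To prove (\ref{coboest2}), I would track the eigenvalue degeneration of $\hat\omega$ near $E^-$ in the same orbifold chart. Choose at each point $p$ a basis adapted to the splitting into base and fiber directions. A direct computation shows that the $l + 1$ fiber eigenvalues come from $f\,\ddbar g$ and are of order $f \sim 1$, whereas the $m$ tangential eigenvalues come from $g\,\ddbar f$ and are of order $f g = e^\Upsilon$; the cross terms $\partial f \wedge \dbar g + \partial g \wedge \dbar f$ produce off-diagonal entries of order $\sqrt{f g} = e^{\Upsilon/2}$, saturating the Cauchy-Schwarz bound against the diagonal blocks in a way consistent with $\hat\omega$ degenerating tangentially along $E^-$. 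Multiplying by $e^{-\Upsilon} \geq 1$ on $\Omega$, the tangential eigenvalues of $e^{-\Upsilon}\hat\omega$ become of order one and the normal ones of order $e^{-\Upsilon} \geq 1$, while the mixed block scales correspondingly. Since $\omega_-$ is a smooth orbifold K\"ahler metric whose eigenvalues are uniformly bounded above on the compact closure, comparing block by block yields $\omega_- \leq C\, e^{-\Upsilon}\hat\omega$.

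The main technical obstacle is handling the orbifold structure carefully in the first step and performing the eigenvalue bookkeeping of the second step. The real-analytic functions $|x_i|^{2/a_i}$ and $|y_j|^{2/b_j}$ are not smooth as functions of the ambient affine coordinates, so one must pass to genuine orbifold charts, pulling back along the branched covers $x_i \mapsto x_i^{a_i}$ and $y_j \mapsto y_j^{b_j}$, to make them smooth plurisubharmonic functions before computing derivatives. Once this identification is carried out, the remaining linear-algebra estimates parallel exactly those of the unweighted Lemma \ref{lcompa}.
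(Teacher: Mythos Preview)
Your approach is correct and closely parallels the paper's, with one point of terminology worth clarifying. The ``genuine orbifold chart'' of $\EE^-$ over $\{x_i\neq 0\}$ is the $\mathbb{Z}_{a_i}$-quotient of $\CC^{m+l+1}$ with coordinates $(z_l,w_j)$ as in the paper; in those coordinates $f=\sum|x_k|^{2/a_k}$ is \emph{not} smooth (it involves $|z_l|^{2/a_l}$), so $\theta_-+\hat\omega$ is not a smooth orbifold K\"ahler form on $\EE^-$ in the usual sense. What you actually use---and correctly flag in your last paragraph---is a \emph{further} branched cover (substituting $z_l=X_l^{a_l}$, $w_j=Y_j^{b_j}$) on which $f,g$ become smooth and $\theta_-+\hat\omega$ becomes honestly K\"ahler. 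On that cover $\pi^*\omega_-$ is smooth and semipositive on a compact set, so the comparison goes through and descends. This is fine; just be careful not to call that further cover the orbifold chart of $\EE^-$.

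The paper organizes the same comparison slightly differently. For (\ref{coboest1}) it stays in the actual $\mathbb{Z}_{a_i}$-chart and uses the elementary inequality $\ddbar v^{k}=k^{2}v^{k-1}\ddbar v\le C\,\ddbar v$ for $v$ bounded with pluriharmonic $\log v$, applied to $v=|x_i|^{2/a_i}|y_j|^{2/b_j}$ and to $|x_l|^{2/a_l}/|x_i|^{2/a_i}$; this bounds the Euclidean form $\sum\ddbar|z_l|^2+\sum\ddbar|w_j|^2$ (hence any smooth orbifold $\omega_-$) by $C(\theta_-+\hat\omega)$ directly, without passing to the extra cover. For (\ref{coboest2}) the paper does pass to that same further cover, identifies the lifted $e^\Upsilon,\theta_-,\hat\omega$ with the unweighted $e^\rho,\theta_-,\hat\omega$ of Section~2, and invokes Lemma~\ref{lcompa}. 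Your block eigenvalue computation is exactly a direct proof of that lemma on the cover: with $f\sim 1$ on the chart, $\hat\omega$ has $m$ eigenvalues $\sim g\sim e^{\Upsilon}$ and $l+1$ eigenvalues $\sim 1$, so $e^{-\Upsilon}\hat\omega\ge c\,I$ and $\pi^*\omega_-\le C\,I$ finishes it. Either packaging works.
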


\begin{proof}

We consider the coordinate patch
$$U^-_i = \left\{ (x;y)~|~|x_i|^{2/a_i} > \frac{1}{2(m+1)} \sum_{l=0}^m |x_l|^{2/a_l}  \right\} / \CC^*. $$ It is isomorphic to the quotient of $\CC^{m+l+1}$ by $\mathbb{Z}_{a_i}$ acting on $\CC^{m+l+1}$ by 
$$\theta: (z_0, ..., z_{i-1}, z_{i+1}, ..., z_m, w_0, ..., w_l) \rightarrow (\theta^{a_0} z_0, ..., \theta^{a_{i-1}} z_{i-1}, \theta^{a_{i+1}} z_{i+1}, ..., \theta^{a_m}z_m, \theta^{b_0}w_0, ..., \theta^{b_l} w_l)
$$
for any $a_i $ unit root $\theta$.
We set $z_0$, ... , $z_{i-1}$, $z_{i+1}$,  $z_m$, $w_0$,  ... ,  $w_l$ to be the coordinates on $\CC^{m+l+1}$ satisfying
$$ |z_0|^2= \frac{|x_0|^2}{ |x_i|^{2a_0/a_i} }, ..., |z_m|^2= \frac{|x_m|^2}{ |x_i |^{2a_m/a_i} }, |w_0|^2= |y_0|^2|x_i|^{2b_0/a_i} , ..., |w_l|^2= |y_l |^2 |x_i|^{2b_l/a_i}  . $$
%           %
Immediately we have
$$\ddbar |w_j|^2  = \sum \ddbar \left (|x_i|^{2/a_i} |y_j|^{2/b_j}\right)^{b_j} \leq C  \sum \ddbar  \left( |x_i|^{2/a_i} |y_j|^{2/b_j} \right)\leq C\hat\omega ,$$
$$\ddbar \sum_{l\neq i} |z_l|^2 = \ddbar \sum_{l\neq i} \left( \frac{|x_l|^{2/a_l}}{ |x_i|^{2/a_i} } \right)^{a_l}\leq C\ddbar \sum_{l\neq i} \left( \frac{|x_l|^{2/a_l}}{ |x_i|^{2/a_i} } \right) \leq C \theta_-. $$
This proves (\ref{coboest1}) as the same argument applies to $\omega_+$.

To prove (\ref{coboest2}), we use the fact that $\OO_{\PP^m}(-1)^{\oplus (l+1)}$ is an orbifold covering of $U^-_i$ by $\mathbb{Z}_{a_i}$.  Then we can apply the estimate in Lemma \ref{lcompa} because $\omega_-$ is bounded above by a fixed K\"ahler metric on $\OO_{\PP^m}(-1)^{\oplus (l+1)}$, and the lifting of $e^\Upsilon$, $\theta_-$ and $\hat\omega$ coincide with $e^\rho$, $\theta_-$ and $\hat\omega$ defined in section 2. Then (\ref{coboest2}) follows as $e^\Upsilon \theta_-$ is bounded above by a multiple of $\hat\omega$.

\end{proof}

We now consider the  K\"ahler-Ricci flow on a projective variety $X^-$ and let $g(t)$ be a smooth orbifold solution for $t\in [0, T)$. Assume the flow develops singularity at $t=T< \infty$ and $\lim_{t\rightarrow T} [g(t)]\in H^{1,1}(X, \mathbb{R}) \cap H^2(X, \mathbb{Z})$ induces a flip
$$\phi_-: X^- \rightarrow Y \leftarrow X^-: \phi_+$$
of $X^-$ such that  the flip over each exceptional locus is isomorphic to a Mumford quotient.  Without loss of generality, we can assume that there exists only one component for the exceptional locus $E^-$. Then by the general result of \cite{SoT3}, the K\"ahler-Ricci flow converges smoothly to a K\"ahler metric on $X\setminus E^-$. We can can localize the estimates in a neighborhood of $E^-$ and by choosing suitable coordinates. For any point $\alpha=[\alpha_0, ..., \alpha_m] \in \PP^m_{(a_0, ..., a_m)}$ and  $\beta=[\beta_0, ..., \beta_l] \in \PP^l_{(b_0, ..., b_l)}$,  we define
\begin{eqnarray*}
L^-_{[\beta]} &=& \{ (\lambda^{a_0} x_0, ..., \lambda^{a_m} x_m;  \beta_0, ...,  \beta_l )~|~\lambda\in \CC^*, ~x\in \CC^m \}/ \CC^*   \\
 &=& \{ ( x_0, ...,  x_m;  \lambda^{b_0}\beta_0, ..., \lambda^{b_l} \beta_l )~|~\lambda\in \CC^*, ~x\in \CC^m \}/ \CC^*, \\
L^+_{[\alpha]} &=& \{ ( \alpha_0, ...,  \alpha_m; \lambda^{b_0} y_0, ..., \lambda^{b_l} y_l ) ~|~\lambda\in \CC^* ,~ y \in \CC^{l+1}  \}/  \CC^*\\
&=& \{ ( \lambda^{a_0} \alpha_0, ...,  \lambda^{a_m} \alpha_m;  y_0, ...,  y_l ) ~|~\lambda\in \CC^* ,~ y \in \CC^{l+1}  \}/  \CC^*.
\end{eqnarray*}

At each point $(x; y)$ with $x\neq 0, y\neq 0$, there exists a unique branch of $L^-_{[x]}$ and $L^+_{[y]}$ passing through $(x; y)$. When $a_0=...=a_m=b_0=...=b_l=1$, we simply have $$L^-_{[y]} = \OO_{\PP^m} (-1), ~ L^+_{[x]} = \OO_{\PP^l} (-1). $$

On $L^-_{[\beta]}$, suppose $\beta_0 \neq 0$, then we can choose coordinates $(\nu_0, \nu_1, ..., \nu_m)$  on the covering space of $L^-{[\beta]}$, such that
\begin{equation}\label{coordch}
|\nu_i |^2 = |x_i|^{2/a_i} |y_0|^{2/b_0}.
\end{equation}
Direct calculations show that the restriction of $\hat\omega$ on $L^-_{[\beta]}$ and $L^+_{[\alpha]}  $ are induced by standard Euclidean metrics on $\CC^{m+1} $ and $\CC^{l+1}  $.
\begin{lemma} $\hat\omega|_{L^-_{[\beta]}} $ and $\hat\omega|_{L^+_{[\alpha]}} $ are  flat for all $[\beta]\in \PP^l_{[b_0, ..., b_l]}$ and $[\alpha]\in \PP^m_{[a_0, ..., a_m]}$.

\end{lemma}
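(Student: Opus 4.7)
The plan is to restrict $e^\Upsilon$ directly to each leaf and observe that, as a $\CC^*$-invariant function, it collapses to a constant positive multiple of a squared-norm expression in the prescribed coordinates; since $\ddbar$ of a squared norm is the standard Euclidean K\"ahler form, flatness follows at once. First I would reduce to the case $\beta_0\neq 0$ (by relabeling). Using the second description of $L^-_{[\beta]}$, on this leaf the $y$-variables are constrained by a single $\CC^*$-parameter, so we have the (possibly multi-valued) relations $y_j=c_j\,y_0^{b_j/b_0}$ with $c_j=\beta_j\,\beta_0^{-b_j/b_0}$. Although these involve fractional powers, the moduli $|y_j|^{2/b_j}=|c_j|^{2/b_j}\,|y_0|^{2/b_0}$ are single-valued, and summing over $j$ yields
\[
\sum_{j=0}^l|y_j|^{2/b_j}\Bigr|_{L^-_{[\beta]}}=C_\beta\,|y_0|^{2/b_0},\qquad C_\beta:=\sum_{j=0}^l|c_j|^{2/b_j}>0,
\]
where $C_\beta$ depends only on $[\beta]$.

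Substituting into the definition of $e^\Upsilon$ then gives
\[
e^\Upsilon\bigr|_{L^-_{[\beta]}}=C_\beta\sum_{i=0}^m |x_i|^{2/a_i}\,|y_0|^{2/b_0}=C_\beta\sum_{i=0}^m|\nu_i|^2,
\]
where in the last step I use the defining identity $|\nu_i|^2=|x_i|^{2/a_i}|y_0|^{2/b_0}$. Note that this product is genuinely $\CC^*$-invariant (the scalings $|\lambda|^{-2}$ from $|x_i|^{2/a_i}$ and $|\lambda|^{2}$ from $|y_0|^{2/b_0}$ cancel), so the above expression descends to a well-defined smooth function on $L^-_{[\beta]}$. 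Pulling back to the orbifold covering space on which $\nu_0,\dots,\nu_m$ are holomorphic coordinates and using that $\ddbar$ commutes with pullback, I would conclude
\[
\hat\omega\bigr|_{L^-_{[\beta]}}=\ddbar e^\Upsilon\bigr|_{L^-_{[\beta]}}=C_\beta\,\ddbar\!\sum_{i=0}^m|\nu_i|^2,
\]
which is precisely a constant positive multiple of the Euclidean K\"ahler form on $\CC^{m+1}$, hence flat. The statement for $L^+_{[\alpha]}$ is obtained by the completely symmetric argument interchanging the roles of $(x_i,a_i)$ and $(y_j,b_j)$.

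The only genuinely delicate point, and where one must be careful, is justifying that the $\nu_i$ really are holomorphic coordinates on the orbifold covering space rather than merely smooth functions with the prescribed modulus. This is however a construction: the cover is introduced precisely to extract the fractional roots needed so that $\nu_i$ (morally $x_i^{1/a_i}y_0^{1/b_0}$, up to a multiplicative constant from the $\CC^*$-slice choice) becomes single-valued and holomorphic. Independence from the choice of branch is automatic because only $|\nu_i|^2$ enters the formula for $e^\Upsilon$, and the finite cyclic covering group acts on the $\nu_i$ by phases, preserving both the expression $\sum_i|\nu_i|^2$ and its $\ddbar$. No additional analytic estimate is required.
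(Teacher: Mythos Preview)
Your proposal is correct and follows essentially the same route as the paper, which simply asserts that ``direct calculations show that the restriction of $\hat\omega$ on $L^-_{[\beta]}$ and $L^+_{[\alpha]}$ are induced by standard Euclidean metrics on $\CC^{m+1}$ and $\CC^{l+1}$'' without spelling out the computation. Your explicit reduction $e^\Upsilon|_{L^-_{[\beta]}}=C_\beta\sum_i|\nu_i|^2$ via the paper's coordinate prescription $|\nu_i|^2=|x_i|^{2/a_i}|y_0|^{2/b_0}$ is exactly the content of that ``direct calculation,'' and your remark about pulling back to the orbifold cover to make the $\nu_i$ genuine holomorphic coordinates is the appropriate justification.
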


The holomorphic vector field
$$V_- = \sum b_j^{-1} y_j \frac{\partial }{\partial y_j} $$  on $\CC^{m+l+2}$ induces  a holomorphic vector field on $\EE^-\cap \Omega$.  $V_-$ is tangential to $L^-_{[\beta]}$ for all $[\beta]\in \PP^{l}_{[b_0, ..., b_l]}$.
For fixed $[\beta]\in \PP^{l}_{[b_0, ..., b_l]}$ with  $\beta_0\neq 0$, we choose  coordinates $(\nu_0, \nu_1, ..., \nu_m)$ on $L^-_{[\beta]}$ as in (\ref{coordch}), and by direction calculations, we have
$$ V_- |_{L_{[\beta]}} = b_0^{-1} \sum_{i=0}^m \nu_i \frac{\partial }{\partial \nu_i}. $$
Similarly, on $\EE^+\cap \Omega$, we define the holomorphic vector field $V_+$ induced by the holomorphic vector field
$$\sum a_i^{-1} x_i \frac{\partial }{\partial x_i} $$
and  $V_+$ is tangential to $L^+_{[\alpha]}$ for all $[\alpha]\in \PP^{m}_{[a_0, ..., a_m]}$.
After lifting to a cover, we can apply the same calculations in section 2, we can show that  $|V_-|^2_{\hat\omega} = |V_+|^2_{\hat\omega} = e^\Upsilon$ . We then define
$$W_- = e^{-\Upsilon/2} V_-, ~ W_+ = e^{-\Upsilon/2} V_+. $$

\begin{proposition} \label{53} There exist $\gamma>0$ and  $C>0$ such that for all $t\in [0, T)$ ,  $p\in \Omega$  with $(x; y)=(x(p) ; y(p))\in \CC^{m+l+2}$,

\begin{equation} e^\Upsilon \tr_{\hat\omega|_{L_{[y]}}}(\omega|_{L_{[y]}})  (t, p) \leq C. \end{equation}

\begin{equation} e^\Upsilon \tr_{\hat \omega|_{L_{[x]}}}(\omega|_{L_{[x]}} ) (t, p) \leq C. \end{equation}

\begin{equation} |W_-|^2_{g(t)} (t, p) + |W_+|^2_{g(t)}  (t, p) \leq Ce^{-\Upsilon/2} . \end{equation}

\end{proposition}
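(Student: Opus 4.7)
The plan is to mirror the proofs of Proposition \ref{keyest} and Proposition \ref{keyest2} from the smooth flip setting, now performed on each coordinate patch $U^\pm_i$ of $\EE^\pm$ after lifting through the orbifold $\mathbb{Z}_{a_i}$- (respectively $\mathbb{Z}_{b_j}$-) cover by $\OO_{\PP^m}(-1)^{\oplus(l+1)}$ as used in the proof of (\ref{coboest2}). All the quantities $H$, $\Upsilon$, $|V_\pm|^2_{\hat\omega}$ and $|W_\pm|^2_\omega$ are invariant under the local cyclic group action, so every maximum-principle argument carried out upstairs descends to $\EE^\pm$.

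First I would prove the two trace estimates. Fix $[\beta]\in \PP^l_{(b_0,\dots,b_l)}$ and set $H(t,p)=\tr_{\hat\omega|_{L^-_{[\beta]}}}(\omega(t)|_{L^-_{[\beta]}})$. By the flatness lemma for $\hat\omega|_{L^-_{[\beta]}}$ stated just before the proposition, the Cauchy--Schwarz computation in Steps~1--3 of the proof of Proposition \ref{H2} applies verbatim on the cover and yields
\[ \Box \log H \leq \tr_\omega(\theta_+). \]
Since $\ddbar\Upsilon=\theta_-+\theta_+$ on $\Omega$, we have $\Box\Upsilon=-\tr_\omega(\theta_-+\theta_+)$, so $\Box(\log H + (1+\epsilon)\Upsilon)<0$ for every $\epsilon\in(0,1)$. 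The boundary behavior is controlled as in Lemma \ref{H1}: estimate (\ref{coboest2}) forces $e^\Upsilon H \to 0$ as $p \to E^-$, while the smooth convergence of $\omega(t)$ on $X^-\setminus E^-$ (from Proposition \ref{locres} together with \cite{SoT3}) bounds $e^\Upsilon H$ uniformly on $\partial\Omega\setminus E^-$. Applying the maximum principle and letting $\epsilon\to 0$ gives the first inequality; swapping the roles of $x$ and $y$ gives the second.

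For the bound on $|W_\pm|^2_{g(t)}$, I would adapt Steps~3--4 of the proof of Proposition \ref{keyest}. The field $V_-$ is tangential to each leaf $L^-_{[\beta]}$ and on the flat coordinates $(\nu_0,\dots,\nu_m)$ reads $b_0^{-1}\sum \nu_i\,\partial_{\nu_i}$, with $|V_-|^2_{\hat\omega}=e^\Upsilon$. Choose an auxiliary single-variable holomorphic vector field $V_0$ (e.g.\ $V_0=b_0^{-1}y_0\,\partial_{y_0}$ on the cover), vanishing along a suitable divisor and satisfying $|V_0|^2_{\hat\omega}=e^{\Upsilon_0}$ with $\Upsilon_0\leq \Upsilon$. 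The standard normal-coordinate identity gives $\Box\log|V_0|^2_\omega\leq 0$, which combines with $\Box\log H\leq \tr_\omega(\theta_+)$ and $\Box\Upsilon_0\leq 0$ to produce $\Box G_\epsilon<0$ for
\[ G_\epsilon := \log\bigl(|V_0|^2_\omega\, H\bigr) + \epsilon\,\Upsilon_0. \]
Since $G_\epsilon\to -\infty$ along $\{\Upsilon_0=-\infty\}$ and is uniformly bounded on $\partial\Omega$, the maximum principle yields $|V_-|^2_\omega\cdot H \leq C$ on $\Omega$. Cauchy--Schwarz along the leaf $L^-_{[\beta]}$ then gives
\[ |V_-|^4_\omega \leq \bigl|V_-|_{L^-_{[\beta]}}\bigr|^2_\omega \cdot |V_-|^2_\omega \leq \bigl|V_-|_{L^-_{[\beta]}}\bigr|^2_{\hat\omega}\cdot|V_-|^2_\omega\cdot H \leq C e^\Upsilon, \]
so $|W_-|^2_\omega = e^{-\Upsilon}|V_-|^2_\omega \leq C e^{-\Upsilon/2}$; the bound for $W_+$ follows by symmetry.

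The principal obstacle is the orbifold compatibility: one must verify that on each patch $U^\pm_i$ the local diagonalization of $\omega(t)$, the flat leaf-coordinates $(\nu_0,\dots,\nu_m)$, and the auxiliary vector field $V_0$ can all be chosen invariantly under the cyclic action so that the arguments genuinely descend from the $\mathbb{Z}_{a_i}$- or $\mathbb{Z}_{b_j}$-cover to $\EE^\pm$, and that the transition between patches does not disturb the bounds. Once this is secured, no new analytic input beyond the comparison estimates (\ref{coboest1})--(\ref{coboest2}) and the uniform $L^\infty$ bound on the K\"ahler potential from \cite{SoT3} is required.
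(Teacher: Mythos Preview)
Your proposal is correct and follows essentially the same approach as the paper: the paper's proof consists of a single sentence referring back to the arguments of Proposition \ref{keyest3} (and hence Propositions \ref{keyest} and \ref{keyest2}), and your write-up fills in precisely those details---lift to the $\mathbb{Z}_{a_i}$- or $\mathbb{Z}_{b_j}$-cover, repeat the parabolic Schwarz-type computations on the flat leaves, and descend by invariance. One small slip: the boundary control on $\partial\Omega\setminus E^-$ here comes from the analogue of Corollary \ref{reduction1} (smooth convergence on $X^-\setminus E^-$ for the contracting flow), not from Proposition \ref{locres}, which lives in the resolution setting; this does not affect the argument.
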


\begin{proof}  The proof proceeds the same way the proof for Proposition \ref{keyest3}.

\end{proof}

Proposition \ref{53} immediately gives the estimates of evolving metrics in the normal directions of $E^-$ and implies a uniform diameter for $(X, g(t))$. The same argument for Corollary \ref{corestE} gives the following estimate for the evolving metric on $E^-$.
\begin{corollary}
There exists $C>0$ such that for all $t\in [0, T)$,
\begin{equation}\label{estE3}
\omega(t)|_{E^-} \leq C \theta_- |_{E^-}.
\end{equation}

\end{corollary}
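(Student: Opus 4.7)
The plan is to run the argument of Corollary \ref{corestE} essentially verbatim, with Proposition \ref{53} playing the role of Corollary \ref{roue}. First I would fix a point $p \in E^-$, choose any $[\beta]\in \PP^l_{(b_0,\ldots,b_l)}$ with $p \in L^-_{[\beta]}$, and work on an orbifold cover near $p$: there the exceptional locus pulls back to a smooth $\PP^m$ with normal bundle $\OO_{\PP^m}(-1)^{\oplus(l+1)}$, the leaf $L^-_{[\beta]}$ becomes $\CC^{m+1}$ blown up at one point, and $\hat\omega|_{L^-_{[\beta]}}$ is the pullback of a flat Euclidean metric. All estimates of Proposition \ref{53} descend unchanged from this cover, so it suffices to establish \eqref{estE3} upstairs.

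Next, I pick $e_1,\ldots,e_m \in T_p E^-$ and $e_{m+1}\in T_p L^-_{[\beta]}$ forming an orthonormal basis of $T_p L^-_{[\beta]}$ with respect to $\omega|_{L^-_{[\beta]}}$, and with $\theta_-|_{E^-}$ diagonal in the first $m$ vectors. Writing
\begin{equation*}
\tr_{\theta_-|_{E^-}}(\omega|_{E^-}) = \frac{(\omega\wedge \theta_-^{m-1})(e_1\wedge\overline{e_1}\wedge\cdots\wedge e_m\wedge\overline{e_m})}{\theta_-^m(e_1\wedge\overline{e_1}\wedge\cdots\wedge e_m\wedge\overline{e_m})},
\end{equation*}
multiplying numerator and denominator by $\hat\omega(e_{m+1}\wedge\overline{e_{m+1}})>0$, and splitting the numerator into its two natural terms, I observe that the cross term carrying $\omega(e_{m+1}\wedge\overline{e_{m+1}}) \cdot (\theta_-^{m-1}\wedge \hat\omega)(e_1\wedge\overline{e_1}\wedge\cdots\wedge e_m\wedge\overline{e_m})$ vanishes identically on $E^-$. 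Indeed, $\hat\omega=\ddbar e^\Upsilon$ with $e^\Upsilon$ factoring as $(\sum|x_i|^{2/a_i})(\sum|y_j|^{2/b_j})$ and the second factor vanishing to order two along $E^-=\{y=0\}$, so $\hat\omega|_{E^-}$ has no pure tangent-to-$E^-$ components. Consequently
\begin{equation*}
\tr_{\theta_-|_{E^-}}(\omega|_{E^-}) \leq \frac{\omega\wedge \theta_-^{m-1}\wedge \hat\omega}{\theta_-^m\wedge \hat\omega}\bigg|_{L^-_{[\beta]}}.
\end{equation*}

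Finally, the trace estimate $e^\Upsilon \tr_{\hat\omega|_{L^-_{[\beta]}}}(\omega|_{L^-_{[\beta]}})\leq C$ of Proposition \ref{53} yields the pointwise bound $\omega|_{L^-_{[\beta]}} \leq Ce^{-\Upsilon}\hat\omega|_{L^-_{[\beta]}}$, dominating the displayed ratio by $Ce^{-\Upsilon}(\theta_-^{m-1}\wedge \hat\omega^2)/(\theta_-^m\wedge \hat\omega)|_{L^-_{[\beta]}}$. A short Taylor expansion of $\hat\omega|_{L^-_{[\beta]}}$ near $E^-$ shows that only the fiber direction of $L^-_{[\beta]}\to E^-$ contributes to $\hat\omega$ at $y=0$, so a second copy of $\hat\omega$ supplies a factor of order $e^\Upsilon$ in the ratio; the $e^{\pm\Upsilon}$ factors then cancel to give the desired uniform bound. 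The only real subtlety I expect is the orbifold and fractional-power nature of $E^-=\PP^m_{(a_0,\ldots,a_m)}$ and of $e^\Upsilon$, but both are absorbed cleanly by the orbifold cover introduced in the first step, reducing everything to the smooth computation of section 3.1.
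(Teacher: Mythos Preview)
Your proposal is correct and follows exactly the route the paper intends: the paper simply says ``the same argument for Corollary \ref{corestE}'' applies, and you have faithfully reproduced that argument, correctly substituting Proposition \ref{53} for Corollary \ref{roue} and adding the orbifold lift (which the paper likewise invokes in the proof of Theorem \ref{gen1}) to reduce to the smooth $\OO_{\PP^m}(-1)^{\oplus(l+1)}$ computation of Section 3.1. One small remark: your claim that the $y$-factor ``vanishes to order two'' is not literally accurate when some $b_j>1$, but this is harmless since you only need that the pure tangent-to-$E^-$ components of $\hat\omega$ vanish on $E^-$, which holds because those components retain the undifferentiated factor $\sum|y_j|^{2/b_j}$; on the orbifold cover this reduces in any case to $\hat\omega=\ddbar e^\rho$ vanishing on $E^-$ exactly as in Corollary \ref{corestE}.
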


We can now state the following  generalizations of Theorem \ref{minus1} and Theorem \ref{minus2}. In particular,  Theorem \ref{main1} and \ref{main2} follow immediately.

 \begin{theorem} \label{gen1} Let $X$ be a projective orbifold of $\dim_{\CC} X =n$ and let $g(t)$ be a smooth solution of the K\"ahler-Ricci flow for $t\in [0, T)$, starting from a smooth K\"ahler metric $g_0$ with $[g_0] \in H^{1,1}(X, \mathbb{R})\cap H^2(X, \mathbb{Q})$.  Suppose that

 \begin{enumerate}

\item the limiting K\"ahler class $\lim_{t\rightarrow T} [g(t)] $ is a Cartier $\mathbb{Q}$-divisor  which induces a birational mophism $\phi: X \rightarrow Y$ from $X$ to a normal projective variety $Y$;

\item locally $\phi: X\rightarrow Y$ is a divisorial contraction or a small contraction  isomorphic to a Mumford's quotient.

\end{enumerate}
Let $E$ be the exceptional locus of $\phi$. Then the following holds.
\begin{enumerate}

\item $g(t)$ converges to a smooth K\"ahler metric $g(T)$ on $X\setminus E$ in $C^\infty(X\setminus E)$.

\item The metric completion of $(X\setminus E, g(T))$ is a compact metric length space homeomorphic to $Y$. We denote it by $(Y, d_T)$

\item $(X, g(t))$ converges in Gromov-Hausdorff topology to $(Y, d_T)$ as $t\rightarrow T$.

\end{enumerate}

 \end{theorem}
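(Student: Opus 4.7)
The plan is to adapt the two-step proof of Theorem \ref{minus1} (and analogously Theorem \ref{minus2} for the resolution case) to the Mumford quotient setting, with the key input being Proposition \ref{53} in place of Proposition \ref{keyest2}. First, outside the exceptional locus $E$, Tsuji's trick together with the third-order estimates of Yau (as packaged in Corollary \ref{reduction1}, which carries over to the orbifold setting without change) produces smooth convergence of $g(t)$ to a smooth K\"ahler metric $g(T)$ on $X\setminus E$ as $t \to T$. This is conclusion (1) and reduces the problem to analyzing small tubular neighborhoods of each component of $E$.

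Next, I would localize around each component. For divisorial components of $\phi$, the results of \cite{SW2, SW3} (and their orbifold extensions) already give the Gromov-Hausdorff surgical contraction, so we only need to treat a component contracted by a small contraction whose local model is a Mumford's quotient $\EE^-$. In a neighborhood of such a component, identify the flow with its local model on $\Omega = \{e^\Upsilon \leq 1\}/\CC^*$, and apply Proposition \ref{53} to obtain uniform bounds
\begin{equation*}
e^{\Upsilon}\tr_{\hat\omega|_{L^-_{[y]}}}(\omega|_{L^-_{[y]}}) + e^\Upsilon\tr_{\hat\omega|_{L^+_{[x]}}}(\omega|_{L^+_{[x]}}) + e^{\Upsilon/2}\bigl(|W_-|^2_{g(t)} + |W_+|^2_{g(t)}\bigr) \leq C
\end{equation*}
on $\Omega\times[0,T)$, together with the bound $\omega(t)|_{E^-} \leq C\,\theta_-|_{E^-}$ from the analog of Corollary \ref{corestE}. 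The diameter of $E^-$ with respect to $g(t)$ tends to zero as in Proposition \ref{onE}, by combining the upper bound on $\omega(t)|_{E^-}$ with the intersection pairing $[\omega(t)]\cdot\ell \to 0$ for projective lines $\ell\subset E^-$.

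Then I would build the Gromov-Hausdorff statement by constructing short piecewise smooth paths in annular regions $\AAA_{r, 2r} = \{r \leq e^{\Upsilon/2} \leq 2r\}$. The idea is the one of Proposition \ref{geomest}: given $p, q \in \AAA_{r, 2r}$, pick an intermediate point $o$ lying simultaneously on a leaf $L^+_{[\alpha]}$ through $p$ and a leaf $L^-_{[\beta]}$ through $q$, and join $p,o$ and $o,q$ inside these leaves. After passing to a branched $\mathbb{Z}_{a_i}$ or $\mathbb{Z}_{b_j}$-cover, each leaf becomes a neighborhood of the zero section of $\OO_{\PP^m}(-1)$ or $\OO_{\PP^l}(-1)$, on which the path-length estimate of Theorem 5.1 of \cite{SW3} applies directly, using that the restrictions $\omega|_{L^\pm}$ are controlled exactly by the one-form estimates above. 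Concatenating gives paths of length $\leq C r^{1/6}$ in $\AAA_{r/\kappa,\kappa r}$. From here, the analog of Corollary \ref{metrcom1} shows $\mathrm{diam}_{g(T)}(B_r\setminus E) \to 0$ as $r \to 0$, so the metric completion of $(X\setminus E, g(T))$ contracts each component of $E$ to a point and is homeomorphic to $Y$ via $\phi$; combined with smooth convergence away from $E$ and the uniform diameter bound, this yields (2) and (3).

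The main obstacle is step three: adapting the geometric estimate of Proposition \ref{geomest}, originally stated for the Calabi-symmetric smooth model of $\OO_{\PP^m}(-1)^{\oplus(l+1)}$, to the orbifold weighted Mumford quotient. The difficulty is that the leaves $L^\pm$ are no longer smooth blow-ups of $\CC^{n+1}$ but rather orbifold quotients of such, so the construction of an intermediate point $o$ satisfying $(1+|z|^2)|\lambda''\xi'/\lambda'|^2 = r^2$ must be performed $\CC^*$-equivariantly on the cover, and one must check that path lengths on the quotient are controlled by lifted path lengths on the cover (which is automatic up to a factor of $|\mathbb{Z}_{a_i}|$). Once this branched-cover reduction is made, the rest of the argument is a straightforward combination of the ingredients from Sections 3 and 4.
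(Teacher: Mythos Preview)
Your proposal is correct and follows essentially the same route as the paper: the paper's own proof simply says to rerun the argument of Section 3.2 (i.e., Proposition \ref{onE}, Proposition \ref{geomest}, and Corollary \ref{metrcom1}) using Proposition \ref{53} as input, with the key observation that all computations and estimates can be carried out by locally lifting to a copy of $\OO_{\PP^m}(-1)^{\oplus(l+1)}$, which is exactly the branched-cover reduction you identify as the main technical point. One minor organizational difference: you split off divisorial components and appeal to \cite{SW2, SW3} separately, whereas the paper treats divisorial and small contractions uniformly through the Mumford quotient framework (the case $l=0$ recovers $\OO_{\PP^m}(-1)$), but this does not affect correctness.
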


\begin{proof} We follow the same argument in section 3.2. Proposition \ref{onE}, Proposition 3.6 and Corollary \ref{metrcom1} can be proved because all the computation and estimates can be achieved by locally lifting to a copy of $\OO_{\PP^m}(-1)^{\oplus(l+1)}$.

\end{proof}

\begin{theorem} \label{gen2}  Let $Y$ be a projective variety of $\dim_{\CC} Y=n$ with isolated singularities $p_1, ..., p_k$. Suppose

\begin{enumerate}

%\item the singular set  of $Y$ is the union of orbifold singularities and finitely many isolated non-orbifold singularities $p_1, ..., p_k$;

\item there exists a flip $\phi_-: X^- \rightarrow Y \leftarrow X^+: \phi_+$ such that $\phi_+$ is a small resolution of singularities along $p_1, ..., p_k$;

\item the flip $(X^-, X^+)$ is locally isomorphic to a Mumford's quotient.

\end{enumerate}
Let $E^+$ be the exceptional locus of $\phi_+$. Let $g_0$ be a smooth K\"ahler metric on $Y$, i.e., locally $g_0$ is the restriction of a smooth K\"ahler metric on a local embedding of $Y$ in $\CC^N$ for some $\CC^N$. Then there exists a unique smooth orbifold solution $g(t)$ of the K\"ahler-Ricci flow on $X$ for $t\in (0, T)$ for some $T\leq \infty$ satisfying

\begin{enumerate}

\item $g(t)$ converges to  $g_0$ on $X\setminus E^+$ in $C^\infty(X\setminus E^+)$.

\item $(X, g(t))$ converges in Gromov-Hausdorff topology to $(Y, g_0)$ as $t\rightarrow 0$.

\end{enumerate}

\end{theorem}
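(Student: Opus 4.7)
The strategy is to reduce the proof of Theorem \ref{gen2} to the scheme used for Theorem \ref{minus2}, replacing the explicit $\OO_{\PP^l}(-1)^{\oplus(m+1)}$ model by the Mumford quotient $\EE^+$ and replacing the bundle estimates of Proposition \ref{keyest3} by the intrinsic estimates of Proposition \ref{53}. Throughout I identify $X$ with $X^+$ and write $E^+$ for its exceptional locus.

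First I would regularize the singular initial datum by setting $\omega_{0,\epsilon}=\omega_0+\epsilon\,\omega_+$ for a fixed smooth orbifold K\"ahler metric $\omega_+$ on $X^+$, and then run the smooth orbifold K\"ahler-Ricci flow $\omega_\epsilon(t)$ starting from $\omega_{0,\epsilon}$ on $[0,T)$, where $T=\sup\{\,t>0: [\omega_0]+t[K_X] \text{ is K\"ahler}\,\}$. The $L^\infty$ estimate on the Monge-Amp\`ere potential, Tsuji's trick, and the higher order estimates of \cite{SoT3} are uniform in $\epsilon$, yielding smooth convergence of $\omega_\epsilon(t)$ on compact subsets of $X^+\setminus E^+$ to a unique limit flow $\omega(t)$. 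Uniqueness among weak solutions with bounded potential also follows from \cite{SoT3}.

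Second, I would transfer Proposition \ref{53} from the approximating flows to the limit. Since $\omega_{0,\epsilon}$ is uniformly bounded above by a fixed smooth orbifold K\"ahler metric, the maximum principle arguments of Proposition \ref{53} go through uniformly in $\epsilon$, so that on a cobordism neighbourhood $\Omega$ of $E^+$ the limit $\omega(t)$ satisfies
\[
e^\Upsilon \tr_{\hat\omega|_{L^-_{[y]}}}(\omega|_{L^-_{[y]}}) + e^\Upsilon \tr_{\hat\omega|_{L^+_{[x]}}}(\omega|_{L^+_{[x]}}) + e^{\Upsilon/2}\bigl(|W_-|^2_\omega+|W_+|^2_\omega\bigr) \le C
\]
uniformly for $t\in(0,T')$. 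Combined with the bound $\omega(t)|_{E^+}\le C\,\theta_+|_{E^+}$, obtained as in Corollary \ref{corestE} by lifting to the $\OO_{\PP^l}(-1)^{\oplus(m+1)}$ cover through the cyclic quotient charts of the Mumford model, this controls the evolving metric in both tangential and normal directions to $E^+$.

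Finally, I would rerun the Gromov-Hausdorff argument of section 3.2 almost verbatim. The uniform diameter bound for $(X^+,g(t))$ and the decaying diameter of $E^+$ as $t\to 0^+$ give the analog of Proposition \ref{onE}, and the annulus-path construction of Proposition \ref{geomest}, which joins nearby points in $\AAA_{r,2r}$ through the two transverse foliations $L^-_{[y]}$ and $L^+_{[x]}$, shows that the metric completion of $(X^+\setminus E^+, g_0)$ is homeomorphic to $Y$ and that $(X^+,g(t))\to(Y,g_0)$ in Gromov-Hausdorff topology as $t\to 0^+$. The principal obstacle is this last path-gluing step: one must verify that the smooth construction of Proposition \ref{geomest}, which relied on the explicit blow-up of $\CC^{m+1}$ from \cite{SW3}, descends to the weighted orbifold quotient. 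I would handle this by covering a neighbourhood of $E^+$ by finitely many weighted charts $U^\pm_i$, lifting each chart through its cyclic cover where the smooth foliation argument of \cite{SW3} applies, and gluing the pieces while tracking that the orbifold multiplicities contribute only bounded constants to the final arc length.
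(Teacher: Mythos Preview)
Your proposal is correct and follows essentially the same approach as the paper: the paper's proof of Theorem \ref{gen2} is a one-line reduction to the argument for Theorem \ref{minus2}, noting that near each $p_i$ all computations and estimates can be carried out on a local lifting to the smooth cover of the Mumford quotient. You have in fact spelled out more of the details than the paper does, including the regularization via $\omega_{0,\epsilon}$, the transfer of Proposition \ref{53} to the limit, and the handling of the orbifold path-gluing by lifting through the cyclic covers, all of which are exactly what the paper intends.
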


\begin{proof} The same argument for the proof of Theorem \ref{minus2} applies since near each $p_i$, all the computation and estimates can be achieved on a local lifting.

\end{proof}

The above arguments can be easily applied for small contractions and flops of Calabi-Yau orbifolds if locally the contractions and flops are isomorphic to Mumford's quotients. This generalizes the results in \cite{So3}.

\bigskip

\section{Blow-up models and a metric uniformization}

In this section, we will apply the results of \cite{FIK} and \cite{Li} to construct the local models of the flip.
We consider a local model for flips given by
$$\phi_-: \EE^- \rightarrow \hat \EE \leftarrow \EE^+: \phi_+$$
where $\EE^-= \OO_{\PP^m}(-1)^{\oplus (l+1)}$ and $\EE^+ = \OO_{\PP^l}(-1)^{\oplus (m+1)} $ are defined as in section 2 for $0<l<m$.

 We keep the same notations as in section 2. Note that $\hat\EE$ is a sub-cone in $\CC^{(m+1)(l+1)}$ induced by the Segre map from $\PP^m \times \PP^l$ to $\CC^{(m+1)(l+1)}$ as in (\ref{segre}).

Let $\zz=(\zz_1, ..., \zz_{(m+1)(l+1)})$ be the standard coordinates on $\CC^{(m+1)(l+1)}$ We consider the cone metric $g_{  \gamma}$ on $\CC^{(m+1)(l+1)}$ defined by
\begin{eqnarray*}
g_{\gamma } &=& \ddbar  (|\zz|^{2\gamma } )\\
&=& \sqrt{-1} |\zz|^{2\gamma -2} \left( \delta_{\alpha \bar \beta} + (\gamma-1) |\zz|^{-2}\bar\zz_\alpha \zz_\beta \right) d \zz_\alpha\wedge d\bar \zz_\beta.
\end{eqnarray*}
Obviously, the restriction of $g_{\gamma}$ on $\hat\EE$ is also a cone metric and it satisfies the Calabi symmetry with
$$ g_{\gamma}|_{\EE^-} = \ddbar ( e^{\gamma \rho})$$
since $|\zz|^2|_{\EE^-} = e^{\rho}$.

It is shown in \cite{Li} that there exists a unique complete shrinking gradient K\"ahler-Ricci soliton $g_{KR,-}$ on $\EE^-$ such that $g_{KR, -}$ satisfies the Calabi symmetry with $g_{KR}|_{E^-} =\theta_-$. In particular, the asymptotic cone of $g_{KR,-}$ at infinity is given by the cone metric $g_{cone}$ on $\EE^-$, where $g_{cone}=g_{\gamma }|_{\EE^-}$ for a unique  $\gamma=\gamma(m,l) \in (0,1)$. $\gamma$ can be calculated using the algebraic equation (18) in \cite{Li}.

Then by \cite{Li}, there also exists a unique complete expanding gradient K\"ahler-Ricci slotion $g_{KR, +}$ on $\EE^+$ such that $g_{KR, +}$ satisfies the Calabi symmetry with $g_{KR,+}|_{E^+} = \theta_+$ and the asymptotic cone of $g_{KR, +} $ at infinity coincides with $g_{cone}$.

$g_{KR, -}$ induces $g(t)$, a solution of the K\"ahler-Ricci flow on $\EE^-$ for $t<0$, and $g_{KR, +}$ induces $g(t)$, a solution of the K\"ahler-Ricci flow on $\EE^+$ for $t>0$, such that

\medskip

\begin{enumerate}

\item $g(t)$ converges to $g_{cone}$ smoothly on $\EE^-\setminus E^-$, as $t\rightarrow 0^-$;

\medskip

\item $g(t)$ converges to $g_{cone}$ smoothly on $\EE^+ \setminus E^+$, as $t\rightarrow 0^+$;

\medskip

\item  $(\EE^-, p_-,  g(t))$ converges to $(\hat\EE, g_{cone})$ in pointed Gromov-Hausdorff topology as $t\rightarrow 0^-$ for any fixed point $p_-\in E^-$;

\medskip

\item  $(\EE^+, p_+, g(t))$ converges to $(\hat\EE, g_{cone})$ in Gromov-Hausdorff topology as $t\rightarrow 0^+$ for any fixed point $p_+\in E^+$.

\end{enumerate}
This immediately implies Proposition \ref{main4}. We conjecture that such a local model for a surgical metric flip is exactly the blow-up limit for a global surgical metric flip by the Ricci flow. The global surgical metric flip by the K\"ahler-Ricci flow should be indeed a self-similar metric surgery after parabolic Type-I scaling. The small contraction before the singular time together with the small resolution after the singular time is modeled on a continuous path in pointed Gromov-Hausdorff topology, joining a complete shrinking gradient K\"ahler-Ricci soliton and a complete expanding gradient K\"ahler-Ricci soliton through the metric tangent cone at the singularity formed at the singular time.

We therefore make the following conjecture.

\begin{conjecture} \label{conj1} Let $\phi_-: X^- \rightarrow Y \leftarrow X^+ : \phi_+$ be a smooth flip of two smooth projective manifolds. Let $g(t)$ be a smooth solution of the K\"ahler-Ricci flow on $X^-$ for $t\in [0, T)$. Suppose $\lim_{t\rightarrow T} [g(t)]$ is the pullback of an ample class on $Y$, then the K\"ahler-Ricci flow performs a surgical metric flip at $t=T$. Furthermore, let $g(t)$ be the solution through singularities at $t=T$, then
$$ \limsup_{t\rightarrow T^-} (T-t) |Rm(g(t))|_{g(t)} < \infty, ~ \limsup_{t\rightarrow T^+} (t-T) |Rm(g(t))|_{g(t)} < \infty. $$
In particular, after suitable parabolic Type-I rescaling, the blow-up limits of $g(t)$ through the singular time $T$ is a continuous path in pointed Gromove-Hausdorff topology, joining a complete shrinking gradient K\"ahler-Ricci soliton and a complete expanding gradient K\"ahler-Ricci soliton through the metric tangent cone of the singularities formed at $t=T$.

\end{conjecture}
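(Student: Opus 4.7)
The plan divides naturally along the two assertions of the conjecture: (i) the flow performs a surgical metric flip at $t=T$ in the sense of Definition~\ref{defn}, and (ii) the singularity is of Type~I with a continuous path of self-similar blow-up limits joining the two gradient K\"ahler-Ricci solitons through their common tangent cone. My first step for (i) is to invoke Theorem~\ref{gen1} (or, more directly, the Mumford-quotient estimates of Proposition~\ref{keyest2} and Corollary~\ref{roue}) to conclude that $g(t) \to g(T)$ smoothly on $X^- \setminus E^-$ and that $(X^-, g(t))$ converges in Gromov--Hausdorff topology to a compact length space $(Y, d_T)$ homeomorphic to $Y$. Pulling $\omega(T)$ back via $\phi_+^{-1} \circ \phi_-$ then yields a K\"ahler current on $X^+$ with bounded potential; the weighted volume bound of Proposition~\ref{volatsin}, applied with the flip-model weight $e^{(m-l)\rho'}$ matching the algebraic data at each contracted locus, shows that this current has Monge--Amp\`ere mass in $L^{1+\delta}$ for some $\delta>0$. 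The weak solution theory of \cite{SoT3} then provides a unique continuation on $X^+$ for $t > T$, and Proposition~\ref{keyest3} applied on the resolution side yields smooth convergence outside $E^+$ and Gromov--Hausdorff convergence back to $(Y, d_T)$ as $t \to T^+$, completing the surgical metric flip.

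For (ii), I would perform a parabolic rescaling analysis at the contracted points. Fixing sequences $t_i \to T^\mp$ and base points $p_i$ converging to a singular point $p \in Y$, I would rescale $(X^{\mp}, g(t), p_i)$ by the factor $|T-t_i|^{-1/2}$. The first target is the Type~I curvature bound $\limsup_{t \to T^\mp} |T-t|\,|\mathrm{Rm}(g(t))|_{g(t)} < \infty$ in a neighborhood of the exceptional locus. I would attack this by differentiating the foliation-wise estimates of Section~3.1 and combining with Shi-type local curvature derivative estimates, using the comparison with the flat model $\hat\omega$ on each leaf $L^{\mp}_{[\cdot]}$ together with the controlled size of the metric in the normal directions $W_\pm$. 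Given Type~I, Hamilton's compactness theorem together with Perelman's $\kappa$-noncollapsing would produce a complete ancient limit on the $t < T$ side and a complete immortal limit on the $t > T$ side, each inheriting the Calabi-like symmetry of the local Mumford quotient in the limit. Identification of these limits with the solitons $g_{KR,-}$ and $g_{KR,+}$ constructed in \cite{Li} would use the uniqueness statement of Proposition~\ref{main4} after verifying that the asymptotic cone of the limits matches $(\hat\EE, g_{\mathrm{cone}})$, which in turn is forced by the common tangent cone structure of $(Y, d_T)$ at $p$ produced in step (i).

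The main obstacle is the Type~I bound itself. Without Calabi symmetry, the partial second order estimates of Proposition~\ref{keyest2} bound only certain tensorial contractions of $g(t)$ against $\hat\omega$, not the full Riemann tensor, and extending them to a genuine $|\mathrm{Rm}| = O((T-t)^{-1})$ bound appears to require new maximum principle arguments exploiting the fibered holomorphic structure provided by the Mumford quotient, or equivalently a Harnack-type propagation of curvature bounds from generic regular points into the exceptional locus. A secondary but equally serious difficulty is the uniqueness of the complete shrinking (respectively expanding) gradient K\"ahler-Ricci soliton on $\EE^-$ (resp.\ $\EE^+$) with prescribed asymptotic cone, without the a priori assumption of Calabi symmetry: one would need a symmetry enhancement theorem, perhaps via a Matsushima-type argument using the large isometry group of $(\hat\EE, g_{\mathrm{cone}})$, to reduce to the setting of \cite{Li}. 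These two ingredients together are what would upgrade Proposition~\ref{main4} from a construction of local models to a genuine description of the generic global blow-up behavior, and they are precisely the points where I would expect significant new work to be required.
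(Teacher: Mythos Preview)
The statement you are addressing is labeled \emph{Conjecture}~\ref{conj1} in the paper, and the paper does not contain a proof of it. There is therefore no ``paper's own proof'' to compare against; the conjecture is offered as a speculation motivated by the special cases treated in Theorems~\ref{main1}--\ref{main3} and Proposition~\ref{main4}.

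Your outline for part (i) overstates what the paper establishes. You invoke Proposition~\ref{keyest3} and the argument of Theorem~\ref{gen2} on the resolution side to conclude Gromov--Hausdorff convergence of $(X^+, g(t))$ back to $(Y, d_T)$ as $t\to T^+$. But those results are proved only when the initial metric on $Y$ is \emph{smooth} in the sense that it is locally the restriction of a smooth ambient K\"ahler metric. The limiting current $\omega(T)$ produced by Theorem~\ref{gen1} is not known to have this regularity; it has bounded potential and $L^p$ Monge--Amp\`ere mass, which suffices (via \cite{SoT3} and Proposition~\ref{volatsin}) to \emph{continue} the flow on $X^+$, but not to run the foliation-wise second order estimates of Proposition~\ref{keyest3} uniformly down to $t=T$. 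The paper says this explicitly just before Definition~\ref{defn}: ``we are unable to show at the moment that the K\"ahler-Ricci flow performs a surgical metric flip in the assumption of Theorem~\ref{main1} \dots because Theorem~\ref{main2} requires the initial metric to be smooth.'' The only case in which the full surgical metric flip is actually proved is Theorem~\ref{main3}, where Calabi symmetry gives the extra control (Lemma~\ref{u''0}, Proposition~\ref{u''2}) needed on the resolution side. Moreover, your appeal to Theorem~\ref{gen1} already presumes that the flip is locally isomorphic to a Mumford quotient, which is an additional hypothesis not present in the conjecture.

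For part (ii) you correctly identify the two essential missing ingredients: a Type~I curvature bound without symmetry, and a uniqueness/rigidity statement for the soliton limits without an a priori Calabi ansatz. Your candid assessment that these ``would require significant new work'' is exactly right and matches the paper's own stance; these are precisely why the statement is posed as a conjecture rather than a theorem.
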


When the exceptional loci of $\phi_-$ and $\phi_+$ are nonsingular, the solitons that appear as the blow-up limit, should live on the normal bundle on the contracted fibre. Conjecture \ref{conj1} can also be generalized to flips for projective varieties with log terminal singularities as there exists a unique analytic solution of the weak K\"ahler-Ricci flow on such varieties \cite{SoT3}.  Of course, one does not expect the Type-I bound for the full curvature tensors to hold because the underlying variety is already singular, however, one might hope that after suitable pointed Type-I parabolic dilations, the flow would still converge to a complete shrinking and expanding gradient solitons on the cotangent sheaf of the fibre of $\phi_-$ and $\phi_+$ through the singular time.

In the following, we make a speculation on the metric uniformation for all algebraic singularities arising from smooth flips.

\begin{conjecture} [Metric uniformization for flips]  \label{conj2} Let $\phi_-: X^- \rightarrow Y \leftarrow X^+ : \phi_+$ be a smooth flip of two smooth projective manifolds.    Then at each point $y$ in the singular set of $Y$, $p^-\in (\phi_-) ^{-1}(y)$ and $p^+\in (\phi_+)^{-1}(y)$, there exist

\begin{enumerate}

\item  a unique complete shrinking K\"ahler-Ricci soliton $g^-(t)$ on $N_{-}$ the normal bundle of $(\phi_-)^{-1} (y) $ for $t\in (-\infty, 0)$,

\medskip

\item a unique complete expanding K\"ahler-Ricci soliton $g^+(t)$ on $N_+$ the normal bundle of $(\phi_+)^{-1}(y) $ for $ t\in (0, \infty)$,

\medskip

\item a cone metric $g_Y$ on the tangent cone of $Y$ at $y$,

\end{enumerate}
such that  $(N_-, p^-, g^-(t))$ converges to $(Y, y, g_Y)$ as $t\rightarrow 0^-$ and $(N_+, p^+, g^+(t))$ converges to $(Y, g_Y)$ as $t\rightarrow 0^+$, in pointed Gromov-Hausdorff topology. Furthermore, the converges is smooth outside $E^-$ and $E^+$, the exceptional locus of $\phi_-$ and $\phi_+$.

\item

\end{conjecture}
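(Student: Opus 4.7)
The plan is to reduce Conjecture \ref{conj2} to a collection of local models indexed by the singular points of $Y$, and then within each model to construct the shrinking and expanding solitons and verify that they share a common tangent cone. First I would invoke a holomorphic tubular neighborhood around the smooth exceptional fibres $F_-=(\phi_-)^{-1}(y)$ and $F_+=(\phi_+)^{-1}(y)$ to identify analytic neighborhoods of $F_\pm$ in $X^\pm$ with neighborhoods of the zero sections of the normal bundles $N_\pm=N_{F_\pm/X^\pm}$. Since both $\phi_-$ and $\phi_+$ contract to the same point $y\in Y$, the associated graded ring $\mathrm{gr}_{\mathfrak{m}_y}\OO_{Y,y}$ is canonically identified with a common affine cone $\hat N$, so the local flip $N_-\to\hat N\leftarrow N_+$ becomes a Mumford-type quotient as in Section~5. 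This reduces the conjecture to a non-compact problem on the pair $(N_-,N_+)$.

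Next I would construct the solitons. In the symmetric case $F_\pm=\PP^{n_\pm}$ with split normal bundles, I would invoke \cite{Li} directly, as in Proposition \ref{main4}: the Calabi ansatz of Section~\ref{2.1} reduces both soliton equations to nonlinear ODEs in $\rho$, whose unique solutions subject to the boundary conditions of Proposition \ref{kacon} give complete self-similar metrics $g^-(t)$ on $N_-$ for $t<0$ and $g^+(t)$ on $N_+$ for $t>0$. Their asymptotic cones at spatial infinity both take the form $\ddbar(|\zz|^{2\lambda})|_{\hat N}$ for a single exponent $\lambda\in(0,1)$ determined algebraically by the invariants of $N_\pm$; that the shrinking and expanding sides yield the same $\lambda$ is the compatibility equation from \cite{Li}, which is the algebraic shadow of the fact that $\hat N$ is intrinsic to $y\in Y$. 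The pointed Gromov--Hausdorff convergence to $(\hat N,y,g_Y)$ as $t\to 0^\mp$ then follows from the self-similar scaling together with smooth convergence outside $E^\pm$, exactly as in Proposition~\ref{main4}.

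In the general case where $F_\pm$ are arbitrary smooth subvarieties and $N_\pm$ need not split, I would attempt a continuity method, deforming the Mumford model through a family of birationally equivalent flips while propagating the solitons along the deformation. The a priori estimates would be modelled on Proposition \ref{keyest2}, adapted to the non-compact self-similar setting by replacing the bounded region $\Omega$ with a weighted exhaustion adapted to the soliton vector field, in the spirit of the weighted maximum-principle technique of \cite{FIK}. Uniqueness would rely on a maximum principle at infinity that respects the prescribed conical asymptotics.

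The hard part will be the general, non-symmetric existence problem: without Calabi symmetry, the gradient soliton equation is a fully nonlinear degenerate elliptic PDE on a non-compact K\"ahler manifold with prescribed conical asymptotics at infinity and prescribed K\"ahler class on the zero section, and at present neither a priori estimates nor the matching of the two cone exponents $\lambda^-,\lambda^+$ arising from the two sides are available. I expect that matching $\lambda^-=\lambda^+$ should ultimately follow from a monotonicity formula coupling the shrinking and expanding solitons across $t=0$ through the tangent cone of $Y$ at $y$, in the spirit of Perelman's reduced volume; but extracting such a formula in the K\"ahler flip setting, where the underlying variety genuinely changes topology at $t=0$, appears to require new analytic input beyond what is developed in Sections~3--5.
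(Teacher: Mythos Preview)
The statement you are addressing is a \emph{conjecture}, not a theorem; the paper offers no proof of it. It is presented explicitly as a speculation (``In the following, we make a speculation on the metric uniformization for all algebraic singularities arising from smooth flips''), with Proposition~\ref{main4} supplying evidence only in the split symmetric case $N_\pm=\OO_{\PP^{n_\pm}}(-1)^{\oplus(n-n_\pm)}$. So there is no ``paper's own proof'' to compare against.

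Your write-up is therefore not a proof but a research outline, and you recognize this yourself in the final paragraph. A few remarks on the outline as such. Your first reduction step already overreaches: a holomorphic tubular neighborhood theorem identifying a neighborhood of a compact submanifold with a neighborhood of the zero section in its normal bundle is \emph{false} in general for complex manifolds (the obstruction is the formal neighborhood structure, cf.\ Grauert), so you cannot simply pass to $N_\pm$ without additional hypotheses on $F_\pm\subset X^\pm$. Second, the matching $\lambda^-=\lambda^+$ in the symmetric case is not merely an ``algebraic shadow'' of $\hat N$ being intrinsic; in \cite{Li} it comes from an explicit ODE analysis, and there is no known mechanism forcing the asymptotic cones of a shrinking soliton on $N_-$ and an expanding soliton on $N_+$ to agree in general. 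Your proposed continuity/deformation scheme and the Perelman-type monotonicity across a topological transition are both genuinely new ideas that would have to be developed from scratch; nothing in Sections~3--5 provides the needed a priori estimates in the noncompact self-similar setting. In short, you have correctly located where the difficulties lie, but none of the steps beyond the split $\PP^m$ case is currently within reach.
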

$$\begin{diagram}
\node{ \textnormal{\small Shrinking~gradient~soliton} ~g(t)} \arrow{se,b,}{\textnormal{blow-down~as~} t\rightarrow 0^-}  \arrow[2]{e,t,..}{ \textnormal{\tiny continuous~metric~flip} }     \node[2]{\textnormal{\small Expanding~gradient~soliton~} g(t)} \arrow{sw,r}{ \textnormal{blow-down~as~} t\rightarrow 0^+} \\
\node[2]{\textnormal{\small Tangent ~cone}}
\end{diagram}$$

Conjecture \ref{conj2} can also be generalized to flips of two projective varieties with log terminal singularities with suitable modifications. In principle, the K\"ahler-Ricci flow should lead to a metric uniformization for all projective varieties in the following sense. Let $X$ be a projective variety with log terminal singularities and positive Kodaira dimension. Then after finitely many metric surgerical divisorial contractions and flips by the K\"ahler-Ricci flow, $X$ is replaced by its minimal model and afterwards the K\"ahler-Ricci flow will eventually converge to its canonical model coupled with a unique canonical K\"ahler metric of Einstein type. In particular, each surgery is composed with a complete shrinking and expanding K\"ahler-Ricci soliton through the tangent cone of the algebraic singularities arising from the divisorial contraction or flip. The long time behavior of the K\"ahler-Ricci flow is studied and discussed in \cite{SoT1, SoT2, SoT3, SoT4, GTZ}.

%%%%%%%%%%%%%%%%%%%%%%%%%%%%%%%%%%%%%%%%%%%%%%

\bigskip
\bigskip

\noindent {\bf{Acknowledgements:}} The author would like to thank D.H. Phong,  Gang Tian, Tom Ilmanen, Ben Weinkove, Yuan Yuan and Xiaowei Wang for many stimulating discussions. He would also like to thank Ved Datar and Bin Guo for a number of helpful suggestions.

%%%%%%%%%%%%%%%%%%%%%%%%%%%%%%%%%%%%%%%%%%%%%%

\bigskip
\bigskip

\bigskip
\bigskip


\begin{thebibliography}{99}

\footnotesize

\bibitem{A} Aubin, T.  {\em \'Equations du type Monge-Amp\`ere sur les vari\'et\'es k\"ahl\'eriennes compactes},  Bull. Sci. Math. (2) 102 (1978), no. 1, 63--95

\bibitem{C1} Calabi, E. {\em M\'etriques K\"ahl\'eriennes et fibr\'es holomorphes}, Annales scientifiques de l'f\'E.N.S. $4^e$ s\'erie, tome 12, n$^{\circ}$ 2(1979), p. 269--294


\bibitem{Cao1} Cao, H.-D. {\em Deformation of K\"ahler metrics to K\"ahler-Einstein metrics on compact
K\"ahler manifolds},  Invent. Math. 81 (1985),  no. 2, 359--372

\bibitem{Cao2} Cao, H.-D. {\em Existence of gradient K\"ahler-Ricci solitons},  Elliptic and parabolic methods in geometry (Minneapolis, MN, 1994), 1--16, A K Peters, Wellesley, MA, 1996

\bibitem{D}Debarre, Olivier, {Higher-dimensional algebraic geometry}, Universitext. Springer-Verlag, New York, 2001. xiv+233 pp.

\bibitem{CZ} Cao, H.-D. and Zhu, X.-P. {\em A Complete Proof of the Poincar\'e and Geometrization Conjectures - Application of the Hamilton-Perelman Theory of the Ricci Flow}, Asian J. Math., 10 (2006), no. 2, 165--492

\bibitem{CW} Chen, X.X. and Wang, B. {\em  K\"ahler Ricci flow on Fano manifolds(I)}, J. Eur. Math. Soc. (JEMS) 14 (2012), no. 6, 2001--2038

\bibitem{CS} Collins, T. and Szekelyhidi, G. {\em     The twisted K\"ahler-Ricci flow}, arXiv:1207.5441


\bibitem{EGZ} Eyssidieux, P., Guedj, V. and Zeriahi, A. {\em Singular K\"ahler-Einstein metrics}, J. Amer. Math. Soc. 22 (2009), no. 3, 607--639

\bibitem{FIK} Feldman, M., Ilmanen, T. and Knopf, D. {\em Rotationally symmetric shrinking and expanding gradient K\"ahler-Ricci solitons},  J. Differential Geometry 65  (2003),  no. 2, 169--209

\bibitem{FF} Fong, F. {\em K\"ahler-Ricci Flow on Projective Bundles over K\"ahler-Einstein Manifolds},  preprint,  	arXiv:1104.3924

\bibitem{GTZ} Gross, M., Tosatti, V. and Zhang, Y. {\em Collapsing of abelian fibred Calabi-Yau manifolds},  Duke Math J. 162 (2013), no.3, 517--551

\bibitem{H1} Hamilton, R. S. {\em Three-manifolds with positive Ricci curvature},  J. Differential Geom.  17  (1982), no. 2, 255--306

\bibitem{H2} Hamilton, R. S. {\em Four-manifolds with positive isotropic curvature},  Comm. Anal. Geom.  5  (1997),  no. 1, 1--92



\bibitem{KMM} Kawamata, Y., Matsuda, K. and K. Matsuki, K., {\em Introduction to the minimal model problem},
Algebraic geometry, Sendai, 1985, Adv. Stud. Pure Math., vol. 10, North-Holland, Amsterdam, 1987,
pp. 283--360.

\bibitem{KL} Kleiner, B. and Lott, J. {\em Notes on Perelman's papers},
Geom. Topol. 12 (2008), no. 5, 2587--2855.


\bibitem{Koi} Koiso, N. {\em
On rotationally symmetric Hamilton's equation for K\"ahler-Einstein metrics}, Recent topics in differential and analytic geometry, 327--337,
Adv. Stud. Pure Math., 18-{\rm I}, Academic Press, Boston, MA, 1990

\bibitem{KM}  Kollar, S. and Mori, S. {\em Birational geometry of algebraic varieties}, Cambridge Tracts in Mathematics,
vol. 134, Cambridge University Press, Cambridge, 1998


\bibitem{Kol1} Kolodziej, S. {\em The complex Monge-Amp\`ere
equation}, Acta Math. 180 (1998), no. 1, 69--117

\bibitem{Kol2} Kolodziej, S. {\em The Monge-Amp\`ere equation on compact K\"ahler manifolds}, Indiana Univ. Math. J. 52 (2003), no. 3, 667--686



\bibitem{LT} La Nave, G. and Tian, G. {\em Soliton-type metrics and K\"ahler-Ricci flow on symplectic quotients}, preprint, arXiv: 0903.2413.

\bibitem{Li} Li, C. {\em On rotationally symmetric K\"ahler-Ricci solitons}, preprint, arXiv:1004.4049

\bibitem{Mo} Morelli, R. {\em The birational geometry of toric varieties}, J. Alg. Geom. 5 (1996), 751--782


\bibitem{MT} Morgan, J. and Tian, G. {\em Completion of the Proof of the Geometrization Conjecture}, preprint, arXiv: 0809.4040.

\bibitem{P1} Perelman, G. {\em The entropy formula for the Ricci flow and its geometric applications}, arXiv:math.DG/0211159

\bibitem{P2} Perelman, G. {\em Ricci flow with surgery on three-manifolds}, arXiv:math/0303109

\bibitem{P3} Perelman, G. {\em  The entropy formula for the Ricci flow and its geometric applications},   arXiv:math/0211159

\bibitem{P4} Perelman, G. unpublished work on the K\"ahler-Ricci flow

\bibitem{PSSW1} Phong, D.H., Song, J., Sturm, J. and Weinkove, B. {\em The K\"ahler-Ricci flow and the $\bar\partial$ operator on vector fields}, J. Differential Geometry 81 (2009), no. 3, 631--647


\bibitem{PSSW2} Phong, D.H., Song, J., Sturm, J. and Weinkove, B. {\em The modified K\"ahler-Ricci flow and solitons}, Comment. Math. Helv. 86 (2011), no. 1, 91--112



\bibitem{PS} Phong, D.H. and Sturm, J.  {\em On stability and the convergence of the K\"ahler-Ricci flow},  J. Differential Geometry  72 (2006),  no. 1, 149--168

\bibitem{PSS} Phong, D.H., Sturm, J. and Sesum, N. {\em Multiplier ideal sheaves and the K\"ahler-Ricci flow},
Comm. Anal. Geom. 15 (2007), no. 3, 613--632


\bibitem{RZ1} Rong, X. and Zhang, Y. {\em   Continuity of Extremal Transitions and Flops for Calabi-Yau Manifolds, Appendix B by Gross, M.}, J. Differential Geom. 89 (2011), no. 2, 233--269


\bibitem{RZ2} Rong, X. and Zhang, Y. {\em  Degenerations of Ricci-flat Calabi-Yau manifolds}, preprint, arXiv:1206.3636


\bibitem{SeT} Sesum, N. and Tian, G. {\em Bounding scalar curvature and diameter along the K\"ahler Ricci flow (after Perelman)},  J. Inst. Math. Jussieu  7  (2008),  no. 3, 575--587

\bibitem{So1} Song, J. {\em Finite time extinction of the K\"ahler-Ricci flow}, preprint, arXiv: 0905.0939

\bibitem{So2} Song, J. {\em   Some Type-I solutions of Ricci flow with rotational symmetry}. preprint, arXiv:1203.2688


\bibitem{So3} Song, J. {\em      On a conjecture of Candelas and de la Ossa}, preprint, arXiv:1201.4358

\bibitem{SSW} Song, J., Szekelyhidi, G. and Weinkove, B. {\em The K\"ahler-Ricci flow on projective bundles}, to appear in I.M.R.N.,  arXiv:1107.2144

\bibitem{SoT1} Song, J, and Tian, G. {\em The K\"ahler-Ricci flow on surfaces of positive Kodaira dimension},
Invent. Math. 170 (2007), no. 3, 609--653

\bibitem{SoT2} Song, J, and Tian, G. {\em Canonical measures and K\"ahler-Ricci flow}, J. Amer. Math. Soc. 25 (2012), no. 2, 303--353

\bibitem {SoT3} Song, J, and Tian, G. {\em  The K\"ahler-Ricci flow through singularities}, preprint, arXiv: 0909.4898


\bibitem {SoT4} Song, J, and Tian, G. {\em  Bounding scalar curvature for global solutions of the K\"ahler-Ricci flow}, preprint,   arXiv:1111.5681



\bibitem{SW1} Song, J. and Weinkove, B. {\em The K\"ahler-Ricci flow on Hirzebruch surfaces}, J. Reine Angew. Math. 659 (2011), 141--168

\bibitem{SW2} Song, J. and Weinkove, B. {\em  Contracting exceptional divisors by the K\"ahler-Ricci flow}, to appear in Duke Math. J., arXiv:0909.4898

\bibitem{SW3} Song, J. and Weinkove, B. {\em Contracting divisors by the K\"ahler-Ricci flow II}, preprint,  arXiv:1102.1759

\bibitem{SW4} Song, J. and Weinkove, B. {\em      Lecture notes on the K\"ahler-Ricci flow}, preprint,  arXiv:1212.3653

 \bibitem{SY} Song, J. and Yuan, Y. {\em  Metric flips with Calabi ansatz},  Geom. Funct. Anal. 22 (2012), no. 1, 240--265

 \bibitem{StT} Streets, J. and Tian, G. {\em A parabolic flow of pluriclosed metrics}, Int. Math. Res. Notices (2010), Vol. 2010, 3101--3133


\bibitem{T1} Tian, G. {\em K\"ahler-Einstein metrics with positive scalar curvature}, Invent. Math. 130 (1997), no. 1, 1--37

\bibitem{T2}  Tian, G. {\em New results and problems on K\"ahler-Ricci flow}, G\'eom\'etrie diff\'erentielle, physique math\'ematique, math\'ematiques et soci\'et\'e. II.  Ast\'erisque  322  (2008), 71--92

\bibitem{TZhu3} Tian, G. Zhang, S., Zhang, Z. and Zhu, X. {\em
    Supremum of Perelman's entropy and K\"ahler-Ricci flow on a Fano manifold}, arXiv:1107.4018


\bibitem{TZha} Tian, G. and Zhang, Z. {\em On the K\"ahler-Ricci flow on projective manifolds of general type},  Chinese Ann. Math. Ser. B  27  (2006),  no. 2, 179--192

\bibitem{TZhu1} Tian, G. and Zhu, X. {\em Convergence of K\"ahler-Ricci flow}, J. Amer. Math. Soc. 20 (2007), no. 3, 675--699

\bibitem{TZhu2} Tian, G. and Zhu, X. {\em    Convergence of K\"ahler-Ricci flow on Fano manifolds, II}, arXiv:1102.4798



\bibitem{TW1} Tosatti, V. and Weinkove, B. {\em     On the evolution of a Hermitian metric by its Chern-Ricci form}, preprint, arXiv:1201.0312

\bibitem{TW2} Tosatti, V. and Weinkove, B. {\em   Collapsing of the Chern-Ricci flow on elliptic surfaces }, preprint, arXiv: arXiv:1302.6545



\bibitem{Ts} Tsuji, H. {\em Existence and degeneration of K\"ahler-Einstein metrics on minimal algebraic varieties of general type}, Math.
Ann. 281 (1988), 123--133


\bibitem{W} Wlodarczyk, J. {\em
Birational cobordisms and factorization of birational maps}, J. Algebraic Geom. 9 (2000), no. 3, 425--449


\bibitem{Y1} Yau, S.-T. {\em On the Ricci curvature of a compact K\"ahler
manifold and the complex Monge-Amp\`ere equation, I}, Comm. Pure
Appl. Math. 31 (1978), 339--411

\bibitem{Y2} Yau, S.-T. {\em Open problems in geometry}, Proc. Symposia Pure Math. {\bf 54} (1993), 1--28

\bibitem{Zha} Zhang, Z. {\em On degenerate Monge-Amp\`ere equations over closed K\"ahler manifolds},   Int. Math. Res. Not. 2006, Art. ID 63640, 18 pp


\end{thebibliography}
\end{document}